\author{Joaqu\'{i}n Rodrigues Jacinto}
\address{Universit\'e Aix-Marseille}
\email{joaquin.rodrigues-jacinto@univ-amu.fr}
\author{Chris Williams}
\address{University of Nottingham}
\email{chris.williams1@nottingham.ac.uk}
\title{An introduction to $p$-adic $L$-functions}
\date{}
\newcommand{\lsem}{[\! [}
\newcommand{\rsem}{]\! ]}
\newcommand{\sD}{\mathscr{D}}
\newcommand{\sar}[2]{\ar@{}[#1]|-*[@]{#2}}
\newcommand{\cF}{\mathcal{F}}
\newcommand{\GG}{\Gamma}
\newcommand{\Homc}{\mathrm{Hom}_{\mathrm{cts}}}
\newcommand{\GL}{\mathrm{GL}}
\newcommand{\sX}{\mathscr{X}}
\newcommand{\ccD}{\mathscr{D}}
\newcommand{\cD}{\mathcal{D}}
\newcommand{\sM}{\mathscr{M}}
\newcommand{\sC}{\mathscr{C}}
\newcommand{\cW}{\mathcal{W}}
\numberwithin{equation}{section}
\def\@tocline#1#2#3#4#5#6#7{\relax
  \ifnum #1>\c@tocdepth 
  \else
    \par \addpenalty\@secpenalty\addvspace{#2}%
    \begingroup \hyphenpenalty\@M
    \@ifempty{#4}{%
      \@tempdima\csname r@tocindent\number#1\endcsname\relax
    }{%
      \@tempdima#4\relax
    }%
    \parindent\z@ \leftskip#3\relax \advance\leftskip\@tempdima\relax
    \rightskip\@pnumwidth plus4em \parfillskip-\@pnumwidth
    #5\leavevmode\hskip-\@tempdima
      \ifcase #1
       \or\or \hskip 1em \or \hskip 2em \else \hskip 3em \fi%
      #6\nobreak\relax
    \dotfill\hbox to\@pnumwidth{\@tocpagenum{#7}}\par
    \nobreak
    \endgroup
  \fi}
\begin{document}
\thispagestyle{empty}
\maketitle

\renewcommand{\baselinestretch}{1.0}

\begin{abstract}
These expository notes introduce $p$-adic $L$-functions and the foundations of Iwasawa theory. We focus on Kubota--Leopoldt's $p$-adic analogue of the Riemann zeta function, which we describe in three different ways. We first present a measure-theoretic (analytic) $p$-adic interpolation of special values of the Riemann zeta function. Next, we describe Coleman's (arithmetic) construction via cyclotomic units. Finally, we examine Iwasawa's (algebraic) construction via Galois modules over the Iwasawa algebra.

The \emph{Iwasawa Main conjecture}, now a theorem due to Mazur and Wiles, says that these constructions agree. We will state the conjecture precisely, and give a proof when $p$ is a Vandiver prime (which conjecturally covers every prime). Throughout, we discuss generalisations of these constructions and their connections to modern research directions in number theory.
\end{abstract}


\stepcounter{tocdepth}
\footnotesize
\tableofcontents
\normalsize



\section{Introduction}\label{sec:intro}
The theory of $L$-functions, and their special values, has been central in number theory for 200 years. Their study spans from classical results, such as Gauss's class number formula and the proof of Dirichlet's theorem on primes in arithmetic progressions, to two major problems in mathematics: the Riemann hypothesis and Birch and Swinnerton-Dyer conjecture. They are also central in the Langlands program, a vast project connecting the fields of number theory, geometry and representation theory.

The Birch and Swinnerton-Dyer conjecture is one example of a huge network of conjectures on the special values of $L$-functions, including the Beilinson, Deligne, and Bloch--Kato conjectures. At their heart, these problems relate complex analytic information -- such as the order of vanishing and special values of meromorphic functions -- to arithmetic data, such as invariants attached to algebraic varieties and Galois representations. A fruitful approach to these problems has been the use of $p$-adic methods, for $p$ a prime number. Naively, one might consider the complex world a `bad' place to do arithmetic, as the integers are discrete in $\C$. This is not the case when one instead considers finite extensions of $\Qp$. The $p$-adic setting brings extra flexibility and methods with which to attack these open problems, including $p$-adic $L$-functions, Euler systems, and Hida and Coleman families of modular/automorphic forms. 


The study of $p$-adic properties of special values of $L$-functions is generally known as \emph{Iwasawa theory}. In these notes, we give an introduction to this subject, focusing on perhaps the most fundamental of all $L$-functions: the Riemann zeta function $\zeta(s)$. We describe what a $p$-adic $L$-function is, construct it in this setting, and then describe Iwasawa's Main Conjecture\footnote{Iwasawa's original conjecture  was proved in full by Mazur and Wiles in \cite{MW84}. However analogous conjectures, relating Selmer groups and $p$-adic $L$-functions, have been formulated in a large generality, for example for elliptic curves, modular forms, and beyond. These are also (somewhat confusingly) referred to as `Iwasawa Main Conjectures', even in the special cases (such as the one we consider) where they have been proved. We discuss such generalisations in \S\ref{sec:greenberg selmer} and Appendix \ref{sec:modular forms}.} in this case. We try to anchor the theory in the context of current research activity, indicating how the various concepts we discuss have been generalised, and where the reader should turn next to learn more.

\subsection{What do we cover in these notes?} We now summarise the main results we cover. In \S \ref{sec:riemann to kubota}, we give a broad introduction to $p$-adic $L$-functions, with an emphasis on how one can naturally move from complex to $p$-adic $L$-functions. We make this precise in our case of interest by stating some of the main results of Part I. 

Our focus for the rest of the notes is on the Kubota--Leopoldt $p$-adic $L$-function (or $p$-adic zeta function), which is the $p$-adic analogue of the Riemann zeta function. We will see three constructions of this object, each of a different flavour, and describe the connections between them.

Part I is devoted to the construction and study of an \emph{analytic} version of the Kubota--Leopoldt $p$-adic $L$-function. This has the clearest connection to the classical complex Riemann zeta function; it is a pseudo-measure $\zeta_p^{\mathrm{an}}$ on $\Zp^\times$ that interpolates the (rational numbers) $\zeta(1-k)$ for all positive integers $k$ \footnote{ The precise meaning of this will be clear later. Here the word interpolation is as in Lagrange's interpolation formula, i.e.\ a single object that hits certain specific values when evaluated at various points.}.

\begin{itemize}

\item[$\bullet$]  
 In \S\ref{sec:measures}, we describe some basic tools and results from $p$-adic analysis needed to parse this statement, including measures/pseudo-measures, Iwasawa algebras, and their connections to power series. 

\item[$\bullet$] In \S \ref{kl} we use the techniques developed in \S\ref{sec:measures} to prove Theorem \ref{thm:kubota-leopoldt} (see also Theorem \ref{thm:kubota leopoldt theorem}) on the existence of the Kubota--Leopoldt $p$-adic $L$-function $\zeta_p^{\mathrm{an}}$. 

\item[$\bullet$] In \S \ref{interpolation} we prove that $\zeta_p^{\mathrm{an}}$ also interpolates special values of $L$-functions of Dirichlet characters of $p$-power conductor, and construct analogues for arbitrary Dirichlet characters.

\item[$\bullet$] In \S\ref{sec:s=1}, we describe a result of Leopoldt, showing that the values of the $p$-adic $L$-function of a non-trivial Dirichlet character $1$ are related to logarithms of cyclotomic units, establishing one of the first instances of the $p$-adic Beilinson conjectures. The (untwisted) $p$-adic zeta function has a simple pole at $s=1$; in \S\ref{sec:residue}, we prove an analogous result describing its residue.

\item[$\bullet$] Finally in \S \ref{sec:eisenstein} we discuss an approach to $p$-adic $L$-functions based on the theory of families of Eisenstein series.
\end{itemize}

In Part II, we give two more constructions of the Kubota--Leopoldt $p$-adic $L$-function: \emph{arithmetic} and \emph{algebraic} versions.
\begin{itemize}
	\item[$\bullet$] In \S\ref{sec:coleman map}, we give an arithmetic construction. The \emph{cyclotomic units} are special elements in cyclotomic fields. As one considers the tower $\Qp(\mu_{p^n})$ of cyclotomic extensions of $\Qp$, the cyclotomic units fit together into a norm-compatible tower/system. The \emph{Coleman map} is a map that attaches a $p$-adic measure to any such tower of units. Via this process, we show that to the cyclotomic units, one can naturally attach a pseudo-measure $\zeta_p^{\mathrm{arith}}$ on $\Zp^\times$. One connection between arithmetic and analysis, fully explained in \S\ref{sec:coleman map}, is that $\zeta_p^{\mathrm{an}} = \zeta_p^{\mathrm{arith}}$.
 
    \item[$\bullet$] In \S\ref{sec:iwasawa zeros} and \S\ref{sec:proof Iwasawa}, we deepen the arithmetic picture, respectively stating and proving \emph{Iwasawa's theorem} describing the zeros of the $p$-adic zeta function via modules of cyclotomic units.
 
	\item[$\bullet$] In \S\ref{sec:IMC}, we build on Iwasawa's theorem to give an algebraic construction of the Kubota--Leopoldt $p$-adic $L$-function, as an ideal $\zeta_p^{\mathrm{alg}}$ in the Iwasawa algebra. This ideal arises from the structure of a Galois module over the Iwasawa algebra. We also  describe this module in terms of Selmer groups and discuss generalisations of the Main Conjecture.
\end{itemize}
	
	The \emph{Iwasawa Main Conjecture} says this ideal is generated by the analytic/arithmetic Kubota--Leopoldt $p$-adic $L$-function, connecting the analytic, arithmetic and algebraic constructions, and ultimately connecting special complex $L$-values and Selmer groups. We state this precisely in \S\ref{sec:IMC}, and prove it in the special case where $p$ is a Vandiver prime.

  The reader interested in taking a minimal path to the Iwasawa Main Conjecture can do so by reading the following sections: \S\ref{subsec:classicalL}, \S\ref{sec:riemann}--\ref{sec:p-adic l-functions}, \S\ref{sec:p-adic measures 1}--\ref{sec:pseudo-measures}, \S\ref{kl}, \S\ref{sec:notation coleman}--\ref{sec:coleman map definition}, \S\ref{sec:iwasawa zeros}, \S\ref{sec:proof Iwasawa}, \S\ref{sec:structure theorems}--\ref{sec:vandiver proof}.

 In Appendix \ref{sec:modular forms}, we conclude with remarks on how the analytic, arithmetic and algebraic constructions above have been generalised to other situations, each spawning a field of study in their own right. We illustrate this by giving a sketch in the case of modular forms.

\subsection*{Further reading}
For more information and detail on Part I of these notes, the reader could consult \cite{Lan90}. The construction of the $p$-adic zeta function we give here is based on Colmez's beautiful lecture notes \cite{ColCourse} (in French).

Part II can serve as a prelude to a number of more advanced treatments, such as Rubin's (complete) proof of the Main Conjecture using the theory of Euler systems. We must mention the book of Coates and Sujatha \cite{CS06}, which inspired our original course, and whose aim was to present Rubin's proof. A canonical book in the field is \cite{Washington2}, which introduces further topics in classical Iwasawa theory that there was not space to treat here. We give a flavour of such topics in Appendix \ref{sec:mu invariant}.


\subsection*{Acknowledgements}
These notes started life as the lecture notes for a course at the London Taught Course Centre in 2017. We thank the organisers of the LTCC, and the participants of that course, for their attention and  enthusiasm. We would also like to thank Martin Baric, Keith Conrad, David Corwin and Luis Santiago for their comments and corrections on earlier drafts of these notes.  We first learnt of this construction of the Kubota--Leopoldt $p$-adic $L$-function from Pierre Colmez's notes \cite{ColCourse}, and we are grateful to him for allowing us to reproduce them here. We are also grateful to the three anonymous referees, whose  careful reading and insightful suggestions and corrections greatly improved this text.

\section{What is a $p$-adic $L$-function, and what should it do?}\label{sec:riemann to kubota}

This introductory section aims to motivate the definition and study of $p$-adic $L$-functions. We start with a general discussion on complex $L$-functions and then lean slowly towards the $p$-adic world, focusing on the example of most importance to us in these lectures: the Riemann zeta function.

\subsection{Classical $L$-functions} \label{subsec:classicalL} We first give some important examples of $L$-functions.

\begin{itemize}
	\item The \emph{Riemann zeta function}, the most famous and fundamental of all $L$-functions, is defined by
	\[
	\zeta(s) = \sum_{n\geq 1}n^{-s} = \prod_{p} (1 - p^{-s})^{-1}, 
	\] 
	where the last product -- an \emph{Euler product} -- runs over all prime numbers $p$ and the second equality is a consequence of the unique prime factorisation of integers. The sum converges absolutely whenever $s$ is a complex variable with real part greater than 1, making $\zeta$ a well-defined holomorphic function in a right half-plane $\{ s \in \C \, : \, \mathrm{Re}(s) > 1\}$. It can be meromorphically continued to the whole complex plane, and satisfies a \emph{functional equation}, a symmetry relating the values $\zeta(s)$ and $\zeta(1-s)$. 
	
	\item Let $F$ be a number field. The \emph{zeta function of $F$} is
	\[
	\zeta_F(s) \defeq \sum_{0\neq I \subset \roi_F}N(I)^{-s} = \prod_{\mathfrak{p}} (1 - N(\mathfrak{p})^{-s})^{-1}, 
	\]
	where the sum is over all non-zero ideals in the ring of integers, and the product is over all non-zero prime ideals of $K$. Again, this sum converges absolutely for $\mathrm{Re}(s) > 1$, can be meromorphically continued to $\C$, and satisfies a functional equation relating $\zeta_F(s)$ and $\zeta_F(1-s)$. The existence of the Euler product again follows from unique factorisation.

	\item Let $\chi : (\Z/N\Z)^\times \rightarrow \C^\times$ be a Dirichlet character. We extend $\chi$ to a function $\chi : \Z \rightarrow \C$ by setting $\chi(m) = \chi(m \newmod{N})$ if $m$ is prime to $N$, and $\chi(m) = 0$ otherwise. The $L$-function of $\chi$ is
	\[ 
	L(\chi,s) \defeq \sum_{n\geq 1}\chi(n) n^{-s} = \prod_p (1 - \chi(p)p^{-s})^{-1}.
	\]
	Yet again, the above sum defining $L(\chi, s)$ converges for $\mathrm{Re}(s)>1$, admits meromorphic continuation to $\C$ (analytic when $\chi$ is non-trivial), and satisfies a functional equation relating the values at $s$ and $1-s$.
	
	\item Let $E/\Q$ be an elliptic curve of conductor $N$. One can define an $L$-function
	\[
	L(E,s) \defeq \sum_{n\geq 1}a_n(E)n^{-s} = \prod_{p \nmid N} (1 - a_p(E) p^{-s} + p^{1 - 2s})^{-1} \prod_{p | N} L_p(s) ,
	\]
	where $a_p(E) = p + 1 - \# E(\mathbf{F}_p)$, and the $a_n(E)$ are defined recursively from the $a_p(E)$. The factors $L_p(s)$ at bad primes $p|N$ are defined as $L_p(s) = 1$ (resp. $(1 - p^{-s})^{-1}$, resp. $ (1 + p^{-s})^{-1}$) if $E$ has bad additive (resp. split multiplicative, resp. non-split multiplicative) reduction at $p$. The above sum defining the function $L(E,s)$ converges for $\mathrm{Re}(s) > 3/2$, admits analytic continuation to $\C$, and satisfies a functional equation relating the values at $s$ and $2-s$.
	
	\item Let 
	\[
	f = \sum_{n \geq 1} a_n(f) q^n \in S_k(\Gamma_0(N), \omega_f)
	\]
	be a modular newform of weight $k$, level $N$ and character $\omega_f$. The $L$-function associated to $f$ is given by
	\[ 
	L(f, s) \defeq \sum_{n \geq 1} a_n(f) n^{-s} = \prod_{p \nmid N} (1 - a_p(f) p^{-s} + \omega_f(p) p^{k - 1 - 2s})^{-1} \prod_{p | N} (1 - a_p(f) p^{-s})^{-1}. 
	\]
	This sum converges for $\mathrm{Re}(s) > (k+1)/2$, admits analytic continuation to $\C$, and satisfies a functional equation relating the values at $s$ and $k-s$. Such objects are introduced, and these results proved, in \cite[\S5]{DS05}.
\end{itemize}

The above examples share common features. The `arithmetic' $L$-functions of most interest to us should have the following basic properties (which can, nevertheless, be extremely deep): 
\begin{itemize}
	\item[(1)] An Euler product converging absolutely in a right-half plane;
	\item[(2)] A meromorphic continuation to the whole complex plane;
	\item[(3)] A functional equation relating $s$ and $k-s$ for some $k \in \R$.
\end{itemize}

\begin{remark}
More generally, let $\mathscr{G}_{\Q} = \mathrm{Gal}(\overline{\Q} / \Q)$ denote the absolute Galois group of $\Q$ and let $V \in \mathrm{Rep}_L \, \mathscr{G}_{\Q}$ be a $p$-adic Galois representation (i.e.\ a finite dimensional vector space over a finite extension $L$ of $\qp$ equipped with a continuous linear action of $\mathscr{G}_{\Q}$). One defines the global $L$-function of $V$ as a formal Euler product
	\[ 
	L(V, s) = \prod_{\ell} L_\ell(V, s)
	\] 
 of local factors. For $\ell \neq p$ a rational prime, the local factor at $\ell$ is defined as
	\[ 
	L_\ell(V, s) \defeq \det(\mathrm{Id} - \mathrm{Frob}_\ell^{-1} \ell^{-s} | V^{I_\ell})^{-1}, 
	\]
	where $\mathrm{Frob}_\ell$ denotes the arithmetic Frobenius at $\ell$, and $I_\ell$ denotes the inertia group at $\ell$ (all described in \cite[\S I]{Serre89}). At $p$, defining the local factor is considerably more complicated, requiring $p$-adic Hodge theory, as described in \cite{Ber02,Ben22}. In this case, one defines 
	\[ 
	L_p(V, s) \defeq \det(\mathrm{Id} - \varphi^{-1} p^{-s} | \mathbf{D}_{\mathrm{cris}}(V))^{-1},
	\]
	where $\mathbf{D}_{\mathrm{cris}}(V)$ denotes the crystalline module of $V|_{\mathscr{G}_{\Q_p}}$, equipped with a crystalline Frobenius denoted by $\varphi$. 
 
	When $V$ is the representation attached to an arithmetic object\footnote{For example, a number field, a Dirichlet character, an elliptic curve, a modular form, or much more generally -- in the spirit of the Langlands program -- an automorphic representation of a reductive group.}, the $L$-function of the representation is typically equal to the $L$-function attached to that object; for example, taking $V = \qp(\chi)$ (that is, $V$ is 1-dimensional, with $\mathscr{G}_{\Q}$ acting through the character $\chi$ via class field theory), one recovers the Dirichlet $L$-functions described above. See  \cite{Bel09} for further introductions to these topics.
\end{remark}

\subsection{Motivating questions for Iwasawa theory}

\subsubsection{Special values and arithmetic data}
There are deep results and conjectures relating special values of $L$-functions to important arithmetic information, of which a prototypical example is the following (cf., for example, \cite[\S 5]{NeuirchANT}):

\begin{theorem}[Class number formula]\label{thm:class number formula}
	Let $F$ be a number field with $r_1$ real embeddings, $r_2$ pairs of complex embeddings, $w$ roots of unity, discriminant $D$, and regulator $R$. The zeta function $\zeta_F$ has a simple pole at $s=1$ with residue
	\[\mathrm{res}_{s=1}\zeta_F(s) = \frac{2^{r_1}(2\pi)^{r_2} R}{w\sqrt{|D|}}h_F,\]
	where $h_F$ is the class number of $F$.
\end{theorem}
On the left-hand side, we have a special value of a complex
meromorphic function, from the world of analysis. On the right-hand side, we have invariants attached to a number field, from the world of arithmetic. The class number formula thus provides a deep connection between two different fields of mathematics.

A second famous example of such a connection comes in the form of the \emph{Birch and Swinnerton--Dyer (BSD) conjecture}. Let $E/\Q$ be an elliptic curve. The set of rational points $E(\Q)$ forms a finitely generated abelian group and, based on computer computations, Birch and Swinnerton--Dyer predicted that 
\[
\ord_{s=1}L(E,s) = \mathrm{rank}_{\Z}E(\Q).
\]
They also predicted a closer analogue of the class number formula: that the leading term of the $L$-function can be described in terms of arithmetic invariants attached to $E$.

Again, the left-hand side is from the world of analysis, the right-hand side is from the world of arithmetic, and this prediction is inherently surprising. The worlds are so different that the analytic $L$-function defies easy study using arithmetic properties of the elliptic curve. For example, when the conjecture was formulated, the left-hand side was not even known to exist: nobody had proved that $L(E,s)$ was defined at the value $s=1$. This relies on analytic continuation of the $L$-function; such a proof would not follow for several decades, and even now the only proof we have goes through another deep connection between arithmetic and analysis, namely Wiles' modularity theorem. 

\subsubsection{Iwasawa Main Conjectures}

One of the goals of Iwasawa theory is to seek and prove $p$-adic analogues of BSD and its generalisations, replacing complex analysis (which is poorly suited to arithmetic) with $p$-adic analysis (where arithmetic arises naturally). For each prime $p$, there is a $p$-adic \emph{Iwasawa Main Conjecture (IMC)} for the elliptic curve $E$, relating a $p$-adic analytic $L$-function to certain $p$-adic arithmetic invariants of $E$:
\[
\xymatrix@C=25mm{
	\fbox{complex analytic $L$-function} \ar@{<-->}[r]^-{\text{BSD}}\ar@{<-->}[d] & \fbox{arithmetic invariants of $E$}\ar@{<-->}[d]\\
	\fbox{$p$-adic analytic $L$-function} \ar@{<->}[r]^-{\text{IMC}} & \fbox{$p$-adic invariants of $E$}
}
\]
One has many more tools available to attack the bottom row than the top, including Euler systems, $p$-adic families and eigenvarieties, $p$-adic Hodge theory and $(\varphi,\Gamma)$-modules, and more.  As a result, the $p$-adic conjectures are much more tractable than their complex counterparts. Indeed, whilst BSD remains open beyond low rank special cases, the IMC for elliptic curves has been proved much more widely by Skinner--Urban (see \cite{SU14}), following work of Kato (see \cite{Kat04}). See also \cite{FouquetWan} for a recent summary of known results on the IMC for elliptic curves.

\subsubsection{Applications of $p$-adic methods to classical BSD}
Each new $p$-adic Iwasawa Main Conjecture that is proved brings the worlds of analysis and arithmetic a little closer together. They can also bring us closer to our original goal of, for example, BSD. Indeed, the current state-of-the-art results towards BSD  have arisen as consequences of Iwasawa theory. 

To elaborate, let us summarise some fundamental work ($p$-adic and otherwise) on BSD. There are two natural subquestions.
\begin{itemize}
	\item[(a)] We could try to prove that $\ord_{s=1} L(E,s) \leq \mathrm{rank}_{\Z} E(\Q).$ A natural approach is to try to construct enough independent rational points on the elliptic curve. 
  The theory of \emph{Heegner points} is based on such an idea. A Heegner point in $E(\Q)$ has infinite order if and only if $\mathrm{ord}_{s=1}L(E,s) = 1$. This yields the above inequality in this case.
	
	\item[(b)] Conversely, we could try and prove that $\ord_{s=1} L(E,s) \geq \rank_{\Z} E(\Q).$ In this case we want to bound the number of rational points. One method for trying to do this uses \emph{Euler systems} attached to $p$-adic Galois representations (see \cite{Rub00} for a comprehensive introduction). The main application of Euler systems is in bounding certain Galois cohomology groups, known as \emph{Selmer groups} (cf.\ \S \ref{sec:greenberg selmer}), which are defined using local behaviour and can be viewed as a cohomological interpretation of the group of rational points on $E$. Indeed, 
let $\Sha(E/\Q)$ denote the \emph{Tate--Shafarevich group} of $E$, a torsion abelian group that is conjecturally finite.
  If the subgroup $\Sha(E/\Q)[p^\infty] \leq \Sha(E/\Q)$ of elements with $p$-power order is finite, then the $\Z$-rank of $E(\Q)$ is equal to the $\Zp$-corank\footnote{That is, the rank of the Pontryagin dual of the $p$-Selmer group; see \cite[\S2.1.4]{Ski18}.} of the $p$-Selmer group. In this case, bounding the $p$-Selmer group is equivalent to bounding $E(\Q)$. 
\end{itemize}

The ideas above have led to special cases of the conjecture; in particular, we now know it to be true (under some assumptions) when $\ord_{s=1}L(E,s) \leq 1$ due to work of Kolyvagin, Gross--Zagier and Murty--Murty (see \cite{Kol88}, \cite{GZ86} and \cite{MM91}). More recent Iwasawa-theoretic research building on the above has led to results towards the converse \cite{Ski20}, as well as towards the leading term formula \cite{JSW15}. 

We emphasise that whilst these methods have yielded important progress towards BSD, to date such results have been fundamentally limited to elliptic curves defined over $\Q$ and of rank $\leq 1$. It is natural to try to execute such strategies in more general settings. More recently, the $p$-adic theory of \emph{Stark--Heegner points}, a $p$-adic analogue of (a) initiated in \cite{Dar01}, has been used with some success for elliptic curves over totally real fields. Heegner and Stark--Heegner points are beautifully summarised in \cite{DarmonBook} and \cite{DarmonICM}. A more recent overview of work on Heegner and Stark--Heegner points, and the relationships between them, is given in \cite{DiagonalAsterisque}.  There has also been encouraging (and fundamentally $p$-adic) work on analogous questions in rank 2 and beyond; for example, see \cite{DR16,CastellaHsieh}.

\begin{remark}
The study of $p$-adic $L$-functions is intrinsic to (b), and they also feature prominently in the $p$-adic analogues of (a). Mazur, Tate and Teitelbaum formulated a $p$-adic  BSD conjecture (see \cite{MTT86} and also \cite{ColmezBSDpadique}), which relates the order of vanishing of a $p$-adic $L$-function at $s=1$ to the rank of the rational points of the elliptic curve, and expresses its principal coefficient in terms of arithmetic data in a manner analogous to the classical BSD conjecture (replacing the complex regulator by a $p$-adic regulator).
 
  For an elliptic curve over $\Q$ of analytic rank $0$, we know that the order of vanishing of its attached $p$-adic $L$-function is always 0 or 1. The possible extra zero, discussed in \cite{MTT86, GS93}, is known as a \textit{trivial zero} of the $p$-adic $L$-function and is well understood in terms of local data attached to $E$ at $p$. If the Tate--Shafarevich group $\Sha(E/\Q)$ is finite, the $p$-adic and classical BSD conjectures are equivalent in this case.
 
 Under the assumption of the non-degeneration of the $p$-adic height pairing, the $p$-adic IMC for elliptic curves implies the $p$-adic BSD conjecture (see \cite{Schneiderpadicheight}). 
\end{remark}

\subsubsection{The IMC for the Riemann zeta function}
We mention the elliptic curve case only to motivate the study of $p$-adic $L$-functions and Iwasawa theory. In these notes, we will focus on a simpler example of the above picture, namely the Main Conjecture for the $p$-adic Riemann zeta function, as formulated by Iwasawa himself. Here the picture above is essentially complete; the full IMC is known for any prime $p$ (thanks to \cite{MW84} for $p$ odd, and \cite{Wil90} for $p=2$). We will (for odd $p$) construct the $p$-adic analogue of the zeta function on the way to stating the Main Conjecture, which we will prove for a special case.

\subsection{The Riemann zeta function}\label{sec:riemann}
Since the Riemann zeta function will be a central player in the rest of these notes, we take a brief detour to describe some of the classical theory surrounding it. We start with the following general result. 

\begin{theorem}\label{thm:l-function}
Let $f : \R_{\geq 0} \longrightarrow \R$ be a rapidly decreasing $\mathscr{C}^\infty$-function (i.e.\ such that $f$ and all of its derivatives $f', f'', \ldots$ decay exponentially at infinity). Let 
\begin{equation} \label{EqGamma}
\Gamma(s) = \int_0^\infty e^{-t}t^{s-1} dt.
\end{equation}
	be the usual Gamma function. The function
	\[L(f,s) \defeq \frac{1}{\Gamma(s)}\int_0^\infty f(t)t^{s-1}dt, \hspace{12pt} s \in \C,\]
	which converges to a holomorphic function for Re$(s) > 0$, has an analytic continuation to the whole complex plane, and
	\[L(f,-n) = (-1)^n \frac{d^n}{dt^n}f(0).\]
	We call $L(f,s)$ the \emph{Mellin transform} of $f$.
\end{theorem}
\begin{proof}
	To show analytic continuation, we claim that when Re$(s) > 1$, we have
	\[
	L(f,s) = -L(f',s+1),
	\]
	where $f' = df/dt$. This is an exercise in integration by parts, using the identity $\Gamma(s) = (s-1)\Gamma(s-1)$, and gives the analytic continuation to all of $\C$ by iteration. Finally, iterating the same identity $n+1$ times shows that
	\begin{align*}
		L(f,-n) &= (-1)^{n+1}L(f^{(n+1)} ,1)\\
		& = (-1)^{n+1}\int_0^\infty f^{(n+1)}(t)dt = (-1)^nf^{(n)}(0)
	\end{align*}
	from the fundamental theorem of calculus, giving the result.
\end{proof}

We would like to use the Mellin transform to recover the Riemann zeta function and its properties. For this, we pick a specific choice of $f$, namely, we let
\[f(t) = \frac{t}{e^t - 1} = \sum_{n \geq 0}B_n \frac{t^n}{n!},\]
the generating function for the Bernoulli numbers $B_n$. 
\begin{remark}
The Bernoulli numbers are highly combinatorial, and satisfy  recurrence relations that ensure they are \emph{rational} numbers; for example, the first few are
\[
B_0 = 1, B_1 = -\frac{1}{2}, B_2 = \frac{1}{6}, B_3 = 0, B_4 =  -\frac{1}{30}, ...
\]
For $k \geq 3$ odd, $B_k = 0$.
\end{remark}

We want to plug this function into Theorem \ref{thm:l-function}, and for this, we require\footnote{We thank Keith Conrad for pointing out this proof.}:
\begin{lemma}
	The function $f(t)$ and all of its derivatives decay exponentially at infinity. 
\end{lemma}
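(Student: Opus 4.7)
The plan is to proceed by induction on the order $k$ of the derivative, exploiting the algebraic identity $(e^t-1)f(t)=t$ directly rather than going through the power series.

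For the base case $k=0$: on $[1,\infty)$ we have $e^t-1 \ge e^t/2$, so $|f(t)| \le 2te^{-t}$, which decays exponentially.

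For the inductive step, I would apply the Leibniz rule to $(e^t-1)f(t)=t$ and differentiate $k\ge 1$ times. Since every derivative of $e^t-1$ equals $e^t$ and all derivatives of $t$ vanish beyond the first, one obtains
\[
(e^t-1)f^{(k)}(t) + e^t\sum_{j=0}^{k-1}\binom{k}{j}f^{(j)}(t) = \delta_{k,1},
\]
and solving for $f^{(k)}$ yields
\[
f^{(k)}(t) = \frac{\delta_{k,1}}{e^t-1} \;-\; \frac{e^t}{e^t-1}\sum_{j=0}^{k-1}\binom{k}{j}f^{(j)}(t).
\]
On $[1,\infty)$ the prefactor $e^t/(e^t-1)$ is bounded (it tends to $1$), while $1/(e^t-1)$ decays exponentially. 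By the inductive hypothesis each $f^{(j)}$ with $j<k$ decays exponentially, hence so does the finite sum, and therefore $f^{(k)}$ decays exponentially as well.

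There is no serious obstacle here; the only subtle point is bookkeeping at $k=1$, where the inhomogeneous term $1/(e^t-1)$ appears. But this term itself is $O(e^{-t})$, so it does not disturb the induction. An alternative, equally short route would be to use the expansion $1/(e^t-1)=\sum_{n\ge 1}e^{-nt}$ for $t>0$, differentiate termwise, and bound $\sum n^k e^{-nt}$ on $[1,\infty)$ by pulling out a single factor of $e^{-t}$; this gives the same conclusion but requires justifying uniform convergence of the differentiated series, which the inductive approach sidesteps.
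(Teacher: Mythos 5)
Your proof is correct, and it takes a genuinely different route from the one in the paper. The paper's argument goes through the geometric series expansion $1/(e^t-1)=\sum_{n\ge 1}e^{-nt}$, differentiates it termwise $n$ times, and reads off the asymptotic behaviour $f^{(n)}(t)\sim (-1)^n t e^{-t}$; as you correctly observe in your final remark, that route tacitly relies on justifying termwise differentiation of the infinite series (e.g.\ via uniform convergence on $[\epsilon,\infty)$). Your induction via the Leibniz rule applied to the algebraic identity $(e^t-1)f(t)=t$ avoids that entirely: differentiating $k\ge 1$ times gives precisely $(e^t-1)f^{(k)}(t)+e^t\sum_{j=0}^{k-1}\binom{k}{j}f^{(j)}(t)=\delta_{k,1}$ (all derivatives of $e^t-1$ of order $\ge 1$ equal $e^t$, and $d^k t/dt^k=0$ for $k\ge 2$), and solving for $f^{(k)}$ expresses it as an exponentially decaying inhomogeneity plus a bounded multiple of a finite sum of terms already known, by the inductive hypothesis, to decay exponentially. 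The trade-off is that the paper's computation produces a cleaner picture of the actual leading-order asymptotics of each derivative, whereas your argument is somewhat more elementary and self-contained but only yields the qualitative bound. Both are perfectly valid; yours is a reasonable choice if one wants to avoid any discussion of interchanging differentiation and summation.
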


\begin{proof}
	For $t >0$, we may expand $f(t)$ as a geometric series
	\begin{align*}
		f(t) &= t(e^{-t} + e^{-2t} + e^{-3t} + \cdots) =: t F(t).
	\end{align*}
	Note that $f'(t) = F(t) + tF'(t)$, and $f''(t) = 2F'(t) + tF''(t)$; arguing inductively we see
	\begin{align*}
		f^{(n)}(t) &= nF^{(n-1)}(t) + tF^{(n)}(t) \\
		&= n (-1)^{n-1} (e^{-t} + 2^{n-1} e^{-2t} + 3^{n-1} e^{-3t} + \cdots) + (-1)^{n}t(e^{-t} + 2^ne^{-2t} + 3^n e^{-3t} + \cdots)\\
		&\sim (-1)^nte^{-t} \qquad \text{ as }t \to \infty.
	\end{align*}
	This decays exponentially.
\end{proof}

\begin{lemma} \label{lem:FormulaZeta}
	For the choice of $f$ as above, we have
	\[(s-1)\zeta(s) = L(f,s-1).\]
\end{lemma}
\begin{proof}
Substituting $t$ for $nt$ and rearranging in Equation \eqref{EqGamma} defining $\Gamma(s)$, we obtain 
	\[n^{-s} = \frac{1}{\Gamma(s)}\int_0^\infty e^{-nt}t^{s-1}dt.\]
	Now, when $\mathrm{Re}(s)$ is sufficiently large, we can write
	\[ \zeta(s) = \sum_{n\geq 1} n^{-s} = \frac{1}{\Gamma(s)}\sum_{n\geq 1}\int_0^\infty e^{-nt}t^{s-1}dt = \frac{1}{\Gamma(s)}\int_0^\infty \bigg( \sum_{n\geq 1} e^{-nt} \bigg) t\cdot t^{s-2}dt, \]
	and the result now follows from the identity
	\[\frac{1}{e^t - 1} = \sum_{n \geq 1} e^{-nt}. \qedhere \]
\end{proof}

From the theorem above, we immediately obtain:

\begin{corollary}\label{cor:rational}
	For $n\geq 0$, we have
	\[\zeta(-n) = -\frac{B_{n+1}}{n+1}.\]
	In particular, $\zeta(-n) \in \Q$ for $n\geq 0$, and $\zeta(-n) = 0$ if $n \geq 2$ is even.
\end{corollary}

\subsection{$p$-adic $L$-functions}\label{sec:p-adic l-functions}
As explained in the introduction, $p$-adic $L$-functions are excellent tools to study special values of $L$-functions. 
In this section, we explain what a $p$-adic $L$-function is and the properties it should satisfy.


\subsubsection{$p$-adic $L$-functions, a first idea}

The complex $\zeta$-function is a complex analytic function
\[\zeta : \C \longrightarrow \C\]
which is rational at negative integers. Since $\Z$ is a common subset of both $\C$ and $\zp \subseteq \cp$, it is natural to ask if there exists a function
\[
\zeta_p : \Zp \longrightarrow \Cp
\]
that is `$p$-adic analytic' (in some sense to be defined) and which agrees with the complex $L$-function at negative integers in the sense that
\begin{equation}\label{eq:naive interpolation}
\zeta_p(1-n) = (*) \cdot \zeta(1-n),
\end{equation}
for some explicit factor $(*)$. We would say that such a function `$p$-adically interpolates the special values of $\zeta(s)$'. Ideally, one would like these properties to uniquely characterise $\zeta_p$.

\subsubsection{Ideles, measures and Tate's thesis} \label{sec:dirichlet ideles}

In practice, there is no \emph{single} analytic function on $\Zp$ that interpolates all of the special values\footnote{Rather, there are $p-1$ different analytic functions $\zeta_{p,1}, \hspace{1pt}..., \hspace{1pt}\zeta_{p,p-1}$ on $\Zp$, and $\zeta_{p,i}$ interpolates only the values $\zeta(1-k)$ for which $k \equiv i \newmod{p-1}$.}, as we shall explain in Section \ref{mellin}. Instead, a better way of thinking about $L$-functions is to use a viewpoint initiated by Tate in his thesis \cite{Tat50} (and later independently by Iwasawa; see \cite{Iwa52}). This viewpoint sees $L$-functions as \emph{measures on ideles}, and allows one to package together \emph{all} Dirichlet $L$-functions, including the Riemann zeta function, into a single object. We will give a brief account of the classical theory here, but for fuller accounts, one should consult the references above.

We begin with some observations on characters.

\begin{proposition} \label{prop:dirichlet ideles}  The following assertions hold.
\begin{itemize}
\item[(i)] There is an identification between Dirichlet characters $\chi$ and continuous characters\[ \chi : \prod_{\ell\hspace{1pt}\mathrm{prime}} \Z_\ell^\times \longrightarrow \C^\times, \]
where the source is equipped with the product of the $\ell$-adic topologies.
\item[(ii)]
There is an identification of $\C$ with the space $\Homc(\R_{>0},\C^\times)$ of  continuous multiplicative characters by sending $s$ to $x \mapsto x^s$.
\end{itemize}

In particular, each pair $(\chi,s)$, where $\chi$ is a Dirichlet character and $s\in\C$, corresponds to a (unique) continuous character
	\begin{align*}\kappa_{\chi,s} : \R_{>0} \times \prod_{\ell\hspace{1pt}\mathrm{prime}} \Z_\ell^\times &\longrightarrow \C^\times\\
		(x,y) &\longmapsto x^s\chi(y),
	\end{align*}
where we equip the source with the product topology, and all continuous characters on this group are of this form.
\end{proposition}

\begin{proof}
First, observe that any Dirichlet character $\chi : (\Z/N\Z)^\times \longrightarrow \C^\times$  induces naturally a character
		\[\chi : \prod_{\ell\hspace{1pt}\mathrm{prime}} \Z_\ell^\times \longrightarrow \C^\times.\]
Indeed, suppose first that $N = \ell^n$ is a power of some prime $\ell$, with $n \geq 1$. As $(\Z/\ell^n\Z)^\times \cong \Z_\ell^\times/(1+\ell^n\Z_\ell)$, we can lift $\chi$ from $(\Z/\ell^n\Z)^\times$ to a function on $\Z_\ell^\times$. 
The case for general $N$ follows from the Chinese remainder theorem.  Conversely, any continuous character $\chi: \prod_{\ell\hspace{1pt}\mathrm{prime}} \Z_\ell^\times \to \C^\times$ must factor through a finite quotient $(\Z / N \Z)^\times$ of $\prod_{\ell\hspace{1pt}\mathrm{prime}} \Z_\ell^\times$ for some large enough $N$. Indeed, the image of a sufficiently small open neighbourhood 
\[
    U_N\defeq \Big\{ x \in \prod_{\ell} \Z_\ell^\times \; : \; x \equiv 1 \text{ (mod $N$)} \Big\}
\]
of $1$ is contained in $\{ z \in \C : |z - 1| < 1\}$. This image is a compact subgroup, but the only compact subgroup of the latter set is $\{1\}$. Hence $\chi$ is trivial on $U_N$, and factors through $(\prod_\ell \Z_\ell^\times) / U_N = (\Z / N \Z)^\times$, inducing a Dirichlet character. These two procedures are inverse to each other, showing the first point.

We now prove the second point. For $s \in \C$, the function $x \mapsto x^s$ is visibly a continuous character on $\R_{>0}$. We want to show that all such characters are of this form. After taking a logarithm, this is equivalent to showing that all continuous homomorphisms (of additive groups) $g : \R \rightarrow \C$ are of the form $g(x) = xg(1)$, which is shown by directly computing the values of $g$ on $\Q$ and extending by continuity.
\end{proof}

The product space appearing in Proposition \ref{prop:dirichlet ideles} is more usually written using ideles.

\begin{definition}
	Define the \emph{ideles} $\A^\times$ of $\Q$ to be
	\begin{align*}\A^\times &= \A_{\Q}^\times \defeq \R^\times \times \sideset{}{'}\prod_{\ell\hspace{1pt}\mathrm{prime}} \Q_\ell^\times\\
		&= \big\{(x_{\R},x_2,x_3,x_5,...) : x_\ell \in \Z_\ell^\times \text{ for all but finitely many }\ell\big\}.
	\end{align*}

(The prime on the product denotes \emph{restricted product}, which indicates the almost everywhere integral property in the definition.)  The ideles form a topological ring equipped with the restricted product topology, namely the topology with a basis of open neighbourhoods given by subsets of the form $U \times \prod_\ell U_\ell$, with $U \subseteq \R^\times$, $U_\ell \subseteq \Q_\ell^\times$ open subsets such that $U_\ell = \Z_\ell^\times$ for almost all primes $\ell$. The units $\Q^\times$ embed diagonally in $\A^\times$ (that is, via $x \mapsto (x,x,x,...)$) and we have:

\end{definition}

\begin{proposition}[Strong approximation] There is a topological isomorphism
	\[ \Q^\times \backslash \A^\times \cong \R_{>0} \times \prod_{\ell\hspace{1pt}\mathrm{prime}}\Z_\ell^\times.\]
	Hence all continuous characters
	\[\Q^\times \backslash \A^\times \longrightarrow \C^\times\]
	are of the form $\kappa_{\chi,s}$ as above, where $\chi$ is a Dirichlet character and $s \in \C$.
\end{proposition}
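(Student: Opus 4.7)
The plan is to prove the decomposition first, then deduce the character classification directly from Proposition \ref{prop:dirichlet ideles}. For the decomposition, I will construct an explicit continuous map in each direction and verify they are mutually inverse. The multiplication map
\[
\mu : \Q^\times \times \R_{>0} \times \prod_{\ell} \Z_\ell^\times \longrightarrow \A^\times, \qquad (q,r,u) \longmapsto q \cdot (r, u_2, u_3, \ldots),
\]
is clearly a continuous group homomorphism, where $\Q^\times$ is viewed inside $\A^\times$ via the diagonal embedding. The main content is constructing the inverse.

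Given an idele $x = (x_\R, x_2, x_3, \ldots) \in \A^\times$, for each finite prime $\ell$ let $n_\ell = v_\ell(x_\ell) \in \Z$; by the restricted product condition, $n_\ell = 0$ for all but finitely many $\ell$, so the product
\[
q(x) \defeq \mathrm{sign}(x_\R) \prod_{\ell} \ell^{n_\ell} \in \Q^\times
\]
is well-defined. Setting $r(x) \defeq |x_\R|/|q(x)| \in \R_{>0}$ and $u_\ell(x) \defeq x_\ell / q(x) \in \Z_\ell^\times$ (which lies in $\Z_\ell^\times$ precisely because we have removed the $\ell$-adic valuation), one obtains a map $x \mapsto (q(x), r(x), u(x))$ which is visibly a two-sided inverse to $\mu$. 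Uniqueness of the decomposition follows from the observation that if $q \cdot r \cdot u = 1$ with $q \in \Q^\times$, $r \in \R_{>0}$, $u \in \prod_\ell \Z_\ell^\times$, then inspecting the $\ell$-component forces $v_\ell(q) = 0$ for all $\ell$, whence $q = \pm 1$; the real component then gives $qr = 1$ with $r > 0$, so $q = 1$.

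The point requiring the most care is the topology: one must check that this inverse map is continuous, which amounts to verifying that $x \mapsto q(x)$ is locally constant (as $\Q^\times$ carries the discrete topology here). This is not hard once one fixes a compact neighbourhood of a given idele in $\A^\times$: on such a neighbourhood, only finitely many valuations $n_\ell$ can vary, and in fact they are all constant on a small enough basic open set in the restricted product topology. This is the step I expect to be the only genuine obstacle; the rest is formal.

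For the character classification, any continuous character $\kappa : \Q^\times \backslash \A^\times \to \C^\times$ pulls back via $\mu$ to a continuous character on $\R_{>0} \times \prod_\ell \Z_\ell^\times$ (the $\Q^\times$ factor disappears because we quotient by it). Proposition \ref{prop:dirichlet ideles} then identifies this with some $\kappa_{\chi,s}$, and conversely every $\kappa_{\chi,s}$ extends to an idele class character by the decomposition. This gives the desired bijection between continuous characters of $\Q^\times \backslash \A^\times$ and pairs $(\chi, s)$.
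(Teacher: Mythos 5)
The paper gives no proof of this proposition: it is stated immediately after the definition of $\A^\times$ with the explicit remark that ``it's a good exercise to check,'' so there is no in-text argument to compare yours against. Your proof is the standard one and it is correct. The explicit inverse --- $q(x) = \mathrm{sign}(x_\R)\prod_\ell \ell^{v_\ell(x_\ell)}$, then rescaling each component of $x$ by $q(x)^{-1}$ to land in $\R_{>0}\times\prod_\ell\Z_\ell^\times$ --- is exactly right, the verification that it is a two-sided inverse is sound, and the uniqueness argument ($v_\ell(q)=0$ for all $\ell$ forces $q=\pm1$, and positivity at $\infty$ forces $q=1$) is the usual one. You correctly identify the only genuine subtlety (local constancy of $x\mapsto q(x)$, needed because $\Q^\times$ is discrete in $\A^\times$), and the argument you sketch --- a basic open set in the restricted product pins down both the sign at $\infty$ and each of the finitely many nonzero $v_\ell$ --- closes that gap. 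The deduction of the character classification via Proposition \ref{prop:dirichlet ideles} is then immediate, as you say. Nothing to fix.
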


\begin{proof}
See \cite[Proposition I.4.6]{GoldfeldHundley}.
\end{proof}

\begin{remark}The space $\Q^\times\backslash\A^\times$ is the \emph{idele class group} of $\Q$, and features prominently in the idelic formulation of Class Field Theory.
\end{remark}

By the identification of $\C$ with $\Homc(\R_{>0},\C^\times)$ one can view $\zeta$ as a function
\begin{align*}\zeta : \Homc(\R_{>0},\C^\times) &\longrightarrow \C\\
	\big[x \mapsto x^s \big] &\longmapsto \zeta(s).
\end{align*}
But now we can consider \emph{all} complex Dirichlet $L$-functions \emph{at once} via the function
\begin{align} L : \Homc(\Q^\times\backslash\A^\times, \C^\times) &\longrightarrow \C \label{eq:dirichlet L-function}\\
	\kappa_{\chi,s} &\longmapsto L(\chi,s)\notag.
\end{align}
In the framework of Tate, this function $L$ can be viewed as integrating $\kappa_{\chi,s}$ against the \emph{Haar measure} on $\Q^\times\backslash\A^\times$. In Tate's thesis, he showed properties such as the analytic continuation and functional equations of Dirichlet $L$-functions using harmonic analysis on measures. Indeed, the idelic formulation gives a beautiful conceptual explanation for the appearance of the $\Gamma$-functions and powers of $2\pi i$ in the functional equation of the zeta function; these factors form the `Euler factor at the archimedean place'. The measure-theoretic perspective has proven to be a powerful method of defining and studying automorphic $L$-functions in wide generality.

\subsubsection{$p$-adic $L$-functions via measures}

To obtain a $p$-adic version of this picture, by analogy with \eqref{eq:dirichlet L-function}, a natural thing to do is to consider $\Homc(\Q^\times\backslash\A^\times,\Cp^\times)$ (that is, replacing $\C$ with $\Cp$).  Again, by strong approximation, an element of this space corresponds to a $\Cp$-valued character on $\R_{>0} \times \prod \Z_\ell^\times$. Since $\R_{>0}$ is connected and $\Cp$ is totally disconnected, the restriction of any such character to $\R_{>0}$ is trivial. Also using topological arguments we find that the restriction to $\prod_{\ell \neq p}\Z_{\ell}^\times$ factors through a finite quotient, so gives rise to some Dirichlet character of conductor prime to $p$. This leaves the restriction to $\Zp^\times$, i.e.\ $\Homc(\Zp^\times,\Cp^\times)$, which is by far the most interesting part.

In particular, in the spirit of \eqref{eq:naive interpolation}, we look for a `$p$-adic analytic' function
\[ 
\zeta_p : \Homc(\Zp^\times,\Cp^\times) \longrightarrow \Cp 
\]
which `sees' the special values of $\zeta(s)$ in the sense that that
\[\zeta_p(x \mapsto x^k) = (*) \cdot \zeta(1-k), \hspace{12pt} k \geq 1 \]
for an explicit factor $(*)$.

 In \S\ref{sec:measures}, we will develop the appropriate notion of `$p$-adic analytic' object in this setting: $p$-adic measures\footnote{It is not immediately obvious why we describe these as analytic, but in the background such objects can be described in terms of \emph{rigid analysis}, a $p$-adic analogue of complex analysis. Whilst we will not explicitly use rigid analysis, the connection is described precisely in Remark \ref{rem:weight space rigid analytic}.

\hspace{8pt} In this language, measures correspond to analytic functions, and pseudo-measures to meromorphic functions with at worst simple poles.}  (and pseudo-measures) on $\Zp^\times$. Then in \S\ref{kl} we will prove:

\begin{theorem} [Kubota-Leopoldt, Iwasawa]\label{thm:kubota-leopoldt}
	 There exists a unique pseudo-measure 
  $\zeta_p$ on $\Zp^\times$ such that, for all $k > 0$,
	\[\int_{\Zp^\times} x^k \cdot \zeta_p \defeq \zeta_p(x\mapsto x^k) = \big(1-p^{k-1}\big)\zeta(1-k).\]
\end{theorem}

\begin{remark}
	 Note that the factor $(1 - p^{k-1})$ is  the inverse of the factor at the prime $p$ of the product formula $\zeta(s) = \prod_{\ell} (1-\ell^{-s})^{-1}$, evaluated at $s = 1-k$. So, even though the Euler product does not converge at $s = 1 - k$, Theorem \ref{thm:kubota-leopoldt} morally says  that, removing the factor at $p$ from the Euler product formula, one can $p$-adically interpolate the Riemann zeta function. This is a general phenomenon appearing in the theory of $p$-adic $L$-functions.
\end{remark}

From the pseudo-measure $\zeta_p$, we can build $p-1$ (meromorphic) functions on $\Zp$, each satisfying a partial version of \eqref{eq:naive interpolation}. If we stick with the measure-theoretic approach, however, we have much more. The following result illustrates this power. In constructing $\zeta_p$, we use only values of $\zeta(s)$, without referring to Dirichlet characters at all. However, we also have:

\begin{theorem} \label{thm:introinterpdir}
	Let $\chi$ be a Dirichlet character of conductor $p^n$, $n\geq 0$, viewed as a locally constant character on $\Zp^\times$ \footnote{ That is, a character on $\Z_p^\times$ factoring through $\Z_p^\times / (1 + p^n \Z_p)$ for some large enough $n$.}. Then, for all $k > 0$,
	\[\int_{\Zp^\times} \chi(x) x^k \cdot \zeta_p = \big(1- \chi(p)p^{k-1}\big)L(\chi,1-k).\]
\end{theorem}

Thus $\zeta_p$ also interpolates all the negative integer values $L(\chi,-k)$ for \emph{all} Dirichlet $L$-functions of $p$-power conductor. This is very surprising, since a priori one constructs $\zeta_p$ using only information about the untwisted special $L$-values.

To complete the picture given in \S\ref{sec:dirichlet ideles}, one also considers Dirichlet characters of conductor prime to $p$. Similar ideas can also be used to show:

\begin{theorem} \label{thm:introdir}
	Let $D > 1$ be any integer coprime to $p$, and let $\eta$ denote a (primitive) Dirichlet character of conductor $D$. There exists a unique measure $\zeta_\eta$ on $\Zp^\times$ with the following interpolation property: for all primitive Dirichlet characters $\chi$ with conductor $p^n$ for some $n \geq 0$, we have, for all $k > 0$,
	\[\int_{\Zp^\times} \chi(x) x^k \cdot\zeta_{\eta} = \big(1 - \chi\eta(p)p^{k-1} \big) L(\chi\eta, 1-k). \]
\end{theorem}

\begin{remark} Let $(\Z/D\Z)^{\times\wedge}$ denote the space of characters on $(\Z/D\Z)^\times$. The measures given by Theorem \ref{thm:introdir} can be seen as functions on $\Homc(\zpe, \cp^\times) \times (\Z / D \Z)^{\times\wedge}$ and they are compatible with respect to the natural maps $(\Z / E \Z)^{\times\wedge} \to (\Z / D \Z)^{\times\wedge}$ for $E | D$. This shows that they define a function on
	\begin{align*}
		\Homc(\zpe, \cp^\times) \times \varinjlim_{(D, p) = 1} \big( \Z / D \Z\big)^{\times\wedge} &= \Homc(\zpe, \cp^\times) \times \big( \prod_{\ell \neq p} \Z_\ell^\times \big)^\wedge \\
		&= \Homc(\Q^\times \backslash \A^\times, \cp^\times).
	\end{align*}
	In other words, they give a measure on the idele class group of $\Q$.
\end{remark}

\begin{remark}\label{rem:kummer congruences}
Note that if $k \equiv \ell \newmod{p^{m-1}(p-1)}$, then $x^k \equiv x^\ell \newmod{p^m}$ for any $x \in \Zp^\times$. In particular,  for any Dirichlet character $\eta$ of conductor prime to $p$, these theorems tell us that the special values of $L$-functions satisfy congruences
\[
    (1-\eta(p)p^{k-1})L(\eta, 1-k) \equiv (1-\eta(p)p^{\ell-1})L(\eta,1-\ell) \newmod{p^m}.
\]
For the Riemann zeta function, these are the \emph{(generalised) Kummer congruences}, which played a role in his classification of irregular primes, and which provided significant motivation for Theorem \ref{thm:kubota-leopoldt}. This gives an alternative way of viewing $p$-adic $L$-functions: as $p$-adic analytic objects that package together systematic congruences between $L$-values.
\end{remark}


\newpage
\begin{center}
	\scshape{{\LARGE Part I: The Kubota--Leopoldt $p$-adic $L$-function}}\\[20pt]
\end{center}
\addcontentsline{toc}{part}{Part I: The Kubota--Leopoldt $p$-adic $L$-function}

In this part, we give constructions of the Kubota--Leopoldt $p$-adic $L$-function and the $p$-adic $L$-functions of Dirichlet characters. In Section \ref{sec:measures}, we introduce the necessary formalism of $p$-adic (pseudo-)measures and Iwasawa algebras, and -- via Mahler transforms -- their relationship with spaces of power series. This section sets up language and correspondences we will use throughout the rest of these notes. 

In Section \ref{kl}, we construct a pseudo-measure on $\Zp^\times$ that interpolates the values of the Riemann zeta function at negative integers. In Section \ref{interpolation}, we show moreover that this pseudo-measure interpolates the values $L(\chi,-k)$ for $k > 0$ and $\chi$ any Dirichlet character of $p$-power conductor. Further, if $\eta$ is a Dirichlet character of conductor prime to $p$, we construct a measure on $\Zp$ that interpolates the values $L(\chi\eta,-k)$ for the same range of $k$ and $\chi$. In Section \ref{mellin} we rephrase the construction in terms of analytic functions on $\Zp$ via the Mellin transform. In \S\ref{sec:s=1} and \S\ref{sec:residue} we describe the behaviour at $s=1$ of these analytic functions, a point outside the region of interpolation. Finally, in \S\ref{sec:eisenstein} we discuss how these results can be used to construct the $p$-adic family of Eisenstein series, a prototype for Hida and Coleman families.



\section{Measures and Iwasawa algebras}\label{sec:measures}
In this section, we formally develop the theory of $p$-adic analysis that we will be using in the sequel. Whilst some of the results may appear a little dry in isolation, fluency in the measure-theoretic language will greatly help us simplify later calculations that would otherwise be very technical.

We start in a general setting, letting $G$ be a profinite abelian group, and introducing $p$-adic measures on $G$. We then show that the space of $p$-adic measures is isomorphic to the Iwasawa algebra of $G$. Additionally, in the special case where $G = \Zp$, we give a third perspective, showing that the Iwasawa algebra is also isomorphic to a space of power series via the Mahler transform. After developing a measure-theoretical toolkit for later use, we introduce pseudo-measures. We then conclude by discussing generalisations, including locally analytic distributions and rigid analytic functions.

Throughout, we fix a finite extension $L$ of $\qp$, with the $p$-adic valuation normalised so that $v_p(p) = 1$; this will be the coefficient field. We write $\mathscr{O}_L$ for its ring of integers.

\subsection{Preliminaries on $p$-adic Banach spaces}
We first collect some technical general definitions to anchor our discussions. This is intended only to make precise some of the notions we subsequently use, and the reader comfortable with $p$-adic Banach spaces and orthonormal bases may skip to \S\ref{sec:p-adic measures 1}. For more details, see \cite[\S I.1]{Colm10}.

\begin{definition}\label{def:valuation}
Let $B$ be an $L$-vector space. A \emph{valuation} on $B$ is a function $v : B \to \R\cup\{+\infty\}$ such that:
\begin{itemize}
\item[(i)] $v(x) = +\infty$ if and only if $x = 0$; 
\item[(ii)] $v(x+y) \geq \mathrm{min}(v(x), \ v(y))$ for all $x,y \in B$; 
\item[(iii)]and $v(\lambda x) = v_p(\lambda) + v(x)$ for all $\lambda \in L, x \in B$.
\end{itemize}
Such a valuation induces a norm (hence a topology) on $B$.
\end{definition}

\begin{definition}\label{def:Banach}
An \emph{$L$-Banach space} is a complete topological $L$-vector space $B$ whose topology is induced from a valuation $v$.
\end{definition}

\begin{definition}\label{def:orthonormal}
\begin{enumerate}
\item	Let $I$ be a set, and $\ell^0_\infty(I,L)$ the set of sequences $(a_i)_{i\in I}$ in $L$ that tend to 0 in the sense that for all $\epsilon > 0$, we have $|a_i|_L < \epsilon$ for all but finitely many $i$. This is naturally an $L$-Banach space with valuation $v((a_i)_{i}) = \mathrm{inf}_{i \in I} v_p(a_i).$
			
\item If $B$ is an $L$-Banach space, an \emph{orthonormal basis} of $B$ is a collection $(e_i)_{i \in I}$, for some set $I$, such that we have an isometry
\begin{align*}
\ell^0_\infty(I, L) &\longrightarrow B,\\
(a_i)_{i \in I} &\longmapsto \sum_{i \in I} a_i e_i,
\end{align*}
 \end{enumerate}
 \end{definition}

\begin{remark}
By \cite[Prop.\ I.1.5]{Colm10}, if $B$ is an $L$-Banach space with valuation $v_B$, and $v_B(B) = v_p(L)$, then $B$ admits an orthonormal basis.
\end{remark}

We shall also be concerned with dual spaces. If $B$ is a topological $L$-vector space, let
\[
B^* \defeq \mathrm{Hom}_{\mathrm{cts}}(B, L)
\]
be its continuous linear dual. If $B$ is an $L$-Banach space, there are two natural topologies on $B^*$.

\begin{definition}\label{def:strong topology} Let $B$ be an $L$-Banach space and $B^*$ its continuous dual.
\begin{itemize}
\item The \emph{strong topology} is the topology induced by the natural dual valuation $v^*$, where
\[
v^*(\mu) \defeq \mathrm{inf}_{x \in B} \left(v_p(\mu(x)) - v(x)\right).
\]   
This is the coarsest topology such that a sequence $(\mu_j)_j \subset B^*$ converges if and only if it converges uniformly (in the usual sense of continuous functions on $B$).

\item The \emph{weak topology} is induced by the family of semivaluations\footnote{That is, functions $v$ that satisfy (ii) and (iii) of Definition \ref{def:valuation}, but not necessarily (i).} $\{v_x: x \in B\}$, where
\[
v_x(\mu) \defeq v_p(\mu(x)).
\]
This is the topology of pointwise convergence, the coarsest such that a sequence $(\mu_j)_j \subset B^*$ converges if and only if $\mu_j(x)$ converges for all $x \in B$.
\end{itemize}
\end{definition}

\begin{remark}\label{rem:strong topology}
The dual $B^*$ is complete with both of these topologies. However generally it is an $L$-Banach space only for the strong topology, whilst $B$ is reflexive (canonically isomorphic to its double continuous dual) only when $B^*$ is equipped with the weak topology.
\end{remark}

\subsection{$p$-adic measures}\label{sec:p-adic measures 1}
We now return to our specific setting, and introduce the $p$-adic measures fundamental to our story. Let $G$ be a profinite abelian group; the examples $G = \Zp$ or $G = \Zp^\times$ are of most interest to us.

\begin{definition}\label{def:cts functions}
We denote by $\mathscr{C}(G, L)$ the space of continuous functions $\phi : G \to L$. We equip this space with a valuation 
\[
v_{\mathscr{C}}(\phi) = \inf_{x \in G} v_p(\phi(x)), \qquad \phi \in \mathscr{C}(G,L),
\]
noting this is well-defined as $G$ is compact (hence $\phi$ is bounded).
\end{definition}
This valuation induces the sup norm on $\mathscr{C}(G,L)$, and endows it with the structure of an $L$-Banach space, in the sense of Definition \ref{def:Banach}.

\begin{definition}\label{def:measures}
We define the space $\mathscr{M}(G, L)$ of \emph{$L$-valued measures on $G$} as the continuous linear dual $\mathscr{C}(G,L)^* = \mathrm{Hom}_{\rm cts}(\mathscr{C}(G, L), L)$.  If $\phi \in \mathscr{C}(G, L)$ and $\mu \in \mathscr{M}(G, L)$, the evaluation of $\mu$ at $\phi$ will be denoted by
\[ \int_G \phi(x) \cdot \mu(x), \]
or by $\int_G \phi \cdot \mu$ if the variable of integration is clear from the context.

We say that an element $\mu \in \mathscr{M}(G, L)$ is an \emph{$\roi_L$-valued measure}, and write $\mu \in \mathscr{M}(G, \roi_L)$, if $\int_G \phi \cdot \mu \in \roi_L$ for every $\roi_L$-valued function $\phi$. Since measures are continuous (or equivalently, bounded), we have $\mathscr{M}(G, L) = \mathscr{M}(G, \roi_L) \otimes_{\roi_L} L$. We will be mainly concerned with $\roi_L$-valued functions and measures.
\end{definition}

\begin{remark}
All parts of Definitions \ref{def:cts functions} and \ref{def:measures} apply identically if we replace $G$ with any subset $X \subset G$ equipped with the subspace topology (noting that $X$ no longer need be a group).
\end{remark}

The following simple example will be crucial in later chapters.
\begin{example}\label{ex:dirac}
For any $g \in G$, the \emph{Dirac measure} $\delta_g \in \mathscr{M}(G,\roi_L)$ is the linear functional `evaluation at $g$', that is, the measure defined by
\begin{align*} \delta_g : \mathscr{C}(G,\roi_L) &\longrightarrow \roi_L\\
\phi &\longmapsto \phi(g).
\end{align*}
\end{example}

We will give a number of alternative descriptions of $p$-adic measures. Firstly, we make the following simplifications.
\begin{remark}\label{rem:locally constant}
Let $\sC^{\mathrm{lc}}(G,\roi_L)$ denote the space of locally constant functions $G \to \roi_L$; this is a dense subspace of the continuous functions $\sC(G,\roi_L)$. Indeed, any continuous function $\phi \in \mathscr{C}(G, \roi_L)$ can be $p$-adically approximated by its locally constant truncations $\phi_n(x) = \sum_{a \in (\Z / p^n \Z)} \phi(a) \mathbf{1}_{a + p^n \zp}(x)$, where $\mathbf{1}_{a + p^n \zp}(x)$ denotes the characteristic function of $a + p^n \zp$. Let 
\[
    \sM^{\mathrm{lc}}(G,\roi_L) \defeq \sC^{\mathrm{lc}}(G,\roi_L)^*
\]
be the continuous dual, the space of `locally constant measures on $G$'. We claim restriction from $\sC$ to $\sC^{\mathrm{lc}}$ defines a canonical isomorphism
\begin{equation}\label{eq:restrict measures}
    \sM(G,\roi_L) \isorightarrow \sM^{\mathrm{lc}}(G,\roi_L).
\end{equation}
To see this, we write down an inverse. Suppose $\mu^{\mathrm{lc}} \in \sM^{\mathrm{lc}}(G,\roi_L)$, and let $\phi \in \sC(G,\roi_L)$. Using density, choose a sequence $\phi_n \in \sC^{\mathrm{lc}}(G,\roi_L)$ with $\phi_n \to \phi$ and define
\[
    \int_G \phi \cdot \mu \defeq \lim_{n\to \infty} \int_G\phi_n\cdot \mu^{\mathrm{lc}}.
\]
By continuity, the limit is well-defined and independent of the choice of $\phi_n$, and hence we obtain a well defined measure $\mu \in \sM(G,\roi_L)$. This gives a map
\[
    \sM^{\mathrm{lc}}(G,\roi_L) \longrightarrow \sM(G,\roi_L)
\]
visibly inverse to \eqref{eq:restrict measures}.

Henceforth we drop the notation $\mu^{\mathrm{lc}}$ and just write $\mu$.
\end{remark}

\begin{remark}\label{rem:additive functions on compact sets}
We have an identification of $\sM^{\mathrm{lc}}(G,\roi_L)$ with the space of additive functions 
\begin{equation}\label{eq:measures as additive functions}
	 \mu : \{\text{open compact subsets of $G$}\} \longrightarrow \mathscr{O}_L. 
	 \end{equation}
Indeed, if $\mu \in \mathscr{M}^{\mathrm{lc}}(G, \mathscr{O}_L)$ and $U \subset G$ is an open compact set, one defines $\mu(U) \defeq \int_G \mathbf{1}_U(x) \cdot \mu(x)$, where $\mathbf{1}_U(x)$ denotes the characteristic function of $U$.

Conversely, let $\mu$ be such a function and let $\phi \in \mathscr{C}^{\mathrm{lc}}(G, \mathscr{O}_L)$. We will see how to integrate $\phi$ against $\mu$. As $\phi$ is locally constant, there is some open subgroup $H$ of $G$ such that $\phi$ can be viewed as a function on $G/H$. We define the integral of $\phi$ against $\mu$ to be 
\[\int_{G}\phi \cdot \mu \defeq \sum_{[a] \in G/H}\phi(a)\mu(aH).\]
\end{remark}



Combining Remarks \ref{rem:locally constant} and \ref{rem:additive functions on compact sets}, we have an identification of $\sM(G,\roi_L)$ with the space of additive functions on the open compact subsets of $G$.

\begin{remark}
On $\Zp$, we have a (real-valued) Haar measure defined so that open compact subsets of the form $a+p^n\Zp$ have measure $p^{-n}$. Whilst this is probably the most natural measure one might consider on $\Zp$, observe that this is \emph{not} a $p$-adic measure, as it is not $p$-adically bounded!
\end{remark}

\begin{example}
For $g \in G$, the Dirac measure $\delta_g$ from Example \ref{ex:dirac} corresponds to the function $\widetilde{\delta}_g$ on open compact subsets given by
\[
\widetilde{\delta}_g(X) = \left\{\begin{array}{ll} 1 &\text{if } g \in X\\
0 &\text{if } g \notin X,\end{array}\right.
\]
as can be seen directly from the identification above.
\end{example}

\subsection{The Iwasawa algebra} 
We will now express measures in terms of algebra. As a prototype, we recall a useful fact from representation theory. If $G$ is a finite abelian group, let $\sC(G,\Z)$ be the space of functions $G \to \Z$, and $\mathscr{M}(G,\Z)$ its dual, the space of `continuous measures' on $G$ (when we equip $G$ with the discrete topology). For any $g \in G$ we have the Dirac measure $\delta_g \in \mathscr{M}(G,\Z)$ given by $\delta_g(\phi) \defeq \phi(g)$, as in Example \ref{ex:dirac}.  Then recall the following classical result.

\begin{proposition} \label{PropIwGroupAlgebrafinite}
If $G$ is a finite abelian group, the map $[g] \mapsto \delta_g$ induces an isomorphism between the group algebra $\Z[G]$ and $\mathscr{M}(G,\Z)$. 
\end{proposition}

When $G$ is profinite abelian, we have an analogous $p$-adic result after replacing the group  algebra with its profinite completion, the Iwasawa algebra.

\begin{proposition}\label{prop:iwasawa algebra measures}
We have an natural isomorphism
\[\mathscr{M}(G, \mathscr{O}_L) \cong \varprojlim_H \mathscr{O}_L[G/H],\]
where the limit is over all open subgroups of $G$. 
\end{proposition}

\begin{proof}
By Remark \ref{rem:locally constant}, we have a canonical isomorphism
\[
    \sM(G,\roi_L) \cong \sM^{\mathrm{lc}}(G,\roi_L) = \mathrm{Hom}_{\mathrm{cts}}(\sC^{\mathrm{lc}}(G,\roi_L),\roi_L). 
\]
As any locally constant function factors through $G/H$ for some open compact subgroup $H \leq G$, we also have a natural isomorphism
\[
    \sC^{\mathrm{lc}}(G,\roi_L) \cong \varinjlim_H \mathscr{C}(G / H, \roi_L).
\]
We thus have
\begin{equation}\label{eq:measures as inverse limit}
\mathscr{M}(G, \roi_L) \cong \mathrm{Hom}_{\rm cts}(\varinjlim_H \mathscr{C}(G / H, \roi_L), \roi_L) \cong \varprojlim_H \mathscr{M}(G/ H, \roi_L),
\end{equation}
the final isomorphism following from compatibility of duals with limits. 
As $G/H$ is a finite group, by Proposition \ref{PropIwGroupAlgebrafinite} we have that $\mathscr{M}(G/ H, \roi_L) \cong \roi_L[G/H]$, giving the result.
\end{proof}

We explicitly describe both maps in this isomorphism.

Let $\mu$ be a measure, considered as an additive function as in \eqref{eq:measures as additive functions}, and let $H$ be an open subgroup of $G$. We define an element $\lambda_H$ of $\roi_L[G/H]$ by setting 
\[
    \lambda_H \defeq \sum_{[a] \in G/H}\mu(aH)[a].
\]
By the additivity property of $\mu$, we see that $(\lambda_H)_H \in \varprojlim \mathscr{O}_L[G/H]$, and this gives the map from measures to the inverse limit.

Conversely, given such an element $\lambda$ of the inverse limit, write $\lambda_H$ for its image in $\roi_L[G/H]$ under the natural projection. Then 
\[\lambda_H = \sum_{[a] \in G/H} c_a[a].\]
We define
\[\mu(aH) = c_a.\]
Since the $\lambda_H$ are compatible under projection maps, this defines an additive function on the open compact subsets of $G$, i.e.\ an element $\mu \in \mathscr{M}(G, \mathscr{O}_L)$.

\begin{definition}
We define the \emph{Iwasawa algebra of $G$}  to be the profinite completion of the group algebra $\roi_L[G]$, i.e.,
\[\Lambda(G) \defeq \lim_{\substack{\longleftarrow \\ H}} \mathscr{O}_L[G/H].\]
(Note that we suppress $L$ from the notation.)
\end{definition}

\begin{remark} \label{RemarkConvolution}
The Iwasawa algebra $\Lambda(\Zp)$ has a natural $\roi_L$-algebra structure, and hence by transport of structure we obtain such a structure on $\mathscr{M}(\Zp,\roi_L).$ As with the classical situation for finite group rings, the algebra structure on the space of measures can be described directly via \emph{convolution of measures}. For a general profinite abelian group $G$, given two measures $\mu,\lambda \in \mathscr{M}(G,\roi_L)$, one defines their convolution $\mu * \lambda$ to be
\[\int_G \phi \cdot(\mu*\lambda) = \int_G\left(\int_G \phi(x + y) \cdot \lambda(y)\right) \cdot \mu(x).\]
One checks that this does give an algebra structure and that the isomorphism above is an isomorphism of $\roi_L$-algebras.
\end{remark}

\begin{example}
Let $a \in \Zp$, and let $\delta_a$ be the Dirac measure on $\Zp$ from Example \ref{ex:dirac}. Recall this corresponds to the function $\widetilde{\delta}_a$ on open compact subsets given by $\widetilde{\delta}_a(a) = 1$ (if $a \in X$) and $\widetilde{\delta}_a(a) = 0$ (if $a \notin X$).  Under the isomorphism of Proposition \ref{prop:iwasawa algebra measures}, $\delta_a$ corresponds to the projective system 
\[
([a + p^n\Zp])_{n \in \N} \in \varprojlim_{n \in \N} \roi_L[\Zp/p^n\Zp].
\]
In the inverse limit this yields an element of the Iwasawa algebra that we denote  $[a]$.
\end{example}

\subsection{$p$-adic analysis and Mahler transforms}\label{sec:mahler}
So far we have given three equivalent descriptions of $p$-adic measures on a profinite abelian group $G$:
\begin{enumerate}
\item as linear functionals on $\mathscr{C}(G,L)$,
\item as additive functions on open compact subsets of $G$, and
\item as elements of the Iwasawa algebra of $G$.
\end{enumerate}

In this section we specialise to $G = \Zp$, and in this case give yet another equivalent description, via the power series ring $\roi_L\lsem T\rsem$.

\begin{definition}\label{def:binomial polynomials}
For $x \in \zp$, let
\[\binomc{x}{n} \defeq \frac{x(x-1)\cdots(x-n+1)}{n!} \text{ for }n \geq 1, \qquad \text{and } \binomc{x}{0} = 1.\]
\end{definition}

One easily checks that $x \mapsto {x \choose n}$ defines an element in $\mathscr{C}(\zp, \zp)$ of valuation $v_{\mathscr{C}}\big( {x \choose n} \big) = 0$. The following theorem is fundamental in all that follows. It says that the functions ${x \choose n}$ form an orthonormal basis for the $L$-Banach space $\mathscr{C}(\zp, L)$ (in the sense of Definition \ref{def:orthonormal}).

\begin{theorem}[Mahler]\label{thm:Mahler}
Let $\phi : \Zp \rightarrow L$ be a continuous function. There exists a unique expansion
\[ \phi(x) = \sum_{n\geq 0} a_n(\phi) \binomc{x}{n}, \]
where $a_n(\phi) \in L$ and $a_n(\phi) \rightarrow 0$ as $n\rightarrow \infty$. Moreover, $v_{\mathscr{C}}(\phi) = \inf_{n \in \N} v_p(a_n(\phi))$.
\end{theorem}

\begin{proof}
See \cite[Théorème 1.2.3.]{Colm10}.
\end{proof}

\begin{remark}
The coefficients $a_n(\phi)$ are called the \emph{Mahler coefficients} of $\phi$. 
One can write down the Mahler coefficients of $\phi$ very simply: we define the \emph{discrete derivatives} of $\phi$ by 
\[\phi^{[0]} = \phi,\hspace{12pt} \phi^{[k+1]}(x) = \phi^{[k]}(x+1) - \phi^{[k]}(x),\]
and then $a_n(\phi) = \phi^{[n]}(0).$
\end{remark}

 It is natural to study a measure by looking at its values on the elements of the (orthonormal) Mahler basis. We encode these values in the following power series.

\begin{definition}
Let  $\mu \in \mathscr{M}(\Zp, \roi_L)$ be a $p$-adic measure on $\Zp$. Define the \emph{Mahler transform} (or \emph{Amice transform}) of $\mu$ to be
\[\Am_\mu(T) \defeq \int_{\Zp} (1+T)^x \cdot \mu (x) = \sum_{n\geq 0} \left(\int_{\Zp} \binomc{x}{n} \cdot\mu \right) T^n \in \roi_L\lsem T\rsem .\]
\end{definition}

\begin{example}
Let $a \in \Zp$, and recall the Dirac measure $\delta_a$. By definition, its Mahler transform is
\[ 
\Am_{\delta_a}(T) = \sum_{n\geq 0}\binomc{a}{n} T^n = (1+T)^a. 
\]
\end{example}

Before stating the main theorem concerning the Mahler transform, let us consider how it interacts with the isomorphism $\mathscr{M}(\Zp,\roi_L) \isorightarrow \Lambda(\Zp)$ of Proposition \ref{prop:iwasawa algebra measures}. As $1$ is a topological generator of (the additive group) $\Zp$, and likewise $1+T$ is a topological generator of $\roi_L\lsem T\rsem$, one may check that $[1] \mapsto (1 + T)$ induces an isomorphism $\Lambda(\Zp) \isorightarrow \roi_L\lsem T\rsem$, fitting into a commutative diagram
\begin{equation} \label{EqMahler}
\begin{tikzcd}
\mathscr{M}(\zp, \roi_L) \ar[r] \ar[d] & \roi_L\lsem T \rsem \\
\Lambda(\Zp) \ar[ur] &
\end{tikzcd}
\end{equation}
Indeed, by continuity it suffices to check on Dirac measures. As $\delta_a \mapsto (1+T)^a$ under the top arrow, and $\delta_a \mapsto [a] \mapsto (1+T)^a$ under the bottom arrows, we are done.

Given the bottom two arrows in \eqref{EqMahler} are isomorphisms, the following theorem should not be surprising.

\begin{theorem}\label{thm:mahler}
The Mahler transform gives an $\roi_L$-algebra isomorphism
\[ \mathscr{M}(\Zp, \roi_L) \isorightarrow \roi_L\lsem T\rsem . \]
\end{theorem}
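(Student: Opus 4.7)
The plan is to build the isomorphism essentially by reading off Mahler's theorem, and then to verify that it respects the algebra structure via convolution. The Mahler transform of $\mu$ has $n$-th coefficient $\int_{\Zp}\binom{x}{n}\cdot\mu$, and since $\binom{x}{n}\in\mathscr{C}(\Zp,\Zp)$ has valuation $0$, these coefficients lie in $\roi_L$ whenever $\mu\in\Lambda(\Zp)=\mathscr{M}(\Zp,\roi_L)$. Thus $\Am_\mu\in\roi_L\lsem T\rsem$ and the map is well-defined; $\roi_L$-linearity is immediate from the linearity of the integral.

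For the bijection, the heart of the matter is Mahler's theorem (Theorem \ref{Mahler}). First, for injectivity: if $\Am_\mu=0$, then $\int\binom{x}{n}\cdot\mu=0$ for all $n$, so $\mu$ vanishes on every finite $\roi_L$-linear combination of Mahler basis functions. Any $\phi\in\mathscr{C}(\Zp,L)$ is a uniform limit $\phi=\sum_n a_n(\phi)\binom{x}{n}$ with $a_n(\phi)\to 0$, so by continuity of $\mu$ we obtain $\int\phi\cdot\mu=0$, hence $\mu=0$. For surjectivity, given $F(T)=\sum_n b_n T^n\in\roi_L\lsem T\rsem$, I would define a functional $\mu_F$ on $\mathscr{C}(\Zp,L)$ by
\[
\int_{\Zp}\phi\cdot\mu_F \defeq \sum_{n\geq 0} a_n(\phi)\, b_n.
\]
Since $a_n(\phi)\to 0$ and $v_p(b_n)\geq 0$, this series converges in $L$, and the identity $v_p\bigl(\int\phi\cdot\mu_F\bigr)\geq\inf_n v_p(a_n(\phi))=v_{\mathscr{C}}(\phi)$ shows that $\mu_F$ is continuous with operator norm $\leq 1$, i.e.\ $\mu_F\in\Lambda(\Zp)$. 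Taking $\phi(x)=\binom{x}{m}$ gives $a_n(\phi)=\delta_{n,m}$, so $\int\binom{x}{m}\cdot\mu_F=b_m$ and therefore $\Am_{\mu_F}=F$, giving surjectivity (and simultaneously showing that $\mu\mapsto\Am_\mu$ is the inverse of $F\mapsto\mu_F$).

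It remains to verify that the map is multiplicative, where the product on $\Lambda(\Zp)$ is the convolution of Remark \ref{RemarkConvolution}. The key algebraic identity is the binomial theorem $(1+T)^{x+y}=(1+T)^x(1+T)^y$ in $\roi_L\lsem T\rsem$ (for fixed $x,y\in\Zp$, both sides are continuous functions of $(x,y)$ interpolating the case of nonnegative integers). Applying Fubini for measures (which reduces, via Mahler approximation, to the obvious statement for locally constant functions), I compute
\[
\Am_{\mu*\lambda}(T)=\int_{\Zp}(1+T)^z\cdot(\mu*\lambda)(z)=\int_{\Zp}\!\!\int_{\Zp}(1+T)^{x+y}\,\mu(x)\lambda(y)=\Am_\mu(T)\,\Am_\lambda(T),
\]
and $\Am$ clearly sends the unit (the Dirac measure at $0$) to $1$.

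The main technical obstacle is really Mahler's theorem itself, which is quoted; once we have it, injectivity and the bound on $\mu_F$ are essentially free, and multiplicativity reduces to the binomial identity together with a mild Fubini argument for $p$-adic measures. A secondary subtlety worth flagging is checking that $\mu_F$ does land in $\roi_L$-valued measures rather than just $L$-valued ones, but this is exactly what the bound $v_p(\int\phi\cdot\mu_F)\geq v_{\mathscr{C}}(\phi)$ encodes.
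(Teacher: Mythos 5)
Your proposal is correct, and it takes a genuinely different route from the paper's. The paper builds the inverse map concretely at finite level: for $g(T)=\sum_n c_n T^n$ and each open subgroup $H\subseteq\Zp$, it writes down the Mahler expansion of every characteristic function $\mathbf{1}_{aH}$, pairs the Mahler coefficients against $(c_n)$, and assembles the resulting elements $\mu_H\in\roi_L[\Zp/H]$ into a compatible system in the inverse limit. You instead dualize Mahler's theorem directly, defining the inverse as a continuous linear functional $\mu_F(\phi)=\sum_n a_n(\phi)b_n$ on $\mathscr{C}(\Zp,L)$, with boundedness (hence $\roi_L$-integrality of values, hence membership in $\Lambda(\Zp)\cong\mathscr{M}(\Zp,\roi_L)$) falling out of the orthonormality statement $v_{\mathscr{C}}(\phi)=\inf_n v_p(a_n(\phi))$. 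This is the dual (functional-analytic) picture of Proposition \ref{iwasawa algebra measures} made concrete, and it avoids any finite-level bookkeeping. You also check strictly more than the paper's proof does: you verify multiplicativity against the convolution product via $(1+T)^{x+y}=(1+T)^x(1+T)^y$, whereas the paper only constructs the bijection and defers the algebra-compatibility to an implicit ``easy check.'' One small refinement: the Fubini step you flag is even milder than you suggest, because working coefficient-by-coefficient in $T$, each computation reduces via the Vandermonde identity $\binom{x+y}{n}=\sum_{k=0}^n\binom{x}{k}\binom{y}{n-k}$ to a \emph{finite} sum, so no interchange of limits with integrals is actually required.
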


\begin{proof}
This is almost a tautology from the definition of orthonormal basis. By continuity and linearity, any measure $\mu \in \sM(\Zp,\roi_L)$ is uniquely determined by the values $\int_{\Zp} {x \choose n}\cdot \mu$. Indeed, let $\phi \in \sC(\Zp,\roi_L)$. By Mahler's theorem, we can write $\phi(x) = \sum_{n \geq 0} a_n(\phi) {x \choose n}$ for some unique $a_n(\phi) \in \roi_L$ such that $a_n(\phi) \to 0$ as $n \to \infty$; and then we have
\[
\int_{\Zp} \phi \cdot \mu = \sum_{n \geq 0} a_n(\phi)\int_{\Zp} {x \choose n} \cdot \mu.
\]

Conversely, given any collection of values $c_n \in \roi_L$, defining an element $g = \sum_{n\geq 0}c_nT^n \in \roi_L\lsem T\rsem$, there is a unique measure $\mu_g$ with $\int_{\Zp} {x \choose n} \cdot \mu_g = c_n$. Concretely, for any $\phi = \sum_{n\geq 0} a_n(\phi){x\choose n}\in \sC(\Zp,\roi_L)$ as above, we define
\[
\int_{\Zp} \phi \cdot \mu_g = \sum_{n \geq 0} a_n(\phi) c_n,
\]
which converges to an element in $\roi_L$. Visibly we have $\sA_{\mu_g} = g$, so this defines an inverse to the Mahler transform. 
\end{proof}

\begin{remark}
Each space in the diagram \eqref{EqMahler} has a description as an inverse limit. For measures, this is \eqref{eq:measures as inverse limit}; for the Iwasawa algebra, this is by definition; and for power series, we have 
\[
    \roi_L\lsem T\rsem \cong \varprojlim_n \roi_L[T]\big/\big((1+T)^{p^n} - 1\big).
\]
The appearance of the expression $(1+T)^{p^n} - 1$ will become clearer in \S\ref{SubSectionphipsi} below (and its importance further recognised in Appendix \ref{sec:mu invariant}).

We invite the reader to spell out maps between the  `level $n$' terms of the inverse limits,  analogous to \eqref{EqMahler} and such that the following diagram commutes:
\[
	\begin{tikzcd}
		\mathscr{M}(\z/p^n\Z, \roi_L) \ar[r] \ar[d] & \roi_L[T]\big/\big((1+T)^{p^n} - 1\big)\\
		\roi_L[\Z/p^n\Z] \ar[ur] &
	\end{tikzcd}
\]

\end{remark}

\begin{definition}
If $g \in \roi_L\lsem T\rsem $, we continue to write $\mu_g \in \mathscr{M}(\Zp, \roi_L)$ for the corresponding ($\roi_L$-valued) measure on $\Zp$ (so that $\Am_{\mu_g} = g)$.
\end{definition}

\begin{remark} \label{integ} 
\begin{enumerate}
    \item Mahler's isomorphism induces an isomorphism 
    \[
    	\mathscr{M}(\Zp, L) \cong \sM(\Zp,\roi_L) \otimes_{\roi_L} L \isorightarrow \roi\lsem T\rsem \otimes_{\roi_L}L \cong  \roi\lsem T\rsem[1/p].
    \]
    \item Let $g \in \roi_L\lsem T\rsem $ with associated measure $\mu_g$. From the definitions, it is easily seen that
\[\int_{\Zp}\mu_g = g(0), \;\;\; \int_{\Zp} x \cdot \mu_g = g'(0), \;\;\; \int_{\Zp} x^2 \cdot \mu_g = g''(0) + g'(0), \]
\[
\int_{\Zp}x^3 \cdot \mu_g = g'''(0) + 3g''(0) + g'(0), \ \ \ \ldots, 
\]
that is, for every $n$, the value $\int_{\Zp} x^n \cdot \mu_g$ can be written as an integer combination of $g^{(r)}(0)$ for $0 \leq r\leq n$. We simplify this in Corollary \ref{cor:eval at x^k} below.

\item Recall (from Definition \ref{def:strong topology}) that there are two natural topologies on $\sM(\Zp,\roi_L)$. One can check that under the isomorphism of Theorem \ref{thm:mahler}, the strong topology corresponds to the $p$-adic topology on $\roi_L\lsem T\rsem$, whilst the weak topology corresponds to the $(p,T)$-adic topology. Analogously to Remark \ref{rem:strong topology}, the $p$-adic topology on $\roi_L\lsem T\rsem$ is that of uniform convergence in the power series coefficients, whilst the $(p,T)$-adic topology is that of pointwise (term-by-term) convergence. For example, consider the sequence $1, T, T^2, T^3, \dots$ in $\roi_L\lsem T\rsem$. This converges to 0 pointwise, and in the $(p,T)$-adic (weak) topology; but it does not converge uniformly, or in the $p$-adic (strong) topology.
\end{enumerate}
\end{remark}

\subsection{A measure-theoretic toolbox}\label{sec:toolbox}
There are natural operations one might consider on measures, and via the Mahler transform these give rise to operators on power series. The following operations can be considered as a `toolbox' for working with measures and power series; as we shall see in the sequel, the ability to manipulate measures in this way has important consequences. For further details (and more operations), see \cite{Colm10}.

\begin{notation}
	In light of the isomorphism between the space  $\sM(G,\roi_L)$ of measures and the Iwasawa algebra $\Lambda(G)$, we will frequently conflate the two. In particular, it is convenient -- and indeed standard -- to write $\mu \in \Lambda(G)$ for measures on $G$, typically suppressing the coefficient field $L$, which is fixed throughout.
\end{notation}

\subsubsection{Multiplication by $x$}\label{sec:mult by x}
Given a measure $\mu$ on $\Zp$, we naturally wish to compute $\int_{\Zp}x^k \cdot \mu$ for $k$ a positive integer. To allow us to do that, we let $x\mu$ be the measure defined by
\[
\int_{\Zp} f(x) \cdot x\mu = \int_{\Zp} xf(x) \cdot\mu.
\]
We can ask what the operation $\mu \mapsto x\mu$ does on Mahler transforms; we find:

\begin{lemma} \label{LemmaMultiplicationbyx}
We have
\[\Am_{x\mu} = \partial\Am_\mu,\]
where $\partial$ denotes the differential operator $(1+T)\frac{d}{dT}$.
\end{lemma}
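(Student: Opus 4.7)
The plan is to verify the identity by comparing coefficients of $T^n$ on both sides, reducing everything to a short combinatorial identity for binomial coefficients. This avoids any need to justify swapping a derivative with the $p$-adic integral.

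First I would write out both sides of the claimed identity as elements of $\mathscr{O}_L\lsem T\rsem$. By definition,
\[
\Am_{x\mu}(T) = \int_{\Zp} (1+T)^x \cdot (x\mu)(x) = \int_{\Zp} x(1+T)^x \cdot \mu(x) = \sum_{n\geq 0}\left(\int_{\Zp} x\binomc{x}{n}\cdot\mu\right)T^n.
\]
On the other hand, writing $a_n \defeq \int_{\Zp}\binomc{x}{n}\cdot\mu$ for the Mahler coefficients of $\Am_\mu$, the differential operator $\partial = (1+T)\tfrac{d}{dT}$ acts on $\Am_\mu(T) = \sum_n a_n T^n$ termwise (this is a legitimate operation on formal power series in $\mathscr{O}_L\lsem T\rsem$) to give
\[
\partial\Am_\mu(T) = (1+T)\sum_{n\geq 1} n a_n T^{n-1} = \sum_{n\geq 0}\bigl((n+1)a_{n+1} + n a_n\bigr)T^n.
\]

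Comparing the $T^n$-coefficients and using linearity of the measure $\mu$, the desired identity $\Am_{x\mu} = \partial \Am_\mu$ becomes
\[
\int_{\Zp}\Bigl(x\binomc{x}{n} - (n+1)\binomc{x}{n+1} - n\binomc{x}{n}\Bigr)\cdot\mu = 0 \qquad \text{for every } n\geq 0,
\]
so it is enough to establish the \emph{polynomial} identity $x\binomc{x}{n} = (n+1)\binomc{x}{n+1} + n\binomc{x}{n}$. This is immediate from the definition of binomial coefficients: one has $(n+1)\binomc{x}{n+1} = (x-n)\binomc{x}{n}$, whence $(n+1)\binomc{x}{n+1} + n\binomc{x}{n} = x\binomc{x}{n}$.

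There is no real obstacle here, only a minor pitfall: one might be tempted to prove the lemma by formally differentiating $(1+T)^x$ under the integral sign via $(1+T)\tfrac{d}{dT}(1+T)^x = x(1+T)^x$, which is morally the right idea but requires an extra continuity argument. Arguing at the level of Mahler coefficients sidesteps this and reduces the claim to a one-line combinatorial check.
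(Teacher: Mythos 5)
Your proof is correct, and it is essentially the same as the paper's: the paper's one-line argument also reduces the lemma to the binomial identity $x\binom{x}{n} = (n+1)\binom{x}{n+1} + n\binom{x}{n}$, which you prove in identical fashion. You have merely spelled out the coefficient comparison that the paper leaves implicit.
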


\begin{proof}
The result follows directly from computing
\[  x\binomc{x}{n} = (x-n)\binomc{x}{n} + n\binomc{x}{n} = (n+1)\binomc{x}{n+1} + n\binomc{x}{n}. \qedhere\]
\end{proof}

From the above lemma, we immediately obtain the following expressions, completing the examples seen in Remark \ref{integ}.

\begin{corollary}\label{cor:eval at x^k}
For $\mu \in \Lambda(\Zp)$, we have
\[\int_{\Zp}x^k \cdot \mu = \big(\partial^k \Am_{\mu}\big)(0).\]
\end{corollary}

\subsubsection{Multiplication by $z^x$}

Totally analogously to above, if $g \in \sC(\Zp,L)$ and $\mu$ is a measure on $\Zp$, then we can define a measure $g(x)\mu$ by 
\[
\int_{\Zp} f(x) \cdot g(x)\mu \defeq \int_{\Zp} f(x)g(x) \cdot \mu.
\]

Of particular interest is the measure $z^x\mu$, for $z \in \mathscr{O}_L$ such that $|z-1| < 1$. We claim the Mahler transform of $z^x \mu$ is
\[\Am_{z^x \mu}(T) = \Am_{\mu}\big((1+T)z - 1\big).\]
Indeed, from the definition of the Mahler transform, we see that
\[\Am_{\mu}\big((1+T)z - 1\big) = \int_{\Zp}\big((1+T)z\big)^x \cdot\mu,\]
and this is precisely the Mahler transform of $z^x \mu$ (one has to be slightly careful about convergence issues). 

\subsubsection{Restriction to open compact subsets}

Consider an open compact subset $X \subset \Zp$, and write $\mathbf{1}_{X}$ for the characteristic function of $X$. The `restriction of $\mu$ to $X$' is the measure $\mathrm{Res}_{X}(\mu)$ on $\Zp$ defined by
\[
\int_{\Zp} f \cdot\mathrm{Res}_{X}(\mu) \defeq \int_{\Zp}f\mathbf{1}_{X} \cdot\mu.
\]
It is also standard (and intuitive) to denote this quantity as
\[
\int_X f\cdot \mu.
\]
We say a measure $\mu$ is \emph{supported on $X$} if $\mu = \mathrm{Res}_X(\mu)$.

\begin{remark}
Note that we may view this restriction as a measure on $X$ as follows. For any continuous function $g : X \to L$, let $\widetilde{g} : \Zp \to L$ denote its extension by 0 outside $X$. The map 
\[
\sC(X,L) \longrightarrow L, \qquad g \longmapsto \int_{\Zp}\widetilde{g} \cdot\mu
\]
defines a measure on $X$. Abusing notation, we also denote this measure by $\mathrm{Res}_X(\mu)$, noting it is intuitively compatible with its cousin defined on $\Zp$. Indeed we will often blur the distinction between considering $\mathrm{Res}_{X}(\mu)$ as a measure on $\Zp$ or on $X$.
\end{remark}

In the case where $X = b +p^n\Zp$, we can write the characteristic function explicitly as
\[\mathbf{1}_{b+p^n\Zp}(x) = \frac{1}{p^n}\sum_{\xi \in \mu_{p^n}}\xi^{x-b},\]
and then using the above, we calculate the Mahler transform of $\mathrm{Res}_{b+p^n\Zp}(\mu)$ to be
\begin{equation} \label{EqRestrictionFormula}
\Am_{\mathrm{Res}_{b + p^n\Zp}(\mu)}(T) = \frac{1}{p^n}\sum_{\xi \in \mu_{p^n}}\xi^{-b}\Am_{\mu}\big((1+T)\xi - 1\big).
\end{equation}

\subsubsection{Restriction to $\Zp^\times$}\label{sec:res to zpe}
From the above applied to $b=0$ and $n=1$, it is immediate that 
\begin{equation}\label{mahler transform restriction}\Am_{\mathrm{Res}_{\Zp^\times}(\mu)}(T) = \Am_\mu(T) - \frac{1}{p}\sum_{\xi \in \mu_{p}}\Am_{\mu}\big((1+T)\xi - 1\big).\end{equation}

In order to calculate a formula for the restriction to an arbitrary open compact subset $X \subseteq \Zp$, we can write $X$ (or its complement, as we did with $\Zp^\times$) as a disjoint union of sets of the form $b + p^n \Zp$ and apply the formulas obtained before.

\subsubsection{The action of $\Zp^\times$, $\varphi$ and $\psi$} \label{SubSectionphipsi}

We introduce an action of $\Zp^\times$ that serves as a precursor to a Galois action later on. Let $a \in \Zp^\times$. We can define a measure $\sigma_a(\mu)$ by
\[\int_{\Zp}f(x)\cdot\sigma_a(\mu) = \int_{\Zp}f(ax)\cdot\mu.\]
This has Mahler transform
\[\Am_{\sigma_a(\mu)} = \Am_\mu\big((1+T)^a - 1\big).\] 
\vspace{0.1pt}

In a similar manner, we can define an operator $\varphi$ acting as `$\sigma_p$' by
\[\int_{\Zp}f(x)\cdot\varphi(\mu) = \int_{\Zp}f(px)\cdot\mu,\]
and this corresponds to 
\begin{equation}\label{eq:varphi power series}
\Am_{\varphi(\mu)} = \varphi(\Am_\mu) \defeq \Am_\mu\big((1+T)^p - 1\big).
\end{equation}
Finally, we also define the analogous operator for $p^{-1}$; we define a measure $\psi(\mu)$ on $\Zp$ by defining
\[\int_{\Zp}f(x)\cdot\psi(\mu) = \int_{p\Zp}f(p^{-1}x)\cdot\mu.\]
Note that $\psi\circ\varphi = \mathrm{id}$, whilst $\varphi\circ\psi(\mu) = \mathrm{Res}_{p\Zp}(\mu)$. Indeed, we have
\[ \int_{\Zp} f(x) \cdot \psi\circ\varphi(\mu) = \int_{\Zp} \mathbf{1}_{p \zp}(x) f(p^{-1}x) \cdot \varphi(\mu) = \int_{\Zp} \mathbf{1}_{p \zp}(px) f(x) \cdot \varphi(\mu) = \int_{\Zp} f(x) \cdot \mu,  \]
\[ \int_{\Zp} f(x) \cdot \varphi\circ\psi(\mu) = \int_{\Zp} f(px) \cdot \psi(\mu) = \int_{p \Zp} f(x) \cdot \mu = \int_{\Zp} f(x) \cdot \mathrm{Res}_{p\Zp}(\mu). \]
In particular, we have
\begin{equation}\label{res to Zp}\mathrm{Res}_{\Zp^\times}(\mu) = (1-\varphi\circ\psi)(\mu).
\end{equation}
The operator $\psi$ also gives an operator on any $F(T) \in \roi_L\lsem T\rsem $ under the Mahler transform, and using the restriction formula above, we see that it is the unique operator satisfying
\begin{equation} \label{Eqphipsi}
\varphi \circ \psi(F)(T) = \frac{1}{p}\sum_{\xi \in \mu_p}F\big((1+T)\xi - 1\big).
\end{equation}
The following result will be useful in Part II.

\begin{corollary} \label{CorollarySupportedZpet}
A measure $\mu \in \Lambda(\Zp)$ is supported on $\Zp^\times$ if and only if $\psi(\Am_\mu) = 0$.
\end{corollary}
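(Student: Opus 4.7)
The plan is to reduce the statement to a direct consequence of the restriction formula \eqref{res to Zp} and the injectivity of $\varphi$. The key observation is that a measure $\mu$ is supported on $\Zp^\times$ if and only if $\mathrm{Res}_{\Zp^\times}(\mu) = \mu$, or equivalently (since $\Zp = \Zp^\times \sqcup p\Zp$) if and only if $\mathrm{Res}_{p\Zp}(\mu) = 0$.

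First, I would invoke the identity $\varphi \circ \psi(\mu) = \mathrm{Res}_{p\Zp}(\mu)$ established just before equation \eqref{res to Zp}. Combined with the observation above, this shows that $\mu$ is supported on $\Zp^\times$ if and only if $\varphi\circ\psi(\mu) = 0$, where I am now viewing $\varphi,\psi$ either as acting on measures or on their Amice transforms (the two viewpoints are compatible by construction).

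Next, I would show that $\varphi$ is injective on $\roi_L\lsem T\rsem$, which reduces the condition $\varphi(\psi(\Am_\mu)) = 0$ to $\psi(\Am_\mu) = 0$. This is the only step requiring a small argument: since $\varphi(F)(T) = F((1+T)^p - 1)$ and the power series $(1+T)^p - 1 = pT + \binom{p}{2}T^2 + \cdots + T^p$ has positive $T$-adic valuation with leading term not annihilating coefficients, substitution by this series defines an injective $\roi_L$-algebra endomorphism of $\roi_L\lsem T\rsem$ (if $\varphi(F) = 0$, comparing coefficients order by order forces all coefficients of $F$ to vanish).

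The main (very mild) obstacle is just being careful that the operator $\psi$ on measures corresponds under the Amice transform to the operator $\psi$ on power series — but this is exactly how $\psi$ on $\roi_L\lsem T\rsem$ was defined in \S\ref{SubSectionphipsi}, via the formula $\varphi\circ\psi(F)(T) = \frac{1}{p}\sum_{\xi \in \mu_p} F((1+T)\xi - 1)$, which matches the restriction formula \eqref{mahler transform restriction}. Putting everything together gives the equivalence.
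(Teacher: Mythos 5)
Your proof is correct and follows essentially the same route as the paper: reduce to the identity $\mathrm{Res}_{\Zp^\times} = 1 - \varphi\circ\psi$ (or equivalently $\varphi\circ\psi = \mathrm{Res}_{p\Zp}$) on Amice transforms, then cancel $\varphi$ by injectivity. The paper states the injectivity of $\varphi$ without proof, whereas you supply the (entirely routine) justification, which is fine.
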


\begin{proof}
Let $\mu \in \Lambda(\Zp)$. Then $\mu$ is supported on $\Zp^\times$ if and only if $\mathrm{Res}_{\Zp^\times}(\mu) = \mu$, or equivalently if and only if $\Am_\mu = \Am_\mu - \varphi\circ\psi(\Am_\mu)$, which happens if and only if $\psi(\Am_\mu) = 0$, since the operator $\varphi$ is injective.
\end{proof}

\begin{remark} \label{not subalgebra}
We have an injection $\iota : \Lambda(\Zp^\times) \hookrightarrow \Lambda(\Zp)$ given by
\[\int_{\Zp} \phi \cdot \iota(\mu) = \int_{\Zp^\times} \phi\big|_{\Zp^\times} \cdot \mu,\]
and as $\mathrm{Res}_{\Zp^\times}\circ \iota$ is the identity on $\Lambda(\Zp^\times)$, we can identify $\Lambda(\Zp^\times)$ with its image as a subset of $\Lambda(\Zp)$. By Corollary \ref{CorollarySupportedZpet}, a measure $\mu \in \Lambda(\Zp)$ lies in $\Lambda(\Zp^\times)$ if and only if $\psi(\mu) = 0$. Whilst we identify $\Lambda(\Zp^\times)$ with a subset of $\Lambda(\Zp)$, it is important to remark that it is \emph{not} a subalgebra. Indeed, convolution of measures on a group $G$ uses the group structure of $G$; for $\Zp^\times$ this is multiplicative, and for $\Zp$ this is additive (cf.\ Remark \ref{RemarkConvolution}). If $\lambda$ and $\mu$ are two measures on $\Zp^\times$, writing $\mu *_{\Zp^\times} \lambda$ for the convolution over $\Zp^\times$, we have
\begin{equation}\label{eq:convolution}
\int_{\zpe} f(x) \cdot (\mu*_{\Zp^\times}\lambda) = \int_{\zpe} \bigg( \int_{\zpe} f(x y) \cdot \mu(x) \bigg) \cdot \lambda(y) 
\end{equation}
\end{remark}

\subsection{Pseudo-measures}\label{sec:pseudo-measures}

The Mahler transform gives a correspondence between $p$-adic measures and $p$-adic analytic functions on the open unit ball (explained in Remark \ref{rem:analytic functions} below). The Riemann zeta function, however, is not analytic everywhere, as it has a simple pole at $s=1$. To reflect this, we also need to be able to handle simple poles on the $p$-adic side. We do this via the theory of pseudo-measures.

\begin{definition}
	Let $G$ be a profinite abelian group, and let $Q(G)$ denote the ring of fractions of the Iwasawa algebra $\Lambda(G)$. A \emph{pseudo-measure} on $G$ is an element $\lambda \in Q(G)$ such that 
	\[
 ([g]-[1])\lambda \in \Lambda(G)
 \]
	for all $g \in G$.  (This is using the natural product on the Iwasawa algebra $\Lambda(G)$, which we recall corresponds to convolution \eqref{eq:convolution} of measures).
\end{definition}

 We first explain how to integrate certain functions against pseudo-measures. Let $\chi : G \to \C_p^\times$ be a non-trivial character, and let $\lambda \in Q(G)$ be a pseudo-measure. Then one can define
\begin{equation}\label{eq:integrate pseudo-measure}
\int_G \chi \cdot \lambda \defeq (\chi(g) - 1)^{-1} \int_G \chi \cdot ([g] - [1]) \lambda, 
\end{equation}
where $g \in G$ is any element such that $\chi(g) \neq 1$. This definition does not depend on the choice of $g \in G$. Indeed one has
\[ (\chi(h) - 1) \int_G \chi \cdot ([g] - [1]) \lambda  = \int_G \chi \cdot ([g] - 1) ([h] - 1) \lambda  = (\chi(g) - 1) \int_G \chi \cdot ([h] - 1) \mu, \] 
where in the first and last equalities we used the convolution product and that $G$ is abelian.

 \begin{remark} To motivate this definition, note that if $\chi : G \to \Cp^\times$ is a non-trivial character, then from the definition of convolution product, the function
\begin{align*}
\Lambda(G) &\longrightarrow \Cp,\\
\mu &\longmapsto \int_G \chi \cdot \mu
\end{align*}
is a ring homomorphism. This induces a unique homomorphism $Q(G) \to \Cp$, which -- when applied to a pseudo-measure -- yields the expression \eqref{eq:integrate pseudo-measure} above.
\end{remark}

We will be most interested in pseudo-measures on $G = \Zp^\times$. The following lemma shows that a pseudo-measure $\mu$ on $\Zp^\times$ is uniquely determined by the values $\int_{\Zp^\times}x^k \cdot\mu$ for $k > 0$.

\begin{lemma}\label{lem:zero divisor}
	\begin{itemize}
		\item[(i)]
		Let $\mu \in \Lambda(\Zp^\times)$ be such that
		\[\int_{\Zp^\times} x^k \cdot\mu = 0\]
		for all $k > 0$. Then $\mu = 0$.
		\item[(ii)] Let $\mu \in \Lambda(\Zp^\times)$ be such that
		\[\int_{\Zp^\times} x^k \cdot\mu \neq 0\]
		for all $k > 0$. Then $\mu$ is not a zero divisor in $\Lambda(\Zp^\times).$
		\item[(iii)]Part (i) holds if, more generally, $\mu$ is a pseudo-measure.
	\end{itemize}
\end{lemma}

\begin{proof}
	(i) Note that the vanishing condition forces the Mahler transform $\mathscr{A}_\mu(T) = \sum_{k \geq 0} \left( \int_{\Zp} {x \choose k} \cdot \mu \right) T^k$ of $\mu$ to be constant, since each non-trivial binomial polynomial is a linear combination of strictly positive powers of $x$. As $\mu$ is a measure on $\Zp^\times$, we also have $\psi(\mathscr{A}_\mu)(T) = 0$ by \eqref{res to Zp}. Since $\psi$ is the identity on constants (using, e.g., \eqref{Eqphipsi}), we deduce that $\mathscr{A}_\mu(T) = 0$, so $\mu = 0$.

	(ii) Suppose there exists a measure $\lambda$ such that $\mu *_{\Zp^\times} \lambda = 0$, where the product is the convolution product on $\Zp^\times$ (cf.\ Remark \ref{not subalgebra}). Then
	\[0 = \int_{\Zp^\times}x^k \cdot (\mu *_{\Zp^\times} \lambda) = \int_{\Zp^\times} \bigg( \int_{\Zp^\times} (xy)^k \cdot \mu(x) \bigg) \cdot \lambda(y) = \bigg(\int_{\Zp^\times}x^k \cdot\mu \bigg) \bigg( \int_{\Zp^\times}x^k \cdot\lambda \bigg),\]
	which forces $\lambda = 0$ by part (i). In particular $\mu$ is not a zero divisor.

	(iii) Let $\mu$ be a pseudo-measure satisfying the vanishing condition. Let $a \neq 1$ be an integer prime to $p$; then  $\lambda = ([a] - [1])\mu \in \Lambda(\Zp^\times)$ is a measure by the definition of pseudo-measure, and by \eqref{eq:integrate pseudo-measure} we have
	\[
	\int_{\Zp^\times} x^k \cdot \lambda = (a^k-1) \int_{\Zp^\times} x^k \cdot \mu = 0
	\]
	for all $k > 0$. By part (i), we have $\lambda = 0$. But $[a] - [1]$ satisfies the condition of part (ii), so it is not a zero-divisor, and this forces $\mu = 0$, as required.
\end{proof}

Finally, we give a simpler process for writing down pseudo-measures on $\Zp^\times$. 

\begin{definition} \label{DefAugmentationIdealFiniteLevel}
	The \emph{augmentation ideal} $I((\Zp/p^n)^\times) \subset \roi_L[(\Zp/p^n)^\times]$ is the kernel of the natural `degree' map 
 \[
    \mathscr{O}_L[(\Z/p^n\Z)^\times] \to \mathscr{O}_L, \qquad \sum_a c_a[a] \mapsto \sum_a c_a.
\]
These fit together into a degree map $\Lambda(\Zp^\times) \to \roi_L$; we call its kernel the augmentation ideal $I(\Zp^\times) \subset \Lambda(\Zp^\times)$. One may check directly there is an isomorphism
\[
I(\Zp^\times) \cong \varprojlim I((\Zp/p^n)^\times).
\]
\end{definition}

\begin{lemma}\label{lem:pseudo-measure existence}
	Let $a$ be any topological generator of $\Zp^\times$ (for example, take $a$ to be a primitive root modulo $p$ such that $a^{p - 1} \not\equiv 1 \; (mod \; p^2)$), and $\mu \in \Lambda(\Zp^\times)$ a measure. Then 
	\[
		\mu' \defeq \frac{\mu}{[a]-[1]} \in Q(\Zp^\times)
	\]
	is a pseudo-measure.
\end{lemma}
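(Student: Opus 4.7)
To show $\mu'$ is a pseudo-measure we must verify that $([g]-[1])\mu' \in \Lambda(\Zp^\times)$ for every $g \in \Zp^\times$. Equivalently, it suffices to show that $[g]-[1]$ lies in the principal ideal $([a]-[1])\Lambda(\Zp^\times)$: any witness $[g]-[1] = ([a]-[1])\nu_g$ with $\nu_g \in \Lambda(\Zp^\times)$ yields $([g]-[1])\mu' = \nu_g \mu \in \Lambda(\Zp^\times)$. My plan is therefore to prove this ideal-membership statement, in two steps: first for integer powers of $a$ via an elementary polynomial identity, then for all of $\Zp^\times$ by a density-and-closedness argument.

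The first step treats integer powers of $a$. For $n \geq 0$ the telescoping identity
\[ [a^n] - [1] = ([a]-[1])\bigl(1 + [a] + [a^2] + \cdots + [a^{n-1}]\bigr) \]
in the group ring gives divisibility directly, and for $n < 0$ the trick $[a^n]-[1] = -[a^n]([a^{-n}]-[1])$ reduces to the positive case. Thus $\{a^n : n \in \Z\}$ is contained in the set $S \defeq \{g \in \Zp^\times : [g]-[1] \in ([a]-[1])\Lambda(\Zp^\times)\}$.

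The second step extends the membership to all of $\Zp^\times$ by density. The map $g \mapsto [g]-[1]$ from $\Zp^\times$ into $\Lambda(\Zp^\times) = \varprojlim_H \roi_L[\Zp^\times/H]$ is continuous by construction of the Iwasawa algebra, since on each finite quotient it is locally constant. Moreover $\Lambda(\Zp^\times)$ is a compact Hausdorff topological ring and multiplication by $[a]-[1]$ is continuous, so the image $([a]-[1])\Lambda(\Zp^\times)$ is the continuous image of a compact set in a Hausdorff space and is therefore closed. Consequently $S$ is closed in $\Zp^\times$; since it contains the dense subset $\{a^n\}_{n \in \Z}$ (density being exactly the statement that $a$ is a topological generator), we conclude $S = \Zp^\times$.

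The main subtle point is the closedness of the ideal $([a]-[1])\Lambda(\Zp^\times)$; once this is secured via compactness of the Iwasawa algebra, the argument reduces to the elementary identity of Step 1 together with a density argument, and no finer analysis (such as a structural decomposition of $\Lambda(\Zp^\times)$ into character components, or a zero-divisor analysis for $[a]-[1]$) is required to finish the proof.
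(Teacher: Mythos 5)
Your proof is correct, but it takes a genuinely different route from the one in the paper. The paper's proof works at each finite level: since $p$ is odd, $(\Zp/p^n)^\times$ is cyclic generated by $\overline{a}$, so the finite-level augmentation ideal is the principal ideal $([\overline{a}]-[1])\roi_L[(\Zp/p^n)^\times]$ (by exactly your telescoping identity restricted to the finite group); then the paper passes to the inverse limit to conclude $I(\Zp^\times) = ([a]-[1])\Lambda(\Zp^\times)$, and observes that every $[g]-[1]$ lies in the augmentation ideal. You instead verify the divisibility directly for the countable dense set $\{a^n : n\in\Z\}$ and then extend by continuity, using compactness of $\Lambda(\Zp^\times)$ to secure closedness of the principal ideal $([a]-[1])\Lambda(\Zp^\times)$. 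The two arguments are close in spirit -- the paper's step ``in the inverse limit we see that $I(\Zp^\times) = ([a]-[1])\Lambda(\Zp^\times)$'' is not purely formal (an inverse limit of finite-level principal ideals need not be principal in general) and is in effect justified by the same compactness/closedness phenomenon you make explicit. Your version has the virtue of surfacing this topological input and of bypassing the augmentation-ideal bookkeeping and the explicit appeal to cyclicity of the finite quotients; the paper's version is a bit terser and hands you the stronger structural fact $I(\Zp^\times) = ([a]-[1])\Lambda(\Zp^\times)$ as a reusable byproduct.
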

\begin{proof}
As $p$ is odd, $(\Zp/p^n)^\times$ is cyclic, generated by $\overline{a} \defeq a\newmod{p^n}$, and we have
\[
I((\Zp/p^n)^\times) = ([\overline{a}] - [\overline{1}])\roi_L[(\Zp/p^n)^\times].
\]
In the inverse limit we see that
\[
I(\zp^\times) = ([a]-[1])\Lambda(\Zp^\times).
\]
Thus if $g \in \zp^\times$, we have $[g]-[1] \in I(\Zp^\times)$, and we must have
\[
[g]-[1] = \nu([a]-[1])
\]
for some $\nu \in \Lambda(\zp^\times)$. Then 
\[
([g]-[1])\mu' = \nu([a]-[1])\mu' = \nu \cdot \mu \in \Lambda(\zp^\times),
\]
that is, $\mu'$ is a pseudo-measure.
\end{proof}

Note moreover that \emph{all} pseudo-measures have this shape. Indeed, let $\mu'$ be a pseudo-measure, and $a \in \Zp^\times$ a topological generator; then $\mu = ([a]-[1])\mu'$ is a measure, and $\mu' = \mu/([a]-[1])$ as above.

\subsection{Locally analytic functions and distributions}
\label{sec:locally analytic}

We finally introduce another important space of functions and its dual, namely \emph{locally analytic functions} and \emph{locally analytic distributions}. This subsection may be safely skipped on a first reading, and indeed we make only peripheral use of its content in these notes (in \S\ref{sec:p-adic s=1} and \S\ref{sec:residue}, to study values of the $p$-adic zeta function). The locally analytic theory is nonetheless of fundamental importance in more general settings, so we include a sketch here, and indicate how it dovetails beautifully with the theory of measures we have already studied. All of this -- and other related theories in $p$-adic functional analysis -- are described in detail in \cite{Colm10}.

As motivation, we first note the following.

\begin{remark}\label{rem:analytic functions}
The space $\sM(\Zp,L)$ of measures has an interpretation via rigid analysis. To explain this, consider the $p$-adic open unit ball in $\Cp$, i.e.\ the space
\[
B(0, 1) = \{ z \in \cp \, | \, |z| < 1 \} \subset \Cp.
\]
This is the set of $\Cp$-points of a rigid analytic space in the sense of \cite{BGR}. An $L$-valued function on $B(0,1)$ is \emph{rigid analytic} if it can be written as a power series $\sum_{n \geq 0}a_n T^n \in L\lsem T\rsem$ that is everywhere convergent on $B(0,1)$ (i.e.\ $|a_n|r^n \to 0$ as $n \to \infty$ for all $r < 1$); write $\mathscr{R}^+ \subset L\lsem T\rsem$ for the space of such functions. A rigid analytic function is \emph{bounded} if the $|a_i|$ are bounded. 

Note that  the space of bounded $L$-valued rigid analytic functions is $\roi_L\lsem T\rsem  \otimes_{\roi_L} L$, which, via the Mahler transform (Remark \ref{integ}), is isomorphic to $\sM(\Zp,L)$. Hence $p$-adic measures on $\Zp$ can be viewed as bounded rigid analytic functions on $B(0, 1)$.
\end{remark}

It is natural to ask if the Mahler correspondence, as studied in Theorem \ref{thm:mahler}, can be extended from $\roi_L\lsem T\rsem$ to all of $\mathscr{R}^+$. Such an extension is given by locally analytic distributions, in the sense described in Theorem \ref{thm:mahler la} and equation \eqref{eq:extend mahler} below.

\begin{definition} \label{DefLocAn}
Let $L/\Qp$ be a finite extension, and let $f : \Zp \to L$
be a function. 
\begin{enumerate}
\item For $z \in \Zp$, we say $f$ is \emph{locally analytic at $z$} if $f$ can be described locally around $z$ by a convergent power series. Precisely, this means there exists some integer $n_z \geq 0$ and numbers $\{a_k(z) \in L : k\geq 0\}$, such that 
\[
    \sum_{k\geq 0}a_k(z) \cdot (x-z)^k
\]
converges to $f(x)$ for all $x \in U_z \defeq z + p^{n_z} \zp$.

\item We say $f$ is \emph{locally analytic} if it is locally analytic at all $z \in \Zp$.

\item We write $\sC^{\mathrm{la}}(\Zp,L)$ for the $L$-vector space of all locally analytic functions $\Zp \to L$.
\end{enumerate}
\end{definition}

Recall that $\sC(\Zp,L)$ can be equipped with a valuation that makes it into  an $L$-Banach space, and that the space of measures $\sM(\Zp,L)$ was defined as its continuous dual. Analogously, the space $\sC^{\mathrm{la}}(\Zp,L)$ has a natural topology, described as follows (see \cite[\S I.4]{Colm10} for more details). For each $n \in \N$, we say a function $f : \zp \to L$ is \emph{locally analytic of radius $p^{-n}$} if it is locally analytic and moreover we can take $n_z = n$ for all $z \in \Zp$; in other words, it is analytic (described by a single power series) on each open set of form $z + p^n\Zp$. Denote by $\mathscr{C}^{n-\mathrm{an}}(\zp, L)$ the subspace of such functions. Then one can check that $\mathscr{C}^{n-\mathrm{an}}(\zp, L)$ is an $L$-Banach space with valuation given by 
\[
    v_n(f) \defeq \inf_{z \in (\zp / p^n \zp)} \inf_{k \in \N} \Big(nk + v_p(a_k(z))\Big),
\]
or equivalently norm given by 
\[
    ||f||_n \defeq \sup_{z \in (\zp / p^n \zp)} \sup_{k \in \N} |a_k(z)| p^{-nk}.
\]
Moreover, almost by definition, we have that $\mathscr{C}^{\mathrm{la}}(\zp, L) = \varinjlim_{n \in \N} \mathscr{C}^{n-\mathrm{an}}(\zp, L)$, and so it inherits a natural topology given by the direct limit topology. 

\begin{remark} 
Locally analytic functions are continuous, so $\sC^{\mathrm{la}}(\Zp,L) \subset \sC(\Zp,L)$. Note, however, that the topology we just defined on $\sC^{\mathrm{la}}(\zp,L)$ is \emph{not} the one induced from $\sC(\zp,L)$. Nonetheless the image of this inclusion is dense, as, for example, locally constant functions are locally analytic functions and are dense in $\mathscr{C}(\zp, L)$. 
\end{remark}

Analogously to Definition \ref{def:measures}, we make the following definition.

\begin{definition}
We define the space  $\sD^{\mathrm{la}}(\Zp,L)$ of locally analytic distributions on $\zp$ to be the continuous dual $\mathrm{Hom}_{\mathrm{cts}}(\sC^{\mathrm{la}}(\Zp,L),L)$.
\end{definition}

If $\mu$ is a locally analytic distribution on $\Zp$, and $\phi \in \sC^{\mathrm{la}}(\Zp,L)$, we continue to write 
\[
    \int_{\Zp} \phi(x) \cdot \mu(x) \defeq \mu(\phi).
\]
The binomial polynomials ${x \choose n}$ are visibly locally analytic, so we may also extend the Mahler transform to this generality; namely, if $\mu \in \sD^{\mathrm{la}}(\Zp,L)$, define
\[
\sA_{\mu}(T) \defeq \int_{\zp}(1+T)^x \cdot \mu(x) = \sum_{n \geq 0} \left(\int_{\Zp}{x \choose n}\cdot \mu\right) T^n \in L\lsem T\rsem.
\]

The following crucial result provides the desired extension of the Mahler transform beyond bounded measures/power series.

\begin{theorem}\label{thm:mahler la}
The Mahler transform induces a bijection
\[
    \sD^{\mathrm{la}}(\Zp,L) \longrightarrow \mathscr{R}^+ \subset L\lsem T \rsem.
\]
\end{theorem}
\begin{proof}
This is \cite[Thm.\ II.2.2]{Colm10}.
\end{proof}

As with Theorem \ref{thm:mahler}, the theorem says more: the bijection respects natural topologies on both sides. Here the topologies are as follows:
\begin{itemize}
\item $\sD^{\mathrm{la}}(\Zp,L)$ is the inverse limit of the continuous duals $\sD^{n-\mathrm{an}}(\Zp,L)$; each of these is a Banach space with the natural dual (strong) topology (see Remark \ref{rem:strong topology}). This endows $\sD^{\mathrm{la}}(\zp,L)$ with the corresponding inverse limit topology. 
\item As the open unit disc is the union of the closed discs $ B(0, r) \defeq \{ z \in \Cp : |z| \leq r \}$ of radius $r < 1$, we can write $\mathscr{R}^{+}$ as the inverse limit over $r < 1$ of the Banach spaces $\mathcal{O}(B(0, r))$ of analytic functions on $B(0, r)$. Again we get an inverse limit topology on $\mathscr{R}^+$. 
\end{itemize}
A topology induced from an inverse limit of Banach spaces is called a \emph{Fr\'echet topology}. The Mahler transform is then an isomorphism of Fr\'echet spaces.

\begin{remark}
If $\mu \in \sM(\Zp,L)$ is any measure, then we obtain a locally analytic distribution $\widetilde{\mu} \in \sD^{\mathrm{la}}(\Zp,L)$ by restricting to $\sC^{\mathrm{la}}(\Zp,L) \subset \sC(\Zp,L)$. As locally analytic functions are dense inside continuous functions, the association $\mu \mapsto \widetilde{\mu}$ is injective, and hence this identification allows us to consider $\sM(\Zp,L) \subset \sD^{\mathrm{la}}$ as a subset. The combination of Theorems \ref{thm:mahler} and \ref{thm:mahler la} says that this inclusion is compatible with the natural inclusion of bounded power series inside power series converging in the open unit disc, that is, the following diagram commutes:
\begin{equation}\label{eq:extend mahler}
\xymatrix@C=20mm@R=6mm{
\mathscr{D}^{\mathrm{la}}(\zp,L)\sar{d}{\supset} \ar[r]^{\mu \longmapsto \sA_\mu} & \mathscr{R}^+\sar{d}{\supset}\\
\sM(\Zp,L) \ar[r]^{\mu \longmapsto \sA_\mu} & \roi_L\lsem T\rsem \otimes_{\roi_L} L.
}
\end{equation}
\end{remark}

\begin{remark} \label{rem:distribution toolbox}
Observe that every part of our `measure-theoretic toolbox' from \S\ref{sec:toolbox}, including corresponding operations on Mahler transforms, carries over identically to the setting of locally analytic distributions.
\end{remark}
\begin{remark}
Locally analytic $p$-adic analysis is fundamentally important in the study of $p$-adic $L$-functions (and in many other areas, such as in the study of $p$-adic automorphic forms, in the $p$-adic Langlands correspondence, in $p$-adic Hodge theory, etc.). Indeed, more general $p$-adic $L$-functions -- for example, those attached to elliptic curves and modular forms -- are frequently not measures or pseudo-measures, but rather locally analytic distributions with prescribed growth, in the sense of \cite{AV75}. We discuss this further in Appendix \ref{sec:modular forms}.
\end{remark}

\subsection{Further remarks}

	The following remarks will not be seriously used in the sequel, but are included for completeness, and to illustrate some other ways that the objects studied in this chapter appear in the literature.

\begin{remark}\label{rem:weight space rigid analytic}
We have an analogue of Remark \ref{rem:analytic functions} for measures on $\Zp^\times$, using instead the multiplicative structure. The key object here is the \emph{weight space} 
 \[\mathcal{W}(\Cp) = \mathrm{Hom}_{\mathrm{cts}}(\Zp^\times,\Cp^\times). \]
 Since $\zpe = \mu_{p - 1} \times (1 + p \zp)$, we have 
 \[
 \Homc(\Zp^\times,\Cp^\times) \cong \Homc(\mu_{p-1},\Cp^\times) \times \Homc(1+p\Zp, \Cp^\times).
 \]
Moreover, evaluation at a topological generator of $1+p\Zp$ identifies $\Homc(1+p\zp,\Cp^\times)$ with $B(0,1)$ from above. Hence we may identify $\Homc(\Zp^\times,\Cp^\times)$ with $p-1$ copies of $B(0,1)$. Summarising,
\[
\cW(\Cp)  = \bigsqcup_{\nu \in (\mu_{p-1})^\vee} U_\nu,
\]
where:
\begin{itemize}
    \item $\nu$ ranges over the $p-1$ different characters of $\mu_{p-1}$.
    \item $U_\nu \subset \cW(\Cp)$ is the subset of characters $\chi$ of $\Zp^\times$ with $\chi|_{\mu_{p-1}} = \nu$.
    \item Each $U_\nu$ may be identified with $B(0,1)$.  
\end{itemize}

 This space can also be given more structure; there is a rigid analytic space $\mathcal{W}$ such that the elements of $\mathcal{W}(\Cp)$ are the $\Cp$-points of $\mathcal{W}$. Analogously to above, a theorem of Amice says that giving a measure $\mu$ on $\Zp^\times$ is equivalent to giving a bounded rigid analytic function $F_\mu$ on $\mathcal{W}$. This equivalence is given as follows: if $\mu$ is a measure on $\zpe$ and $\chi : \zpe \to \C_p^\times$ is a character (seen as a point on $\mathcal{W}(\Cp)$), then one defines $F_\mu(\chi) \defeq \int_{\zpe} \chi \cdot \mu$. Observe that the multiplicative convolution product corresponds to pointwise multiplication of rigid functions. 
 
 Finally, if $\lambda \in Q(\zpe)$ is a pseudo-measure, then it is of the form $\frac{\mu}{([a] - [1])}$ for some topological generator $a \in \zpe$ and some measure $\mu \in \Lambda(\zpe)$. Note $\int_{\Zp^\times} \chi \cdot ([a]-[1]) = 0$ if and only if $\chi(a) = 1$, which -- as $a$ is a topological generator of $\Z_p^\times$ -- implies $\chi$ is the trivial character. Hence, as a function on the weight space, $\lambda$ might have a simple pole at the trivial character. So pseudo-measures can be seen as rigid analytic functions on the weight space that possibly have a simple pole at the trivial character.
\end{remark}

\begin{remark}
Power series rings have been generalised to what now are called Fontaine rings. It turns out that Galois representations are connected to certain modules over these rings called $(\varphi, \Gamma)$-modules. The operations described above generalise to fundamental operations on $(\varphi,\Gamma)$-modules, and their interpretation via $p$-adic analysis inspired the proof of the $p$-adic Langlands correspondence for $\mathrm{GL}_2(\qp)$ (see \cite{Colm10b}).
\end{remark}


\section{The Kubota-Leopoldt $p$-adic $L$-function}\label{kl}

In this section, we prove the following:

\begin{theorem}\label{thm:kubota leopoldt theorem}
	There is a unique pseudo-measure $\zeta_p$ on $\Zp^\times$ such that, for all $k > 0$, we have
	\[\int_{\Zp^\times}x^k \cdot\zeta_p = (1-p^{k-1})\zeta(1-k).\]
\end{theorem}

This pseudo-measure, denoted $\zeta_p^{\mathrm{an}}$ in \S\ref{sec:intro}, is the \emph{Kubota--Leopoldt $p$-adic $L$-function}.

\subsection{The measure $\mu_a$} \label{KTconst}

Recall from Lemma \ref{lem:FormulaZeta} that we can write the Riemann zeta function in the form
\[(s-1)\zeta(s) = L(f, s-1) \defeq \frac{1}{\Gamma(s-1)}\int_0^\infty f(t)t^{s-2}dt,\]
where $f(t) = t/(e^t-1)$, and that $\zeta(-k) = (d^kf/dt^k)(0) = (-1)^k B_{k+1}/(k+1).$ We want to remove the smoothing factor at $s=1$. For this, let $a$ be an integer coprime to $p$ and consider the related function
\[f_a(t) = \frac{1}{e^t-1} - \frac{a}{e^{at}-1}.\]
This is also $\mathscr{C}^\infty$ and rapidly decreasing, so we can apply Theorem \ref{thm:l-function} and consider the function $L(f_a,s)$. The presence of $a$ removes the factor of $s-1$, at the cost of introducing a different smoothing factor.

\begin{lemma}\label{lem:values of zeta}
We have
\[L(f_a,s) =(1-a^{1-s})\zeta(s),\]
which has an analytic continuation to $\C$, and
\[f_a^{(k)}(0) = (-1)^k(1-a^{1+k})\zeta(-k).\]
\end{lemma}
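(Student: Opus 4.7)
The plan is to apply Theorem \ref{l-function} to $f_a$, so the first step is to check that $f_a$ qualifies. Exponential decay of $f_a$ and all its derivatives at infinity follows from the same geometric-series argument used for $f$. The subtlety is at $t=0$: both $\tfrac{1}{e^t-1}$ and $\tfrac{a}{e^{at}-1}$ have simple poles there, but their Laurent expansions start with $\tfrac1t + O(1)$ and $\tfrac{a}{at} + O(1) = \tfrac1t + O(1)$ respectively, so the poles cancel and $f_a$ extends to a $\mathscr{C}^\infty$ function on $[0,\infty)$. Thus $L(f_a,s)$ is defined and entire by Theorem \ref{l-function}.

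Next I would establish the identity $L(f_a,s) = (1-a^{1-s})\zeta(s)$ in the region $\mathrm{Re}(s)>1$, where we may work term by term. Splitting the Mellin transform gives
\[
L(f_a,s) = \frac{1}{\Gamma(s)}\int_0^\infty \frac{t^{s-1}}{e^t-1}\,dt \;-\; \frac{a}{\Gamma(s)}\int_0^\infty \frac{t^{s-1}}{e^{at}-1}\,dt.
\]
The first integral equals $\zeta(s)$ by the argument already used in the proof of Lemma \ref{LemmaFormulaZeta}. In the second, the substitution $u=at$ yields a factor of $a^{-s}$ from $t^{s-1}dt$ and $a$ from the prefactor, producing $a^{1-s}\zeta(s)$. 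Subtracting gives the claimed formula on $\mathrm{Re}(s)>1$, and since both sides are meromorphic on $\C$ (the left entire, the right a priori meromorphic), the identity extends by analytic continuation to all of $\C$.

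Finally, for the value at negative integers, I would simply combine this identity with the formula $L(f_a,-k) = (-1)^k f_a^{(k)}(0)$ from Theorem \ref{l-function}, evaluated at $s=-k$:
\[
(-1)^k f_a^{(k)}(0) \;=\; L(f_a,-k) \;=\; (1 - a^{1-(-k)})\zeta(-k) \;=\; (1-a^{1+k})\zeta(-k),
\]
which rearranges to the stated formula. The only genuinely delicate point in the whole argument is verifying the smoothness of $f_a$ at $0$; everything else is a routine manipulation of the Mellin integral together with analytic continuation.
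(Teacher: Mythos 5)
Your proof is correct and follows essentially the same route the paper intends: the paper's proof of Lemma \ref{lem:values of zeta} is just the one-line remark that the calculation is as in Lemma \ref{LemmaFormulaZeta}, namely expanding the Mellin integral against $\zeta$ on $\mathrm{Re}(s)>1$ and continuing analytically, and your substitution $u=at$ in the second integral is an efficient repackaging of the same geometric-series manipulation. The only point you make explicit that the paper elides is the cancellation of the simple poles of $\tfrac1{e^t-1}$ and $\tfrac{a}{e^{at}-1}$ at $t=0$, which is a worthwhile check for applying Theorem \ref{l-function} but does not change the argument.
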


\begin{proof}
This follows from calculations similar to those in the proof of Lemma \ref{lem:FormulaZeta}.
\end{proof}

We now introduce the $p$-adic tools we have developed into the picture. We will start with the function $f_a(t)$, and slowly manipulate it until we construct a (pseudo-)measure with the desired interpolation properties. Note first the following very simple observation.

\begin{lemma}\label{lem:define F_a}
Under the substitution $e^t = T+1$, the derivative $d/dt$ becomes the operator $\partial = (1+T)\frac{d}{dT}$. In particular, if we define
\[ F_a(T) \defeq \frac{1}{T} - \frac{a}{(1+T)^a - 1},\]
we have
\begin{equation}\label{eq:riemann mahler} 
f_a^{(k)}(0) = \big( \partial^k F_a \big)(0).
\end{equation}
\end{lemma}

The left-hand side of \eqref{eq:riemann mahler} computes the $L$-value $\zeta(-k)$ by Lemma \ref{lem:values of zeta}. The right-hand side is similar to Corollary \ref{cor:eval at x^k}, which expressed the integral $\int_{\Zp} x^k \cdot \mu$ in terms of the Mahler transform $\mathscr{A}_\mu$. This motivates us to seek a measure $\mu_a$ with $\mathscr{A}_{\mu_a} = F_a$. This is possible by: 

\begin{proposition} \label{PropFaT}
The function $F_a(T)$ is an element of $\Zp\lsem T\rsem $.
\end{proposition}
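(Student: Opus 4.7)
The claim is that the two apparently singular summands making up $F_a(T)$ combine to cancel the pole at $T=0$, and moreover that the resulting power series is $p$-integral precisely because $a$ is coprime to $p$. My plan is to exhibit this cancellation by factoring out the offending $T$ explicitly, and then to use the coprimality hypothesis to invert what remains.

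First I would expand
$$(1+T)^a - 1 = \sum_{n \geq 1}\binom{a}{n} T^n = T \cdot g(T), \qquad g(T) := a + \tbinom{a}{2}T + \tbinom{a}{3}T^2 + \cdots.$$
Since $a \in \Z$, each $\binom{a}{n}$ lies in $\Z$, so $g(T) \in \Zp\lsem T \rsem$. Here is where the hypothesis $(a,p) = 1$ does its essential work: the constant term of $g$ is $a$, which is a unit in $\Zp$, and any element of $\Zp\lsem T\rsem$ with unit constant term is invertible. Hence $g(T)^{-1} \in \Zp\lsem T\rsem$.

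Next I would rewrite, over the fraction field of $\Zp\lsem T\rsem$,
$$F_a(T) = \frac{1}{T} - \frac{a}{T \cdot g(T)} = \frac{g(T) - a}{T \cdot g(T)},$$
and observe that $g(T) - a = \tbinom{a}{2}T + \tbinom{a}{3}T^2 + \cdots$ is visibly divisible by $T$ in $\Zp\lsem T\rsem$. Writing $g(T) - a = T \cdot h(T)$ with $h \in \Zp\lsem T\rsem$, the $T$'s cancel and we obtain $F_a(T) = h(T) \cdot g(T)^{-1} \in \Zp\lsem T\rsem$, as required.

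There is no substantive obstacle here; the argument is a short formal manipulation. The only conceptual point worth emphasising is that the coprimality hypothesis is genuinely needed: without $a \in \Zp^\times$, the constant term of $g$ would lie in $p\Zp$, so $g$ would no longer be invertible in $\Zp\lsem T\rsem$ and $F_a$ could acquire arbitrarily large $p$-denominators. This is also why the smoothing factor $a$ was introduced in Lemma \ref{lem:values of zeta}: it simultaneously removes the pole of $\zeta(s)$ at $s=1$ on the analytic side and produces a $p$-integral power series on the $p$-adic side, from which the measure $\mu_a$ can be built via the Mahler transform.
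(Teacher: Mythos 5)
Your proof is correct and follows essentially the same route as the paper's: both factor $(1+T)^a-1$ as $T$ times a power series with constant term $a\in\Zp^\times$ and then use that coprimality to invert. The paper makes the inverse explicit via a geometric series expansion, while you appeal to the general fact that a power series in $\Zp\lsem T\rsem$ with unit constant term is invertible; this is a cosmetic difference only.
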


\begin{proof}
We can expand 
\[ (1+T)^a - 1 = \sum_{n\geq 1} {a \choose n} T^n = aT\big[1+Tg(T)\big], \]
where $g(T) = \sum_{n\geq 2}\frac{1}{a} {a \choose n} T^{n-2}$ has coefficients in $\zp$ since we have chosen $a$ coprime to $p$. Hence, expanding the geometric series, we find
\[\frac{1}{T} - \frac{a}{(1+T)^a - 1} = \frac{1}{T} \sum_{n \geq 1}(-T)^n g(T)^n,\]
which is visibly an element of $\Zp\lsem T\rsem $.
\end{proof}

\begin{definition} \label{DefinitionMeasuremua}
Let $\mu_a$ be the measure on $\Zp$ whose Mahler transform is $F_a(T)$.
\end{definition}

\begin{proposition}
For $k\geq 0$, we have
\[\int_{\Zp}x^k \cdot\mu_a = (-1)^k(1-a^{k+1}) \zeta(-k).\]
\end{proposition}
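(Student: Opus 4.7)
The proposition follows by chaining together the three results built up over the preceding page, so the proof should be essentially a bookkeeping exercise rather than a genuinely new computation. The plan is as follows.

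First, I would apply Corollary \ref{eval at x^k} to the measure $\mu_a$. Since $\mu_a$ is defined (Definition \ref{DefinitionMeasuremua}) to be the measure whose Mahler transform is $F_a$, this gives
\[
\int_{\Zp} x^k \cdot \mu_a \;=\; \bigl(\partial^k F_a\bigr)(0),
\]
where $\partial = (1+T)\tfrac{d}{dT}$.

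Second, I would invoke Lemma \ref{define F_a}, which asserts precisely the change-of-variables identity $\bigl(\partial^k F_a\bigr)(0) = f_a^{(k)}(0)$, translating the Mahler-side evaluation into a derivative of the original archimedean generating function $f_a(t) = 1/(e^t - 1) - a/(e^{at} - 1)$ at $t = 0$.

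Finally, I would quote Lemma \ref{lem:values of zeta}, which evaluates $f_a^{(k)}(0) = (-1)^k(1 - a^{k+1})\zeta(-k)$. Concatenating the three identities yields the desired formula. There is no real obstacle here: all the work is already done, in the construction of $F_a$ as an element of $\Zp\lsem T\rsem $ (which is what allowed us to legitimately define $\mu_a$ in the first place) and in the analytic continuation of $L(f_a,s)$. The proposition is simply the statement that the $p$-adic object $\mu_a$, manufactured to have the right Mahler transform, automatically interpolates the smoothed special values of $\zeta$, with the smoothing factor $(1-a^{k+1})$ arising from the pole-killing auxiliary term in $f_a$.
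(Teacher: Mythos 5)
Your proof is correct and matches the paper's intent exactly: the paper prefaces this proposition with the words ``We have proved:'' and supplies no further argument, because it is precisely the concatenation of Corollary \ref{eval at x^k}, equation \eqref{eq:riemann mahler} from Lemma \ref{define F_a}, and Lemma \ref{lem:values of zeta} that you describe. Nothing to add.
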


\begin{proof}
By Corollary \ref{cor:eval at x^k}, the left-hand side is $(\partial^k\sA_{\mu_a})(0)$. By definition of $\mu_a$ and Lemma \ref{lem:define F_a} this is $(\partial^kF_a)(0) = f_a^{(k)}(0)$. This equals the right-hand side by Lemma \ref{lem:values of zeta}.
\end{proof}

\subsection{Restriction to $\Zp^\times$}

Recall from the introduction that we want the $p$-adic analogue of the Riemann zeta function to be a measure on $\Zp^\times$, not all of $\Zp$. We have already defined a restriction operator in equation (\ref{res to Zp}), which on Mahler transforms acts as $1 - \varphi\circ\psi$.  We begin with a short but important property of the measure $\mu_a$.

\begin{lemma} \label{LemmaPsiInvariant}
We have $\psi(\mu_a) = \mu_a$.
\end{lemma}

\begin{proof}
We show the result by considering the action on power series. We wish to show $\psi(F_a) = F_a$. First note that $F_a(T) = \frac{1}{T} - a \cdot\sigma_a(\frac{1}{T})$, for $\sigma_a$ as in \S\ref{SubSectionphipsi}. As $\psi$ commutes with $\sigma_a$, we have $\psi(F_a) = \psi(\frac{1}{T}) - a\cdot \sigma_a\psi(\frac{1}{T})$, so it suffices to show $\psi(\frac{1}{T}) = \frac{1}{T}$.

By definition (cf.\ equation \eqref{Eqphipsi}) we have 
\begin{align*} (\varphi \circ \psi) \left(\frac{1}{T}\right) &= p^{-1} \sum_{\xi \in \mu_p} \frac{1}{ (1 + T) \xi - 1} = \frac{1}{(1 + T)^p - 1} = \varphi\left(\frac{1}{T}\right), 
\end{align*}
as can be seen by calculating the partial fraction expansion. By injectivity of $\varphi$, we deduce that $\psi(\frac{1}{T}) = \frac{1}{T}$, and conclude.
\end{proof}

\begin{proposition} \label{PropInterpolation1}
We have
\begin{equation}\label{eq:first interpolation}
	\int_{\Zp^\times}x^k \cdot\mu_a = (-1)^k(1-p^k)(1-a^{k+1})\zeta(-k).
\end{equation}
(In other words, restricting to $\Zp^\times$ removes the Euler factor at $p$).
\end{proposition}

\begin{proof}
Since $\mathrm{Res}_{\zpe} = 1 - \varphi \circ \psi$, we deduce that
\[\int_{\zpe} x^k \cdot\mu_a = \int_{\Zp} x^k \cdot (1 - \varphi \circ \psi) \mu_a = \int_{\Zp} x^k \cdot (1 - \varphi)\mu_a = (1 - p^k) \int_{\Zp} x^k \cdot \mu_a,\]
where for the second equality we have used Lemma \ref{LemmaPsiInvariant}. This finishes the proof.
\end{proof}


\subsection{Rescaling and removing dependence on $a$}\label{sec:dep on a}

Finally we remove the dependence on $a$. Thus far, the presence of $a$ has acted as a `smoothing factor' which removes the pole of the Riemann zeta function; so to remove it, we must be able to handle such poles on the $p$-adic side. We use the notion of pseudo-measures from \S\ref{sec:pseudo-measures}.

\begin{definition}
Let $a$ be an integer that is prime to $p$, and let $\theta_{a}$ denote the element of $\Lambda(\Zp^\times)$ corresponding to $[a] - [1]$. Note that, by definition, we have
\[
\int_{\Zp^\times} x^k \cdot\theta_{a} = a^k - 1
\]
However, in \eqref{eq:first interpolation} it is $a^{k+1} -1$ that appears. To bridge this gap, note that on $\Zp^\times$, we have a well-defined operation `multiplication by $x^{-1}$' given by
\begin{equation}\label{eq:mult by xinverse}
	\int_{\Zp^\times} f(x) \cdot x^{-1}\mu \defeq \int_{\Zp^\times} x^{-1}f(x) \cdot \mu,
\end{equation}
and that 
\[
	\int_{\Zp^\times} x^k \cdot x^{-1} \mu_a = (-1)^k(a^k-1)(1-p^{k-1})\zeta(1-k).
\]
We comment further on this  multiplication by $x^{-1}$ in Remark \ref{rem:renormalise}. 

\begin{definition} \label{DefZetap}
Let $a$ be a topological generator of $\zpe$. The \emph{$p$-adic zeta function} is
\[
	\zeta_p \defeq \frac{x^{-1}\mathrm{Res}_{\Zp^\times}\mu_a}{\theta_a} \in Q(\Zp^\times).
\]
\end{definition}
\end{definition}

\begin{proposition} \label{PropInterpolation2}
The element $\zeta_p$ is a well-defined pseudo-measure satisfying
\[
\int_{\Zp^\times} x^k \cdot \zeta_p = (1-p^{k-1})\zeta(1-k) \qquad\qquad \text{for all }k > 0.
\]
\end{proposition}

\begin{proof}
We see $\zeta_p$ is a pseudo-measure by Lemma \ref{lem:pseudo-measure existence}. It is independent of the choice of $a$ by Lemma \ref{lem:zero divisor}(iii).

Using Equation \eqref{eq:integrate pseudo-measure} (to integrate the pseudo-measure) and Proposition \ref{PropInterpolation1}, we obtain the interpolation property
 \[
\int_{\Zp^\times} x^k \cdot \zeta_p = (-1)^k(1-p^{k-1})\zeta(1-k).
\]
To get the result, we may remove the $(-1)^{k}$ as $\zeta(1-k) \neq 0$ if and only if $k$ is even.   
\end{proof}

We finally prove Theorem \ref{thm:kubota leopoldt theorem}. Existence of the pseudo-measure is Proposition \ref{PropInterpolation2}. To conclude the proof we need only show uniqueness; but this follows from  Lemma \ref{lem:zero divisor}(iii). \qed

\section{Interpolation at Dirichlet characters}\label{interpolation}

In our study of the Kubota--Leopoldt $p$-adic $L$-function, the entire construction was essentially built to interpolate special values of the Riemann zeta function, so this property should not have come as a surprise. Now, however, some real magic happens. Since the introduction, we've not mentioned Dirichlet $L$-functions once -- but, miraculously, the Kubota--Leopoldt $p$-adic $L$-function also interpolates Dirichlet $L$-values as well.

\subsection{Characters of $p$-power conductor}

We start studying the interpolation properties when twisting by a Dirichlet character of conductor a power of $p$.

\begin{theorem}\label{thm:tame conductor}
Let $\chi$ be a (primitive) Dirichlet character of conductor $p^n$ for some integer $n \geq 1$ (seen as a locally constant character of $\Zp^\times$, cf.\ \S \ref{sec:dirichlet ideles}). Then, for $k > 0$, we have
\[ \int_{\Zp^\times}\chi(x)x^k \cdot\zeta_p = L(\chi,1-k).\]
\end{theorem}
The rest of this subsection will contain the proof of this result. The proof is somewhat calculation-heavy, but -- given familiarity with the dictionary between measures and power series -- is not conceptually difficult. 

In particular: the Riemann zeta function was the complex Mellin transform of a real analytic function, which -- via Theorem \ref{thm:l-function} -- gave us a formula for its special values. Under the transformation $e^t = T+1$, we obtained a $p$-adic power series; and under the measures--power series correspondence given by the Mahler transform, this gave us a measure on $\Zp$, from which we constructed $\zeta_p$. To obtain interpolation at Dirichlet characters, we pursue this in reverse, as summarised in the following diagram:
\[
\xymatrix{
	(1-a^{1-s})\zeta(s) \ar@{<->}[d]^-{\text{Mellin}} & 	&				(1-\chi(a)a^{1-s})L(\chi,s) & \\
	f_a(t) \ar@{<->}[r]_-{e^t = T+1} & F_a(T)\in \mathcal{O}_L\lsem T\rsem  \ar@{<->}[d]^-{\text{Mahler}} &		f_{a,\chi}(t) \ar@{<->}[r]_-{e^t = T+1}\ar@{<->}[u]^-{\text{Mellin}}& F_{a,\chi}(T) \in \mathcal{O}_L\lsem T\rsem \\
	& \mu_a \in \Lambda(\Zp)\ar[d]\ar@{<-->}[rr]^-{\text{``twist by $\chi$''}} &													&							\mu_{a,\chi} \in \Lambda(\Zp)\ar@{<->}[u]^-{\text{Mahler}}\\
		& \zeta_p	&&
}
\]

Firstly, we introduce a twisting operation on measures. If $\mu$ is a measure on $\Zp$, we define a measure $\mu_{\chi}$ on $\Zp$ by
\begin{equation}\label{eq:twist by chi}
\int_{\Zp}f(x) \cdot\mu_{\chi} = \int_{\Zp}\chi(x)f(x) \cdot\mu. 
\end{equation}
 Observe that, as $\chi$ is supported on $\zpe$, the twisted measure $\mu_{\chi}$ is automatically supported on $\zpe$ as well. In particular, under this we have
\begin{align*}
	\int_{\Zp^\times}\chi(x)x^k \cdot \zeta_p = \int_{\Zp^\times} x^k \cdot (\zeta_p)_{\chi} = \big(\partial^k \sA_{(\zeta_p)_\chi}\big)(0),
\end{align*}
where the last equality follows from Corollary \ref{cor:eval at x^k}. Thus we want to determine the Mahler transform of $\mu_{\chi}$ in terms of $\Am_\mu$, for which we use our measure-theoretic toolkit. This requires a classical definition.

\begin{definition} \label{DefGausssum}
Let $\chi$ be a primitive Dirichlet character of conductor $p^n$, $n \geq 1$. Define the \emph{Gauss sum of $\chi$} as 
\[ G(\chi) \defeq \sum_{c\in(\Z/p^n\Z)^\times} \chi(c) \epsilon_{p^n}^c, \]
where $(\epsilon_{p^n})_{n \in \N}$ denotes a system of primitive $p$-power roots of unity in $\overline{\Q}_p$ such that $\epsilon_{p^{n + 1}}^p = \epsilon_{p^n}$ for all $n \geq 0$ (if we fix an isomorphism $\overline{\Q}_p \cong \C$, then one can take $ \epsilon_{p^n} \defeq e^{2\pi i/p^n}$).
\end{definition}

\begin{remark} \label{RemPropertiesGaussSum}
 We note the following basic properties of Gauss sums (see \cite[\S4.3]{DS05}):
 \begin{itemize}
  \item[(i)] $G(\chi) G(\chi^{-1}) = \chi(-1) p^n.$
  \item[(ii)] $G(\chi) = \chi(a) \sum_{c \in (\Z / p^n \Z)^\times} \chi(c) \epsilon_{p^n}^{ac}$ for any $a \in \zpe$. 
 \end{itemize}
\end{remark}


\begin{lemma}\label{lem:mahler chi}
The Mahler transform of $\mu_{\chi}$ is
\[\Am_{\mu_\chi}(T) = \frac{1}{G(\chi^{-1})}\sum_{c \in (\Z/p^n \Z)^\times} \chi(c)^{-1}\Am_{\mu} \big( (1+T)\epsilon_{p^n}^c - 1 \big).\]
\end{lemma}

\begin{proof}
Since $\chi$ is constant modulo $p^n$, the measure $\mu_{\chi}$ is simply
\[\mu_{\chi} = \sum_{c\in(\Z/p^n \Z)^\times} \chi(c)\mathrm{Res}_{c+p^n\Zp}(\mu).\]
Using this expression and the formula for the Mahler transform of the restriction of a measure to $c + p^n\Zp$ given in \eqref{EqRestrictionFormula}, we find that
\[\Am_{\mu_{\chi}}(T) = \frac{1}{p^n}\sum_{b\in (\Z/p^n \Z)^\times}\chi(b)\sum_{\xi\in\mu_{p^n}}\xi^{-b}\Am_{\mu}\big( (1+T)\xi - 1 \big).\]
Writing $\mu_{p^n} = \{\epsilon_{p^n}^c : c = 0,...,p^n -1\},$ and rearranging the sums, we have
\begin{align*}
\Am_{\mu_{\chi}}(T) &= \frac{1}{p^n}\sum_{c\newmod{p^n}}\sum_{b\in (\Z/p^n \Z)^\times}\chi(b)\epsilon_{p^n}^{-bc}\Am_{\mu}\big( (1+T)\epsilon_{p^n}^c - 1\big) \\
&= \frac{1}{p^n}\sum_{c\in(\Z/p^n \Z)^\times} G(\chi) \chi(-c)^{-1} \Am_{\mu} \big( (1+T)\epsilon_{p^n}^c - 1\big) \\
&= \frac{1}{G(\chi^{-1})} \sum_{c\in(\Z/p^n \Z)^\times} \chi(c)^{-1} \Am_{\mu} \big( (1+T)\epsilon_{p^n}^c - 1\big),
\end{align*}
where the second equality follows from Remark \ref{RemPropertiesGaussSum}(ii) and the last one from Remark \ref{RemPropertiesGaussSum}(i). This finishes the proof.
\end{proof}

We now consider the case where $\mu = \mu_a$ from Definition \ref{DefinitionMeasuremua}, the measure from which we built the Kubota--Leopoldt $p$-adic $L$-function, and which has Mahler transform
\[\Am_{\mu_a}(T) = \frac{1}{T} - \frac{a}{(1+T)^a - 1}.\]
Applying the above transformation, we obtain a measure $\mu_{\chi,a}$ with Mahler transform
\[F_{\chi,a}(T) = \frac{1}{G(\chi^{-1})}\sum_{c \in (\Z/p^n \Z)^\times} \chi(c)^{-1}\left[\frac{1}{(1+T)\epsilon_{p^n}^c - 1} - \frac{a}{(1+T)^a\epsilon_{p^n}^{ac} - 1}\right].\]
Via the standard substitution $e^t = T+1$, this motivates the study of the function 
\[f_{\chi,a}(t) = \frac{1}{G(\chi^{-1})}\sum_{c \in (\Z/p^n \Z)^\times} \chi(c)^{-1}\left[\frac{1}{e^t\epsilon_{p^n}^c - 1} - \frac{a}{e^{at}\epsilon_{p^n}^{ac} - 1}\right],\]
by analogy with the case of the Riemann zeta function.

\begin{lemma}\label{lem:dirichlet integral}
We have
\[L(f_{\chi,a},s) = \chi(-1)\big(1-\chi(a)a^{1-s}\big)L(\chi,s),\]
where $L(f_{\chi,a},s)$ is as defined in Theorem \ref{thm:l-function}. Moreover, for $k \geq 0$, we have
\[
f_{\chi,a}^{(k)}(0) =   \begin{cases}
           -\big(1-\chi(a)a^{k+1}\big)L(\chi,-k) & \quad \text{if } \chi(-1) (-1)^k = -1 \\
    0  & \quad \text{if } \chi(-1) (-1)^k = 1.
  \end{cases}
\]
\end{lemma}

\begin{proof}
We follow a similar strategy to that used for the Riemann zeta function (in Lemma \ref{lem:FormulaZeta}). In particular, expanding as a geometric series, we obtain
\[ \frac{1}{e^t\epsilon_{p^n}^{c} - 1} = \sum_{k\geq 1} e^{-kt}\epsilon_{p^n}^{-kc}.\]
Then we have
\[L(f_{\chi,a},s) = \frac{1}{\Gamma(s)G(\chi^{-1})}\int_0^\infty \sum_{c \in (\Z/p^n \Z)^\times} \chi(c)^{-1}\sum_{k\geq 1} \bigg(e^{-kt}\epsilon_{p^n}^{-kc} - e^{-akt}\epsilon_{p^n}^{-akc}\bigg) t^{s-1}dt.\]
Note that
\[\sum_{c\in(\Z/p^n \Z)^\times}\chi(c)^{-1}\epsilon_{p^n}^{-akc} = \chi(-ak)G(\chi^{-1}),\]
and similarly for the first term, so that the expression collapses to
\[L(f_{\chi,a},s) = \frac{1}{\Gamma(s)}\int_0^\infty \sum_{k\geq 1} \chi(-k) \big(e^{-kt} - \chi(a)e^{-akt}\big)t^{s-1}dt.\]
For Re$(s) \gg 0$, we can rearrange the sum and the integral, and then we can evaluate the $k$-th term of the sum easily to $(1-\chi(a)a^{1-s})k^{-s}$, giving
\[L(f_{\chi,a},s) = \chi(-1)\big(1-\chi(a)a^{1-s}\big)\sum_{k\geq 1}\chi(-k)k^{-s} = \chi(-1)\big(1-\chi(a)a^{1-s}\big) L(\chi, s),\]
showing the equality of $L$-functions. 

 To see the final statement about special values, we use Theorem \ref{thm:l-function}, which immediately says 
 \begin{equation}\label{eq:special value theorem 1}
    f_{\chi,a}^{(k)}(0) = (-1)^{k}\chi(-1)(1-\chi(a)a^{k+1})L(\chi,-k).
\end{equation}
To get the claimed statement, we note that 
\[
    \frac{1}{e^{-t}\epsilon_{p^n}^c - 1} = -1 -\frac{1}{e^t\epsilon_{p^n}^{-c} - 1}, 
\]
and using this twice, we find
\[
    f_{\chi, a}(-t) = -\frac{1}{G(\chi^{-1})}\sum_{c \in (\Z/p^n \Z)^\times} \chi(c)^{-1}\left[\frac{1}{e^t\epsilon_{p^n}^{-c} - 1} - \frac{a}{e^{at}\epsilon_{p^n}^{-ac} - 1}\right].
\]
Changing $c$ for $-c$ yields $f_{\chi, a}(-t) = - \chi(-1) f_{\chi, a}(t)$, whence $(-1)^kf^{(k)}_{\chi,a}(0) = -\chi(-1)f^{(k)}_{\chi,a}(0)$. This implies that $f_{\chi, a}^{(k)}(0) = 0$ unless $\chi(-1) (-1)^k = -1$, concluding the proof.
\end{proof}

\begin{remark}
By  \eqref{eq:special value theorem 1} and the above proof, we recover the well-known fact that $L(\chi,-k) = 0$ if $\chi(-1)(-1)^k = 1$.
\end{remark}

We can now prove Theorem \ref{thm:tame conductor}.

\begin{proof}\emph{(Theorem \ref{thm:tame conductor}).}
Since $\chi$ is 0 on $p\Zp$, we have
\[ \int_{\Zp^\times}\chi(x)x^k \cdot\mu_a = \int_{\Zp}\chi(x)x^k \cdot\mu_a = \int_{\Zp}x^k \cdot\mu_{\chi,a}, \]
where $\mu_{\chi,a}$ is the twist of $\mu_a$ by $\chi$. We know this integral to be
\[\big( \partial^k F_{\chi,a} \big)(0) = f_{\chi,a}^{(k)}(0),\]
under the standard transform $e^t = T+1$. Hence, by Lemma \ref{lem:dirichlet integral}, we find
\[\int_{\Zp^\times}\chi(x)x^k \cdot\mu_a = -(1-\chi(a)a^{k+1})L(\chi,-k),\]
so that 
\[\int_{\Zp^\times}\chi(x)x^k \cdot x^{-1}\mu_a = -(1-\chi(a)a^{k})L(\chi,1-k).\]
By definition, we have
\[\int_{\Zp^\times}\chi(x)x^k \cdot\theta_a = -(1-\chi(a)a^{k}),\]
and hence we find
\[\int_{\Zp^\times}\chi(x)x^k \cdot\zeta_p = L(\chi,1-k),\]
as was to be proved.
\end{proof}

\subsection{Non-trivial tame conductors}\label{sec:non-trivial tame conductor}

We can go even further. The theorem above deals with the case of `tame conductor 1', in that we have constructed a $p$-adic measure that interpolates all of the $L$-values $L(\chi,1-k)$ for $k > 0$ and cond$(\chi) = p^n$ with $n \geq 0$ (where trivial conductor corresponds to the Riemann zeta function). More generally, we have the following result.

\begin{theorem} \label{thm:nontame}
Let $D > 1$ be any integer coprime to $p$, and let $\eta$ denote a (primitive) Dirichlet character of conductor $D$. There exists a unique measure $\zeta_\eta \in \Lambda(\Zp^\times)$ such that, for all primitive Dirichlet characters $\chi$ with conductor $p^n$, $n \geq 0$, and for all $k > 0$, we have
\[\int_{\Zp^\times} \chi(x) x^k \cdot\zeta_{\eta} = \big(1 - \chi \eta(p) p^{k-1}\big) L(\chi \eta,1-k). \]
\end{theorem}

\begin{remarks} \leavevmode
\begin{enumerate}
\item In this case, we obtain a genuine measure rather than a pseudo-measure. As $L$-functions of non-trivial Dirichlet characters are everywhere holomorphic, there is no need for the smoothing factor involving $a$.
\item Implicit in this theorem is the fact that the relevant Iwasawa algebra is defined over a (fixed) finite extension $L/\Qp$ containing the values of $\eta$.
\end{enumerate}
\end{remarks}

\begin{proof}
Since many of the ideas involved in proving the above theorem are present in the case of trivial tame conductor, the proof of Theorem \ref{thm:nontame} is a good exercise.  As such, we give only the main ideas involved in the proof. 

For $\chi$ of $p$-power conductor, note that the calculation relating $L(f_{\chi,a},s)$ to $L(\chi,s)$ above was entirely classical, in the sense that $p$ did not appear anywhere. We can thus perform a similar calculation for general conductors. As there is no need for the smoothing factor $a$, we consider the function
\begin{equation*}
f_\eta(t) = \frac{-1}{G(\eta^{-1})}\sum_{c \in (\Z/D\Z)^\times} \frac{\eta(c)^{-1}}{e^t\epsilon_{D}^c - 1}.
\end{equation*}
(This scaling by $-1$ also appears in the trivial tame conductor situation, but it is incorporated into $\theta_a$).  In the above definition, the Gauss sum $G(\eta^{-1})$ of a Dirichlet character $\eta$ of conductor $D$ is defined as in Definition \ref{DefGausssum}, replacing the power of $p$ by $D$. Define $F_\eta(T)$ by substituting $T+1$ for $e^t$, i.e.
\begin{equation} \label{EquationFeta}
    F_{\eta}(T) \defeq \frac{-1}{G(\eta^{-1})}\sum_{c \in (\Z/D\Z)^\times} \frac{\eta(c)^{-1}}{(1 + T)\epsilon_{D}^c - 1}.
\end{equation}
Expanding the geometric series, we find
\[F_{\eta}(T) = \frac{-1}{G(\eta^{-1})}\sum_{c\in(\Z/D\Z)^\times} \eta(c)^{-1}\sum_{k\geq 0}\frac{\epsilon_D^{kc}}{(\epsilon_D^c - 1)^{k+1}}T^k.\]
This is an element of $\roi_L\lsem T\rsem $ for some sufficiently large finite extension $L$ of $\Qp$, since the Gauss sum is a $p$-adic unit (indeed, we have $G(\eta)G(\eta^{-1}) = \eta(-1)D$ and $D$ is coprime to $p$) and $\epsilon_D^c -1 \in \roi_L^\times$ (since it has norm dividing $D$). There is therefore a measure $\mu_{\eta} \in \Lambda(\Zp)$, the Iwasawa algebra over $\roi_L$, corresponding to $F_{\eta}$ under the Mahler transform.

\begin{lemma}\label{lem:non-trivial}
We have $L(f_\eta,s) = -\eta(-1)L(\eta,s).$ Hence for $k \geq 0$ we have
\[\int_{\Zp}x^k \cdot\mu_\eta = L(\eta,-k).\]
\end{lemma}
\begin{proof}
The first statement is proved in a similar manner to Lemma \ref{lem:dirichlet integral}. The second is proved by equating $\partial$ with $d/dt$ and using the general theory described in Theorem \ref{thm:l-function}.
\end{proof}

The following is the analogue of Lemma \ref{LemmaPsiInvariant}.

\begin{lemma} \label{LemmaPsiTwisted}
We have $\psi(F_\eta) = \eta(p)F_\eta$.
\end{lemma}

\begin{proof}
We show first that 
\begin{equation}\label{trace invariant}\frac{1}{p}\sum_{\xi \in \mu_p}\frac{1}{(1+T)\xi\epsilon_D^c -1} = \frac{1}{(1+T)^p \epsilon_D^{pc} - 1}.\end{equation}
Expanding each summand as a geometric series, the left hand side is 
\[\frac{-1}{p}\sum_{\xi\in\mu_p}\sum_{n\geq 0}(1+T)^n\epsilon_D^{nc}\xi^n = -\sum_{n\geq 0}(1+T)^{pn}\epsilon_D^{pcn},\]
and summing the geometric series now gives the right hand side of (\ref{trace invariant}). It follows that
\begin{align*}(\varphi\circ\psi)(F_\eta) &= \frac{-1}{pG(\eta)^{-1}}\sum_{\xi\in\mu_p}\sum_{c\in(\Z/D\Z)^\times}\frac{\eta(c)^{-1}}{(1+T)\xi\epsilon_D^c - 1}\\
&= \frac{-1}{G(\eta^{-1})}\sum_{c\in(\Z/D\Z)^\times}\frac{\eta(c)^{-1}}{(1+T)^p\epsilon_D^{pc} - 1}\\
&= \eta(p)\varphi(F_\eta).
\end{align*}
The result now follows from the injectivity of $\varphi$.
\end{proof}

We can now show the interpolation property at powers of $x$.

\begin{lemma} \label{LemmaInterpolation3}
We have
\[\int_{\Zp^\times}x^k \cdot\mu_{\eta} = \big(1-\eta(p)p^k\big)L(\eta,-k).\]
\end{lemma}

\begin{proof}
By Lemma \ref{LemmaPsiTwisted} we have
\begin{align*}\mathrm{Res}_{\Zp^\times}(\mu_\eta) &= (1-\varphi\circ\psi)(\mu_\eta) = \mu_\eta - \eta(p)\varphi(\mu_\eta),
\end{align*}
and
\[\int_{\Zp}x^k\cdot\varphi(\mu_\eta) = p^k\int_{\Zp}x^k \cdot\mu_\eta.\]
The result now follows from Lemma \ref{lem:non-trivial}.
\end{proof}

Now let $\chi$ be a Dirichlet character of conductor $p^n$ for some $n\geq 0$, and let $\theta \defeq \chi\eta$ the product (a Dirichlet character of conductor $Dp^n$). For such $\theta = \chi \eta$, we define
\begin{equation} \mu_\theta \defeq (\mu_\eta)_\chi. \end{equation}
Using Lemma \ref{lem:mahler chi}, we find easily that:

\begin{lemma} \label{LemmaFtheta}
The Mahler transform of $\mu_\theta$ is
\[F_\theta(T) \defeq \Am_{\mu_\theta}(T) = \frac{-1}{G(\theta^{-1})} \sum_{c\in(\Z/Dp^n\Z)^\times}\frac{\theta(c)^{-1}}{(1+T)\epsilon_{Dp^n}^c - 1}.\]
\end{lemma}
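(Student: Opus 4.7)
The plan is to apply Lemma \ref{mahler chi} directly, taking $\mu = \mu_\eta$ and the Dirichlet character $\chi$ of $p$-power conductor $p^n$. This immediately gives
\[
F_\theta(T) \;=\; \mathscr{A}_{(\mu_\eta)_\chi}(T) \;=\; \frac{1}{G(\chi^{-1})}\sum_{c\in(\Z/p^n\Z)^\times} \chi(c)^{-1}\, F_\eta\bigl((1+T)\epsilon_{p^n}^{c}-1\bigr).
\]
Then I would substitute the explicit formula for $F_\eta$ recalled just above the statement, obtaining a double sum
\[
F_\theta(T) \;=\; \frac{-1}{G(\chi^{-1})G(\eta^{-1})} \sum_{\substack{c\in(\Z/p^n\Z)^\times \\ d\in(\Z/D\Z)^\times}} \frac{\chi(c)^{-1}\eta(d)^{-1}}{(1+T)\epsilon_{p^n}^{c}\epsilon_D^{d}-1}.
\]

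The second step is to collapse the double sum into a single sum over $(\Z/Dp^n\Z)^\times$ via the Chinese remainder theorem. Since $(D,p)=1$, I fix compatible primitive roots so that $\epsilon_{Dp^n}^{D}=\epsilon_{p^n}$ and $\epsilon_{Dp^n}^{p^n}=\epsilon_{D}$; then the bijection
\[
(\Z/p^n\Z)^\times \times (\Z/D\Z)^\times \;\isorightarrow\; (\Z/Dp^n\Z)^\times, \qquad (c,d) \longmapsto e\equiv Dc+p^n d \pmod{Dp^n}
\]
satisfies $\epsilon_{p^n}^{c}\epsilon_{D}^{d}=\epsilon_{Dp^n}^{e}$, while $c\equiv D^{-1}e\pmod{p^n}$ and $d\equiv (p^n)^{-1}e\pmod{D}$. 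Using the conductors of $\chi$ and $\eta$ this yields $\chi(c)\eta(d)=\chi(D)^{-1}\eta(p)^{-n}\,\theta(e)$, and substitution transforms $F_\theta(T)$ into
\[
F_\theta(T) \;=\; \frac{-\chi(D)\eta(p)^{n}}{G(\chi^{-1})G(\eta^{-1})} \sum_{e\in(\Z/Dp^n\Z)^\times} \frac{\theta(e)^{-1}}{(1+T)\epsilon_{Dp^n}^{e}-1}.
\]

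The third and final step is to identify the scalar prefactor with $-1/G(\theta^{-1})$. This reduces to the Gauss-sum twisting identity
\[
G(\theta^{-1}) \;=\; \chi(D)^{-1}\eta(p)^{-n}\, G(\chi^{-1})\, G(\eta^{-1}),
\]
which I would prove directly by running the same CRT decomposition on $G(\theta^{-1})=\sum_{f}\theta(f)^{-1}\epsilon_{Dp^n}^{f}$: writing $f=Dc+p^n d$ factors both $\theta(f)^{-1}$ and $\epsilon_{Dp^n}^{f}$ into the desired product, yielding the identity. The main obstacle is really only bookkeeping — pinning down a compatible normalisation of primitive $Dp^n$-th roots of unity and keeping careful track of the stray factors $\chi(D)$ and $\eta(p)^{n}$ on both sides, so that the two Gauss-sum manipulations cancel exactly.
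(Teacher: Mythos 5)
Your proof is correct and follows the same route the paper intends: the paper merely states that the formula follows ``easily'' from Lemma \ref{mahler chi}, and you supply exactly the missing computation --- substituting $F_\eta$ into the twisting formula, collapsing the double sum over $(\Z/p^n\Z)^\times \times (\Z/D\Z)^\times$ via CRT with the normalisation $\epsilon_{Dp^n}^D = \epsilon_{p^n}$, $\epsilon_{Dp^n}^{p^n} = \epsilon_D$, and then absorbing the stray factor $\chi(D)\eta(p)^n$ through the Gauss-sum factorisation $G(\theta^{-1}) = \chi(D)^{-1}\eta(p)^{-n}G(\chi^{-1})G(\eta^{-1})$. The bookkeeping checks out and the argument is complete.
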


Via a calculation essentially identical to the cases already seen, we can prove
\begin{align*}\int_{\Zp}\chi(x)x^k \cdot\mu_{\eta} &= \int_{\Zp}x^k\cdot\mu_{\theta} = L(\theta,-k),
\end{align*}
and that
\begin{equation}\label{eq:restriction mu theta}
\mathrm{Res}_{\Zp^\times}(\mu_\theta) = \big(1-\theta(p)\varphi\big) \mu_\theta.
\end{equation}
(Here, note that if $\chi$ is non-trivial, then $\mu_\theta$ is already supported on $\Zp^\times$; but this is consistent, as $\theta(p) = 0$ in this case).
Combining the above we find
\[\int_{\Zp^\times}\chi(x)x^k\cdot\mu_{\eta} = \big(1-\theta(p)p^k\big)L(\theta,-k).\]

Finally, to complete the proof of Theorem \ref{thm:nontame} and to ensure compatibility with the construction of $\zeta_p$, we introduce a shift by 1. The following is directly analogous to the construction of $\zeta_p$; note again that $\zeta_\eta$ is truly a measure, not a pseudo-measure.
\begin{definition}
	Define $\zeta_\eta \defeq x^{-1} \mathrm{Res}_{\Zp^\times}(\mu_\eta).$
\end{definition}

We see that 
\[
	\int_{\Zp^\times}\chi(x)x^k\cdot\zeta_{\eta} = \big(1-\theta(p)p^{k-1}\big)L(\theta,1-k).
\]
which completes the proof of Theorem \ref{thm:nontame}.
\end{proof}


\subsection{Analytic functions on $\Zp$ via the Mellin transform}\label{mellin}
The reader should hopefully now be convinced that measures are a natural language with which to discuss $p$-adic $L$-functions. In this subsection, we use this (more powerful) language to answer the question we originally posed in the introduction: namely, we define analytic functions on $\Zp$ interpolating the  values $\zeta(1-k)$ for $k > 0$. In passing from measures to analytic functions on $\Zp$, we lose the clean interpolation statements. In particular, there is no \emph{single} analytic function on $\Zp$ interpolating the values $\zeta(1-k)$ for all $k>0$, but rather $p-1$ different `branches' of the Kubota--Leopoldt $p$-adic $L$-function on $\Zp$, each interpolating a different range.

The reason we cannot define a single $p$-adic $L$-function on $\Zp$ is down to the following technicality. We'd \emph{like} to be able to define ``$\zeta_p(s) = \int_{\Zp^\times}x^{-s}\cdot\zeta_p$'' for $s\in\Zp$. The natural way to define the exponential $x \mapsto x^s$ is as
\[x^s = \mathrm{exp}(s\cdot\mathrm{log}(x)),\]
but unfortunately in the $p$-adic world the exponential map does not converge on all of $\Zp$, so this is not well-defined for general $x\in\Zp^\times$. Instead:

\begin{lemma} \label{LemmapadicLogarithm}
The $p$-adic exponential map converges on $p\Zp$. Hence, for any $s \in \Zp$, the function $1+p\Zp \rightarrow \Zp$ given by $x \mapsto x^s \defeq \mathrm{\exp}(s\cdot\mathrm{log}(x))$ is well-defined.
\end{lemma}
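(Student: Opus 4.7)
The proof has two parts: first establishing convergence of $\exp$ on $p\Zp$ (where recall $p$ is odd), and then combining this with convergence of $\log$ on $1+p\Zp$.

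For the first part, my plan is to use the standard power series definition $\exp(x) = \sum_{n \geq 0} x^n/n!$ and bound the $p$-adic valuation of the general term. By Legendre's formula (or a direct digit-sum argument), $v_p(n!) = (n - s_p(n))/(p-1) < n/(p-1)$, where $s_p(n)$ is the sum of the base-$p$ digits of $n$. Consequently
\[
v_p\!\left(\tfrac{x^n}{n!}\right) = n\, v_p(x) - v_p(n!) > n\!\left(v_p(x) - \tfrac{1}{p-1}\right).
\]
For $x \in p\Zp$ we have $v_p(x) \geq 1$, and since $p$ is odd we have $1/(p-1) \leq 1/2 < 1$, so this lower bound tends to $+\infty$ and the series converges in $\Zp$. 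The same bound, applied for $n \geq 1$, shows that $\exp(p\Zp) \subseteq 1 + p\Zp$.

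For the second part, I will recall the analogous estimate for $\log(1+y) = \sum_{n \geq 1}(-1)^{n-1} y^n/n$: for $y \in p\Zp$ one has $v_p(y^n/n) \geq n v_p(y) - v_p(n) \geq n - \log_p(n)$, which tends to $+\infty$, so $\log$ converges on $1+p\Zp$. Moreover, since $n - v_p(n) \geq 1$ for every integer $n \geq 1$, each term lies in $p\Zp$, hence $\log(1+p\Zp) \subseteq p\Zp$.

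To conclude, given $s \in \Zp$ and $x \in 1 + p\Zp$, the previous paragraph shows $\log(x) \in p\Zp$, and therefore $s \cdot \log(x) \in p\Zp$ since $p\Zp$ is an ideal of $\Zp$. By the first part, $\exp(s\log(x))$ is then a well-defined element of $1 + p\Zp$, proving the lemma.

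The argument is essentially routine once the two valuation estimates above are in place; the only subtlety worth flagging is that the restriction to $1 + p\Zp$ (rather than all of $\Zp^\times$) is genuinely necessary, because $\exp$ does not converge on $\Zp \setminus p\Zp$, reflecting the analytic phenomenon that forces the Kubota--Leopoldt $p$-adic $L$-function to split into $p-1$ distinct branches, one on each coset of $1 + p\Zp$ in $\Zp^\times$.
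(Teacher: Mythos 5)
Your proof is correct and complete. The paper, however, does not prove this lemma at all: it simply refers the reader to a standard reference on local fields (Cassels). Your argument is precisely the standard one being alluded to, so in substance there is no divergence—your write-up just fills in the details the paper chose to outsource. The key estimates are right: $v_p(n!) = (n - s_p(n))/(p-1) < n/(p-1)$ gives convergence of $\exp$ on $p\zp$ (and indeed maps into $1+p\zp$, since for $n \geq 1$ the term has $v_p(x^n/n!) > n(p-2)/(p-1) > 0$, forcing it to lie in $p\zp$), while $v_p(n) \leq n-1$ gives $\log(1+p\zp) \subseteq p\zp$, and the ideal property of $p\zp$ closes the loop. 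Two small cosmetic points: your first inequality $v_p(y^n/n) \geq n\,v_p(y) - v_p(n)$ is actually an equality, and it is worth recording explicitly that $v_p$ is integer-valued, which is what upgrades the strict bound $> 1/2$ to the clean statement $\geq 1$ when $p=3$. Your closing remark about the necessity of restricting to $1+p\zp$—tied to the $p-1$ branches of the Kubota--Leopoldt function—is a reasonable heuristic and consistent with the discussion in \S\ref{mellin}.
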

\begin{proof}
This is a standard result in the theory of local fields; see e.g.\ \cite[\S12]{cassels}.
\end{proof}

\begin{definition}
Recall that we assume $p$ to be odd and that we have a decomposition $\Zp^\times \cong \mu_{p  - 1} \times (1+p\Zp)$. Let
\begin{align*}\omega : \Zp^\times &\longrightarrow \mu_{p - 1},\\
\langle\cdot\rangle : \Zp^\times &\longrightarrow 1+p\Zp,
\end{align*}
where $\omega(x) \defeq $ Teichm\"uller lift of the reduction modulo $p$ of $x$ and $\langle x \rangle \defeq \omega^{-1}(x) x$ denote the projections to the first and second factors respectively. If $x\in\Zp^\times$, then we can write $x = \omega(x)\langle x\rangle$.
\end{definition}

By Lemma \ref{LemmapadicLogarithm}, the function $x \mapsto \langle x\rangle^s$ is well-defined for any $s \in \zp$. For each $i = 1,..,p-1$ we can define an injection
\begin{align*}\Zp &\longhookrightarrow \mathrm{Hom}_{\mathrm{cts}}(\Zp^\times,\Cp^\times)\\
s &\longmapsto \big[x \mapsto \omega(x)^i \langle x\rangle^s\big],
\end{align*}
and hence we can define  a meromorphic function as follows.
\begin{definition} \label{DefpadicZeta}
 We define the $i$-th branch of the $p$-adic zeta function as
\begin{align*}
\zeta_{p,i} : \Zp &\longrightarrow \Cp\\
s &\longmapsto \int_{\Zp^\times}\omega(x)^{i}\langle x\rangle^{1-s} \cdot\zeta_p.
\end{align*}
\end{definition}
This function \emph{does not} interpolate as wide a range of values as the measure $\zeta_p$, since the character $x^k$ can be written in the form $\omega(x)^i\langle x\rangle^k$ if and only if $k \equiv i \newmod{p-1}$, and in this case $x^k$ is the value of $\omega(x)^{i}\langle x \rangle^{1-s}$ at the value $s = 1-k$. Then we have the following result.

\begin{theorem} \label{thm:kubota leopoldt analytic}
For all $k\geq 1$ with $k \equiv i \newmod{p-1}$, we have
\[\zeta_{p,i}(1-k) = (1-p^{k-1})\zeta(1-k).\]
\end{theorem}

Note that the above theorem implies that $\zeta_{p, i}$ is identically zero whenever $i$ is odd (as by Corollary \ref{cor:rational} the value $\zeta(1-k)$ is zero for every odd positive integer $k \geq 1$). More generally, we can twist by Dirichlet characters as we have done before.

\begin{definition} \label{DefinitionLp}
Let $\theta = \chi\eta$ be a Dirichlet character, where $\eta$ has conductor $D$ prime to $p$ and $\chi$ has conductor $p^n$ for $n\geq 0$. Define
\[L_p(\theta,s) \defeq \int_{\Zp^\times}\chi(x)\langle x\rangle^{1-s} \cdot \zeta_\eta, \hspace{12pt} s\in\Zp.\]
\end{definition}

\begin{remark}
\item An equivalent definition is  
\begin{equation}\label{eq:alternative}
	L_p(\theta,s) = \int_{\Zp^\times} \chi\omega^{-1}(x)\langle x\rangle^{-s}\cdot \mu_\eta = \int_{\Zp^\times} \chi\omega^{s-1}(x)x^{-s}\cdot \mu_\eta.
\end{equation}
Note here we use the measure $\mu_\eta$, rather than the (shifted-by-1) analogue $\zeta_\eta$. 
In \cite{Washington2}, the analytic functions $L_p(\theta,s)$ are constructed directly without using measures, and the more direct approach differs from the one obtained using our measure-theoretic approach by precisely this factor of $\omega$. This twist by 1 also appears naturally when we study the Iwasawa Main Conjecture.
\end{remark}

\begin{theorem} \label{TheoremLeopoldtAnalyticTwist}
 For all $k\geq 1$, we have
\[L_p(\theta,1-k) = \big(1 - \theta \omega^{-k}(p)p^{k-1}\big) L(\theta \omega^{-k},1-k).\]
\end{theorem}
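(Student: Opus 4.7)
The plan is to reduce the statement directly to Theorem \ref{nontame} by unwinding the definition of $L_p(\theta, s)$ at $s = 1-k$ and rewriting the character in the integrand as a product of a Dirichlet character and an integer power of $x$.

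First, I would substitute $s = 1-k$ into the definition
\[
L_p(\theta, 1-k) = \int_{\Zp^\times} \chi(x) \langle x \rangle^{k} \cdot \zeta_\eta,
\]
and use the decomposition $x = \omega(x)\langle x\rangle$ on $\Zp^\times$, which implies $\langle x \rangle^k = \omega(x)^{-k} x^k$ for any integer $k$ (the Teichmüller character takes values in $\mu_{p-1} \subset \Zp^\times$, so this identity is literal, not merely a $p$-adic analytic one). Substituting, the integrand becomes $(\chi\omega^{-k})(x)\, x^k$, and hence
\[
L_p(\theta, 1-k) = \int_{\Zp^\times} (\chi\omega^{-k})(x)\, x^k \cdot \zeta_\eta.
\]

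Next, I would observe that $\chi\omega^{-k}$ is a primitive Dirichlet character whose conductor is a (possibly trivial) power of $p$: both $\chi$ and $\omega$ have $p$-power conductor, so their product does as well. Hence $\chi\omega^{-k}$ is a valid choice in the interpolation formula of Theorem \ref{nontame} (applied with the tame character $\eta$ and the $p$-power-conductor character $\chi\omega^{-k}$ in place of $\chi$). Applying that theorem with $k \geq 1$ yields
\[
\int_{\Zp^\times} (\chi\omega^{-k})(x)\, x^k \cdot \zeta_\eta = \bigl(1 - (\chi\omega^{-k}\eta)(p)\, p^{k-1}\bigr)\, L(\chi\omega^{-k}\eta, 1-k).
\]
Since $\theta = \chi\eta$ by definition, we have $\chi\omega^{-k}\eta = \theta\omega^{-k}$, and the right-hand side is exactly $\bigl(1 - \theta\omega^{-k}(p)\, p^{k-1}\bigr) L(\theta\omega^{-k}, 1-k)$, completing the proof.

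There is essentially no obstacle here: the theorem is a repackaging of the measure-theoretic interpolation formula into the analytic language on $\Zp$. The only subtlety worth flagging explicitly is that $\chi\omega^{-k}$ may fail to be primitive of the same conductor as $\chi$ (for instance when $\chi = \omega^k$, in which case one obtains the trivial character and recovers the Riemann zeta interpolation, cf.\ Remark following the definition of $L_p(\theta,s)$). This does not cause any problem because Theorem \ref{nontame} covers the case of trivial $p$-part uniformly, and $L(\theta\omega^{-k}, 1-k)$ is defined as the $L$-function of the primitive character inducing $\theta\omega^{-k}$.
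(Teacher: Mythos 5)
Your proof is correct and takes essentially the same approach as the paper: rewrite $\langle x\rangle^k = \omega(x)^{-k}x^k$ so that the integrand becomes a Dirichlet character of $p$-power conductor times an integer power of $x$, then invoke the interpolation theorem. The only cosmetic difference is that you quote Theorem \ref{nontame} directly for $\zeta_\eta$, whereas the paper uses the equivalent alternative description \eqref{eq:alternative} in terms of $\mu_\eta$ and applies the corresponding interpolation formula for $\mu_\eta$; these amount to the same computation since $\zeta_\eta = x^{-1}\mathrm{Res}_{\Zp^\times}\mu_\eta$ by definition.
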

\begin{proof}
We use the description of \eqref{eq:alternative}. From the definitions, we have \[
    \chi\omega^{-1}(x)\langle x\rangle^{k-1} = \chi\omega^{-k}(x)\cdot\omega^{k-1}(x)\langle x\rangle^{k-1} = \chi\omega^{-k}(x)x^{k-1},
\]
so that
\begin{align*}
\int_{\Zp^\times} \chi(x) \langle x \rangle^{k-1} \cdot \mu_\eta &= \int_{\Zp^\times} \chi \omega^{-k}(x) x^{k-1} \cdot\mu_\eta\\
&= \big( 1 - \theta \omega^{-k}(p) p^{k-1}\big) L(\theta \omega^{-k}, 1-k),
\end{align*}
as required. The last equality here is Lemma \ref{LemmaInterpolation3}.
\end{proof}

\begin{remark} Directly from the definitions, we have $\zeta_{p,i}(s) = L_p(\omega^{i},s)$. Hence for arbitrary $k > 0$, Theorem \ref{TheoremLeopoldtAnalyticTwist} gives
\[\zeta_{p,i}(1-k) = (1-\omega^{i-k}(p)p^{k-1})L(\omega^{i-k},1-k).\]
Of course, $\omega^{i-k}$ is just the trivial character when $i \equiv k \newmod{p-1}$, so we recover Theorem \ref{thm:kubota leopoldt analytic} from Theorem \ref{TheoremLeopoldtAnalyticTwist}.
\end{remark}

In general, for any measure $\mu$ on $\Zp^\times$ one can define
\[\mathrm{Mel}_{\mu,i}(s) = \int_{\Zp^\times} \omega(x)^i \langle x\rangle^s \cdot\mu,\]
the \emph{Mellin transform} of $\mu$ at $i$. We have then $\zeta_{p,i}(s) = \mathrm{Mel}_{\zeta_p,i}(1-s)$. This transform gives a way to pass from $p$-adic measures on $\Zp$ to analytic functions on $\Zp$.

\begin{remark}
	The results of this section are simply a more concrete version of Remark
\ref{rem:weight space rigid analytic}. There we described how a measure (resp.\ pseudo-measure) $\mu$ on $\Zp^\times$ gives rise to a rigid analytic (resp. rigid meromorphic) function $F_\mu$ on weight space $\cW(\Cp)$. The function $\zeta_{p,i}$ above corresponds to the restriction of $F_\mu$ to the open ball $U_{\omega^i} \subset \cW(\Zp)$ (again explaining why we need $p-1$ such functions, corresponding to the $p-1$ disjoint open balls).
 \end{remark}

\section{The values at $s=1$}\label{sec:s=1}

In the following we give an example of further remarkable links between the classical and $p$-adic zeta functions. Let $\theta$ be a non-trivial Dirichlet character, which as usual we write in the form $\chi\eta$, where $\chi$ has conductor $p^n$ and $\eta$ has conductor $D$ prime to $p$. By Theorem \ref{thm:nontame}, for any integer $k > 0$, we have
\[ \int_{\Zp^\times}\chi(x)x^k \cdot \zeta_{\eta} = L(\theta,1-k).\]
 It's natural to ask what happens outside the range of interpolation $k >0$. In particular, what happens when we take $k = 0$? Since this is \emph{outside} the range of interpolation, this value may \emph{a priori} have nothing to do with classical $L$-values. Indeed, the classical value $L(\theta,1)$ is transcendental\footnote{This follows from Baker's theorem and Theorem \ref{s=1 theorem}, part (i).}, so one cannot see it as a $p$-adic number in a natural way. However, even though we cannot directly equate the two values, it turns out that there is a formula for the $p$-adic $L$-function at $s=1$ which is strikingly similar to its classical analogue.

\begin{theorem}\label{s=1 theorem} Let $\theta$ be a non-trivial Dirichlet character of conductor $N$, and let $\varepsilon_N$ denote a primitive $N$th root of unity. Then:
\begin{itemize}
\item[(i)] (Classical value at $s=1$). We have
\[L(\theta,1) = -\frac{1}{G(\theta^{-1})} \sum_{c \in (\Z/N\Z)^\times} \theta^{-1}(c) \log\big( 1-\varepsilon_N^c \big).\]
\item[(ii)] ($p$-adic value at $s=1$). We have
\[ L_p(\theta,1) = -\big( 1 - \theta(p) p^{-1} \big) \frac{1}{G(\theta^{-1})} 		\sum_{c \in (\Z/N\Z)^\times} \theta^{-1}(c) \log_p\big(1-\varepsilon_N^c).\]
\end{itemize}
\end{theorem}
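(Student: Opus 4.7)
For part (i), the plan is the classical one. I would substitute the Gauss sum inversion $\theta(n) = G(\theta^{-1})^{-1}\sum_{a \in (\Z/N\Z)^\times} \theta^{-1}(a)\,\xi^{an}$ (immediate from Remark \ref{RemPropertiesGaussSum}(2), using primitivity of $\theta$) into $L(\theta,1) = \sum_{n\geq 1}\theta(n)/n$, swap the summations (by Abel summation, using non-triviality of $\theta$), and apply the Taylor series $\sum_{n \geq 1}z^n/n = -\log(1-z)$ at $z = \xi^a$ for each $a$ coprime to $N$.

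For part (ii), I would pass to the measure-theoretic framework of Section \ref{interpolation}. Unwinding the definition of $L_p(\theta,s)$ at $s=1$, together with $\zeta_\eta = x^{-1}\mathrm{Res}_{\Zp^\times}(\mu_\eta)$ and the twist operation, yields
\[
L_p(\theta,1) = \int_{\Zp^\times} x^{-1} \cdot \mathrm{Res}_{\Zp^\times}(\mu_\theta),
\]
where $\mu_\theta \defeq (\mu_\eta)_\chi$ has Mahler transform (by Lemma \ref{mahler chi})
\[
F_\theta(T) = -\frac{1}{G(\theta^{-1})} \sum_{c \in (\Z/N\Z)^\times} \theta^{-1}(c) \cdot \frac{1}{(1+T)\xi^c - 1}.
\]
The plan is then to interpret this integral as $\widetilde{G}(0)$, where $\widetilde{G}$ is the Mahler transform of the measure $x^{-1}\cdot\mathrm{Res}_{\Zp^\times}(\mu_\theta)$; by Lemma \ref{LemmaMultiplicationbyx} and Corollary \ref{CorollarySupportedZpet}, $\widetilde{G}$ is uniquely characterised by the two conditions $\partial \widetilde{G} = F_{\mathrm{Res}_{\Zp^\times}(\mu_\theta)}$ and $\psi \widetilde{G} = 0$.

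The heart of the argument is the elementary identity
\[
\partial\bigl[\log((1+T)\zeta - 1) - \log(1+T)\bigr] = \frac{1}{(1+T)\zeta - 1}
\]
for any root of unity $\zeta \neq 1$, which provides a termwise antiderivative of $F_\theta$. First I would verify the uniform formula $\psi F_\theta = \theta(p) F_\theta$ using the product identity $p^{-1}\sum_{\omega \in \mu_p}\frac{1}{(1+T)\omega\zeta-1} = \frac{1}{(1+T)^p\zeta^p-1}$, the bijection $c\mapsto pc$ on $(\Z/N\Z)^\times$ when $p\nmid N$, and primitivity of $\theta$ when $p\mid N$ (in which case $\theta(p) = 0$ matches a genuine vanishing). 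Hence $F_{\mathrm{Res}_{\Zp^\times}(\mu_\theta)} = (1-\theta(p)\varphi) F_\theta$, and combining with the commutation $\partial\circ\varphi = p\,\varphi\circ\partial$ produces the antiderivative
\[
\widetilde{G} = H - \tfrac{\theta(p)}{p}\,\varphi(H) + c_0, \quad H(T) \defeq -\tfrac{1}{G(\theta^{-1})}\sum_{c}\theta^{-1}(c)\bigl[\log((1+T)\xi^c - 1) - \log(1+T)\bigr],
\]
where $c_0$ is an as-yet-undetermined constant of integration.

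To fix $c_0$, an analogous change-of-variables calculation shows $\psi H = (\theta(p)/p) H$; combined with $\psi\circ\varphi = \mathrm{id}$, this gives $\psi \widetilde{G} = c_0$, so the condition $\psi \widetilde{G} = 0$ forces $c_0 = 0$. Evaluating at $T=0$ and using $\log_p(-1) = 0$ (so that $\log_p(\xi^c - 1) = \log_p(1-\xi^c)$), together with $\sum_c \theta^{-1}(c) = 0$ (non-triviality of $\theta$, which annihilates the $\log(1+T)$ contribution), gives
\[
L_p(\theta,1) = \widetilde{G}(0) = \Bigl(1 - \tfrac{\theta(p)}{p}\Bigr)H(0) = -\bigl(1 - \theta(p)p^{-1}\bigr)\frac{1}{G(\theta^{-1})}\sum_{a \in (\Z/N\Z)^\times}\theta^{-1}(a)\log_p(1-\xi^a),
\]
as required. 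The main obstacle will be the bookkeeping of the $\psi$-invariance needed to pin down the antiderivative uniquely; this is precisely where the Euler factor $1 - \theta(p)p^{-1}$ emerges naturally from the restriction to $\Zp^\times$, uniformly covering both the case $p\nmid N$ and the case $p\mid N$ (where the factor becomes trivial because $\theta(p)=0$).
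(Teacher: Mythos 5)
Your argument is correct and follows essentially the same strategy as the paper's: Gauss-sum inversion and the Taylor series of $\log$ for the classical value, and, for the $p$-adic value, exhibiting the logarithmic power-series antiderivative of $F_\theta$, extracting the Euler factor via $1-\varphi\circ\psi$, and evaluating at $T=0$. The one (welcome) refinement is that you explicitly fix the constant of integration using the condition $\psi\widetilde G = 0$, a step that the paper's Lemma \ref{s=1lem1} leaves implicit.
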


\begin{remark}\label{rem:beilinson 1}
Part $(ii)$ of Theorem \ref{s=1 theorem} is due to Leopoldt. Values in the range of interpolation, where the link to classical $L$-values is explicit, are often called \emph{critical} values. Values outside this range, such as those studied in Theorem \ref{s=1 theorem}, are called \emph{non-critical} values. The above result is an instance of the $p$-adic Beilinson or Perrin-Riou conjectures, which give arithmetic descriptions of non-critical special values of $p$-adic $L$-functions. We refer the interested reader to \cite{PR} (or Remark \ref{rem:gross etc} below) for more details on this. 
\end{remark}

If $\theta$ is an odd character, both sides of the $p$-adic formula vanish. In any case, the formulae are identical up to replacing $\log$ with its $p$-adic avatar and, as usual, deleting the Euler factor at $p$. This result can be used to prove a $p$-adic analogue of the class number formula. 

\subsection{The complex value at $s = 1$}

For completeness, we prove the complex case of Theorem \ref{s=1 theorem}, following \cite[Thm.\ 4.9]{Washington2}.

\begin{proof}\emph{(Theorem \ref{s=1 theorem}(i)).}
 Write
 \[ L(\theta,s) = \sum_{a \in (\Z / N \Z)^\times} \theta(a) \sum_{n \equiv a \newmod{D}} n^{-s}. \]
 Using the fact that
 \[ \frac{1}{N} \sum_{c \in (\Z / N \Z)} \epsilon_N^{(a - n)c} = \left \{
   \begin{array}{c l}
      0 &: \text{ if $n \not \equiv a$ mod $N$} \\
      1 &: \text{ if $n \equiv a$ mod $N$,}
   \end{array}
   \right. \]
   we show that the above formula equals
   \begin{align}
   \sum_{a \in (\Z / N \Z)^\times} \theta(a) \frac{1}{N} \sum_{n \geq 1} \sum_{c \in (\Z / N \Z)} \varepsilon_N^{(a - n) c} n^{-s} &= \frac{1}{N} \sum_{c \in (\Z / N \Z)} \left( \sum_{a \in (\Z / N \Z)^\times} \theta(a) \varepsilon_N^{ ac} \right) \sum_{n \geq 1} \frac{\epsilon_N^{- n c}}{n^{s}} \notag\\
   &= \frac{G(\theta)}{N} \sum_{c \in (\Z / N \Z)} \theta^{-1}(c) \sum_{n \geq 1} \frac{\varepsilon_N^{- n c}}{n^{s}} \notag\\
   &= \frac{\theta(-1)}{G(\theta^{-1})} \sum_{c \in (\Z / N \Z)^\times} \theta^{-1}(c) \sum_{n \geq 1} \frac{\varepsilon_N^{- n c}}{n^{s}} \notag\\
   &= \frac{1}{G(\theta^{-1})} \sum_{c \in (\Z / N \Z)^\times} \theta^{-1}(c) \sum_{n \geq 1} \frac{\varepsilon_N^{ n c}}{n^{s}}. \label{eq:classical 6.1}
   \end{align}
Here the penultimate equality uses the standard identity $G(\theta)G(\theta^{-1}) = \theta(-1) \mathrm{cond}(\theta)$ of Gauss sums (cf.\ Remark \ref{RemPropertiesGaussSum}(i)) and that $\theta^{-1}(c) = 0$ if $(c, N) \neq 1$, and the last equality follows from the change of variables $c \mapsto -c$. 

Finally we evaluate this expression at $s = 1$. As $\theta$ is not trivial we have $N>1$, so $\varepsilon_N^c \neq 1$ for any $c \in (\Z/N\Z)^\times$. Thus we may consider the Taylor series expansion 
\[	
	-\log(1-\varepsilon_N^{c}) = \sum_{n\geq 1}\varepsilon_N^{nc}n^{-1}.
\] Substituting this into \eqref{eq:classical 6.1}, we see the series converges at $s=1$ to the required result.
\end{proof}

\begin{rem} \label{s=1}
 We can further refine this expression depending on the parity of $\theta$. If $\theta$ is even then $\theta^{-1}(c)\log(1-\varepsilon_N^{c}) + \theta^{-1}(-c)\log(1-\varepsilon_N^{-c}) = 2\theta^{-1}(c)\log|1-\varepsilon_N^c|$, so, rearranging
 \[ L(\theta,1) = - \frac{1}{G(\theta^{-1})} \sum_{c \in (\Z / N \Z)^\times} \theta^{-1}(c) \log |1 - \varepsilon^{c} |.  \]
If $\theta$ is odd, we can use the functional equation to obtain
 \[ L(\theta,1) = -i \pi \frac{1}{G(\theta^{-1})} B_{1, \theta^{-1}}, \]
 where $B_{k,\theta^{-1}}$ denotes the $k$-th twisted Bernoulli number (see \cite[Chapter 4]{Washington2}).
\end{rem}

\subsection{The $p$-adic value at $s = 1$}\label{sec:p-adic s=1}

We now compute
\begin{align} \label{EqLp1} L_p(\theta, 1) &\defeq \int_{\zpe} \chi(x) x^{-1} \cdot \mu_\eta = \int_{\zpe} x^{-1} \cdot \mu_\theta.
\end{align} 

A tempting argument to study this goes as follows. Suppose $k$ is divisible by $p-1$. We know from Equation \eqref{eq:alternative} that 
\[
L_p(\theta,1-k) = \int_{\zpe}\chi(x)x^{k-1}\cdot \mu_\eta,
\]
noting that $\omega^{(1-k)-1} = \omega^{-k} = 1$ as $\omega$ has order $p-1$. Now if $k>0$, we showed above that
\[
\int_{\zpe}\chi(x)x^{k-1} \cdot \mu_\eta = \sA_{\mathrm{Res}_{\zpe}(x^{k-1}\mu_\theta)}(0) = (1-\theta(p)  p^{k-1})\big(\partial^{k-1}F_\theta\big)(0).
\]
We want to compute this for $k=0$. Identically we could try to argue that
\begin{equation}\label{eq:false argument}
L_p(\theta, 1) = \sA_{\mathrm{Res}_{\zpe}(x^{-1}\mu_\theta)}(0)= (1-\theta(p)p^{-1})\big(\partial^{-1} F_\theta\big)(0).
\end{equation}
Then, recalling that by Lemma \ref{LemmaFtheta} we have
\[ F_\theta(T) = -\frac{1}{G(\theta^{-1})} \sum_{c\in(\Z/N\Z)^\times}\frac{\theta(c)^{-1}}{(1+T)\epsilon_{N}^c - 1}, \]
we observe that, if we define  
\[
\widetilde{F}_\theta(T) = - \frac{1}{G(\theta^{-1})} \sum_{c \in (\Z / N \Z)^\times} \theta^{-1}(c) \log \big( (1 + T) \epsilon_N^c - 1 \big),
\]
then formally $\partial \widetilde{F}_\theta = F_\theta$; we will show this in the proof of Lemma \ref{lem:mu theta'} below. In particular, $\widetilde{F}_\theta$ is a good candidate for $\partial^{-1}F_\theta$. Plugging in $T=0$ and combining with \eqref{eq:false argument} would give the claimed value of $L_p(\theta,1)$. 

In order to make this reasoning rigorous, one needs to deal with the fact that $x^{-1}$ is not a well-defined operation on measures on $\Zp$, rendering $x^{-1}\mu_\theta$ ill-defined. On power series, this is captured by the indeterminacy in defining $\partial^{-1}$. In particular, $\widetilde{F}_\theta(T)$ is \emph{not} necessarily a bounded power series, so under the Mahler correspondence, does not correspond to a $p$-adic measure. However we do have the following. Recall that $\mathscr{R}^+$ (from Remark \ref{rem:analytic functions}) denotes the space of power series $\sum a_nT^n$ such that $|a_n|r^n \to 0$ for any $0\leq r <1$.

\begin{lemma}\label{lem:bounded power series}
The power series $\widetilde{F}_\theta(T)$ is an element of $\mathscr{R}^+$.
\end{lemma}

\begin{proof}
We can write
\begin{align*} \log((1 + T) \epsilon_N^c - 1) &= \log_p(\epsilon_N^c - 1) + \log\left(1 + \frac{\epsilon_N^c T}{\epsilon_N^c - 1}\right)\\
=& \log_p(\epsilon_N^c - 1) +  \sum_{n = 1}^{\infty} \frac{(-1)^{n-1}}{n} \cdot\frac{\epsilon_N^{cn}}{(\epsilon_N^c - 1)^n} T^n. 
\end{align*}
We now consider two cases. If $(N, p) = 1$, we know that $(\epsilon_N^c - 1)$ is a $p$-adic unit; then the coefficient of $T^n$ has $p$-adic valuation bounded below by $-v_p(n)$. This means the coefficients in $\widetilde{F}_\theta(T)$ have logarithmic growth, and in particular $\widetilde{F}_\theta(T) \in \mathscr{R}^+$.  More generally, suppose $N = Dp^n$ with $(D, p) = 1$. We write $\theta = \eta\chi$, with $\eta$ and $\chi$ characters of conductors $D$ and $p^n$ respectively. Then, as in Lemma \ref{lem:mahler chi} or Lemma \ref{LemmaFtheta}, we have
\[ \widetilde{F}_\theta(T) = (\widetilde{F}_\eta)_\chi(T) = - \frac{1}{G(\chi^{-1})} \sum_{c \in (\Z / p^n \Z)^\times} \chi^{-1}(c) \widetilde{F}_\eta((1 + T) \epsilon_{p^n}^c - 1). \]
As $\widetilde{F}_\eta(T) \in \mathscr{R}^+$, the same holds for $\widetilde{F}_\theta(T)$.
\end{proof}

By Theorem \ref{thm:mahler la}, $\widetilde{F}_\theta(T)$ is the Mahler transform of a locally analytic distribution $\widetilde{\mu}_\theta$ on $\Zp$. We now relate this distribution to $x^{-1}\mathrm{Res}_{\zpe}(\mu_\theta)$, as appeared in \eqref{EqLp1}.

\begin{lemma}\label{lem:mu theta'}
We have 
\[x \widetilde{\mu}_\theta = \mu_\theta.\]
In particular,
\[
    \mathrm{Res}_{\Zp^\times}(\widetilde{\mu}_\theta) = x^{-1}\mathrm{Res}_{\zpe}(\mu_\theta).
\]
\end{lemma}

\begin{proof}
The first equality can be checked on Mahler transforms. By Lemma \ref{LemmaMultiplicationbyx}, this means showing 
\begin{equation}\label{eq:mahler primitive}
\partial \widetilde{F}_\theta(T) = (1+T)\frac{d}{dT}\widetilde{F}_\theta(T) = \sA_{\mu_\theta}(T).
\end{equation}
By Lemma \ref{LemmaFtheta}, we have
\[\mathscr{A}_{\mu_\theta}(T) = F_\theta(T) = \frac{-1}{G(\theta^{-1})} \sum_{c\in(\Z/N\Z)^\times}\frac{\theta(c)^{-1}}{(1+T)\epsilon_{N}^c - 1}. \]
Then \eqref{eq:mahler primitive} follows immediately from the formula
\[ \partial \log \big( (1 + T) \epsilon_D^c - 1 \big) = \frac{(1 + T) \epsilon_D^c}{(1 + T) \epsilon_D^c - 1} = 1 + \frac{1}{(1 + T) \epsilon_D^c - 1} \] 
and the fact that $\sum_{c \in (\Z / D \Z)^\times} \theta^{-1}(c) = 0$.

To see the second equality, note that as measures on $\Zp^\times$, we visibly have $x\mathrm{Res}_{\Zp^\times}(\widetilde{\mu}_\theta) = \mathrm{Res}_{\Zp^\times}(\mu_\theta)$. It follows that $\mathrm{Res}_{\Zp^\times}(\widetilde{\mu}_\theta)  =x^{-1}\mathrm{Res}_{\Zp^\times}(\mu_\theta)$ as `multiplication by $x$' is an invertible operator on measures/distributions on $\zpe$.
\end{proof}

From the above, we now know that
\begin{equation}\label{eq:Lptheta 1,2}
L_p(\theta,1) = \sA_{x^{-1}\mathrm{Res}_{\zpe}(\mu_\theta)}(0) = \sA_{\mathrm{Res}_{\zpe}(\widetilde{\mu}_\theta)}(0) = \Big((1-\varphi\circ\psi)\widetilde{F}_\theta\Big)(0),
\end{equation}
where the last equality follows from the formula for the restriction of a distribution to $\zpe$ (cf.\ Equation \eqref{res to Zp}, and Remark \ref{rem:distribution toolbox}). We are now ready to prove Theorem \ref{s=1 theorem}(ii).

\begin{proof}\emph{(Theorem \ref{s=1 theorem}(ii))}.
We compute the right-hand side of \eqref{eq:Lptheta 1,2}. Recall from \S\ref{SubSectionphipsi} that $\varphi\circ\psi(\widetilde{F}_\theta)$ is the Mahler transform of $\mathrm{Res}_{p\Zp}(\widetilde{\mu}_\theta)$. Recall that $N = Dp^n$, and $\theta = \eta\chi$, where $\eta$ has prime-to-$p$ conductor $D$, and $\chi$ has conductor $p^n$. To compute this we break into two cases.

\begin{enumerate}
\item First assume that $n > 1$, so that $\chi \neq 1$; then, as $\chi|_{p\Zp} = 0$, we see $\widetilde{\mu}_\theta = (\widetilde{\mu}_\eta)_\chi$ is automatically supported on $\zpe$ by \eqref{eq:twist by chi}. In particular, $\mathrm{Res}_{p\Zp}(\widetilde{\mu}_\theta) = 0$, and thus $\varphi\circ\psi(\widetilde{F}_\theta) = 0$. In particular, in this case
\[
L_p(\theta,1) = \widetilde{F}_\theta(0).
\]
It is convenient to write this in the form $L_p(\theta,1) = (1-\theta(p)p^{-1})\widetilde{F}_\theta(0)$ using that $\theta(p) = 0$.

\item Now assume $n = 0$, so $N = D$ is coprime to $p$ and hence $\theta = \eta$.  By \eqref{Eqphipsi}, we know
\begin{align*}
\varphi\circ\psi(\widetilde{F}_\theta) &= \frac{1}{p}\sum_{\xi \in \mu_p} \widetilde{F}_\theta((1+T)\xi - 1)\\
&= \frac{-1}{G(\theta^{-1})} \cdot \frac{1}{p}\sum_{c\in (\Z/N\Z)^\times}\theta^{-1}(c)\sum_{\xi \in \mu_p} \log_p ((1+T)\xi\varepsilon_N^c - 1).
\end{align*}
Evaluating at $T = 0$, we get
\begin{align*}
  \varphi\circ\psi(\widetilde{F}_\theta)(0) &=   \frac{-1}{G(\theta^{-1})} \cdot \frac{1}{p}\sum_{c\in (\Z/N\Z)^\times}\theta^{-1}(c)\sum_{\xi \in \mu_p} \log_p(\xi\varepsilon_N^c - 1)\\
  &= \frac{-1}{G(\theta^{-1})} \cdot \frac{1}{p}\sum_{c\in (\Z/N\Z)^\times}\theta^{-1}(c)\log_p(\varepsilon_N^{pc} - 1) \\
  &=\frac{-1}{G(\theta^{-1})} \cdot \frac{\theta(p)}{p}\sum_{c'\in (\Z/N\Z)^\times}\theta^{-1}(c')\log_p(\varepsilon_N^{c'} - 1)\\
 &= \frac{\theta(p)}{p} \widetilde{F}_\theta(0).
\end{align*}
Here, we used that, since $p\nmid N$, the assignment $c \mapsto c' = pc$ defines an automorphism of $(\Z / N\Z)^\times$. Hence in this case we also find that 
\begin{align*}
    L_p(\theta,1) &= \Big((1-\varphi\circ\psi)\widetilde{F}_\theta\Big)(0) = \Big(1 - \theta(p)p^{-1}\Big) \widetilde{F}_\theta(0).
\end{align*}
\end{enumerate}

To complete the proof, we simply evaluate the expression $\widetilde{F}_\theta(0)$ (and use that $\log_p(x) = \log_p(-x)$ for $x \in \Cp^\times$) to find, for all $N$, that
\begin{align*}
L_p(\theta,1)    &= -\Big(1 - \theta(p)p^{-1}\Big)\frac{1}{G(\theta^{-1})} 		\sum_{c \in (\Z/N\Z)^\times} \theta^{-1}(c) \log_p\big(1- \epsilon_N^c),
\end{align*}
as claimed.
\end{proof}

\begin{remark}
Theorem \ref{s=1 theorem} has been generalised by Coleman in \cite{Colemandilog} for every positive integer value $s = k\geq 1$. More precisely, for $s, z \in \C$, let $\mathrm{Li}_s(z) \defeq \sum_{n \geq 1} \frac{z^n}{n^s}$ be the polylogarithm fuction; recall that it admits a unique analytic continuation to $\C \backslash\{ z \in \R : z \geq 1\}$. In particular, one sees that $\mathrm{Li}_s(1) = \zeta(s)$, and $\mathrm{Li}_1(z) = - \log(1 - z)$. Coleman constructed $p$-adic analogues $\mathrm{Li}_{k, p}(z)$, which are locally analytic functions on $\C_p \backslash \{1\}$, and showed:

\begin{theorem} [\cite{Colemandilog}] \label{s=k theorem} Let $\theta$ be a non-trivial Dirichlet character of conductor $N$, let $k \geq 1$ be an integer, and let $\varepsilon_N$ denote a primitive $N$th root of unity. Then:
\begin{itemize}
\item[(i)] (Classical value at $s=k$). We have
\[L(\theta,k) = \frac{1}{G(\theta^{-1})} \sum_{c \in (\Z/N\Z)^\times} \theta^{-1}(c) \mathrm{Li}_{k}(\varepsilon_N^c).\]
\item[(ii)] ($p$-adic value at $s=k$). We have
\[ L_p(\theta,k) = \big( 1 - \theta(p) p^{-k} \big) \frac{1}{G(\theta^{-1})} 		\sum_{c \in (\Z/N\Z)^\times} \theta^{-1}(c) \mathrm{Li}_{k, p}(\varepsilon_N^c).\]
\end{itemize}
\end{theorem}
\end{remark}

\begin{remark}\label{rem:gross etc}
We highlighted in Remark \ref{rem:beilinson 1} that Theorem \ref{s=1 theorem}(ii) is an instance of Perrin-Riou's $p$-adic Beilinson conjectures. More precisely, her conjectures describe special, non-critical values of $p$-adic $L$-functions of motives in terms of arithmetic data. Specialised to the case of the Kubota--Leopoldt $p$-adic $L$-function (see \cite[\S 4.3.3]{PerrinRiou}), this gives formulas for the values of $L_p(\theta, k)$ in terms of $p$-adic regulators of the cyclotomic units, special elements that we will see later; the right-hand sides of Theorems \ref{s=1 theorem}(ii) and \ref{s=k theorem}(ii) can be reinterpreted in such terms, justifying Remark \ref{rem:beilinson 1}. We refer to \cite{PR} for more details, and \cite{ColmezFonctL} for an excellent survey on all of this. 

We also mention the pioneering work of Gross \cite{GrossyntomicI}, based on Coleman's work, as well as \cite{MTsyntomic}, for another approach expressing values of Dirichlet $p$-adic $L$-functions at odd positive integers in terms of syntomic regulators and $K$-theory. 
\end{remark}


\section{The residue of $\zeta_p$ at $s = 1$}\label{sec:residue}

In the previous section, we described the value of the Dirichlet $p$-adic $L$-functions $L_p(\theta,s)$ at $s=1$ for a non-trivial Dirichlet character $\theta$.  We now turn to the value at $s=1$ of the \emph{untwisted} $p$-adic zeta function, that is, the analogue when $\theta$ is trivial. Recall the Riemann zeta function has a simple pole at $s = 1$ with residue $1$ and that, in the $p$-adic world, we have defined the $p$-adic zeta function as a pseudo-measure (rather than a measure) which implies, as explained in Remark \ref{rem:weight space rigid analytic}, that there might be a potential pole at the trivial character. We now show that there is indeed a simple pole here, and calculate its residue.

As with $L_p(\theta,s)$, it is convenient to use the language of analytic functions $\zeta_{p,i}$ from Definition \ref{DefpadicZeta}. The behaviour of $\zeta_p$ at the trivial character is captured by the behaviour of $\zeta_{p,p-1}(s)$ at $s=1$. The main result of this section is the following.

\begin{theorem}\label{thm:residue}
Let $i \in \{1, 2, \ldots, p-1 \}$. The following assertions hold:
\begin{itemize}
\item[(i)] If $i \neq p-1$, then $\zeta_{p,i}$ is analytic at $s=1$.
\item[(ii)] The function $\zeta_{p,p-1}$ has a simple pole at $s=1$ with residue $(1 - p^{-1})$. 
\end{itemize}
\end{theorem}

The proof will occupy the rest of this section.

For any $i \in \{1,2, \ldots, p-1\}$, by Definition \ref{DefpadicZeta} we have
\begin{align} 
\zeta_{p, i}(s) = \int_{Z_p^\times} \omega(x)^i \langle x \rangle^{1 - s} \cdot \zeta_p. \notag
\end{align}
Now, from Definition \ref{DefZetap}, we have 
\[
    \zeta_p = \frac{x^{-1} \mathrm{Res}_{\Z_p^\times}(\mu_a)}{\theta_a} = \frac{x^{-1} \mathrm{Res}_{\Z_p^\times}(\mu_a)}{[a]-[1]},
\]
where $a$ is any topological generator of $\Z_p^\times$. Thus by the expression \eqref{eq:integrate pseudo-measure} for evaluating pseudo-measures, we find
\begin{align}
\zeta_{p,i}(s)   &= \frac{\int_{\Z_p^\times} \omega(x)^{i} \langle x\rangle ^{1-s} x^{-1} \cdot \mu_a}{\omega(a)^{i}\langle a\rangle^{1-s} - 1}\notag\\
&=\frac{\int_{\Z_p^\times} \omega(x)^{s + i - 1} x^{-s} \cdot \mu_a}{\omega(a)^{i}\langle a\rangle^{1-s} - 1},\label{Eqtmp2}
\end{align}
where we have used $\langle x\rangle  = \omega^{-1}(x)x$ in the second equality. Let \[
    g_{a,i}(s) \defeq \omega(a)^i\langle a \rangle^{1-s} - 1
\]
be the denominator of \eqref{Eqtmp2}.

\begin{lemma}\label{lem:g p-1} The following assertions hold.
\begin{itemize}
\item[(i)] If $i \neq p-1$, then $g_{a,i}(1) \neq 0$. In particular, Theorem \ref{thm:residue}(i) holds.
\item[(ii)] We have $g_{a,p-1}(1) = 0$, and 
\[
\lim_{s\to1} (s-1)^{-1}g_{a,p-1}(s) = -\log_p(a).
\]
\end{itemize}
\end{lemma}

\begin{proof}
Since $a$ is a topological generator of $\Z_p^\times$, we see $\omega(a)$ is a primitive $(p-1)$-th root of unity. Hence the denominator $\omega(a)^{i}\langle a\rangle^{1-s} - 1$ of equation \eqref{Eqtmp2} vanishes at $s = 1$ if and only $i = p-1$. This already implies Theorem \ref{thm:residue}(i), as the expression \eqref{Eqtmp2} does not have a pole at $s=1$.

If $i = p-1$, we know $\omega(a)^{i} = 1$, so $g_{a,p-1}(1) = 0$. Moreover
\begin{align}
g_{a,p-1}(s) = \omega(a)^{p-1} \langle a \rangle^{1 - s} - 1 \notag &= \langle a \rangle^{1 - s} - 1 \notag\\
&= \sum_{n \geq 1} {{1 - s} \choose n} (\langle a \rangle - 1)^n \notag\\
&= (1 - s) \sum_{n \geq 1} \frac{1}{n} {{-s} \choose {n-1}} (\langle a \rangle - 1)^n, \label{eq:g p-1}
\end{align}
where in the last equality we have used the identity ${{1 - s} \choose n} = \frac{1-s}{n} {{-s} \choose {n-1}}$ for $n \geq 1$. The sum in \eqref{eq:g p-1} evaluates at $s = 1$ to
\begin{align*}
\sum_{n \geq 1} \frac{1}{n} {{-1} \choose {n-1}} (\langle a \rangle - 1)^{n} = \sum_{n \geq 1} \frac{(-1)^{n-1}}{n} (\langle a \rangle - 1)^n
&= \log_p(\langle a \rangle) = \log_p(a),
\end{align*}
where we have used ${ {-1} \choose {n - 1}} = (-1)^{n-1}$ (direct from Definition \ref{def:binomial polynomials}). 
We deduce that
\[ \lim_{s\to 1}[(s-1)^{-1}g_{a,s-1}(s)] = -\log_p(a), \] as claimed.
\end{proof}

Combining Lemma \ref{lem:g p-1}(ii) with \eqref{Eqtmp2}, we deduce
\begin{equation}\label{eq:zeta p residue}
\lim_{s\to1} (s-1)\zeta_{p,p-1}(s)  = -\frac{\int_{\Zp^\times} x^{-1} \cdot \mu_a}{\log_p(a)}.
\end{equation}

We calculate the numerator in this expression via similar methods to those used in \S\ref{sec:p-adic s=1}. Recall that $F_a(T) = \frac{1}{T} - \frac{a}{(1 + T)^a - 1}$ is the Mahler transform of $\mu_a$; we find a power series $\widetilde{F}_a(T)$ such that $\partial \widetilde{F}_a(T) = F_a(T)$, where $\partial = (1 + T) \frac{d}{dT}$. Then, via Lemma \ref{lem:mu theta'} and directly analogously to \eqref{eq:Lptheta 1,2}, we find
\begin{equation}\label{eq:zeta p residue 2}
\int_{\Zp^\times}x^{-1}\mu_a = \Big((1 - \varphi\circ\psi)\widetilde{F}_a\Big)(0).
\end{equation}

To this end, let
\[
\widetilde{F}_a(T) \defeq \log\left(\frac{T}{1+T} \cdot \frac{(1+T)^a}{(1+T)^a-1}\right).
\]

\begin{lemma}
Formally, we have
\[
\partial \widetilde{F}_a(T) = F_a(T).
\]
\end{lemma}

\begin{proof}
We let the reader check that
\[ \partial \log\left(\frac{(1+T)^a-1}{(1+T)^a}\right) = \frac{a}{(1+T)^a-1}. \]
In particular, taking $a=1$, we also get 
\[
\partial\log\left(\frac{T}{1+T}\right) = \frac{1}{T}.
\]
We conclude as
\begin{align*}
\partial \widetilde{F}_a(T) &= \partial\log\left(\frac{T}{1+T}\right) - \partial\log\left(\frac{(1+T)^a-1}{(1+T)^a}\right)\\
&=\frac{1}{T} - \frac{a}{(1+T)^a-1} = F_a(T). \qedhere
\end{align*}
\end{proof}

As in Lemma \ref{lem:bounded power series}, we must also check $\widetilde{F}_a(T) \in \mathscr{R}^+$ to use the Mahler correspondence.

\begin{lemma}
We have $\widetilde{F}_a(T) \in \mathscr{R}^+$.
\end{lemma}

\begin{proof}
It is convenient to note
\begin{equation}\label{eq:tilde F_a 2}
\widetilde{F}_a(T) = \log\left(\frac{T}{(1+T)^a-1} \cdot (1+T)^{a-1}\right).
\end{equation}
As in Proposition \ref{PropFaT}, write
\[ (1+T)^a-1 = aT (1 + T g(T)),  \qquad g(T) = \sum_{n \geq 2} a^{-1} {a \choose n} T^{n - 2}.\]
Recall from the proof of that proposition that we have
\begin{equation}\label{eq:poly expansion}
\frac{1}{(1+T)^a - 1} = \frac{1}{aT}(1 + T h(T)), 
\end{equation}
with 
\[
h(T) = \sum_{n \geq 1} (-1)^n T^{n-1} g(T)^n \in \Zp\lsem T\rsem.
\]
We thus see that
\[
    \frac{T}{(1+T)^a-1} = a^{-1} (1 + T h(T)),
\]
whose logarithm is given by
\begin{align}
\log\left(\frac{T}{(1+T)^a-1}\right) &= -\log_p(a) + \log(1+Th(T))\notag\\ 
&= -\log_p(a) + \sum_{n\geq 1} \frac{(-1)^{n+1}}{n} T^n h(T)^n. \label{eq:F_a tilde}
\end{align}
As in Lemma \ref{lem:bounded power series}, the coefficients here have logarithmic growth in $n$, so this lies in $\mathscr{R}^+$. Identically, $(1 + T)^{a-1} = 1 + T \sum_{n \geq 1} {{a - 1} \choose n} T^{n-1}$ also has well defined logarithm in $\mathscr{R}^+$. Adding these two elements of $\mathscr{R}^+$ yields $\widetilde{F}_a(T)$ and completes the proof.
\end{proof}

\begin{lemma}\label{lem:numerator}
We have $\big((1-\varphi\circ\psi)\widetilde{F}_{a}\big)(0) = -(1-p^{-1})\log_p(a)$.
\end{lemma}
\begin{proof}
First, by \eqref{eq:tilde F_a 2} we know that
\begin{equation}\label{eq:F_a(0)}
    \widetilde{F}_a(0) = \log\left(\frac{T}{(1+T)^a-1}\right)\Big|_{T=0}  + \log\left( (1+T)^{a-1}\right)|_{T=0} = -\log_p(a) + 0,
\end{equation}
where we use \eqref{eq:F_a tilde} to evaluate the first summand. Secondly,  we have
\begin{align*}
\varphi\circ\psi(\widetilde{F}_a)(T) &= \frac{1}{p}\sum_{\xi \in \mu_p}\widetilde{F}_a((1+T)\xi -1)\\
&= \frac{1}{p}\sum_{\xi \in \mu_p}\log\left(\frac{(1+T)\xi -1}{(1+T)\xi} \cdot \frac{(1+T)^a\xi^a}{(1+T)^a\xi^a-1}\right)\\
&= \frac{1}{p}\sum_{\xi \in \mu_p}\log\left(\frac{(1+T)\xi -1}{(1+T)^a\xi^a-1} \cdot (1+T)^{a-1}\xi^{a-1}\right).
\end{align*}
This rearranges to
\begin{align*}
\frac{1}{p}\log\left(\prod_{\xi \in \mu_p}\frac{(1+T)\xi -1}{(1+T)^a\xi^a-1} \cdot (1+T)^{a-1}\xi^{a-1}\right) &= \frac{1}{p}\log\left(\frac{(1+T)^p -1}{(1+T)^{ap}-1} \cdot (1+T)^{(a-1)p}\right).
\end{align*}
Here we simplify both terms of the fraction using $\prod_{\xi\in\mu_p}(X\xi - 1) = X^p - 1$, in the denominator noting that as $a$ is a topological generator of $\Z_p^\times$, we have $\{\xi^a : \xi \in \mu_p\} = \mu_p$. In the final term we use that $\prod_{\xi\in\mu_p}\xi^{a-1} = (\prod_{\xi \in \mu_p}\xi)^{a-1} = 1$.

Writing 
\[
    (1+T)^p-1 = pT(1+Tj(T)), \qquad \frac{1}{(1+T)^{ap}-1} = \frac{1}{apT}(1+Tk(T))
\]
analogously to \eqref{eq:poly expansion}, we find ultimately that
\[
\varphi\circ\psi(\widetilde{F}_a)(T) = \frac{1}{p}\log\left(\frac{1}{a} (1+Tj(T))(1+Tk(T)) \cdot (1+T)^{(a-1)p}\right),
\]
and the right-hand side at $T=0$ collapses to $-p^{-1}\log_p(a)$. Combining with \eqref{eq:F_a(0)},
\[
    \Big((1-\varphi\circ\psi)\widetilde{F}_a\Big)(0) = \widetilde{F}_a(0) - (\varphi\circ\psi)\widetilde{F}_a(0) = -\log_p(a) -p^{-1}\log_p(a) = -(1-p^{-1})\log_p(a),
\]
as required.
\end{proof}

Combining Equations \eqref{eq:zeta p residue} and \eqref{eq:zeta p residue 2} with Lemma \ref{lem:numerator}, we deduce that
\[ \lim_{s \to 1} (s - 1) \zeta_{p, p-1}(s) = 1 - p^{-1}, \]
completing the proof of Theorem \ref{thm:residue}(ii). \qed

\section{The $p$-adic family of Eisenstein series}\label{sec:eisenstein}

We finally take a brief detour to illustrate another example of $p$-adic variation in number theory, namely the $p$-adic variation of modular forms. In constructing the Kubota--Leopoldt $p$-adic $L$-function, we have seen many of the key ideas that go into the simplest example of this, namely the $p$-adic family of Eisenstein series, which we will illustrate below.

Let $k\geq 4$ be an even integer. The \emph{Eisenstein series of level $k$}, defined as
\[G_k(z) \defeq \sum_{\substack{c,d \in \Z\\(c,d) \neq (0,0)}} \frac{1}{(cz+d)^k},\hspace{12pt}z \in \uhp \defeq \{z\in\C: \mathrm{Im}(z) >0\}\]
can be viewed as a two-dimensional analogue of the zeta value $\zeta(k)$. It is an example of a \emph{modular form of weight $k$}. In the classical theory of modular forms, one computes the normalised Fourier expansion of such an object to be

\begin{align*}E_k(z) &\defeq \frac{G_k(z) (k-1)!}{2\cdot (2\pi i)^k} = \frac{\zeta(1-k)}{2} + \sum_{n\geq 1}\sigma_{k-1}(n)q^n,
\end{align*}
where $\sigma_{k-1}(n) = \sum_{0<d|n}d^{k-1}$ and $q = e^{2 i \pi z}$. In particular, it is a power series with rational coefficients. (This is a standard exercise; see \cite[Chapter 1.1]{DS05} for details).\\

From the definition, we see the Kubota--Leopoldt $p$-adic $L$-function as a pseudo-measure that, when evaluated at $x^k$ with $k \geq 4$ even, gives (up to an Euler factor) the constant coefficient of the Eisenstein series of weight $k$. The idea now is to find measures giving similar interpolations of the other coefficients. 
Fortunately, these are much easier to deal with: we only need interpolations of the functions $k \mapsto d^{k}$, where $k$ is varying $p$-adically. When $d$ is coprime to $p$, we do this by viewing $d$ as an element of $\zpe$ and considering the Dirac measure $\delta_d$ at $d$ (that is, evaluation at $d$). Indeed, $\int_{\zpe} x^k \cdot \delta_d = d^{k}$ for any $k \in \Z$.

When $d$ is divisible by $p$, however, we run into an immutable obstacle. There is no Dirac measure on $\Zp^\times$ corresponding to evaluation at $p$, since $p \notin \Zp^\times$. Moreover, the function $k \mapsto p^k$ can \emph{never} be interpolated continuously $p$-adically; it simply behaves too badly for this to be possible. Suppose there was indeed a measure $\theta_p$ with 
\[\int_{\Zp^\times} x^k \cdot \theta_p = p^k,\]
and then suppose $k_n$ is a strictly increasing sequence of integers $p$-adically tending to $k$. Then 
\[p^{k_n} = \int_{\Zp^\times}x^{k_n} \cdot \theta_p \longrightarrow \int_{\Zp^\times}x^{k} \cdot \theta_p = p^k,\]
which is clearly impossible since $p^{k_n}$ tends to 0.

We get around this issue by taking $p$-stabilisations to kill the coefficients at $p$.

\begin{definition}
 We define the \emph{$p$-stabilisation of $E_k$} to be 
\[E_k^{(p)}(z) \defeq E_k(z) - p^{k-1}E_k(pz).\]
An easy check shows that
\[E_{k}^{(p)} = \frac{(1-p^{k-1})\zeta(1-k)}{2} + \sum_{n\geq 1}\sigma_{k-1}^p(n)q^n,\]
where
\[\sigma_{k-1}^p(n) = \sum_{\substack{0< d|n\\ p\nmid d}} d^{k-1}.\]
Note $E_{k}^{(p)}$ is a modular form of weight $k$ and level $\Gamma_0(p) = \{\smallmatrd{a}{b}{c}{d} \in \mathrm{SL}_2(\Z) : p|c\}$.
\end{definition}

We've done all the work in proving the following result.

\begin{theorem} There exists a power series
\[\ \mathbf{E}(z) = \sum_{n\geq 0} A_n q^n \in Q(\Zp^\times)\lsem q\rsem  \]
such that:
\begin{itemize}
\item[(a)] $A_0$ is a pseudo-measure, and $A_n \in \Lambda(\Zp^\times)$ for all $n\geq 1$;
\item[(b)] For all even $k \geq 4$, we have
\[ \int_{\zpe} x^{k-1} \cdot \mathbf{E}(z) \defeq \sum_{n\geq 0} \bigg( \int_{\zpe} x^{k-1} \cdot A_n \bigg) q^n  = E_k^{(p)}(z). \]
\end{itemize}
\end{theorem}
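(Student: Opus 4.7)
The strategy is to construct each Fourier coefficient $A_n$ separately so that its $(k-1)$-th moment recovers the $n$-th Fourier coefficient of $E_k^{(p)}$, and then to assemble the generating series termwise.

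For $n\geq 1$: the $q^n$-coefficient of $E_k^{(p)}$ is $\sigma_{k-1}^p(n) = \sum_{d\mid n,\,p\nmid d} d^{k-1}$. Every divisor $d$ appearing in this sum is coprime to $p$, hence lies in $\Zp^\times$, so the Dirac measure $\delta_d$ makes sense as an element of $\Lambda(\Zp^\times)$. I would therefore define
\[
A_n \defeq \sum_{\substack{d \mid n \\ p \nmid d}} \delta_d \in \Lambda(\Zp^\times).
\]
Evaluation against $x^{k-1}$ then gives $\int_{\Zp^\times} x^{k-1}\cdot A_n = \sum_{d\mid n,\, p\nmid d} d^{k-1} = \sigma^p_{k-1}(n)$ directly from the definition of $\delta_d$, for every $k\geq 1$.

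For $n=0$: the constant term of $E_k^{(p)}$ is $\tfrac12(1-p^{k-1})\zeta(1-k)$, which by Theorem \ref{kubota leopoldt theorem} equals $\tfrac12\int_{\Zp^\times} x^k \cdot \zeta_p$. Since the statement demands $\int x^{k-1}\cdot A_0$ rather than $\int x^k\cdot A_0$, I would construct $A_0$ as an ``index-shifted'' Kubota--Leopoldt pseudo-measure, following the template of Section \ref{sec:dep on a} but without the initial multiplication by $x^{-1}$ --- so that the exponent of $a$ appearing in the interpolation is $k$ (matching $\int x^{k-1}\mathrm{Res}_{\Zp^\times}\mu_a = (-1)^{k-1}(1-p^{k-1})(1-a^k)\zeta(1-k)$) rather than $k-1$. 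Concretely, I would take
\[
A_0 \defeq \frac{1}{2}\cdot\frac{\mathrm{Res}_{\Zp^\times}(\mu_a)}{a[a]-[1]},
\]
where $\mu_a$ is the measure of Definition \ref{DefinitionMeasuremua} and $a$ is an integer chosen as in the proof of Proposition 4.5. A computation parallel to that of \S\ref{sec:dep on a}, using $\int x^{k-1}(a[a]-[1]) = a^k-1$, yields $\int x^{k-1}\cdot A_0 = \tfrac12(1-p^{k-1})\zeta(1-k)$ for all even $k\geq 4$; independence from $a$ then follows from Lemma \ref{zero divisor}(iii) exactly as for $\zeta_p$.

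Once the $A_n$ are in hand, the power series $\mathbf{E}(z)\defeq\sum_{n\geq 0} A_n\,q^n$ lives in $Q(\Zp^\times)\lsem q\rsem$, and comparing coefficients gives
\[
\int_{\Zp^\times} x^{k-1}\cdot\mathbf{E}(z) = \frac{(1-p^{k-1})\zeta(1-k)}{2} + \sum_{n\geq 1}\sigma^p_{k-1}(n)\,q^n = E_k^{(p)}(z),
\]
as required. The main technical point --- and the only place where real care is needed --- is verifying that the $A_0$ defined above genuinely fits the definition of pseudo-measure of \S\ref{sec:pseudo-measures}: Lemma \ref{lem:pseudo-measure existence} does not apply verbatim, because the shift has replaced the augmentation-ideal generator $\theta_a = [a]-[1]$ by $a[a]-[1]$, which is not in $I(\Zp^\times)$. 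The resolution is to observe that the numerator $\mathrm{Res}_{\Zp^\times}(\mu_a)$ carries a compensating zero in the isotypic components where $a[a]-[1]$ fails to be a unit (reflecting the vanishing $(1-p^{k})\zeta(-k)=0$ at $k=0$ in the trivial component), so that the quotient lies in $Q(\Zp^\times)$ with pole structure precisely killed by $[g]-[1]$; this is the step I expect to require the most bookkeeping.
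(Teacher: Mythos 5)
Your construction agrees with the paper's: for $n\geq 1$ you take exactly the paper's $A_n = \sum_{d\mid n,\, p\nmid d}\delta_d$, and your $A_0 = \tfrac12\, \mathrm{Res}_{\Zp^\times}(\mu_a)/(a[a]-[1])$ equals the paper's $A_0 = \tfrac12 x\zeta_p$ --- multiplication by $x$ is an algebra automorphism of $Q(\Zp^\times)$ carrying $x^{-1}\mathrm{Res}_{\Zp^\times}(\mu_a)$ to $\mathrm{Res}_{\Zp^\times}(\mu_a)$ and $[a]-[1]$ to $a[a]-[1]$, so applying it to $\zeta_p$ as defined in \S\ref{sec:dep on a} produces exactly your formula. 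The interpolation check in part (b) is likewise identical to the paper's.

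The ``bookkeeping'' step you flag at the end, however, does not go through as you describe, and it is worth seeing precisely why. The pole of $A_0$ sits at the character $x^{-1}$: from $\int_{\Zp^\times} x^j\cdot A_0 = \tfrac12(1-p^j)\zeta(-j)$ it blows up at $j=-1$, i.e.\ in the $\omega^{-1}$-isotypic component. But $[g]-[1]$ vanishes at the \emph{trivial} character only; indeed $\int_{\Zp^\times} x^{-1}\cdot([g]-[1]) = g^{-1}-1\neq 0$ for $g\neq 1$. So $([g]-[1])A_0$ retains a pole for every nontrivial $g$, and the ``compensating zero'' you invoke --- the vanishing of $(1-p^k)\zeta(-k)$ at $k=0$, which lives in the trivial component --- is not located where $a[a]-[1]$ vanishes and hence cancels nothing relevant. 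Read against the Definition in \S\ref{sec:pseudo-measures}, $A_0$ is therefore \emph{not} a pseudo-measure; what the $x$-twist of the pseudo-measure property actually yields is $(g[g]-[1])A_0\in\Lambda(\Zp^\times)$ for all $g$, since $g[g]-[1]$ is the image of $[g]-[1]$ under the automorphism $x$ and does vanish at $x^{-1}$. None of this is something your argument should be expected to repair: the paper's own proof merely asserts ``$A_0$ is simply the pseudo-measure $x\zeta_p/2$'' without justification, so item (a) of the statement is, taken literally, an imprecision of the paper. The construction and the interpolation property (b), which are the real content, are correct, and your proof of them matches the paper's.
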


\begin{proof}
The pseudo-measure $A_0$ is simply $x\zeta_p/2$ (shifting by 1 again, but in the opposite direction to before). We then define 
\[A_n = \sum_{\substack{0< d|n\\ p\nmid d}} \delta_d \in \Lambda(\Zp^\times).\]
By the interpolation property of the Kubota--Leopoldt $p$-adic $L$-function, $A_0$ interpolates the constant term of the Eisenstein series. We also have
\begin{align*}\int_{\Zp^\times}x^{k-1}\cdot A_n &= \sum_{\substack{0< d|n\\ p\nmid d}} \int_{\Zp^\times} x^{k-1} \cdot \delta_d = \sum_{\substack{0< d|n\\ p\nmid d}}  d^{k-1} = \sigma_{k-1}^{p}(n),
\end{align*}
so we get the required interpolation property.
\end{proof}


\begin{remarks}\leavevmode
\begin{enumerate}

\item The power series $\mathbf{E}(z)$ is an example of a \emph{$\Lambda$-adic modular form}. In particular, it can be colloquially described as the statement:
\begin{quote}
``Eisenstein series vary $p$-adically continuously as you change the weight; if $k$ and $k'$ are close $p$-adically, then the Fourier expansions of $E_k$ and $E_{k'}$ are close $p$-adically.''
\end{quote}
The theory of $p$-adic modular forms, and in particular the construction and study of $p$-adic families of Eisenstein series, was introduced by Serre \cite{SerreFormesModulairesetZeta} to give a new construction of the $p$-adic zeta function of a totally real number field.  Indeed, the main idea of Serre's paper (cf.\ \cite[Corollaire 2]{SerreFormesModulairesetZeta}) was to show that if one can interpolate all of the non-constant coefficients -- which, as we saw above, is quite simple -- then this automatically gives an interpolation of the constant term, namely the $p$-adic zeta function, which is much more difficult to interpolate directly.

	\item These results are often presented instead using the weight space $\cW$ from Remark \ref{rem:weight space rigid analytic}. The integers are naturally a subset of $\mathcal{W}(\Cp)$ via the maps $\kappa_k : x \mapsto x^k$, and two integers $k, k'$ lie in the same unit ball if and only if $k \equiv k' \newmod{p-1}$. Let $\roi^+(\cW)$ be the space of bounded rigid analytic functions on $\cW$ (corresponding to measures on $\Zp^\times$), and $\mathcal{Q}(\cW)$ the space of rigid meromorphic functions on $\cW$ with a possible (simple) pole only at the trivial character (corresponding to pseudo-measures on $\Zp^\times$). Then we can view $\mathbf{E}$ as a power series $\mathbf{E}(q) = \sum_{n \geq 0} B_n q^n \in \mathcal{Q}(\cW)\lsem q\rsem$, with $B_n \in \roi^+(\cW)$ for all $n>0$, such that for all $k\geq 4$, we have $\mathbf{E}(q)(\kappa_k)\defeq \sum_{n\geq 0}B_n(\kappa_k) q^n = E_k^{(p)}(z)$. Hence we see $\mathbf{E}$ as a $p$-adic interpolation of the Eisenstein series over the weight space.

\item These two remarks go hand in hand. Indeed, pioneering work of Hida went much further on the study of $p$-adic weight families of modular forms, showing that similar families (known as \emph{Hida families}) exist for far more general modular forms. His work has been vastly generalised to the theory of Coleman families and eigenvarieties, parametrising the $p$-adic variation of modular/automorphic forms over appropriate weight spaces. Such families have important applications to the construction and study of $p$-adic $L$-functions; notable constructions in this direction are given in \cite{PS12, Bel12}. For a flavour of the theory of Hida and Coleman families of modular forms, see the books \cite{HidE} or \cite{Eigenbook}. 
\end{enumerate}
\end{remarks}

\vspace{40pt}

 
\begin{center}\scshape{{\LARGE Part II: Iwasawa's Main Conjecture}}\\[20pt]
\end{center}
\addcontentsline{toc}{part}{Part II: Iwasawa's Main Conjecture}

The second part of this work is devoted to the motivation, formulation and study of Iwasawa's Main Conjecture. We will start by studying the Coleman map, a map between towers of local units and $p$-adic measures. This gives a connection between the tower of cyclotomic units -- historically important for their connection to class numbers -- and the Kubota--Leopoldt $p$-adic $L$-function $\zeta_p$ from Part I, and hence a new arithmetic construction of $\zeta_p$ (Theorem \ref{thm:coleman to kl}). This construction can be seen as an arithmetic manifestation of the Euler product expression of the zeta function, and this point of view has led to beautiful generalisations now known as the theory of Euler Systems. We then prove a theorem of Iwasawa (Theorem \ref{thm:iwasawa 2}), relating the zeros of the $p$-adic $L$-function to arithmetic information in terms of units. Using these two results and class field theory, we will naturally arrive at the formulation and proof of (a special case of) the Main Conjecture (Theorem \ref{IMC}).

\section{Notation}\label{sec:coleman map notation}
Our study of the Iwasawa Main Conjecture requires a certain amount of notation, which we introduce straight away for convenience. The following should be used as an index of the key notation, and the reader is urged to consult the definition of new objects as they appear in the text. 

Let $p$ be an odd prime. Throughout this section, we work with coefficient field $L = \Qp$. For $n \in \N$, write
\[ F_n \defeq \Q(\mu_{p^n}), \;\;\; F_n^+ \defeq \Q(\mu_{p^n})^+; \]
\[ \mathscr{V}_n \defeq \mathscr{O}_{F_n}^\times, \;\;\; \mathscr{V}_n^+ \defeq \mathscr{O}_{F_n^+}^\times; \]
\[ K_n \defeq \qp(\mu_{p^n}), \;\;\; K_n^+ \defeq \qp(\mu_{p^n})^+; \]
\[ \mathscr{U}_n \defeq \mathscr{O}_{K_n}^\times, \;\;\; \mathscr{U}_n^+ \defeq \mathscr{O}_{K_n^+}^\times, \]
where $(-)^+$ denotes the maximal totally real subfield (i.e.\ the fixed points under complex conjugation). The extensions $F_n / \Q$, $K_n / \qp$, $F^+_n / \Q$ and $K_n^+ / \qp$ are Galois and totally ramified at $p$ (the first two of degree $(p - 1)p^{n - 1}$ and the last two of degree $\frac{p - 1}{2} p^{n - 1}$) and we denote by $\mathfrak{p}_n$ (resp.\  $\mathfrak{p}_n^+$) the unique prime ideal of $F_n$ (resp.\ $F_n^+$) above the rational prime $p$. We let
\[ F_\infty = \Q(\mu_{p^\infty}) = \bigcup_{n\geq 1}F_n, \;\;\; F_\infty^+ \defeq (F_\infty)^+ = \bigcup_{n\geq 1}F_n^+, \]
\[ K_\infty = \Qp(\mu_{p^\infty}) = \bigcup_{n\geq 1}K_n, \;\;\; K_\infty^+ \defeq (K_\infty)^+ = \bigcup_{n\geq 1}K_n^+, \]
and denote  by $\pri$ (resp.\ $\pri^+$) the unique prime of $F_\infty$ (resp.\ $F^+_\infty$) above $p$. 

Write $\GG \defeq \Gal(F_\infty/\Q)$, $\GG^+ \defeq \Gal(F_\infty^+ / \Q) = \GG / \langle c \rangle$, where $c$ denotes the complex conjugation.  Since $\Gal(F_n/\Q)$ sends a primitive $p^n$th root of unity to a primitive $p^n$th root of unity, one deduces an isomorphism
\[ \chi_n : \Gal(F_n/\Q) \isorightarrow (\Z/p^n\Z)^\times\]
determined by the identity
\[\sigma(\xi) = \xi^{\chi_n(\sigma)},\]
for $\sigma \in \Gal(F_n/\Q)$ and $\xi \in \mu_{p^n}$ any primitive $p^n$th root of unity. By infinite Galois theory,
\begin{equation}\label{eq:cyclo char}
	 \GG = \Gal(F_\infty/\Q) \defeq \varprojlim_n \Gal(F_n/\Q) \isorightarrow \varprojlim_n (\Z/p^n\Z)^\times \cong \Zp^\times,
	 \end{equation}
via the \emph{cyclotomic character} $\chi \defeq \varprojlim \chi_n$. Note $\chi$ induces an isomorphism $\GG^+ \cong \zpe / \{ \pm 1 \}$. \\

We also define
\begin{equation}\label{eq:U1} \mathscr{U}_{n, 1} \defeq \{ u \in \mathscr{U}_n \; : \; u \equiv 1 \text { (mod } \mathfrak{p}_n) \}, \;\;\; \mathscr{U}_{n, 1}^+ \defeq \mathscr{U}_{n, 1} \cap \mathscr{U}_n^+. 
\end{equation}
The subsets $\mathscr{U}_{n,1}$ and $\mathscr{U}_{n,1}^+$ are important as they have the structure of $\zp$-modules (indeed, if $u \in \mathscr{U}_{n, 1}$ or $\mathscr{U}_{n, 1}^+$ and $a \in \zp$, then $u^a = \sum_{k \geq 0} {a \choose k} (u - 1)^k$ converges). By contrast, the full local units $\mathscr{U}_n$ and $\mathscr{U}_n^+$ are only $\Z$-modules. 

In general, our notation satisfies the following logic: if $X_n$ is any subgroup of $\mathscr{U}_n$, then we let $X_n^+ = X_n \cap \mathscr{U}_n^+$, $X_{n, 1} = X_n \cap \mathscr{U}_{n, 1}$ and $X_{n, 1}^+ = X_n^+ \cap \mathscr{U}_{n, 1}^+$. Observe that, since $\mathscr{V}_n \subseteq \mathscr{U}_n$, the same applies for any subgroup $X_n$ of $\mathscr{V}_n$.

It will be essential for our constructions and methods to consider these modules at all levels simultaneously. We define
\begin{equation}\label{eq:Uinfty 1} \mathscr{U}_{\infty} \defeq \varprojlim_n \mathscr{U}_n, \;\;\; \mathscr{U}_{\infty, 1} \defeq \varprojlim_n \mathscr{U}_{n, 1}; \end{equation}
\[ \mathscr{U}_{\infty}^+ \defeq \varprojlim_n \mathscr{U}_n^+, \;\;\; \mathscr{U}_{\infty, 1}^+ \defeq \varprojlim_n \mathscr{U}_{n, 1}^+, \]
where all limits are taken with respect to the norm maps. All of these infinite level modules are compact $\zp$-modules (since they are inverse limits of compact $\zp$-modules) and moreover they are all endowed with natural continuous actions of $\GG = \mathrm{Gal}(F_\infty / \Q)$ or $\GG^+ = \mathrm{Gal}(F_\infty^+ / \Q)$. Accordingly, they are endowed with continuous actions of the Iwasawa algebras $\Lambda(\GG)$ or $\Lambda(\GG^+)$ (which is the primary reason for passing to infinite level objects).

We fix once and for all a compatible system of roots of unity $(\xi_{p^n})_{n \in \N}$, that is, a sequence where $\xi_{p^n}$ is a primitive $p^n$th root of unity such that $\xi_{p^{n+1}}^p = \xi_{p^n}$ for all $n \in \N$. We let $\pi_n = \xi_{p^n} - 1$, which is a uniformiser of $K_n$. 


\section{The Coleman map}\label{sec:coleman map}


In this section we prove a theorem of Coleman relating local units to power series over $\Zp$. Using this result, we construct in \S\ref{sec:coleman map} the \emph{Coleman map}, a device for constructing $p$-adic $L$-functions from the data of a compatible system of units. We will explain how the Kubota--Leopoldt $p$-adic $L$-function can be constructed from towers of cyclotomic units using the Coleman map. This map thus provides an important bridge between analytic objects ($p$-adic $L$-functions) and arithmetic structures (cyclotomic units), and will serve as the key step in our formulation of the Main Conjecture.

In \S\ref{sec:pr log}, we discuss a program started by Perrin-Riou to generalise Coleman's work. Given a $p$-adic Galois representation, Perrin-Riou's big logarithm maps construct a $p$-adic $L$-function from certain compatible systems of cohomology classes. Specialising to the representation $\Qp(1)$, her map recovers the Coleman map. The results of this section are thus a prototype for studying $p$-adic $L$-functions in a larger, more conceptual framework.

\subsection{Notation and Coleman's theorem}\label{sec:notation coleman}
Recall that $K_n = \Qp(\mu_{p^n})$ and $K_\infty = \Qp(\mu_{p^\infty})$ are the local versions of $F_n = \Q(\mu_{p^n})$ and $F_\infty = \Q(\mu_{p^\infty})$. We also defined
\[\mathscr{U}_n = \roi_{K_n}^\times\]
to be the module of local units at level $n$, took a compatible system ($\xi_{p^n}$) of primitive $p^n$th roots of unity, and defined $\pi_n \defeq \xi_{p^n}-1$, a uniformiser of $K_n$. Recall that we defined
\[\UU_\infty \defeq \varprojlim_n \UU_n,\]
where the projective limit is taken with respect to the norm maps $N_{n,n-1} : K_n \rightarrow K_{n-1}$.

Let us first motivate Coleman's theorem. The elements $\pi_n$ give a sequence of elements in the open unit ball 
\[
B(0,1) = \{z \in \C_p : |z|<1\},
\]
that approaches the boundary $\{|z| = 1\}$ as $n\to \infty$. Now, recall from Remark \ref{rem:analytic functions} that an element $f \in \Zp\lsem T\rsem$ can be viewed as a bounded rigid analytic function on $B(0,1)$; so any such $f$ gives a sequence $f(\pi_n)$ of elements of $\roi_{K_n}$. As we shall see below, the Weierstrass preparation theorem implies that $f$ is uniquely determined by this sequence. 

In the spirit of earlier chapters, where we sought analytic objects interpolating collections of specific values, it is natural to ask which sequences arise in this way. Precisely, given a sequence $\{u_n \in \roi_{K_n}\}$, can it be interpolated by a power series $f$, in the sense that $f(\pi_n) = u_n$ for all $n$? Coleman's theorem gives a positive answer to this question for norm-compatible sequences of units $(u_n)_{n \in \N} \in \UU_\infty$.

We begin with a simple observation, for a single fixed $n$.

\begin{lemma}
Let $u \in \UU_n$ be a local unit at level $n$. There exists a power series $f \in \Zp\lsem T\rsem^\times$ such that $f(\pi_n) = u.$
\end{lemma}
\begin{proof}
This is essentially immediate from the fact that $\pi_n$ is a uniformiser. Indeed, $K_n$ is totally ramified, so one can choose some $a_0 \in \Zp$ such that
\[a_0 \equiv u \newmod{\pi_n},\]
and then $a_1 \in \Zp$ such that
\[a_1 \equiv \frac{u - a_0}{\pi_n} \newmod{\pi_n},\]
and so on, defining $f(T) = \sum_n a_n T^n \in \Zp\lsem T\rsem.$ By construction, $f(\pi_n) = u$.  As $u$ is a unit, we have $a_0 \in \Zp^\times$. It's then an exercise to see $f \in \Zp\lsem T\rsem^\times$ is invertible.
\end{proof}

The problem with this proposition is that such a power series $f$ is far from being unique, since we had an abundance of choices for each coefficient. In the usual spirit of Iwasawa theory, Coleman realised that it was possible to solve this problem by passing to the infinite tower $K_\infty$, considering all $n$ simultaneously. 

\begin{theorem} [Coleman] \label{thm:coleman power series}
There exists a unique injective homomorphism
\begin{align*}
\mathscr{U}_\infty &\longrightarrow \zp\lsem T\rsem^\times  \\
u &\longmapsto f_u
\end{align*}
of multiplicative groups such that $f_u(\pi_n) = u_n$ for all $u \in \mathscr{U}_\infty$ and $n\geq 1$.
\end{theorem}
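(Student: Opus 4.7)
The plan hinges on the \emph{Coleman norm operator} $\mathcal{N} \colon \Zp\lsem T\rsem \to \Zp\lsem T\rsem$, defined as the unique power series satisfying
\[
	\varphi(\mathcal{N}f)(T) = \prod_{\zeta \in \mu_p} f\bigl(\zeta(1+T) - 1\bigr),
\]
where $\varphi(g)(T) = g((1+T)^p - 1)$ is the injective operator from \S\ref{SubSectionphipsi}. First I would check that $\mathcal{N}$ is well-defined: the product on the right has coefficients in $\Zp$ by Galois invariance under $\mathrm{Gal}(\qp(\mu_p)/\qp)$, and is invariant under the substitution $T \mapsto \zeta'(1+T) - 1$ for any $\zeta' \in \mu_p$ (by reindexing), so it depends only on $(1+T)^p$ and thus lies in the image of $\varphi$; injectivity of $\varphi$ pins $\mathcal{N}f$ down. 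The critical compatibility is then
\[
	(\mathcal{N}f)(\pi_{n-1}) = N_{K_n/K_{n-1}}\bigl(f(\pi_n)\bigr),
\]
which follows on evaluating $\varphi(\mathcal{N}f)$ at $\pi_n$ (using $(1+\pi_n)^p - 1 = \pi_{n-1}$) and noting that $\{\zeta\xi_{p^n} - 1 : \zeta \in \mu_p\}$ is precisely the Galois orbit of $\pi_n$ over $K_{n-1}$.

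For existence, given $u = (u_n) \in \mathscr{U}_\infty$, one picks for each $m$ some $g_m \in \Zp\lsem T\rsem^\times$ with $g_m(\pi_m) = u_m$, possible since $\pi_m$ is a uniformiser of the totally ramified extension $K_m/\qp$. By the compatibility above and the norm-compatibility of $u$, the power series $\mathcal{N}^k g_{n+k}$ satisfies $(\mathcal{N}^k g_{n+k})(\pi_n) = u_n$ for each fixed $n$ and every $k \geq 0$. The heart of the argument, and the main technical obstacle, is to show that the sequence $\mathcal{N}^k g_{N+k}$ (for some fixed large $N$) converges coefficientwise in $\Zp\lsem T\rsem$ to a single power series $f_u$. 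Convergence rests on a contraction estimate for $\mathcal{N}$: roughly, if two power series agree modulo $p^a$ and modulo $\omega_m(T) \defeq (1+T)^{p^m}-1$, then after applying $\mathcal{N}$ they agree to higher $p$-adic accuracy modulo $\omega_{m-1}$, so iterating $\mathcal{N}$ forces the coefficients to stabilise $p$-adically. Granted this, continuity of evaluation at $\pi_n$ yields $f_u(\pi_n) = u_n$ for every $n$.

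Uniqueness is the clean part: if $f$ and $g$ both represent $u$, then $f - g \in \Zp\lsem T\rsem$ vanishes at every $\pi_n$, and since $v_p(\pi_n) = 1/((p-1)p^{n-1}) \to 0$, the sequence $(\pi_n)$ accumulates at $0$ in the open unit disc of $\cp$. By Weierstrass preparation, any nonzero element of $\Zp\lsem T\rsem$ factors as $p^a$ times a unit times a distinguished polynomial, and so has only finitely many zeros in the open unit disc; hence $f = g$. Multiplicativity then comes for free, since $f_u f_v$ and $f_{uv}$ both represent $uv$; and injectivity is immediate, since $f_u = 1$ forces $u_n = f_u(\pi_n) = 1$ for all $n$.
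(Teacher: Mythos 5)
Your proposal follows the same route as the paper: you introduce the Coleman norm operator $\mathcal{N}$ via $\varphi\circ\mathcal{N}$, translate $\mathcal{N}$-compatibility of power series into norm-compatibility of values, prove uniqueness by Weierstrass preparation (a nonzero power series in $\Zp\lsem T\rsem$ has finitely many zeros, but the $\pi_n$ accumulate at $0$), and obtain existence by iterating $\mathcal{N}$ on rough lifts. The definition of $\mathcal{N}$, the compatibility $\mathcal{N}f(\pi_{n-1}) = N_{K_n/K_{n-1}}(f(\pi_n))$, and the uniqueness/multiplicativity/injectivity endgame are all as in the paper.

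The one genuine gap is in the existence step. You assert that the sequence $h_k \defeq \mathcal{N}^k g_{N+k}$ converges, citing a contraction estimate for $\mathcal{N}$. But each step $h_k \to h_{k+1}$ both applies an extra $\mathcal{N}$ \emph{and} replaces the base lift $g_{N+k}$ by $g_{N+k+1}$; the two intermediate series $\mathcal{N}g_{N+k+1}$ and $g_{N+k}$ agree only at the single point $\pi_{N+k}$, which is a congruence modulo $\Phi_{p^{N+k}}(1+T)$, not modulo any positive power of $p$. The contraction Lemma used in the paper (its analogue of your estimate) controls $\mathcal{N}^{k_2}f - \mathcal{N}^{k_1}f$ for a \emph{fixed} $f$, and does not by itself propagate a congruence modulo a cyclotomic polynomial into a $p$-adic congruence; a quick computation modulo $p$ shows the $T$-order of vanishing of $h_k/h_{k+1}-1$ is not improved by $\mathcal{N}$, so your sequence need not be Cauchy as stated. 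The paper avoids this entirely: it forms $g_m = \mathcal{N}^{2m}f_m$ for a \emph{single} lift $f_m$ at each level, observes that $g_m(\pi_n)\to u_n$ for each fixed $n$ by the one-series contraction, and then invokes compactness of $\Zp\lsem T\rsem$ (a countable product of $\Zp$) to extract a convergent \emph{subsequence}, whose limit is then the Coleman power series. You should either fall back on this compactness argument, or prove the sharper two-variable contraction lemma (congruences modulo $(p^a,\omega_m)$ improve under $\mathcal{N}$ to modulo $(p^{a+1},\omega_{m-1})$), which is true but a nontrivial statement that your sketch leaves unproven.
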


Coleman actually proved something stronger. He described a precise subspace of $\Zp\lsem T\rsem$ in which the associated interpolating power series $f_u$ lives; it is invariant under a certain norm operator $\cN$ on $\Zp\lsem T\rsem$. In particular, norm-compatibility on the right-hand side translates into norm-invariance on the left-hand side. We will prove all of this in \S\ref{sec:proof of Coleman} below.

First, though, we study an important application, explaining how this theorem is related to the Kubota--Leopoldt $p$-adic $L$-function.

\subsection{Example: cyclotomic units} \label{sec:cyclo units coleman}

Let $a \in \Z$ prime to $p$, and define
\[c_n(a) \defeq \frac{\xi_{p^n}^a -1}{\xi_{p^n} - 1} \in \UU_n.\]
 This is indeed a unit, as both $\xi_{p^n}^a -1$ and $\xi_{p^n} - 1$ are uniformisers in $K_n$.

\begin{lemma}
We have $c(a) \defeq (c_n(a))_n \in \UU_\infty.$
\end{lemma}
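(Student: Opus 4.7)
The plan is to verify the two conditions defining membership in $\mathscr{U}_\infty = \varprojlim_n \mathscr{U}_n$: first, that each $c_n(a)$ is a unit in $\mathscr{O}_{K_n}$; and second, that the family is norm-compatible, i.e.\ $N_{K_{n+1}/K_n}(c_{n+1}(a)) = c_n(a)$ for all $n \geq 1$.

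For the first point, I would exploit that $\gcd(a, p) = 1$, so one may pick $b \in \Z$ with $ab \equiv 1 \pmod{p^n}$. Then $\xi_{p^n}^{ab} = \xi_{p^n}$, and the element $\frac{\xi_{p^n}^{ab}-1}{\xi_{p^n}^a - 1}$ lies in $\mathscr{O}_{K_n}$ (it is a sum $1 + \xi_{p^n}^a + \cdots + \xi_{p^n}^{a(b-1)}$), providing an explicit two-sided inverse for $c_n(a)$.

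For the second point, I would use the identification $\Gal(K_{n+1}/K_n) \cong \{1 + jp^n : 0 \le j \le p-1\} \subset (\Z/p^{n+1}\Z)^\times$, so that a Galois element indexed by $j$ sends $\xi_{p^{n+1}}$ to $\xi_{p^{n+1}}\cdot \zeta_j$ where $\zeta_j \defeq \xi_{p^{n+1}}^{jp^n}$ runs through $\mu_p$ as $j$ varies. Since $a$ is prime to $p$, $j \mapsto aj \pmod{p}$ permutes $\Z/p\Z$, hence
\[ N_{K_{n+1}/K_n}(\xi_{p^{n+1}}^a - 1) = \prod_{\zeta \in \mu_p}\big(\xi_{p^{n+1}}^a \zeta - 1\big) = \xi_{p^{n+1}}^{ap} - 1 = \xi_{p^n}^a - 1, \]
using the elementary identity $\prod_{\zeta \in \mu_p}(X\zeta - 1) = X^p - 1$ (valid for odd $p$). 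Applying this with $a$ and with $a = 1$ and taking the quotient gives $N_{K_{n+1}/K_n}(c_{n+1}(a)) = c_n(a)$, as required.

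Neither step presents a serious obstacle; the content of the argument lies entirely in the product identity $\prod_{\zeta \in \mu_p}(X\zeta - 1) = X^p - 1$, which is precisely what makes cyclotomic units a natural and abundant source of norm-compatible systems of local units — the reason they are such a useful input to Coleman's map.
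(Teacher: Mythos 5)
Your proof is correct and follows the same route as the paper: the entire content is the identity $\prod_{\eta\in\mu_p}(X\eta-1)=X^p-1$, applied via the minimal polynomial of $\xi_{p^{n+1}}$ over $K_n$ to the numerator ($b=a$) and the denominator ($b=1$) of $c_{n+1}(a)$ separately, followed by multiplicativity of the norm. The one thing you do in addition is spell out why each $c_n(a)$ lies in $\mathscr{U}_n=\mathscr{O}_{K_n}^\times$ — a point the paper simply builds into the displayed definition $c_n(a)\in\mathscr{U}_n$ — and your explicit geometric-series inverse using $ab\equiv 1\pmod{p^n}$ is a clean way to see it, though not load-bearing for the argument.
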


\begin{proof}
	This is equivalent to proving that $N_{n,n-1}(c_n(a)) = c_{n-1}(a).$ Since the minimal polynomial of $\xi_{p^n}$ over $K_{n-1}$ is $X^p - \xi_{p^{n-1}}$, for any $b$ prime to $p$ we see that
	\begin{equation*}N_{n,n-1}(\xi_{p^n}^b - 1) = \prod_{\eta \in \mu_p}(\xi_{p^n}^b \eta - 1)
		= \xi_{p^n}^{bp} - 1 = \xi_{p^{n-1}}^b -1,
	\end{equation*}
	where in the penultimate equality we have used the identity $X^p - 1 = \prod_{\eta \in \mu_p} (X \eta - 1)$. Applying this with $b=a$ shows the numerator of $c(a)$ is norm-compatible, and with $b=1$ the denominator. We conclude as norm is multiplicative.
\end{proof}

It is possible to write down $f_{c(a)} \in \Zp\lsem T\rsem^\times$ directly by inspection. Indeed, we see that
\[
f_{c(a)}(T) = \frac{(1+T)^a - 1}{T}
\]
satisfies the required property (and $f_{c(a)}$ is even a polynomial). We now connect this to the construction studied in \S\ref{kl}. Recall the operator $\partial = (1+T)\tfrac{d}{dT}$ from Lemma \ref{LemmaMultiplicationbyx}.

\begin{proposition}\label{prop:coleman zetap}
We have
\[\partial \log f_{c(a)}(T) = a - 1 - F_a(T),\]
where $F_a(T)$ is the power series defined in Lemma \ref{lem:define F_a}.
\end{proposition}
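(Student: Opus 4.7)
The plan is to prove this identity by direct computation, exploiting the explicit formula $f_{c(a)}(T) = \frac{(1+T)^a - 1}{T}$ established just above. First I would split the logarithm:
\[ \log f_{c(a)}(T) = \log\bigl((1+T)^a - 1\bigr) - \log T, \]
interpreting both summands formally (or, equivalently, as convergent expressions in an appropriate completion; the identity we seek is one of formal power series, so we only need the logarithmic derivative, which has an unambiguous formal meaning as $f_{c(a)}'/f_{c(a)}$).

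Next I would apply $\partial = (1+T)\tfrac{d}{dT}$ term by term. A straightforward chain-rule computation gives
\[ \partial \log\bigl((1+T)^a - 1\bigr) = \frac{a(1+T)^a}{(1+T)^a - 1}, \qquad \partial \log T = \frac{1+T}{T} = 1 + \frac{1}{T}. \]
The key algebraic manipulation is then to rewrite the first summand via the trivial identity $a(1+T)^a = a\bigl((1+T)^a - 1\bigr) + a$, which yields
\[ \frac{a(1+T)^a}{(1+T)^a - 1} = a + \frac{a}{(1+T)^a - 1}. \]

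Substituting back and rearranging gives
\[ \partial \log f_{c(a)}(T) = a + \frac{a}{(1+T)^a - 1} - 1 - \frac{1}{T} = (a - 1) - \left(\frac{1}{T} - \frac{a}{(1+T)^a - 1}\right), \]
and the parenthesised expression is precisely $F_a(T)$ from Lemma~\ref{define F_a}. There is no substantive obstacle here: the argument is a three-line calculation, and the only mild subtlety is keeping track of the fact that neither $\log((1+T)^a - 1)$ nor $\log T$ lies in $\Zp\lsem T\rsem$ individually, but their difference (and in particular its image under $\partial$) does, which is guaranteed by the fact that $f_{c(a)} \in 1 + T\Zp\lsem T\rsem^\times$ modulo a unit — ensuring the logarithmic derivative $\partial \log f_{c(a)} = \partial f_{c(a)} / f_{c(a)}$ makes sense as a genuine element of $\Zp\lsem T\rsem$.
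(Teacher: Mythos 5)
Your proof is correct and follows exactly the same three-line computation as the paper: split the logarithm, apply $\partial$ to each piece, and rearrange $\frac{a(1+T)^a}{(1+T)^a-1}$ to isolate $F_a(T)$. The extra remark about the formal sense of the logarithmic derivative is sensible but not needed; the paper takes it for granted.
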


\begin{proof}
We compute directly that
\begin{align*}\partial \log f_{c(a)} &=   \partial \log \big( (1+T)^a - 1\big) - \partial\log(T)\\
&=  \frac{a(1+T)^a}{(1+T)^a - 1} - \frac{T+1}{T}\\
&= a - 1 - \frac{1}{T} + \frac{a}{(1+T)^a - 1} \\
&= a - 1 - F_a(T).\qedhere
\end{align*}
\end{proof}

\begin{lemma}\label{lem:relate cyclo to mua}
We have
\[
	\mathrm{Res}_{\Zp^\times}(\mu_{\partial\log f_{c(a)}}) = -\mathrm{Res}_{\Zp^\times}(\mu_a),
	\]
where $\mu_a$ is the measure of Definition \ref{DefinitionMeasuremua}.
\end{lemma}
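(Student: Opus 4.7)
My plan is to reduce the equality to the fact that the difference between the two power series is a constant, and that constant power series correspond to measures supported at $0 \notin \Zp^\times$.

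By Proposition \ref{prop:coleman zetap}, the power series $\partial\log f_{c(a)}(T)$ differs from $-F_a(T) = -\mathscr{A}_{\mu_a}(T)$ by the constant $a - 1$. Since the Mahler transform is an $\roi_L$-algebra isomorphism $\Lambda(\Zp) \isorightarrow \roi_L\lsem T\rsem$, the correspondence between power series and measures is in particular $\Zp$-linear. So the key step is to identify the measure corresponding to the constant power series $1 \in \Zp\lsem T\rsem$. Recalling the computation of the Mahler transform of Dirac measures, $\mathscr{A}_{\delta_a}(T) = (1 + T)^a$, so taking $a = 0$ gives that the constant $1$ corresponds to the Dirac measure $\delta_0$ at $0$. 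Thus
\[ \mu_{\partial\log f_{c(a)}} = (a-1)\delta_0 - \mu_a. \]

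The second step is to observe that $\delta_0$ is supported on $\{0\} \subset p\Zp$, so in particular $\mathbf{1}_{\Zp^\times} \cdot \delta_0 = 0$, and therefore $\mathrm{Res}_{\Zp^\times}(\delta_0) = 0$. Applying $\mathrm{Res}_{\Zp^\times}$ to the displayed identity, the constant term is killed and we are left with $\mathrm{Res}_{\Zp^\times}(\mu_{\partial\log f_{c(a)}}) = -\mathrm{Res}_{\Zp^\times}(\mu_a)$, which is the claimed formula.

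There is no real obstacle here: the only subtlety worth double-checking is the sign and the vanishing of the contribution coming from the constant $a - 1$, both of which follow immediately from the formula $\mathscr{A}_{\delta_0}(T) = 1$ and the fact that $0 \notin \Zp^\times$. One could alternatively verify the identity by computing the integrals $\int_{\Zp^\times} x^k \cdot \mu_{\partial\log f_{c(a)}}$ for all $k \geq 0$ using Corollary \ref{eval at x^k} and the fact that $\partial^k$ applied to a constant vanishes for $k \geq 1$, but the direct argument above is cleaner and avoids any calculation.
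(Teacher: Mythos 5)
Your proof is correct. The paper argues on the power-series side: it notes that $\mathrm{Res}_{\Zp^\times}$ corresponds to the operator $1 - \varphi\circ\psi$ on Mahler/Amice transforms, and that this operator kills the constant $a-1$ (since $\varphi$ and $\psi$ both fix constants). You instead move the constant to the measure side, observing that the constant power series $1$ is $\mathscr{A}_{\delta_0}$ so that $\mu_{\partial\log f_{c(a)}} = (a-1)\delta_0 - \mu_a$, and then use that $\delta_0$ is supported at $0 \in p\Zp$, so its restriction to $\Zp^\times$ vanishes. These are dual phrasings of the same elementary fact — constants vanish under restriction to $\Zp^\times$ — and both are immediate; yours has the slight pedagogical advantage of making the measure-theoretic content of the constant term explicit, while the paper's is marginally shorter because it stays in the power-series picture already established by Proposition \ref{prop:coleman zetap}. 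No gap.
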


\begin{proof}
    In terms of power series, the restriction to $\Zp^\times$ corresponds to applying the operator $(1-\varphi\circ\psi)$. As $1-\varphi\circ\psi$ kills the term $a-1$, we find that, as required,
	\[
	(1-\varphi\circ\psi)\partial\log f_{c(a)} = - (1-\varphi\circ\psi)F_a. \qedhere
	\]
\end{proof}

\begin{remark} \label{rem:restriction Coleman}
	The measure $\mu_a$ was used in the construction of $\zeta_p$. Later in \S\ref{sec:coleman map} we will use Theorem \ref{thm:coleman power series} to give a new construction of $\zeta_p$ via the cyclotomic units. We will see more about the units $c_n(a)$, and in particular the module they generate in $\UU_n$, in \S\ref{sec:iwasawa zeros}.
\end{remark}

\subsection{Proof of Coleman's theorem}
\label{sec:proof of Coleman}
 First we see that there is at most one power series $f_u$ attached to a system of units $u$.

\begin{lemma}\label{lem:unique coleman}
Suppose $u = (u_n) \in \UU_\infty$ and $f, g \in \Zp\lsem T\rsem^\times$ both satisfy 
\[f(\pi_n) = g(\pi_n) = u_n\]
for all $n\geq 1$. Then $f = g$.
\end{lemma}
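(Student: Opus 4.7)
The plan is to reduce to a standard fact about power series in $\Zp\lsem T \rsem$: a nonzero such power series can have only finitely many zeros in the maximal ideal of $\overline{\Q}_p$. Set $h \defeq f - g \in \Zp\lsem T \rsem$. Then the hypothesis gives $h(\pi_n) = 0$ for every $n \geq 1$, so it suffices to show that $h = 0$.

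First I would observe that the $\pi_n = \xi_{p^n} - 1$ are infinitely many distinct elements of the maximal ideal of $\O_{\overline{\Q}_p}$: they all have positive valuation (since $K_n/\qp$ is totally ramified and $\pi_n$ is a uniformiser), and they are distinct because $\xi_{p^n}$ is a primitive $p^n$-th root of unity (so the $\xi_{p^n}$ themselves are pairwise distinct for different $n$). In particular, $h$ vanishes at infinitely many points of the open unit disc.

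The key step is then to invoke the $p$-adic Weierstrass preparation theorem: any nonzero $h \in \Zp\lsem T\rsem$ admits a factorisation $h = p^m U(T) P(T)$, where $m \geq 0$, $U(T) \in \Zp\lsem T\rsem^\times$ is a unit, and $P(T)$ is a distinguished polynomial, i.e.\ a monic polynomial of some finite degree $d$ whose non-leading coefficients lie in $p\Zp$. The zeros of $h$ in the maximal ideal of $\overline{\Q}_p$ are precisely the zeros of $P(T)$, of which there are at most $d < \infty$. Since we have exhibited infinitely many zeros, we must have $h = 0$, hence $f = g$.

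I do not expect any real obstacle here; the only thing to be careful about is confirming that the $\pi_n$ lie in the open unit disc (so that Weierstrass applies) and that they are genuinely distinct. Both of these are immediate from the description of the $\pi_n$ as uniformisers of $K_n = \qp(\mu_{p^n})$ coming from a compatible system of primitive $p^n$-th roots of unity.
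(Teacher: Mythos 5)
Your proof is correct and follows essentially the same route as the paper: set $h = f - g$, note that the $\pi_n$ give infinitely many distinct zeros in the maximal ideal, and apply $p$-adic Weierstrass preparation to conclude that a nonzero $h$ would have only finitely many such zeros. No gaps.
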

\begin{proof}
The Weierstrass preparation theorem says that we can write any non-zero $h(T) \in \Zp\lsem T\rsem $ in the form $p^m u(T)r(T),$ where $u(T)$ is a unit and $r(T)$ is a polynomial. Any such $h(T)$ converges to a function on the maximal ideal in the ring of integers of $\overline{\Q}_p$, and since $u(T)$ cannot have zeros, we deduce that $h(T)$ has a finite number of zeros in this maximal ideal. Now $(\pi_n)_{n\geq 1}$ is an infinite sequence of elements in this maximal ideal, so the fact that $(f-g)(\pi_n) = 0$ for all $n\geq 1$ implies that $f = g$, as required.
\end{proof}

We now move to showing the existence of such a series $f_u$. The key idea in the proof is to identify the subspace of $f \in \Zp\lsem T\rsem^\times$ such that $(f(\pi_n))_n \in \UU_\infty$; that is, identify the \emph{image} in Theorem \ref{thm:coleman power series}. For this, we want norm-compatibility of $f(\pi_n)$. Lemma \ref{lem:norm and trace} and Proposition \ref{prop:R} below will show the existence of a norm operator on power series, and then translate the norm compatibility condition of units into norm invariance of power series; Lemma \ref{lem:norm continuity} will show certain continuity properties of this norm operator, which will allow us to prove Coleman's theorem by a standard diagonal argument. \\

Recall that the action of $\varphi$ on $f(T) \in \Zp\lsem T\rsem $ is defined by $\varphi(f)(T) = f((1+T)^p - 1)$  (see \eqref{eq:varphi power series}), and that this action is injective. Importantly,  we also have
\begin{equation}\label{eq:varphi pin}
\varphi(f)(\pi_{n+1}) = f((\pi_{n+1} + 1)^p - 1) = f(\xi_{p^{n+1}}^p - 1)  = f(\xi_{p^n} - 1) = f(\pi_n).
\end{equation}
From our work with measures (cf.\ \S\ref{SubSectionphipsi}), we have also seen the existence of an additive operator $\psi$ with the property that
\[(\varphi\circ\psi)(f)(T) = \frac{1}{p}\sum_{\eta \in \mu_p} f(\eta (1+T)-1). \] 
We henceforth call $\psi$ the \emph{trace} operator (this terminology will become clear after Lemma \ref{lem:norm and trace}). We now define a multiplicative version of this operator.

\begin{lemma}\label{lem:norm and trace}
There exists a unique multiplicative operator $\cN$ on $\zp\lsem T\rsem$, the \emph{norm operator}, such that
\[
(\varphi\circ\cN)(f)(T) = \prod_{\eta \in \mu_p} f(\eta(1+T)-1).
\]
\end{lemma}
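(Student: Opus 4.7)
The plan is to define $\cN(f)$ implicitly by the equation $\varphi(\cN(f))(T) = g_f(T) \defeq \prod_{\xi \in \mu_p} f(\xi(1+T)-1)$. Uniqueness and multiplicativity are then formal consequences: the map $\varphi$ is injective on $\zp\lsem T\rsem$ (as this is a domain and $(1+T)^p - 1$ is non-constant), which gives uniqueness of $\cN(f)$ once we know it exists, and multiplicativity of $\cN$ follows from the obvious multiplicativity of $f \mapsto g_f$ combined with the injectivity of $\varphi$. The content of the lemma therefore reduces to showing that $g_f$ lies in the image of $\varphi$.

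A priori, $g_f$ lies in $\roi_{K_1}\lsem T\rsem$, where $K_1 = \qp(\mu_p)$. First I would check that $g_f$ actually has coefficients in $\zp$: the Galois group $\Gal(K_1/\qp)$ acts on coefficients and permutes the elements of $\mu_p$, hence permutes the factors in the product, so $g_f$ is Galois-invariant and therefore lies in $\zp\lsem T\rsem$. Next, the crucial symmetry $g_f(\zeta(1+T)-1) = g_f(T)$ for every $\zeta \in \mu_p$ is immediate: the substitution sends $\xi(1+T)-1$ to $(\xi\zeta)(1+T)-1$, and $\xi\zeta$ runs over $\mu_p$ as $\xi$ does.

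The main step is then to show that any $g \in \zp\lsem T\rsem$ fixed by all the substitutions $T \mapsto \zeta(1+T)-1$ (for $\zeta \in \mu_p$) lies in $\varphi(\zp\lsem T\rsem)$. Setting $U \defeq (1+T)^p - 1$, I would establish that $\zp\lsem T\rsem$ is free of rank $p$ over $\zp\lsem U\rsem$ with basis $1, (1+T), \ldots, (1+T)^{p-1}$: the element $1+T$ satisfies the monic polynomial $X^p - (1+U)$ over $\zp\lsem U\rsem$, and since $U \equiv T^p \pmod{p}$, a standard application of the Weierstrass preparation theorem (or a Nakayama-style argument modulo the maximal ideal) gives the freeness. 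Writing $g = \sum_{i=0}^{p-1}(1+T)^i a_i(U)$ uniquely with $a_i \in \zp\lsem U\rsem$, the invariance $g(\zeta(1+T)-1) = g(T)$ becomes $\sum_{i=0}^{p-1} (\zeta^i - 1)(1+T)^i a_i(U) = 0$ for every $\zeta \in \mu_p$, and invertibility of the Vandermonde matrix $(\zeta^i)_{\zeta,i}$ over $\qp(\mu_p)$ forces $a_i = 0$ for $1 \leq i \leq p-1$. Hence $g = a_0(U) = \varphi(a_0)(T)$, and we set $\cN(f) \defeq a_0$.

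The only real obstacle is the freeness of $\zp\lsem T\rsem$ over $\zp\lsem U\rsem$; once this is in hand, everything else is formal manipulation combining Galois descent with elementary linear algebra over $\qp(\mu_p)$.
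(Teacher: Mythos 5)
Your proof is correct and follows essentially the same path as the paper's: both identify $\zp\lsem T\rsem$ as a free module of rank $p$ over $\varphi(\zp\lsem T\rsem) = \zp\lsem (1+T)^p-1\rsem$, observe that the substitutions $T \mapsto \xi(1+T)-1$ are the ``Galois'' automorphisms of this extension, and recognise the defining product as the resulting norm map, which therefore lands in $\varphi(\zp\lsem T\rsem)$. The paper simply invokes the existence of $N_{B/A}$ in one line, while you spell out the freeness (via Weierstrass division, using $U\equiv T^p\bmod p$) and the coefficient computation in the basis $1,(1+T),\dots,(1+T)^{p-1}$ that makes the norm visibly land in the base ring; this is a welcome expansion but not a different argument.
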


\begin{proof}
The ring $B = \zp\lsem T\rsem $ is an extension of $A = \zp\lsem \varphi(T)\rsem  = \varphi(\Zp\lsem T\rsem )$ of degree $p$, the former being obtained by adjoining a $p$th root of $(1 + T)^p$ to the latter. Each automorphism of $B$ over $A$ is given by $T \mapsto (1 + T) \eta - 1$ for some $\eta \in \mu_p$. There is a norm map 
\begin{align*}
N_{B/A}: \Zp\lsem T\rsem  &\longrightarrow \varphi(\Zp\lsem T\rsem )\\
f(T) &\longmapsto \prod_{\eta \in \mu_p} f((1 + T) \eta - 1).
\end{align*}
The norm operator $\cN$ is then defined to be $\varphi^{-1} \circ N_{B/A}$, recalling that $\varphi$ is injective.
\end{proof}

We similarly have $\psi = p^{-1}\varphi^{-1}\circ \mathrm{Tr}_{B/A}$, where $\mathrm{Tr}_{B/A}$ is the trace operator for the extension $B/A$ in the proof of Lemma \ref{lem:norm and trace}. Note that as $\mathcal{N}$ is multiplicative, it preserves $\Zp\lsem T\rsem^\times$. Moreover, it is closely related to the norm operator $N_{n+1,n} : \UU_{n+1} \to \UU_n$ used to defined $\UU_\infty$, via the following lemma.

\begin{lemma}\label{lem:norm power series vs units}
The following diagram commutes:
\begin{equation} \label{EqCommutDiagNorm}
\begin{tikzcd}
\zp\lsem T \rsem^\times \ar[rr, "{\ \ f \mapsto f(\pi_{n+1}) \ \ \ }"] \ar[d, "\mathcal{N}"'] & & \mathcal{U}_{n+1} \ar[d, "{N_{n+1, n}}"] \\
\zp\lsem T \rsem^\times \ar[rr, "{\ \ f \mapsto f(\pi_n)\ \ \ }"'] & & \mathcal{U}_n.
\end{tikzcd}
\end{equation}
\end{lemma}
\begin{proof}
If $f \in \Zp\lsem T \rsem^\times$, then $f(\pi_n) \in \UU_n$ for all $n$, as $f(\pi_n)^{-1} = f^{-1}(\pi_n)$ is also integral. In particular, the horizontal maps are well-defined. 

Observe now that, as the minimal polynomial of $\xi_{p^{n+1}}$ over $K_n$ is $X^p - \xi_{p^n} = 0$, we can write the right-hand norm as
\begin{align*}
N_{n+1,n}\big(f(\pi_{n+1})\big) &= \prod_{\eta \in \mu_p}f(\eta \xi_{p^{n+1}} - 1) \\
&= (\varphi \circ \mathcal{N})(f)(\pi_{n+1}) \\
&= (\mathcal{N}f) (\pi_{n}),
\end{align*}
giving exactly the claimed commutativity. In the final step we have used \eqref{eq:varphi pin}.
\end{proof}
In particular, we get the following.


\begin{proposition} \label{prop:R}
	There is an injective map 
	\begin{align*}
		R : (\Zp\lsem T\rsem^\times)^{\cN=\mathrm{id}} &\longhookrightarrow \UU_\infty,\\
		f &\longmapsto (f(\pi_n))_n.
	\end{align*}
\end{proposition}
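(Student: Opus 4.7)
The proof splits cleanly into three steps: showing that $R(f)$ lands in each $\UU_n$, showing norm-compatibility across the tower, and finally injectivity.

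First I would verify that $R$ takes values in units. If $f \in \Zp\lsem T\rsem^\times$, then $f(0) \in \Zp^\times$, and since $\pi_n$ lies in the maximal ideal of $\mathscr{O}_{K_n}$, we have $f(\pi_n) \equiv f(0) \not\equiv 0 \pmod{\pi_n}$, so $f(\pi_n) \in \mathscr{U}_n$.

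The key step is norm-compatibility. For $n \geq 2$, the extension $K_n/K_{n-1}$ has degree $p$, with Galois group $\{\sigma_\xi : \xi \in \mu_p\}$ where $\sigma_\xi(\xi_{p^n}) = \xi \cdot \xi_{p^n}$. Hence
\[
N_{K_n/K_{n-1}}(f(\pi_n)) = \prod_{\xi \in \mu_p} f(\xi \cdot \xi_{p^n} - 1) = \prod_{\xi \in \mu_p} f(\xi(1 + \pi_n) - 1).
\]
Comparing with the defining formula for the norm operator in Lemma~\ref{lem:norm and trace}, the right-hand side is exactly $(\varphi \circ \cN)(f)(\pi_n)$. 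Now assuming $\cN(f) = f$, this becomes $\varphi(f)(\pi_n) = f((1+\pi_n)^p - 1) = f(\xi_{p^n}^p - 1) = f(\pi_{n-1})$, which is precisely the required norm compatibility. Thus $(f(\pi_n))_n \in \UU_\infty$, so $R$ is well-defined.

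Injectivity is immediate from Lemma~\ref{lem:unique coleman}: if $R(f) = R(g)$, then $f$ and $g$ agree at the infinite sequence of points $(\pi_n)_{n \geq 1}$, so $f = g$ by the Weierstrass-style argument already used. Overall there is no real obstacle here; the one thing requiring care is matching up the Galois action on $K_n/K_{n-1}$ with the product formula defining $\cN$, but this is the very identification the norm operator was engineered to produce. (Note also that the converse direction, namely that every norm-compatible system of units arises from such an $f$, is the substantive content of Coleman's Theorem~\ref{thm:coleman power series}; here we are only asserting that $R$ is an injection into $\UU_\infty$.)
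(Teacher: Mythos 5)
Your proof is correct and takes essentially the same route as the paper: compute the norm $N_{K_n/K_{n-1}}(f(\pi_n))$ by running over the Galois conjugates of $\pi_n$, identify the result with $(\varphi\circ\cN)(f)(\pi_n)$, use $\cN(f)=f$ to reduce to $\varphi(f)(\pi_n)=f(\pi_{n-1})$, and deduce injectivity from Lemma~\ref{lem:unique coleman}. The only cosmetic difference is the check that $f(\pi_n)\in\UU_n$: you argue via the constant term $f(0)\in\Zp^\times$, while the paper notes $f(\pi_n)^{-1}=f^{-1}(\pi_n)$ is integral; both are fine.
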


\begin{proof}
Suppose $\mathcal{N}(f) = f$. By Lemma \ref{lem:norm power series vs units}, we deduce that 
\begin{equation}\label{eq:cyclo norm}
N_{n+1,n}\big(f(\pi_{n+1})\big) = f(\pi_n),
\end{equation}
so $(f(\pi_n))_n \in \UU_\infty$.
\end{proof}

To prove Theorem \ref{thm:coleman power series} it suffices to prove that the map $R$ is surjective. We need the following lemma on the behaviour of $\cN$ modulo powers of $p$.

\begin{lemma} \label{lem:norm continuity} Let $f(T) \in \zp\lsem T\rsem $. Then:
\begin{itemize}
\item[(i)] If $\varphi(f)(T)\equiv 1 \newmod{p^k}$ for some $k \geq 0$, then $f(T) \equiv 1 \newmod{p^k}.$
\item[(ii)] We have
\[\cN(f) \equiv f \newmod{p}.\]
\end{itemize}
Now suppose $f \in \Zp\lsem T\rsem^\times$. Then:
\begin{itemize}
\item[(iii)] If $f \equiv 1 \newmod{p^k}$ with $k\geq 1$, then
\[\cN(f) \equiv 1 \newmod{p^{k+1}}.\]
\item[(iv)] If $k_2 \geq k_1 \geq 0$, then 
\[\cN^{k_2}(f) \equiv \cN^{k_1}(f) \newmod{p^{k_1+1}}.\]
\end{itemize}
\end{lemma}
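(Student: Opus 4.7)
The plan is to prove (i) first, and then use (i) as a bridge to reduce (ii) and (iii) to concrete computations of $\varphi(\mathcal{N}(f))$ via the defining formula $\varphi(\mathcal{N}(f))(T) = \prod_{\xi \in \mu_p} f((1+T)\xi - 1)$. Finally, (iv) will be a formal induction built from (ii) and (iii) and the multiplicativity of $\mathcal{N}$.

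For (i), I rewrite the statement additively: it suffices to show that if $\varphi(g) \equiv 0 \pmod{p^k}$ then $g \equiv 0 \pmod{p^k}$. Write $g = p^j h$ with $h \in \Zp\lsem T \rsem$ of ``$p$-adic content zero'' (i.e.\ $h \not\equiv 0 \pmod p$); I want to show $\varphi(g)$ has $p$-adic valuation exactly $j$. For this, observe that $(1+T)^p \equiv 1 + T^p \pmod{p}$, so $\varphi(h)(T) \equiv h(T^p) \pmod p$, and the map $h \mapsto h(T^p)$ is manifestly injective on $\mathbf{F}_p \lsem T \rsem$. Hence $\varphi(h) \not\equiv 0 \pmod p$, so $\varphi(g) = p^j \varphi(h)$ has $p$-adic valuation exactly $j$, and (i) follows.

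For (ii), by (i) it suffices to verify $\varphi(\mathcal{N}(f)) \equiv \varphi(f) \pmod p$. I pass to the enlarged ring $\Zp[\zeta_p]\lsem T\rsem$: since every $\xi \in \mu_p$ satisfies $\xi \equiv 1 \pmod{\pi}$, where $\pi = \zeta_p - 1$, we have $(1+T)\xi - 1 \equiv T \pmod{\pi}$, and therefore $\prod_\xi f((1+T)\xi - 1) \equiv f(T)^p \pmod{\pi}$ in $\Zp[\zeta_p]\lsem T\rsem$. The key descent is $\Zp\lsem T\rsem \cap \pi \Zp[\zeta_p]\lsem T\rsem = p\Zp\lsem T\rsem$, which holds because $p$ is totally ramified in $\Zp[\zeta_p]$ of degree $p-1$, so any element of $\Zp$ lying in $\pi\Zp[\zeta_p]$ already lies in $p\Zp$. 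Since both $\varphi(\mathcal{N}(f))$ and $f(T)^p$ lie in $\Zp\lsem T\rsem$, their congruence upgrades to one mod $p$ in $\Zp\lsem T\rsem$. The Frobenius identity $f(T)^p \equiv f(T^p) \pmod p$ (from the freshman's dream and Fermat) and the congruence $\varphi(f)(T) \equiv f(T^p) \pmod p$ (again since $(1+T)^p \equiv 1 + T^p \pmod p$) then give $\varphi(\mathcal{N}(f)) \equiv \varphi(f) \pmod p$, and (i) closes the argument. For (iii), write $f = 1 + p^k h$ with $h \in \Zp\lsem T \rsem$ and expand
\[
\varphi(\mathcal{N}(f))(T) = \prod_{\xi \in \mu_p}\bigl(1 + p^k h_\xi\bigr) = 1 + p^k \sum_\xi h_\xi + \sum_{j \geq 2} p^{jk} e_j(h_\xi),
\]
where $h_\xi = h((1+T)\xi - 1)$ and $e_j$ denotes the $j$-th elementary symmetric polynomial. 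For $k \geq 1$ and $j \geq 2$ one has $jk \geq k + 1$, so all higher terms vanish mod $p^{k+1}$; for the linear term, recall that $\sum_{\xi \in \mu_p} h((1+T)\xi - 1) = p(\varphi \circ \psi)(h)$ lies in $p\Zp\lsem T\rsem$ by definition of the trace operator $\psi$, so $p^k\sum_\xi h_\xi \in p^{k+1}\Zp\lsem T\rsem$. Thus $\varphi(\mathcal{N}(f)) \equiv 1 \pmod{p^{k+1}}$ and (i) finishes.

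For (iv), I induct on $k$ to show the stronger successive statement $\mathcal{N}^{k+1}(f) \equiv \mathcal{N}^k(f) \pmod{p^{k+1}}$; the case $k_2 \geq k_1$ then follows by chaining these congruences. The base case $k = 0$ is precisely (ii). For the step, set $g = \mathcal{N}^{k-1}(f) \in \Zp\lsem T\rsem^\times$; the inductive hypothesis says $\mathcal{N}(g)/g \equiv 1 \pmod{p^k}$, and since $k \geq 1$ I can apply (iii) to this unit to get $\mathcal{N}(\mathcal{N}(g)/g) \equiv 1 \pmod{p^{k+1}}$. Multiplicativity of $\mathcal{N}$ converts this into $\mathcal{N}^{k+1}(f) \equiv \mathcal{N}^k(f) \pmod{p^{k+1}}$, completing the induction.

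The main obstacle is (ii): the identity $\prod_\xi f((1+T)\xi - 1) \equiv f(T)^p \pmod p$ is false coefficient-by-coefficient in $\Zp\lsem T\rsem$ if attempted naively, and the clean argument is forced to pass through the cyclotomic extension and exploit total ramification to bring a $\pi$-adic congruence back down to a $p$-adic one. Every other step is a mechanical (if slightly delicate) consequence once (i) and this descent are in place.
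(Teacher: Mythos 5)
Your proof is correct. The overall skeleton matches the paper's (use (i) to descend congruences from the $\varphi$-side, then bootstrap (iii) into (iv)), but two of your steps take a genuinely different route. The paper leaves (i) and (ii) as exercises; your (i) via the observation that $\varphi(h) \equiv h(T^p) \pmod p$ and the injectivity of $h \mapsto h(T^p)$ on $\mathbf{F}_p\lsem T\rsem$ is exactly the clean argument. The more interesting divergence is in (iii): the paper passes to the ring of integers of $\Q_p(\mu_p)$, shows $f((1+T)\xi - 1) \equiv f(T) \pmod{\mathfrak{p}\,p^k}$ term by term, multiplies over $\xi$ to get $\varphi(\cN(f)) \equiv f(T)^p \pmod{\mathfrak{p}\,p^k}$, then descends to $\Z_p$ via $\mathfrak{p}\,p^k \cap \Z_p = p^{k+1}\Z_p$. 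You instead expand the product $\prod_\xi(1 + p^k h_\xi)$ directly into elementary symmetric functions, kill the $j \geq 2$ terms by the valuation bound $jk \geq k+1$, and absorb the linear term into $p^{k+1}$ via the identity $\sum_\xi h_\xi = p(\varphi\circ\psi)(h)$. This is a legitimate alternative: your version exploits the trace operator $\psi$ already in hand from the measure formalism and keeps the argument essentially inside $\Z_p\lsem T\rsem$ (modulo the harmless fact that the $e_j$ are Galois-fixed), whereas the paper's version leans on the totally ramified descent, which you instead deploy only in (ii). For (iv) the two arguments are essentially the same calculation packaged differently: the paper applies (ii) iterated to $\cN^{k_2-k_1}(f)/f$ and then (iii) $k_1$ times to that single unit, whereas you prove the sharper successive statement $\cN^{k+1}(f) \equiv \cN^k(f) \pmod{p^{k+1}}$ by induction and chain; both are fine, and yours makes the inductive structure a bit more transparent.
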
 

\begin{proof}
We leave parts (i) and (ii) as an exercise (cf.\ \cite[Lem.\ 2.3.1]{CS06}). To see part (iii), suppose that $f \equiv 1 \newmod{p^k}$ with $k\geq 1$, and recall that $\pri_1$ is the maximal ideal of the ring of integers of $K_1 = \Qp(\mu_p)$. For each $\eta \in \mu_p$, as $(\eta - 1)(1+T) \in \pri_1\Zp\lsem T\rsem $, we have
\[\eta(1+T)- 1 \equiv T\newmod{\pri_1 \Zp\lsem T\rsem },\]
so that
\[f\big(\eta(1+T) - 1\big) \equiv f(T) \newmod{\pri_1 p^k \Zp\lsem T\rsem }\]
by considering each term separately. It follows that
\begin{align*}\varphi\circ \cN(f)(T) &= \prod_{\eta \in \mu_p} f\big(\eta(1+T) - 1\big)\\
&\equiv f(T)^p \newmod{\pri_1 p^k\Zp\lsem T\rsem }.
\end{align*}
Since both $\varphi\circ\cN(f)$ and $f(T)^p$ are elements of $\Zp\lsem T\rsem $, this is actually an equivalence modulo $\pri_1 p^k \cap \Zp = p^{k+1}.$ If $f(T) \equiv 1 \newmod{p^k}$, then $f(T)^p \equiv 1 \newmod{p^{k+1}}$, and then the proof follows from part (i).

To see part (iv), from part (ii) we see that
\[\frac{\cN^{k_2-k_1}f }{f} \equiv 1 \newmod{p}.\]
Then iterating $\cN$ and using part (iii) $k_1$ times, we obtain the result.
\end{proof}

\begin{proposition}\label{prop:R surjective}
	The map $R : (\Zp\lsem T\rsem^\times)^{\cN=\mathrm{id}} \hookrightarrow \UU_\infty$ is surjective.
\end{proposition}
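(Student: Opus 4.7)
The plan is to construct, for any $u = (u_n)_n \in \UU_\infty$, a preimage $f \in (\Zp\lsem T\rsem^\times)^{\cN=\mathrm{id}}$ satisfying $f(\pi_n) = u_n$ for all $n$, via a double-limit argument built around the ``averaging'' operator
\[
\Psi(g) := \lim_{k \to \infty} \cN^k(g), \qquad g \in \Zp\lsem T\rsem^\times,
\]
whose existence is guaranteed by Lemma \ref{lem:norm continuity}(iv). Each $\Psi(g)$ lies in $(\Zp\lsem T\rsem^\times)^{\cN=\mathrm{id}}$: continuity of $\cN$ yields $\cN$-invariance, and Lemma \ref{lem:norm continuity}(ii) shows the constant terms $\cN^k(g)(0)$ are all congruent to $g(0) \in \Zp^\times$ modulo $p$, so $\Psi(g)(0) \in \Zp^\times$. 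Moreover $\Psi$ is multiplicative and satisfies $\Psi \circ \cN = \Psi$. The fundamental identity, extracted from the proof of Proposition \ref{prop:R}, is $\cN(g)(\pi_n) = N_{n+1,n}(g(\pi_{n+1}))$, which iterates to
\[
\cN^k(g)(\pi_m) = N_{m+k,m}(g(\pi_{m+k})).
\]

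First I would pick, for each $N \geq 1$, a lift $g_N \in \Zp\lsem T\rsem^\times$ of $u_N$ at $\pi_N$ (possible because $u_N \in \UU_N$ forces the constant term $g_N(0)$ to be a unit in $\Zp$) and set $h_N := \Psi(g_N)$. The iterated identity together with the norm-compatibility $N_{N,n}(u_N) = u_n$ yields $\cN^{N-n}(g_N)(\pi_n) = u_n$, and Lemma \ref{lem:norm continuity}(iv) applied with $k_1 = N - n$ gives the congruence $h_N(\pi_n) \equiv u_n \newmod{p^{N-n+1}}$. So for each fixed $n$, the sequence $h_N(\pi_n) \to u_n$ in $\UU_n$ as $N \to \infty$.

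Next I would show the sequence $(h_N)$ converges in $\Zp\lsem T\rsem$. Multiplicativity of $\Psi$ and the identity $\Psi \circ \cN = \Psi$ give $h_{N+1}/h_N = \Psi(r_N)$ where $r_N := \cN(g_{N+1})/g_N$. Since $\cN(g_{N+1})(\pi_N) = N_{N+1,N}(u_{N+1}) = u_N = g_N(\pi_N)$, one has $r_N(\pi_N) = 1$, and rerunning the previous paragraph's argument with $r_N$ in the role of $g_N$ and the constant sequence $(1)$ in place of $(u_n)$ yields $\Psi(r_N)(\pi_n) \equiv 1 \newmod{p^{N-n+1}}$, hence $\Psi(r_N)(\pi_n) \to 1$ for each fixed $n$.

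The hard part will be upgrading this pointwise convergence at each $\pi_n$ to convergence of $\Psi(r_N) \to 1$ in the $p$-adic topology on $\Zp\lsem T\rsem$. My approach is a compactness argument: $(\Zp\lsem T\rsem^\times)^{\cN=\mathrm{id}}$ is a closed subset of the compact ring $\Zp\lsem T\rsem$, so any subsequence of $(\Psi(r_N))$ admits a sub-subsequence converging to some $g$; continuity of evaluation at each $\pi_n$ combined with the previous step forces $g(\pi_n) = 1$ for every $n$, and injectivity (Lemma \ref{lem:unique coleman}) then gives $g = 1$. In a compact Hausdorff space, a sequence whose every convergent subsequence tends to the same limit must itself converge to that limit, so $\Psi(r_N) \to 1$. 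Because the topology on $\Zp\lsem T\rsem$ is non-archimedean, $h_{N+1} - h_N = h_N\bigl(\Psi(r_N) - 1\bigr) \to 0$ implies $(h_N)$ is Cauchy; its limit $f$ lies in the closed subset $(\Zp\lsem T\rsem^\times)^{\cN=\mathrm{id}}$, and continuity of evaluation delivers $f(\pi_n) = \lim_N h_N(\pi_n) = u_n$, giving $R(f) = u$ as required.
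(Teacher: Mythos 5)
Your proof is correct and rests on the same core ideas as the paper's: lift each $u_N$ to some $g_N \in \Zp\lsem T\rsem^\times$, iterate $\cN$ to regularize, invoke the $p$-adic continuity estimates of Lemma \ref{lem:norm continuity}, and close with compactness of $\Zp\lsem T\rsem$. (Where the paper forms $\cN^{2m}(f_m)$, you take the full limit $\Psi(g_N) = \lim_k \cN^k(g_N)$.) The one place you overexert yourself is the passage you call ``the hard part,'' namely proving that the \emph{entire} sequence $(h_N)$ converges. This is more than the proposition requires. Because each $h_N = \Psi(g_N)$ already lies in the closed subset $(\Zp\lsem T\rsem^\times)^{\cN=\mathrm{id}}$ of the compact ring $\Zp\lsem T\rsem$, compactness hands you a convergent subsequence $h_{N_j} \to f$ whose limit is automatically a $\cN$-invariant unit, and continuity of evaluation at each $\pi_n$ combined with your estimate $h_N(\pi_n) \equiv u_n \newmod{p^{N-n+1}}$ already gives $f(\pi_n) = u_n$, hence $R(f) = u$. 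The $r_N$ bookkeeping, the ``every convergent subsequence tends to $1$'' argument, and the Cauchy-via-successive-differences step are all valid, but they can be dropped without loss; the paper simply stops once a convergent subsequence is in hand. On the positive side, your use of $\Psi$ is a genuine (if small) stylistic improvement over the paper's $\cN^{2m}$, since it makes $\cN$-invariance of the limit automatic rather than something one has to check afterwards by noting $\cN(g_m) - g_m \to 0$.
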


\begin{proof}
Let $u  = (u_n)_{n \geq 1} \in \UU_\infty$. For each $n$, choose $f_n \in \Zp\lsem T\rsem^\times$ such that
\[
f_n(\pi_n) = u_n.
\]
We claim that $\mathcal{N} f_{n+1}(\pi_n) = u_n$. Indeed, using Lemma \ref{lem:norm power series vs units} we have
\[ \mathcal{N} f_{n+1}(\pi_n) = N_{n+1, n}(f_{n+1}(\pi_{n+1})) = N_{n+1, n}(u_{n+1}) = u_n.  \]
Iterating, for any $k \geq 0$ we have
\begin{equation} \label{eqtmp1}
(\mathcal{N}^k f_{n+k})(\pi_n) = u_n.
\end{equation}

In view of Lemma \ref{lem:norm continuity}(iv), we define
\[ 
g_n \defeq \mathcal{N}^{n} f_{2n} \in \Zp\lsem T\rsem^\times. 
\] 
Then, for any $m \geq n$, we have
\begin{align*}
u_n &= \mathcal{N}^{2m - n} f_{2m}(\pi_n) \\
&\equiv \mathcal{N}^{m} f_{2m}(\pi_n) = g_m(\pi_n) \newmod{p^{m+1}},
\end{align*}
where the first equality is \eqref{eqtmp1} and the congruence is Lemma \ref{lem:norm continuity}(iv), taking $k_2 = 2m - n$ and $k_1 = m$. Hence as $m \to \infty$, we have $g_m(\pi_n) \to u_n$ for all $n$. It thus suffices to find a convergent subsequence of $(g_m)$; but such a subsequence exists, as $\Zp\lsem T\rsem^\times$ is compact. Letting $f_u \in \zp\lsem T\rsem^\times$ denote the limit of this subsequence, we have $f_u(\pi_n) = u_n$ for all $n$.

It remains to show that $\mathcal{N}(f_u) = f_u$. Indeed, as the sequence $(u_n)$ is norm compatible, we have, using Lemma \ref{lem:norm power series vs units}, 
\[
    \mathcal{N}(f_u)(\pi_{n}) = N_{n+1, n}f_u(\pi_{n+1}) = N_{n+1, n}(u_{n+1}) = u_n = f_u(\pi_n).
\]
As $\mathcal{N}(f_u)$ and $f_u$ are both Coleman power series for $u$, by Lemma \ref{lem:unique coleman} they are equal.
\end{proof}

With this in hand, we have proved the following  more precise version of Theorem \ref{thm:coleman power series}.

\begin{theorem} \label{thm:coleman map 2}
	There exists a unique isomorphism of groups
	\begin{align*}
		\mathscr{U}_\infty &\to \big(\zp\lsem T\rsem^\times\big)^{\cN=\mathrm{id}}  \\
		u &\mapsto f_u
	\end{align*}
	such that $f_u(\pi_n) = u_n$ for all $u \in \mathscr{U}_\infty$ and $n\geq 1$.
\end{theorem}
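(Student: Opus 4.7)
The plan is to package together the results already in place. The map $R : (\Zp\lsem T\rsem^\times)^{\cN=\mathrm{id}} \to \UU_\infty$ given by $f \mapsto (f(\pi_n))_{n\geq 1}$ is injective by Proposition \ref{prop:R} (which in turn leans on the Weierstrass-preparation uniqueness of Lemma \ref{lem:unique coleman}) and surjective by Proposition \ref{prop:R surjective}. I would therefore define $u \mapsto f_u$ as the set-theoretic inverse of $R$, and uniqueness of any map with the required interpolation property $f_u(\pi_n) = u_n$ follows directly from Lemma \ref{lem:unique coleman}.

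To upgrade the bijection $R$ to a group isomorphism, I would note that $R$ is manifestly multiplicative: for $f, g \in (\Zp\lsem T\rsem^\times)^{\cN=\mathrm{id}}$, one has $(fg)(\pi_n) = f(\pi_n) g(\pi_n)$, and multiplication in $\UU_\infty = \varprojlim \UU_n$ is componentwise, so $R(fg) = R(f) R(g)$. A bijective group homomorphism is an isomorphism, so the inverse $u \mapsto f_u$ is an isomorphism of groups. Combined with uniqueness this yields the theorem.

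The one thing that deserves a second look is whether the series $f_u$ produced in Proposition \ref{prop:R surjective} actually lies in $(\Zp\lsem T\rsem^\times)^{\cN=\mathrm{id}}$ rather than merely in $\Zp\lsem T\rsem$; this is the place where the argument could be said to have substance. For membership in $\Zp\lsem T\rsem^\times$, I would reduce the identity $f_u(\pi_n) = u_n$ modulo $\mathfrak{p}_n$: since $u_n \in \mathscr{O}_{K_n}^\times$ has nonzero residue and $\pi_n \in \mathfrak{p}_n$, one gets $f_u(0) \in \Zp \setminus p\Zp$, so $f_u$ is a unit in $\Zp\lsem T\rsem$. For $\cN$-invariance, the sequence $g_m = \cN^{2m}(f_m)$ built in the proof of Proposition \ref{prop:R surjective} satisfies $\cN(g_m) \equiv g_m \pmod{p^{2m+1}}$ by Lemma \ref{lem:norm continuity}(iv), and passing to a convergent subsequence together with the continuity of $\cN$ (visible from its formula $\varphi \circ \cN(f)(T) = \prod_{\xi \in \mu_p} f(\xi(1+T) - 1)$ and the injectivity of $\varphi$) gives $\cN(f_u) = f_u$. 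The principal obstacle, already absorbed into Propositions \ref{prop:R} and \ref{prop:R surjective}, is this compactness-plus-norm-continuity diagonal argument that knits the individual approximations at each level into a single coherent power series.
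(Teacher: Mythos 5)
Your argument is essentially the paper's: the theorem is read off as the inverse of the bijection $R$ from Propositions \ref{prop:R} and \ref{prop:R surjective}, multiplicativity is immediate from the componentwise group law on $\UU_\infty$, and uniqueness is Lemma \ref{lem:unique coleman}. Your extra care in checking that the compactness limit $f_u$ of Proposition \ref{prop:R surjective} really lands in $(\Zp\lsem T\rsem^\times)^{\cN=\mathrm{id}}$ (unit because $f_u(0)$ reduces to the unit $u_n \bmod \mathfrak{p}_n$, $\cN$-invariant by Lemma \ref{lem:norm continuity}(iv) plus continuity of $\cN$, or alternatively by evaluating $\cN(f_u)$ at each $\pi_n$ and invoking Lemma \ref{lem:unique coleman}) correctly fills in a step the paper's proof of that proposition leaves implicit.
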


\begin{proof}
	By Propositions \ref{prop:R} and \ref{prop:R surjective}, we have a bijection $R: (\Zp\lsem T\rsem^\times)^{\cN=\mathrm{id}} \isorightarrow \UU_\infty$. This is an isomorphism, and  $R^{-1}$ gives the required map. We have $f_u(\pi_n) = u_n$ by construction of $R$ and uniqueness follows from Lemma \ref{lem:unique coleman}. 
\end{proof}

\subsection{Definition of the Coleman map} \label{sec:coleman map definition}

The Coleman map is motivated by the example of \S\ref{sec:cyclo units coleman}, where we saw that a distinguished family of local units -- the cyclotomic units -- are strongly linked to the Kubota--Leopoldt $p$-adic $L$-function. In particular, given the construction of $\zeta_p$ in \S\ref{kl} and Lemma \ref{lem:relate cyclo to mua}, $\zeta_p$ can be defined by the following procedure:
\begin{enumerate}
	\item Consider the tower $c(a)$ of cyclotomic units.
	\item Take its Coleman power series $f_{c(a)}$. 
	\item Apply $\partial\log$.
	\item Apply $(1-\varphi\circ\psi)$.
	\item Apply $\partial^{-1}$.
	\item Pass to the corresponding measure on $\zp^\times$ by inverting the Mahler transform.
	\item Finally, divide by $\theta_a$.
\end{enumerate} 
Recall that in terms of measures, step (4) corresponds to restriction to $\Zp^\times$, and (5) to multiplication by $x^{-1}$. We are therefore led to consider the following construction.
\begin{definition} \label{def:coleman map}
	Let
\[
\mathrm{Col} :	\sU_\infty \xrightarrow{u \mapsto f_u(T)} (\Zp\lsem T\rsem ^\times)^{\cN=\mathrm{id}} \xrightarrow {\partial\log} \Zp\lsem T\rsem  \xrightarrow{1-\varphi\circ\psi} \Zp\lsem T\rsem ^{\psi = 0}\xrightarrow{\partial^{-1}}\Zp\lsem T\rsem ^{\psi = 0} \xrightarrow{\sA^{-1}} \Lambda(\Zp^\times),
\]
where the first map is Coleman's isomorphism, the second is the logarithmic derivative appearing in \S\ref{sec:cyclo units coleman}, the third is the measure-theoretic restriction from $\Zp$ to $\Zp^\times$, the fourth is multiplication by $x^{-1}$, and the last is the Mahler correspondence (\S\ref{sec:mahler} and Corollary \ref{CorollarySupportedZpet}).
\end{definition}

Via \S\ref{sec:cyclo units coleman}, we have the following description of the Kubota--Leopoldt $p$-adic $L$-function.

\begin{theorem} \label{thm:coleman to kl} 
For any topological generator $a$ of $\Zp^\times$, we have an equality of pseudo-measures 
\[
	\zeta_p = \frac{\mathrm{Col}(c(a))}{\theta_a} \in Q(\Zp^\times).
\]
\end{theorem}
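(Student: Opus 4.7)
My plan is to chase through the definition of $\mathrm{Col}(c(a))$ using the computations already established in \S\ref{sec:cyclo units coleman}, and compare the result directly to the construction of $\zeta_p$ given in \S\ref{sec:dep on a}. The proof is essentially a bookkeeping exercise tracking signs and the $x^{-1}$ shift through the five steps in the definition of $\mathrm{Col}$; the substantive computations have all been done.

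First I would recall from \S\ref{sec:cyclo units coleman} that the Coleman power series of the cyclotomic norm-compatible system $c(a)$ is the polynomial $f_{c(a)}(T) = ((1+T)^a-1)/T$, and that by Proposition \ref{prop:coleman zetap},
\[
\partial\log f_{c(a)}(T) \;=\; (a-1) \;-\; F_a(T),
\]
where $F_a = \sA_{\mu_a}$ is the Mahler transform of the measure $\mu_a$ of Definition \ref{DefinitionMeasuremua} that was used to build $\zeta_p$.

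Next I would apply the restriction operator $1-\varphi\circ\psi$. The constant $a-1$ is annihilated, since a direct substitution in the formula $\varphi\circ\psi(f)(T) = p^{-1}\sum_{\xi\in\mu_p}f((1+T)\xi-1)$ shows that $\varphi\circ\psi$ acts as the identity on constant power series. Combined with identity \eqref{res to Zp}, this yields
\[
(1-\varphi\circ\psi)\,\partial\log f_{c(a)}(T) \;=\; -(1-\varphi\circ\psi)F_a(T) \;=\; -\,\sA_{\mathrm{Res}_{\Zp^\times}\mu_a}(T).
\]

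Finally I would translate the last two steps of $\mathrm{Col}$ back into measure-theoretic language. By Lemma \ref{LemmaMultiplicationbyx}, $\partial$ on power series corresponds under the Amice transform to multiplication by $x$ on measures, so for a power series in the image of $\sA|_{\Lambda(\Zp^\times)}$ the operation $\sA^{-1}\circ\partial^{-1}$ realises multiplication by $x^{-1}$, which is well defined on $\Lambda(\Zp^\times)$. Hence
\[
\mathrm{Col}(c(a)) \;=\; -\,x^{-1}\mathrm{Res}_{\Zp^\times}\mu_a \;\in\; \Lambda(\Zp^\times),
\]
and dividing by $-\theta_a$ and comparing with the definition $\zeta_p = x^{-1}\mathrm{Res}_{\Zp^\times}\mu_a/\theta_a$ from \S\ref{sec:dep on a} gives the claim. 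Independence of the right-hand side from the choice of topological generator $a$ is automatic from the corresponding property of $\zeta_p$ established there, so no further verification is needed; there is no genuine obstacle in the argument beyond careful sign- and shift-tracking.
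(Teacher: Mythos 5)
Your proof is correct and matches the route the paper implicitly takes (the paper states Theorem \ref{thm:coleman to kl} immediately after \S\ref{sec:cyclo units coleman} and leaves the assembly to the reader): you invoke Proposition \ref{prop:coleman zetap} and the subsequent restriction lemma, then unwind the last two steps of Definition \ref{def:coleman map} as multiplication by $x^{-1}$ and compare with the definition of $\zeta_p$ in \S\ref{sec:dep on a}. No gaps.
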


\subsection{Generalisations: The Kummer sequence, Euler systems and $p$-adic $L$-functions}\label{sec:pr log}

We conclude this section with a digression on the generalisation of  the Coleman map that leads to a conjectural construction, under the assumption of the existence of certain global cohomological elements, of $p$-adic $L$-functions of more general motives. This section is included as additional context and may be skipped on a first reading.  Throughout, if $F$ is a number field, we let $\mathscr{G}_F$ denote its absolute Galois group.

Consider, for $m \geq 1$, the Kummer exact sequence
\begin{equation} \label{eq:kummer}
0 \to \mu_{p^m} \to \mathbf{G}_{\rm m} \xrightarrow{x \mapsto x^{p^m}} \mathbf{G}_{\rm m} \to 0.
\end{equation}
Evaluating at $\overline{\Q}$ and taking fixed points by $\mathscr{G}_F$, this short exact sequence induces, for any number field $F$, a long exact sequence on cohomology
\begin{equation}\label{eq:kummer LES}
0 \to \mu_{p^m}(F) \to F^\times \xrightarrow{x \mapsto x^{p^m}} F^\times \longrightarrow H^1(F, \mu_{p^m}) \to H^1(F, \overline{\Q}^\times). 
\end{equation}
Here, for any topological $\mathscr{G}_F$-module $A$, we write $H^1(F, A) \defeq H^1(\mathscr{G}_F, A)$ for the Galois cohomology, i.e.\ the continuous group cohomology of $\mathscr{G}_F$. By Hilbert 90, we have $H^1(F, \overline{\Q}^\times) = 0$. Taking inverse limits over $m \geq 1$, which is exact, we obtain an isomorphism called the Kummer map
\begin{equation} \label{EqKummerMap}
\delta : F^\times \otimes \zp \isorightarrow H^1(F, \zp(1)).
\end{equation}
 Explicitly, at each finite level, the isomorphism 
 \[F^\times \otimes \Z / p^n \Z = F^\times / (F^\times)^{p^n} \isorightarrow H^1(\mathscr{G}_F, \mu_{p^n})\]
is given as follows. Take $a \in F^\times$ and take any $b \in \overline{\Q}^\times$ such that $b^{p^n} = a$. Then $c_a : \sigma \mapsto \frac{\sigma(b)}{b}$ defines a $1$-coycle on $\mathscr{G}_F$ and it is a coboundary if and only if $a$ is a $p^n$th power in $F^\times$, which shows that the map sending the class of $a$ to the class of $c_a$ is well defined. 

Let $m = D p^n$, $n \geq 1$, and define
\[ \mathbf{c}_m \defeq \frac{\xi_m^{-1} - 1}{ \xi_m - 1} \in \mathscr{O}_{\Q(\mu_m)}^\times, \]
a generalisation of the cyclotomic units $c_n(-1)$ (where $D=1$) from Example \ref{sec:cyclo units coleman}, where $(\xi_m)_m$ denote a compatible system of $m$th roots of unity. One can show that these elements satisfy the following relations with respect to the norm maps:
\[ \mathrm{N}_{\Q(\mu_{m \ell}) / \Q(\mu_m)}(\mathbf{c}_{m \ell}) =
  \begin{cases}
    \mathbf{c}_{m}       & \quad \text{if } \ell \mid m \\
    (1 - \ell^{-1}) \mathbf{c}_m  & \quad \text{if } \ell \nmid m.
  \end{cases}
\]
Using the Kummer map from Equation \eqref{EqKummerMap}, we get elements $\mathbf{z}_m \defeq \delta(\mathbf{c}_m) \in H^1(\Q(\mu_m), \zp(1))$ satisfying
\[ \mathrm{cores}_{\Q(\mu_{m \ell}) / \Q(\mu_m)}(\mathbf{z}_{m \ell}) =
  \begin{cases}
    \mathbf{z}_{m}       & \quad \text{if } \ell \mid m \\
    (1 - \mathrm{Frob}_\ell^{-1}) \mathbf{z}_m  & \quad \text{if } \ell \nmid m,
  \end{cases}
\]
where we have used that $\mathrm{Frob}_\ell$ acts on $\zp(1)$ simply by multiplication by $\ell$ on $\zp(1)$. Observe also that $(1 - \ell^{-1})$ is the Euler factor at $\ell$ of the Riemann zeta function (evaluated at $s = 1$). This admits the following huge generalisation, as described comprehensively in \cite{Rub00}.

\begin{definition}\label{def:euler systems}
Let $\Sigma$ be a finite set of primes containing $p$, let $V \in \mathrm{Rep}_L \mathscr{G}_\Q$ be a global $p$-adic Galois representation which is unramified outside $\Sigma$, and let $T \subseteq V$ be an $\mathscr{O}_L$-lattice stable under $\mathscr{G}_\Q$. An \emph{Euler system} for $(V, T, \Sigma)$ is a collection of classes
\[ \mathbf{z}_m \in H^1(\Q(\mu_m), T), \]
where $m$ is of the form $m = p^n m'$ with $n \geq 0$, and where $m'$ is a square-free product of prime noumbers not belonging to $\Sigma$,  satisfying
\[ \mathrm{cores}_{\Q(\mu_{m \ell}) / \Q(\mu_m)}(\mathbf{z}_{m \ell}) =
  \begin{cases}
    \mathbf{z}_{m}       & \quad \text{if } \ell = p \\
    P_\ell(V^*(1), \sigma_\ell^{-1}) \mathbf{z}_m  & \quad \text{if } \ell \neq p,
  \end{cases}
\]
where $P_\ell(V^*(1), X) = \det(1 - \mathrm{Frob}_\ell^{-1} X | {V^*(1)}^{I_\ell})$ is the Euler factor at $\ell$ of the $L$-function associated to $V^*(1)$ and $\sigma_\ell$ denotes the image of $\mathrm{Frob}_\ell$ in $\mathrm{Gal}(\Q(\mu_m) / \Q)$.
\end{definition}

The cyclotomic units form an Euler system for the representation $\zp(1)$, and lie  at the heart of Rubin's proof of the Main Conjecture. In general, constructing Euler systems for a Galois representation is a very difficult task, and few examples exist at the moment.

 We now describe a rephrasing of Coleman's map that more easily generalises. Above, we showed that evaluating Kummer's exact sequence \eqref{eq:kummer} at $\overline{\Q}$, taking the long exact sequence in Galois cohomology for $F$, and taking an inverse limit, we get an isomorphism $F^\times \otimes \Zp \cong H^1(F,\zp(1))$. In exactly the same way, replacing $\overline{\Q}$ by $\overline{\Q}_p$, and $F$ by the finite extension $K_n$ of $\qp$ for $n \geq 1$, we obtain an isomorphism
\begin{equation*}
K_n^\times \otimes \zp \cong H^1(K_n, \zp(1)).
\end{equation*}
These isomorphisms intertwine the norm maps on the left hand side with the corestriction maps in cohomology on the right hand side, and hence, considering the inverse limit over all $n$, we see that there is an isomorphism
\begin{equation} \label{eq:kummer 2} \varprojlim_{n \geq 1} K_n^\times \otimes \zp \cong \varprojlim_{n \geq 1} H^1(K_n, \zp(1)).
\end{equation}
We define the \emph{Iwasawa cohomology} to be
\[ H^1_{\rm Iw}(\qp, \qp(1)) \defeq \varprojlim_{n \geq 1} H^1(K_n, \zp(1)) \otimes_{\zp} \qp. \]
Such groups can be attached to general Galois representations (see below), and they are a natural generalisation of the local units. 

To make this precise, note that the inclusion $\UU_n = \roi_{K_n}^\times \subset K_n^\times$ induces a natural map $\UU_n \to K_n^\times \otimes \Zp$, yielding a map $\UU_\infty \to \varprojlim K_n^\times \otimes \Zp$. Composing this with \eqref{eq:kummer 2}, we obtain a map

\[ 
\kappa: \mathscr{U}_\infty \longrightarrow \varprojlim_{n \geq 1} H^1(K_n, \zp(1)). 
\]
One can then show that there exists a map
\[ \mathrm{Col'} : H^1_{\rm Iw}(\qp, \qp(1)) \to \mathscr{M}(\Zp^\times, \qp), \] 
where we recall that $\mathscr{M}(\Zp^\times, \qp) = \Lambda(\Zp^\times) \otimes_{\zp} \qp$ is the space of $\qp$-valued measures on $\GG$, making the diagram
\[
\begin{tikzcd}
\mathscr{U}_\infty \ar[rr, "\kappa"] \ar[rd, "\mathrm{Col}"'] &  & H^1_{\rm Iw}(\qp, \qp(1)) \ar[dl, "\mathrm{Col}'"] \\
& \mathcal{M}(\zpe, \qp) &
\end{tikzcd}
\]
commute. 

By localising, the Euler system of cyclotomic units give rise to an element of the Iwasawa cohomology. By combining the above with Proposition \ref{prop:coleman zetap}, we see that the $p$-adic zeta function can be obtained by evaluating $\mathrm{Col}'$ at this Iwasawa cohomology class (and dividing through by the measure $\theta_a$ to make it independent of $a$, which introduces a pole). 

The advantage of this reformulation is that Iwasawa cohomology generalises well, as we now explain. Let $V \in \mathrm{Rep}_L \mathscr{G}_{\qp}$ be any $p$-adic representation of $\mathscr{G}_{\qp}$, i.e\ a finite dimensional $L$-vector space $V$ equipped with a continuous linear action of $\mathscr{G}_{\qp}$. As before, we define its Iwasawa cohomology groups as
\[ 
H^1_{\rm Iw}(\qp, V) \defeq \varprojlim_{n \geq 1} H^1(K_n, T) \otimes_{\mathscr{O}_L} L, 
\] 
where $T \subseteq V$ denotes any $\mathscr{O}_L$-lattice of $V$ stable under the action of the Galois group $\mathscr{G}_{\qp}$, and where as before the inverse limit is taken with respect to the corestriction maps in cohomology. Morally, Iwasawa cohomology groups are the groups where the local part at $p$ of an Euler system of a global $p$-adic representation lives. Assuming that the representation is crystalline\footnote{Loosely, a $p$-adic representation of $\mathscr{G}_{\Qp}$ being \emph{crystalline} is a condition from $p$-adic Hodge theory that is the $p$-adic equivalent to an $\ell$-adic representation of $\mathscr{G}_{\Qp}$ (with $\ell \neq p$) being unramified. For the Galois representation attached to an elliptic curve $E$ defined over $\Q$, this amounts to asking that $E$ has good reduction at $p$. An extension of these results in the case of bad reduction can be found in \cite{RodriguesPhiGamma}.}, the Coleman map has been generalised by Perrin-Riou \cite{PR}. Under some choices, she constructed \emph{big logarithm maps}
\[ \mathrm{Log}_V : H^1_{\rm Iw}(\qp, V) \to \mathscr{D}^{\mathrm{la}}(\Zp^\times, L), \] 
where $\mathscr{D}^{\mathrm{la}}(\Zp^\times, L)$ denotes the space of $L$-valued  locally analytic distributions on $\Zp^\times$ (in the sense of \S\ref{sec:locally analytic}). The map $\mathrm{Log}_V$ satisfies certain interpolation properties expressed in terms of Bloch-Kato's exponential and dual exponential maps and, for $V = \qp(1)$, we recover $\mathrm{Col}'$.

The general idea is that, given an Euler system for a global $p$-adic Galois representation, localising it at the place $p$ and applying Perrin-Riou's map, one can construct a $p$-adic $L$-function for $V$. In a diagram:
\[ \big\{ \text{Euler systems} \big\} \xrightarrow{\mathrm{loc}_p} H^1_{\rm Iw}(\qp, V) \xrightarrow{\mathrm{Log}_V} \big\{ p\text{-adic $L$-functions} \big\}. \] This splits the problem of constructing $p$-adic $L$-functions for motives into a global problem (finding an Euler system) and a purely local problem (constructing the big logarithm maps). See \cite{ColmezFonctL} for further references on this subject.



\section{Iwasawa's theorem on the zeros of the $p$-adic zeta function}\label{sec:iwasawa zeros}

In the previous section, the Coleman map allowed us to give a construction of the Kubota--Leopoldt $p$-adic $L$-function $\zeta_p$ using a specific tower of cyclotomic units. We now describe a theorem of Iwasawa  (Theorem \ref{thm:iwasawa}) that puts this on a deeper footing. This theorem describes the zeros of $\zeta_p$ -- captured by a canonically attached ideal in the Iwasawa algebra -- in terms of arithmetic data, via the \emph{module} of cyclotomic units inside the local units. The Coleman map from \S\ref{sec:coleman map} will be the key step for connecting both worlds.

 With the aim of moving all the analytic information to the Galois side, we will start by reformulating the definition of the $p$-adic zeta function as a pseudo-measure on the Galois group $\GG = \Gal(F_\infty/\Q) \cong \Zp^\times$. We then introduce the global and local modules of cyclotomic units (which will be systematically studied later), stating the connection to class numbers, and state Iwasawa's theorem.




\subsection{Measures on Galois groups} \label{sec:measures on galois groups}



Recall that $F_\infty = \cup_{n \geq 1} \Q(\mu_{p^n})$, that $\GG = \mathrm{Gal}(F_\infty / \Q)$, and that the cyclotomic character gives an isomorphism $\chi : \GG \isorightarrow \zpe$. This isomorphism induces an identification between measures on $\Zp^\times$ and measures on the Galois group $\GG$. From now on, we will let $\Lambda(\GG)$ be the space of measures on $\GG$, which we identify with $\Lambda(\Zp^\times)$ via the cyclotomic character. We may thus naturally consider $\zeta_p$ as a pseudo-measure on $\GG$.

Similarly, the Galois group $\GG^+ = \mathrm{Gal}(F_\infty^+ / \Q) = \GG / \langle c \rangle$ is identified through the cyclotomic character with $\zpe / \{ \pm 1 \}$. Observe that $\zeta_p$, which ostensibly is an element of $Q(\GG)$, vanishes at the characters $\chi^k$, for any odd integer $k > 1$. We will use this fact to show that $\zeta_p$ actually descends to a pseudo-measure on $\GG^+$.

\begin{lemma} \label{lem:decompose plus minus}
	Let $c \in \GG$ denote complex conjugation. Let $R$ be a ring in which $2$ is invertible and $M$ an $R$-module with a continuous action of $\GG$. Then $M$ decomposes as
	\[M \cong M^+ \oplus M^-,\]
	where $c$ acts as $+1$ on $M^+$ and as $-1$ on $M^-$.
\end{lemma}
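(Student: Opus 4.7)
The plan is to use the standard idempotent decomposition trick, available precisely because $2$ is invertible in $R$. Since complex conjugation $c$ has order $2$ in $\GG$, the elements
\[
e^+ \defeq \tfrac{1}{2}(1 + c), \qquad e^- \defeq \tfrac{1}{2}(1 - c)
\]
are well-defined in the group ring $R[\langle c \rangle]$, and a direct computation gives $(e^\pm)^2 = e^\pm$, $e^+ e^- = e^- e^+ = 0$, and $e^+ + e^- = 1$. Thus $\{e^+, e^-\}$ is a complete system of orthogonal idempotents acting $R$-linearly on $M$ through the $\GG$-action.

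I would then set $M^+ \defeq e^+ M$ and $M^- \defeq e^- M$. Writing any $m \in M$ as $m = e^+ m + e^- m$ yields $M = M^+ + M^-$, and if $m \in M^+ \cap M^-$ then $m = e^+ m = e^+ e^- m' = 0$, so the sum is direct. Finally, one checks the sign of the $c$-action: if $m \in M^+$ then $m = e^+ m$ and
\[
c \cdot m = c \cdot e^+ m = \tfrac{1}{2}(c + c^2) m = \tfrac{1}{2}(1 + c) m = e^+ m = m,
\]
while if $m \in M^-$ then similarly $c \cdot m = -m$. This identifies $M^\pm$ with the $\pm 1$-eigenspaces for $c$, completing the proof.

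There is no real obstacle here — the only subtlety is the invertibility of $2$, which is exactly what allows the idempotents $e^\pm$ to be defined; without it the two eigenspaces might fail to span (e.g.\ in characteristic $2$). The continuity of the $\GG$-action is not needed for the algebraic decomposition itself, and continuity of the projections $e^\pm$ is automatic since they are given by a finite $R$-linear combination of the (continuous) action of $1$ and $c$.
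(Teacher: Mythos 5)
Your proof is correct and follows exactly the same approach as the paper, which also decomposes $M$ via the idempotents $\tfrac{1+c}{2}$ and $\tfrac{1-c}{2}$. You have simply spelled out the routine verifications (orthogonality, direct sum, eigenvalue check) that the paper leaves implicit.
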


\begin{proof}
	This follows directly by using the idempotents $\frac{1+c}{2}$ and $\frac{1-c}{2}$, which act as projectors to the corresponding $M^+$ and $M^-$.
\end{proof}

We are assuming that $p$ is odd, so $\Lambda(\GG) \cong \Lambda(\GG)^+ \oplus \Lambda(\GG)^-$. In fact, the module $\Lambda(\GG)^+$ admits a description solely in terms of the quotient $\GG^+$.

\begin{lemma}
	There is a natural isomorphism
	\[\Lambda(\GG)^+ \cong \Lambda(\GG^+).\]
\end{lemma}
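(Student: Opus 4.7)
My plan is to exploit the splitting of $\GG$ coming from odd $p$ to reduce the statement to a decomposition of a group ring of order 2. Since $p$ is odd, the cyclotomic character (\ref{eq:cyclo char}) identifies $\GG$ with $\zpe = \{\pm 1\} \times (1 + p\zp)$, and complex conjugation $c$ corresponds to $-1$; thus $\GG \cong \langle c \rangle \times \GG^+$ as topological groups, where $\GG^+$ sits inside $\GG$ as the subgroup $\chi^{-1}(1 + p\zp)$. The natural quotient map $\pi \colon \GG \to \GG^+$ corresponds to projection onto the second factor.

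Next I would translate this into a decomposition of Iwasawa algebras. Choosing the section just described, for every open subgroup $H$ of $\GG^+$ we obtain $\GG/H \cong \langle c \rangle \times (\GG^+/H)$, and hence
\[
\roi_L[\GG/H] \;\cong\; \roi_L[\langle c\rangle] \otimes_{\roi_L} \roi_L[\GG^+/H].
\]
Passing to the inverse limit over $H$ (and using that $\roi_L[\langle c\rangle]$ is a finite free $\roi_L$-module, so commutes with the limit) yields
\[
\Lambda(\GG) \;\cong\; \roi_L[\langle c\rangle] \otimes_{\roi_L} \Lambda(\GG^+).
\]

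Now I would invoke the idempotent decomposition of the order-2 group ring. Since $2$ is invertible in $\roi_L$, the elements $e_\pm \defeq (1 \pm [c])/2$ are orthogonal idempotents with $e_+ + e_- = 1$, and $\roi_L[\langle c \rangle] \cong \roi_L \oplus \roi_L$, where the first factor is $e_+\roi_L[\langle c \rangle]$, the $+1$-eigenspace for multiplication by $[c]$, and the isomorphism $e_+\roi_L[\langle c\rangle] \isorightarrow \roi_L$ sends $e_+$ to $1$ (equivalently, is induced by $[c]\mapsto 1$). Tensoring with $\Lambda(\GG^+)$ and taking the $+$-part gives
\[
\Lambda(\GG)^+ \;=\; e_+\Lambda(\GG) \;\cong\; \roi_L \otimes_{\roi_L} \Lambda(\GG^+) \;\cong\; \Lambda(\GG^+),
\]
which is the desired isomorphism. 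Finally, to see that the isomorphism is natural, note that the composite coincides with the restriction to $\Lambda(\GG)^+$ of the pushforward $\pi_\ast \colon \Lambda(\GG) \to \Lambda(\GG^+)$ along the quotient $\pi$: indeed, $\pi$ kills $c$, so $\pi_\ast([c]) = 1$, hence $\pi_\ast(e_+) = 1$, and the two $\Lambda(\GG^+)$-linear maps from $e_+\Lambda(\GG)$ to $\Lambda(\GG^+)$ agree on $e_+$. There is no real obstacle here; the only subtlety is that the splitting $\GG \cong \langle c\rangle \times \GG^+$ genuinely uses $p$ odd (so that $2$ is a unit in $\roi_L$ and the short exact sequence $1 \to \GG^+ \to \GG \to \langle c\rangle \to 1$ splits).
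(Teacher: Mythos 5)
Your argument rests entirely on the claimed direct-product decomposition $\GG \cong \langle c\rangle \times \GG^+$, which you justify by writing $\zpe = \{\pm 1\} \times (1+p\zp)$. That identity is wrong: the decomposition is $\zpe \cong \mu_{p-1} \times (1+p\zp)$, and $\{\pm 1\}$ is the whole torsion part only when $p = 3$. Complex conjugation does correspond to $-1$, so $\langle c\rangle$ is the unique order-$2$ subgroup of the cyclic group $\mu_{p-1}$. But a cyclic group of even order $p-1$ has its order-$2$ subgroup as a direct factor if and only if $(p-1)/2$ is odd, i.e.\ $p \equiv 3 \pmod 4$. For $p \equiv 1 \pmod 4$ (take $p = 5$, where $\mu_4 \cong \Z/4\Z$) the extension $1 \to \langle c\rangle \to \GG \to \GG^+ \to 1$ does not split, and your candidate complement $\chi^{-1}(1+p\zp)$ has index $p-1$ in $\GG$, not index $2$, so it cannot be identified with $\GG^+$. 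Everything downstream (the isomorphism $\Lambda(\GG) \cong \roi_L[\langle c\rangle]\otimes_{\roi_L}\Lambda(\GG^+)$ and the identification of $e_+\Lambda(\GG)$) therefore has no foundation for roughly half the odd primes.

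The conclusion of the lemma is nonetheless true, because it is a statement about group \emph{rings}, not about groups, and one never needs $\GG$ to split. The paper's proof works at finite level: the natural quotient $\GG_n \twoheadrightarrow \GG_n^+$ induces a surjection $\Zp[\GG_n] \to \Zp[\GG_n^+]$ that kills $\Zp[\GG_n]^- = \tfrac{1-c}{2}\Zp[\GG_n]$, hence factors through a surjection $\Zp[\GG_n]^+ \to \Zp[\GG_n^+]$ of free $\Zp$-modules; both have rank $(p-1)p^{n-1}/2$, so the surjection is an isomorphism, and one passes to the inverse limit. If you want to keep the flavour of your idempotent computation without invoking any topological splitting, run it at finite level: $e_\pm = (1\pm[c])/2$ decompose $\roi_L[\GG/H]$ for every open $H \leq \GG$, the natural ring map to $\roi_L[\GG^+/\bar H]$ restricts to a surjection on the $e_+$-part, and the rank count finishes it. That is exactly the paper's argument.
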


\begin{proof}
	We work at finite level. Let $\GG_n \defeq \Gal(F_n/\Q)$, and $\GG_n^+ \defeq \Gal(F_n^+/\Q)$. Then there is a natural surjection
	\[\Zp[\GG_n] \to \Zp[\GG_n^+] \]
	induced by the natural quotient map on Galois groups. Since this must necessarily map $\Zp[\GG_n]^-$ to 0, this induces a map $\Zp[\GG_n]^+ \rightarrow \Zp[\GG_n^+]$. The result now follows at finite level by a dimension count (as both are free $\Zp$-modules of rank $(p-1)p^{n-1}/2$,  and one sees easily that the map sends a basis of the first module to a basis of the second). We obtain the required result by passing to the inverse limit.
\end{proof}

We henceforth freely identify $\Lambda(\GG^+)$ with the submodule $\Lambda(\GG)^+$ of $\Lambda(\GG)$.

\begin{lemma}
	Let $\mu \in \Lambda(\GG)$. Then $\mu \in \Lambda(\GG^+)$ if and only if
	\[\int_{\GG}\chi(x)^k \cdot\mu = 0\]
	for all odd $k \geq 1$.
\end{lemma}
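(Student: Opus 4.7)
The plan is to reduce both implications to two ingredients already in place: the decomposition $\Lambda(\GG) = \Lambda(\GG)^+ \oplus \Lambda(\GG)^-$ from Lemma \ref{lem:decompose plus minus} (valid since $p$ is odd, so $2$ is invertible), and the vanishing criterion of Lemma \ref{zero divisor}(i) transported from $\Lambda(\Zp^\times)$ to $\Lambda(\GG)$ via the cyclotomic character $\chi$. The bridge between the two is the observation that $\chi(c) = -1$, so the character $\chi^k$ is $c$-equivariant with sign $(-1)^k$.

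For the forward direction, I would first record the general identity governing the $\GG$-action on $\Lambda(\GG)$: for any $\sigma \in \GG$ and any continuous $f$ on $\GG$, one has $\int_\GG f(x) \cdot \sigma\mu = \int_\GG f(\sigma x) \cdot \mu$. Taking $\sigma = c$ and $f = \chi^k$ with $k$ odd, the right-hand side equals $\chi(c)^k \int_\GG \chi^k \cdot \mu = -\int_\GG \chi^k \cdot \mu$. Now if $\mu \in \Lambda(\GG^+) = \Lambda(\GG)^+$ then $c\mu = \mu$, so this forces $\int_\GG \chi^k \cdot \mu = -\int_\GG \chi^k \cdot \mu$, and the required vanishing follows.

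For the converse, I would decompose $\mu = \mu^+ + \mu^-$ via Lemma \ref{lem:decompose plus minus} and aim to show $\mu^- = 0$. Running the same integral identity as above but with $c\mu^\pm = \pm \mu^\pm$ gives, for every integer $k \geq 1$, that $\int_\GG \chi^k \cdot \mu^+ = 0$ when $k$ is odd and $\int_\GG \chi^k \cdot \mu^- = 0$ when $k$ is even. Combining the latter with the hypothesis $\int_\GG \chi^k \cdot \mu = 0$ for odd $k \geq 1$ (and the already-established vanishing of $\int_\GG \chi^k \cdot \mu^+$ on odd $k$), we get $\int_\GG \chi^k \cdot \mu^- = 0$ for \emph{all} $k \geq 1$.

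The final step is to transport this vanishing across $\chi : \GG \isorightarrow \Zp^\times$ so that $\chi^k$ corresponds to $x^k$, and then invoke Lemma \ref{zero divisor}(i) on $\Lambda(\Zp^\times)$ to conclude $\mu^- = 0$, hence $\mu = \mu^+ \in \Lambda(\GG^+)$. I do not expect any real obstacle here: the only point requiring care is the book-keeping around conventions for the $\GG$-action on measures (whether by $\sigma$ or $\sigma^{-1}$), but since $\GG$ is abelian and we only use the involution $c$, this plays no role in the final computation.
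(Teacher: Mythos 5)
Your proof is correct and follows essentially the same route as the paper: decompose $\mu = \mu^+ + \mu^-$, use $\chi(c) = -1$ to relate $\int_\GG \chi^k \cdot \mu^-$ to $\int_\GG \chi^k \cdot \mu$, and invoke Lemma \ref{zero divisor}(i) to conclude $\mu^- = 0$. The paper merely compresses both directions into a single computation of $\int_\GG \chi^k \cdot \mu^- = \tfrac{1}{2}\bigl(1 - (-1)^k\bigr)\int_\GG \chi^k \cdot \mu$, whereas you spell out the forward and converse implications separately.
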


\begin{proof}
By Lemma \ref{lem:decompose plus minus}, we can write $\mu = \mu^+ + \mu^-$, where $\mu^\pm = \frac{1 \pm c}{2} \mu$. We want to show that $\mu^- = 0$ if and only if $ \int_{\GG}\chi(x)^k \cdot\mu = 0$ for all odd $k \geq 1$. Since $\chi(c) = -1$, we have
\[ \int_{\GG}\chi(x)^k \cdot \mu^+ = \frac{1}{2} \bigg( \int_\GG \chi^k \cdot \mu +  (-1)^k \int_\GG \chi^k \cdot \mu \bigg). \]
If $k$ is odd, the above expression vanishes, showing that $\int_\Gamma \chi(x)^k \cdot \mu = \int_\Gamma \chi(k) \cdot \mu^+$ for all odd $k$. On the other hand, the same argument shows that $\int_\Gamma \chi(x)^k \cdot \mu^-$ vanishes for all $k$ even. The result follows then by Lemma \ref{lem:zero divisor}.
\end{proof}

\begin{corollary}
	The $p$-adic zeta function is a pseudo-measure on $\GG^+$.
\end{corollary}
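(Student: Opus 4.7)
The plan is to decompose $\zeta_p$ into its $\pm$-parts under the action of complex conjugation $c$ and show that the minus part vanishes. Since $p$ is odd, $2 \in \zp^\times$, so the idempotent decomposition of Lemma \ref{lem:decompose plus minus} applies to $\Lambda(\GG)$. Although $\zeta_p \in Q(\GG)$ is only a pseudo-measure, the element
$\zeta_p^- \defeq \tfrac{1-c}{2}\zeta_p = -\tfrac{1}{2}([c]-[1])\zeta_p$
lies in $\Lambda(\GG)$ by the defining property of pseudo-measures; so it suffices to prove $\zeta_p^- = 0$. Once this is established, $\zeta_p$ is fixed by $c$, and a routine verification (using that $\GG$ is abelian and the isomorphism $\Lambda(\GG)^+ \cong \Lambda(\GG^+)$ of the preceding lemma) shows that $\zeta_p$ descends to a pseudo-measure on $\GG^+$.

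To prove $\zeta_p^- = 0$, I would apply Lemma \ref{zero divisor}(i) via the cyclotomic character identification $\GG \cong \zp^\times$ (so $\chi$ corresponds to the coordinate $x$). The task then reduces to checking $\int_\GG \chi^k \cdot \zeta_p^- = 0$ for every integer $k \geq 1$. Since $c$ acts on $\chi^k$ by multiplication by $\chi(c)^k = (-1)^k$, expanding $\zeta_p^- = \tfrac{1-c}{2}\zeta_p$ and applying the interpolation property of Theorem \ref{kubota leopoldt theorem} yields
\[ \int_\GG \chi^k \cdot \zeta_p^- \;=\; \tfrac{1}{2}\bigl(1 - (-1)^k\bigr)\bigl(1 - p^{k-1}\bigr)\zeta(1-k). \]
For $k$ even, the parity factor $1-(-1)^k$ vanishes; for $k \geq 3$ odd, Corollary \ref{rational} gives $\zeta(1-k) = -B_k/k = 0$; and for $k=1$, the Euler factor $1-p^{0}$ is zero.

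The only subtle point is the $k=1$ case, where $\zeta_p$ itself has a pole and the interpolation formula cannot be applied to $\zeta_p$ directly. The clean workaround is to compute $\int_\GG \chi \cdot \zeta_p^-$ via the honest measure $([c]-[1])\zeta_p \in \Lambda(\GG)$, noting the identity
$\int_\GG \chi^k \cdot ([c]-[1])\zeta_p = (\chi(c)^k - 1)(1-p^{k-1})\zeta(1-k)$.
At $k=1$, the factor $\chi(c) - 1 = -2$ is a nonzero scalar, while the vanishing Euler factor $1 - p^{0}$ genuinely absorbs the pole of $\zeta(s)$ at $s=1$; no $0/0$ ambiguity arises, and the integral is a legitimate zero. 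This is the one place where the argument uses more than a mechanical application of the interpolation formula, and it is the hardest step to phrase carefully.
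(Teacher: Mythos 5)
Your proof is correct and follows essentially the same route as the paper, which compresses the whole argument into one sentence invoking the preceding lemma (characterising $\Lambda(\GG^+)$ inside $\Lambda(\GG)$ by vanishing of the odd moments $\int_\GG \chi^k\cdot\mu$) together with the interpolation formula. What you usefully make explicit is the pseudo-measure bookkeeping that the paper elides: the lemma it cites is stated for honest measures $\mu\in\Lambda(\GG)$, whereas $\zeta_p$ only lies in $Q(\GG)$. Your observation that $\zeta_p^- = -\tfrac{1}{2}([c]-[1])\zeta_p$ is nonetheless a genuine element of $\Lambda(\GG)$ (by the very definition of a pseudo-measure) is exactly the right bridge, after which Lemma \ref{zero divisor}(i) applies without change, and the descent from $\Lambda(\GG)^+$ to $\Lambda(\GG^+)$ is the routine verification you describe.

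One small correction of emphasis: the $k=1$ case is not actually a subtlety, and $\zeta_p$ has no pole there. The interpolation formula of Theorem \ref{kubota leopoldt theorem} is stated, and holds, for all $k>0$ including $k=1$, where it gives $(1-p^{0})\zeta(0)=0$; the pole of $\zeta_p$ sits at the trivial character (``$k=0$''), where $\theta_a$ vanishes, not at $k=1$, where $\int\chi\cdot\theta_a = a-1\neq 0$. So the interpolation formula can be cited directly in all three cases. Your detour through the honest measure $([c]-[1])\zeta_p$ reaches the same conclusion and is harmless, but is not required.
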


\begin{proof}
	This follows from the interpolation property, as $\zeta(1-k) = 0$ for odd $k\geq 1$.
\end{proof}

\subsection{The ideal generated by the $p$-adic zeta function}

It is natural to ask about the zeros of the $p$-adic zeta function. Since the zeros are not modified if we multiply by a unit, studying the zeros of a measure on $\GG$ is equivalent to studying the ideal in $\Lambda(\GG)$ generated by the measure. 

Even though Kubota--Leopoldt is only a pseudo-measure -- hence not an element of $\Lambda(\GG)$ -- we now see that it still `generates' a natural ideal in $\Lambda(\GG)$. By definition of pseudo-measures, the elements $([g] - [1]) \zeta_p$ belong to the Iwasawa algebra $\Lambda(\GG)$ for any  $g\in \GG$. Recall from Definition \ref{DefAugmentationIdealFiniteLevel} that $I(\GG)$ denotes the \emph{augmentation ideal} of $\Lambda(\GG)$, that is, the ideal
\[ I(\GG) = \ker(\Lambda(\GG) \to \Zp), \]
where $\Lambda(\GG) \twoheadrightarrow \Zp$ is the map induced by $[g] \mapsto 1$ for any $\sigma \in \GG$. We define $I(\GG^+)$ similarly.

\begin{proposition}
	The module $I(\GG)\zeta_p$ is an ideal in $\Lambda(\GG)$. Similarly, the module $ I(\GG^+)\zeta_p$ is an ideal in $\Lambda(\GG^+)$.
\end{proposition}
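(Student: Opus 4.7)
The plan is to exploit the fact that, for $p$ odd, both $\GG \cong \Zp^\times$ and $\GG^+ \cong \Zp^\times/\{\pm 1\}$ are topologically cyclic, so the augmentation ideals $I(\GG)$ and $I(\GG^+)$ are principal. More precisely, the argument already used in the proof of Lemma \ref{lem:pseudo-measure existence} shows that if $a$ is a topological generator of $\GG$, then $I(\GG) = ([a]-[1])\Lambda(\GG)$, and similarly for $\GG^+$. Granting this, the statement becomes almost formal.

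First I would verify that $I(\GG)\zeta_p \subseteq \Lambda(\GG)$. Pick a topological generator $a$ of $\GG$. Any $\lambda \in I(\GG)$ can be written as $\lambda = \nu([a]-[1])$ for some $\nu \in \Lambda(\GG)$, so
\[
\lambda\,\zeta_p = \nu \cdot \bigl(([a]-[1])\,\zeta_p\bigr).
\]
Since $\zeta_p$ is a pseudo-measure on $\GG$, the bracketed factor lies in $\Lambda(\GG)$, and therefore $\lambda\,\zeta_p \in \Lambda(\GG)$. This shows containment in the ambient ring.

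Next I would check the ideal axioms. Closure under addition is immediate from the fact that $I(\GG)$ is additively closed. For closure under multiplication by $\Lambda(\GG)$, let $\mu \in \Lambda(\GG)$ and $\lambda \in I(\GG)$; then $\mu\lambda \in I(\GG)$ because $I(\GG)$ is an ideal of $\Lambda(\GG)$, hence
\[
\mu \cdot (\lambda\,\zeta_p) = (\mu\lambda)\,\zeta_p \in I(\GG)\,\zeta_p,
\]
which establishes the claim for $\GG$. For the $\GG^+$-statement, the Corollary in \S\ref{sec:measures on galois groups} shows that $\zeta_p$ is in fact a pseudo-measure on $\GG^+$, so $([g]-[1])\zeta_p \in \Lambda(\GG^+)$ for every $g \in \GG^+$; since $\GG^+$ is also topologically cyclic (being the quotient of the topologically cyclic group $\zpe$ by $\{\pm 1\}$, using that $p$ is odd), the identical argument applies verbatim with $\Lambda(\GG^+)$ and $I(\GG^+)$ in place of $\Lambda(\GG)$ and $I(\GG)$.

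There is no real obstacle here; the only mild subtlety is ensuring that the augmentation ideal is principal, which requires that the Galois group be topologically cyclic. This is the standing hypothesis $p$ odd, which guarantees both $\GG$ and $\GG^+$ are topologically cyclic, and the identification $I(\GG) = ([a]-[1])\Lambda(\GG)$ is precisely the computation extracted from Lemma \ref{lem:pseudo-measure existence}.
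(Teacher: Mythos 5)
Your proof is correct and is a mild repackaging of the paper's own argument. The paper observes that $I(\GG)$ is the \emph{topological} ideal generated by all the elements $[g]-[1]$, $g\in\GG$, and that the pseudo-measure property makes each $([g]-[1])\zeta_p$ land in $\Lambda(\GG)$; this settles the claim after a closure argument. You instead use the principality $I(\GG)=([a]-[1])\Lambda(\GG)$ for a topological generator $a$, already established in Lemma \ref{lem:pseudo-measure existence} (and equally applicable to $\GG^+$ since $p$ is odd). This replaces the topological-generation step with a purely algebraic one, at the cost of requiring the underlying group to be topologically cyclic; since that holds here, the two routes are interchangeable and your version is, if anything, slightly more explicit.
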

\begin{proof}
	Since $\zeta_p$ is a pseudo-measure, we know $([g]-[1])\zeta_p \in \Lambda(\GG)$ for all $g \in \GG$. Hence the result follows as $I(\GG)$ is the topological ideal generated by the elements $[g] - [1]$ for $g \in \GG$. The same argument holds for $I(\GG^+)\zeta_p$.
\end{proof}

\subsection{Cyclotomic units and Iwasawa's theorem}

Iwasawa's theorem describes the ideal $I(\GG)\zeta_p$ in terms of the module of cyclotomic units. We now recall this module, and its classical connection to class numbers, and then state Iwasawa's theorem.  

\begin{definition}
	For $n \geq 1$, we define the group $\mathscr{D}_n$ of cyclotomic units of $F_n$ to be the intersection of $\mathscr{O}_{F_n}^\times$ and the multiplicative subgroup of $F_n^\times$ generated by $\{ \pm \xi_{p^n}, \xi_{p^n}^a - 1 \, : \, 1 \leq a \leq p^n - 1 \}$. We set $\mathscr{D}_n^+ = \mathscr{D}_n \cap F_n^+$.
\end{definition}

We will study the structure of cyclotomic units more in detail in subsequent sections. The following result shows their connection to class numbers.

\begin{theorem} \label{thm:cyclo units class number}
	Let $n \geq 1$. The group $\mathscr{D}_n$ (resp. $\mathscr{D}_n^+$) is of finite index in the group of units $\mathscr{V}_n$ (resp.\ $\mathscr{V}_n^+$) in $F_n$ (resp. $F_n^+$), and we have
	\[ 
	h_n^+ = [ \mathscr{V}_n : \mathscr{D}_n ] = [ \mathscr{V}_n^+ : \mathscr{D}_n^+ ],
	 \]
  where $h_n^+ \defeq \# \mathrm{Cl}(F_n^+)$ is the class number of $F_n^+$.
\end{theorem}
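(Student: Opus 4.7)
My plan is to derive the result from the analytic class number formula by using Theorem~\ref{s=1 theorem}(i) to rewrite each special value $L(\chi,1)$ as a logarithmic expression in real cyclotomic units, and then to identify the resulting product with the regulator of $\mathscr{D}_n^+$ via a character-theoretic determinant.

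\emph{Step 1 (reduction to $F_n^+$).} Since $p$ is odd and $F_n / F_n^+$ is a CM extension, the Hasse unit index equals $1$: every unit of $F_n$ is a root of unity times a real unit, so $\mathscr{V}_n = \mu_{p^n}\cdot\mathscr{V}_n^+$. A direct check on the generators $\pm\xi_{p^n}$ and $\xi_{p^n}^a - 1$ of $\mathscr{D}_n$ (using that $(\xi_{p^n}^a - 1)/(\xi_{p^n}-1)$ differs from a real cyclotomic unit only by a root of unity in $\mu_{2p^n}$) yields the analogous statement $\mathscr{D}_n = \mu_{p^n}\cdot\mathscr{D}_n^+$. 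Combined with $\mu_{p^n}\cap\mathscr{V}_n^+ = \{\pm 1\}\subseteq\mathscr{D}_n^+$, these equalities give a natural isomorphism $\mathscr{V}_n/\mathscr{D}_n\cong\mathscr{V}_n^+/\mathscr{D}_n^+$, and it suffices to prove $[\mathscr{V}_n^+:\mathscr{D}_n^+] = h_n^+$.

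\emph{Step 2 (class number formula and $L$-values).} Apply Theorem~\ref{class number formula} to the totally real field $F_n^+$. Using the factorisation
\[
\zeta_{F_n^+}(s) \;=\; \zeta(s)\prod_{\chi}L(\chi, s),
\]
where $\chi$ runs over the non-trivial even Dirichlet characters of conductor dividing $p^n$, and taking residues at $s=1$, one obtains an expression for $h_n^+\cdot R(\mathscr{V}_n^+)$ in terms of $\prod_\chi L(\chi,1)$ and explicit archimedean and discriminant factors. Theorem~\ref{s=1 theorem}(i) then rewrites each $L(\chi,1)$ (using that $\chi$ is even to drop imaginary parts) as $-G(\chi^{-1})^{-1}\sum_a\chi^{-1}(a)\log|1-\xi_{p^n}^a|$, producing an identity
\[
h_n^+\cdot R(\mathscr{V}_n^+) \;=\; A_n\cdot\prod_{\chi}\Bigl(\sum_{a}\chi^{-1}(a)\log|1-\xi_{p^n}^a|\Bigr)
\]
for a completely explicit constant $A_n$ depending only on $p$, $n$, Gauss sums, and the discriminant of $F_n^+$.

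\emph{Step 3 (regulator of $\mathscr{D}_n^+$ and conclusion).} The crux is that the right-hand side above is, up to the same constant $A_n$, precisely $R(\mathscr{D}_n^+)$. A generating set for $\mathscr{D}_n^+$ modulo torsion is furnished by the real cyclotomic units $\eta_a := \xi_{p^n}^{(1-a)/2}(1-\xi_{p^n}^a)/(1-\xi_{p^n})$, with $a$ running over a set of representatives for $((\Z/p^n\Z)^\times\setminus\{\pm 1\})/\{\pm 1\}$. Their regulator is the absolute value of $\det\bigl(\log|\sigma_b\eta_a|\bigr)_{a,b}$ over the archimedean places, and applying orthogonality of characters of $(\Z/p^n\Z)^\times/\{\pm 1\}$ diagonalises this determinant to give
\[
R(\mathscr{D}_n^+) \;=\; A_n\cdot\prod_{\chi}\Bigl(\sum_{a}\chi^{-1}(a)\log|1-\xi_{p^n}^a|\Bigr).
\]
Comparing with Step~2 yields $R(\mathscr{D}_n^+) = h_n^+\cdot R(\mathscr{V}_n^+)$, which by the standard identity $R(H) = [G:H]\,R(G)$ for a finite-index subgroup $H\subseteq G$ of a full lattice gives $[\mathscr{V}_n^+:\mathscr{D}_n^+] = h_n^+$. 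The finiteness of this index follows simultaneously from the non-vanishing of $L(\chi,1)$ for $\chi\neq 1$, which guarantees $R(\mathscr{D}_n^+)\neq 0$.

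The main technical obstacle is the character-theoretic determinant calculation in Step~3: the constant $A_n$ emerging from the class number formula must match precisely the one produced by the regulator expansion, which requires careful bookkeeping of archimedean factors, Gauss sums, and discriminant contributions. Steps~1 and~2 are essentially formal once one has the factorisation of $\zeta_{F_n^+}$ and Theorem~\ref{s=1 theorem}(i) at one's disposal.
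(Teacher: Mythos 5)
Your proposal follows exactly the route that the paper attributes to Washington (\cite[Theorem 8.2]{Washington2}): reduce to $F_n^+$, compute the regulator of the real cyclotomic units via character orthogonality as a product of the sums $\sum_a\chi^{-1}(a)\log|1-\xi_{p^n}^a|$, identify these with $L(\chi,1)$ using Theorem~\ref{s=1 theorem}(i), and compare against the analytic class number formula for $F_n^+$. The paper cites this argument rather than reproducing it, so your plan matches the paper's approach.
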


\begin{proof}
	We will not prove this here; see \cite[Theorem 8.2]{Washington2}. The proof goes by showing that the regulator of cyclotomic units is given in terms of special $L$-values at $s = 1$ of Dirichlet $L$-functions, and then using the class number formula. 
\end{proof}

As we explained in \S\ref{sec:coleman map definition}, the construction of the $p$-adic zeta function via the Coleman map goes as follows. The cyclotomic units $c_n(a)$, introduced in \S\ref{sec:cyclo units coleman}, are naturally elements of $\DD_n$, hence global. One then considers their image inside the space of local units, and then applies the Coleman map (Definition \ref{def:coleman map}), which is a purely local procedure. In this spirit it is natural to switch here from studying the global modules $\DD_n$ and $\DD_n^+$ to their closures in the space of local units. Recall $\UU_{\infty,1}^+$ from the notational introduction to Part II; it is the group of norm-compatible local units congruent to $1 \newmod{p}$.



\begin{definition} For any $n \geq 1$, define $\CC_n$ as the $p$-adic closure of $\DD_n$ inside the local units $\UU_n$\footnote{ We will describe this closure more explicitly in Lemma \ref{lem:closure} below.}, let $\CC^+_n \defeq \CC_n \cap \UU_n^+$, and let
	\[ \CC_{n, 1} \defeq \CC_n \cap \UU_{n, 1}, \;\;\; \CC^+_{n, 1} \defeq \CC^+_n \cap \UU_{n, 1}; \]
	\[ \CC_{\infty, 1} \defeq \varprojlim_{n \geq 1} \CC_{n, 1}, \;\;\; \CC^+_{\infty, 1} \defeq \varprojlim_{n \geq 1} \CC^+_{n, 1}. \]

\end{definition}

We will see that $\UU_{\infty,1}^+$, and its quotient $\UU_{\infty,1}^+/\CC_{\infty,1}^+$, naturally have $\Lambda(\GG^+)$-module structures. Moreover, Iwasawa explicitly related this quotient to the $p$-adic zeta function. The following theorem, which we prove in \S\ref{sec:proof Iwasawa}, says that the cyclotomic units capture the zeros of $\zeta_p$ and ultimately motivated Iwasawa to formulate his Main Conjecture.

\begin{theorem} \label{thm:iwasawa}
The Coleman map induces an isomorphism of $\Lambda(\GG^+)$-modules
		\[ 
		\mathscr{U}^+_{\infty, 1} / \mathscr{C}^+_{\infty, 1} \xrightarrow{\sim} \Lambda(\GG^+) / I(\GG^+) \zeta_p. 
		\]
\end{theorem}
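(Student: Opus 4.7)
The plan is to push the entire statement through the Coleman map of Definition \ref{def:coleman map}, so that both sides become explicit objects in $\Lambda(\GG^+)$, and then apply Theorem \ref{thm:coleman to kl}. The first step is to verify that $\mathrm{Col}$ is $\Lambda(\GG)$-equivariant. Going factor by factor: $u \mapsto f_u$ is $\GG$-equivariant by uniqueness of the Coleman power series (using that complex conjugation $c$ sends $\pi_n$ to $\xi_{p^n}^{-1}-1$); the operator $\partial = (1+T)d/dT$ anti-commutes with the action $c: T \mapsto (1+T)^{-1}-1$, so $\partial\log$ is anti-equivariant for $c$; the projector $1-\varphi\psi$ is equivariant (being the restriction to $\zpe$ at the level of measures); multiplication by $x^{-1}$, i.e.\ $\partial^{-1}$, is anti-equivariant; and $\sA^{-1}$ is equivariant. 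The two anti-equivariances cancel, and restricting to the $c$-fixed submodule $\mathscr{U}^+_{\infty,1}$ produces a $\Lambda(\GG^+)$-linear map
\[
\mathrm{Col}^+ : \mathscr{U}^+_{\infty,1} \longrightarrow \Lambda(\GG)^+ = \Lambda(\GG^+).
\]

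Next I would show that $\mathrm{Col}^+$ is injective and identify the image of $\mathscr{C}^+_{\infty,1}$. A direct analysis shows that the kernel of $u \mapsto (1-\varphi\psi)(\partial\log f_u)$ on $\mathscr{U}_{\infty,1}$ consists of $\{(\xi_{p^n}^k)_n : k \in \zp\}$: a power series annihilated by $1-\varphi\psi$ is $\varphi$-invariant, forcing $\partial \log f_u$ to be a constant in $\zp$, whence $f_u = (1+T)^k$ after using norm-compatibility to kill the integration constant. On $\mathscr{U}^+_{\infty,1}$, $c$ sends $(\xi_{p^n}^k)_n$ to $(\xi_{p^n}^{-k})_n$, so $k = -k$, which forces $k=0$ since $p$ is odd, giving injectivity. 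For the image, Theorem \ref{thm:coleman to kl} gives $\mathrm{Col}(c(a)) = -\theta_a \zeta_p$ for every $a \in \zpe$. Projecting to $+$-parts and taking the closure with respect to the $p$-adic topology (compatible with $\mathrm{Col}^+$ by continuity), we find that $\mathrm{Col}^+(\mathscr{C}^+_{\infty,1})$ is the closed $\Lambda(\GG^+)$-submodule generated by the elements $\theta_a^+ \zeta_p = ([\bar a]-[\bar 1])\zeta_p$. As $a$ varies over $\zpe$, the $\theta_a^+$ topologically generate the augmentation ideal $I(\GG^+)$, so $\mathrm{Col}^+(\mathscr{C}^+_{\infty,1}) = I(\GG^+)\zeta_p$.

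To conclude the isomorphism I need surjectivity of $\mathrm{Col}^+$. Here I would invoke the classical theorem of Iwasawa that (for $p$ odd) $\mathscr{U}^+_{\infty,1}$ is a free $\Lambda(\GG^+)$-module of rank one. Given such a generator $\eta$, surjectivity reduces to showing that $\mathrm{Col}^+(\eta)$ has $p$-adic unit image in $\Lambda(\GG^+)/I(\GG^+) \cong \zp$, which can be checked by an explicit computation with a suitable choice of generator (for example, one produced from a norm-compatible system built out of cyclotomic primes). Combining injectivity, the image computation, and surjectivity, $\mathrm{Col}^+$ is an isomorphism of $\Lambda(\GG^+)$-modules sending $\mathscr{C}^+_{\infty,1}$ onto $I(\GG^+)\zeta_p$, and the theorem follows by passing to quotients. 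The main obstacle I expect is the structure theorem for $\mathscr{U}^+_{\infty,1}$: the injectivity and image computations are essentially bookkeeping once the equivariance and Theorem \ref{thm:coleman to kl} are in place, but the rank-one freeness over $\Lambda(\GG^+)$ requires genuine input from local Iwasawa theory, typically via local class field theory or a computation of Galois cohomology of $\zp(1)$ along the cyclotomic tower. A secondary subtlety is carefully tracking the $p$-adic closure defining $\mathscr{C}^+_{\infty,1}$ and its interaction with the Coleman map, since the cyclotomic units $c(a)$ themselves are not real and only produce real elements after suitable projection.
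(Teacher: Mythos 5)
Your plan runs parallel to the paper through the equivariance, injectivity and image computations: the observation that the factors of $a$ from $\partial\log$ and $\partial^{-1}$ cancel is exactly what the paper verifies for all $\sigma_a$, and the identification of the kernel with $\mu_{p-1}\times\zp(1)$ (which dies in the $+$-part) and of $\mathrm{Col}^+(\mathscr{C}^+_{\infty,1})$ with $I(\GG^+)\zeta_p$ via Theorem~\ref{thm:coleman to kl} matches Theorem~\ref{thm:fund exact seq} and the cyclic-generator argument in \S\ref{lem:cyc units gen 2}. Where you part ways with the paper is surjectivity, and that is where the real content lives. The paper establishes it from scratch: Theorem~\ref{thm:log der} shows $\Delta = \partial\log$ maps $(\zp\lsem T\rsem^\times)^{\cN=\mathrm{id}}$ \emph{onto} $\zp\lsem T\rsem^{\psi=\mathrm{id}}$ (this is the technical heart, via the mod-$p$ Lemmas~\ref{lem:A mod p}--\ref{lem:B mod p 2}), and Lemma~\ref{lem:rest zp*} then identifies $\mathrm{coker}(1-\varphi\circ\psi)$ with $\zp$; assembling these gives the fundamental exact sequence, whose $+$-part is your target isomorphism.

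Your alternative -- invoke rank-one freeness of $\mathscr{U}^+_{\infty,1}$ and then test a generator -- has a concrete gap. The ring $\Lambda(\GG^+)\cong\zp[\Delta]\lsem T\rsem$, with $\Delta$ cyclic of order $(p-1)/2$, is \emph{not} local for $p>3$: it decomposes as a product $\prod_\omega\Lambda_\omega$ over the characters $\omega$ of $\Delta$. For $\mathrm{Col}^+(\eta)$ to be a unit you must verify non-vanishing in each residue field, i.e.\ at each $\omega$-eigenspace; but $\Lambda(\GG^+)/I(\GG^+)\cong\zp$ only records the trivial-character component evaluated at $T=0$, so a unit image there says nothing about the other $(p-3)/2$ factors or about $p$-divisibility of $\mathrm{Col}^+(\eta)$ in the trivial component. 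Thus ``$\mathrm{Col}^+(\eta)$ is a unit mod $I(\GG^+)$'' does not imply ``$\mathrm{Col}^+(\eta)$ is a unit.'' Moreover, the ``suitable generator'' would have to be constructed and its image computed at every $\omega$ -- which is essentially the work of Theorem~\ref{thm:log der} and Lemma~\ref{lem:rest zp*} in disguise. Finally, citing rank-one freeness as an independent classical input should be done with care: in the framework of this paper that freeness is read off from the fundamental exact sequence itself (the $+$-part of $0\to\zp(1)\to\mathscr{U}_{\infty,1}\to\Lambda(\GG)\to\zp(1)\to0$ collapses to $\mathscr{U}^+_{\infty,1}\cong\Lambda(\GG^+)$), so invoking it risks circularity unless you point to a proof that does not already go through the Coleman map.
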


The quotient $\mathscr{U}_{\infty,1}^+/\CC_{\infty,1}^+$ is a local analogue, at infinite level, of the cyclotomic units inside the global units, whose indices compute class numbers in the cylotomic tower (Theorem \ref{thm:cyclo units class number}).  They will in turn be related (cf.\ Corollary \ref{cor:CFTunits2}) to the Galois modules appearing in the formulation of the Iwasawa Main Conjecture. This theorem is hence the first step into proving a remarkable and deep connection between class groups and the $p$-adic zeta function, which will be the main purpose of the Iwasawa Main Conjecture.


\section{Proof of Iwasawa's theorem}\label{sec:proof Iwasawa}

In this section, we prove Theorem \ref{thm:iwasawa}. First, we equip the local units with an action of $\Lambda(\GG)$, and prove that the Coleman map is equivariant with respect to this action. Then, in Theorem \ref{thm:fund exact seq}, we compute the kernel and cokernel of the Coleman map. Finally we describe generators of the modules of cyclotomic units, and compute their image under the Coleman map. We combine all of this to prove Theorem \ref{thm:iwasawa}.

\subsection{Equivariance properties of the Coleman map}\label{sec:equivariance}

Theorem \ref{thm:iwasawa} is a statement about $\Lambda(\GG^+)$-modules. Here it is important that we work over the full Iwasawa algebra; the structure theorem for modules over $\Lambda(\GG)$ and $\Lambda(\GG^+)$ -- stated in Theorem \ref{thm:structure theorem} below -- is crucial in studying the Iwasawa Main Conjecture. It is desirable, then, to equip $\sU_\infty$ with a $\Lambda(\GG)$-module structure. As $\Lambda(\GG)$ is the completed group ring of $\GG$ over $\Zp$, this amounts to equipping it with compatible actions of $\Zp$ and $\GG$. For the latter, we use the natural Galois action on the local units. For the former, however, we are stuck: whilst there is a natural action of $\Z$ on $\sU_\infty$ by $u \mapsto u^a$ for an integer $a$, this does not extend to an action of $\Zp$.

\subsubsection{The action of $\Zp$}

To fix the absence of a $\Zp$-action on local units, we recall the definition of the subgroup $\sU_{\infty,1} \subset \sU_\infty$ introduced in \eqref{eq:Uinfty 1}. In particular, we showed there that the action of $\Z$ \emph{does} extend to $\Zp$ on $\sU_{\infty,1}$. 
For convenience, we recall (cf.\ Definition \ref{def:coleman map}) that the Coleman map was defined as the following composition
\[
\mathrm{Col} :	\sU_\infty \xrightarrow{u \mapsto f_u(T)} (\Zp\lsem T\rsem ^\times)^{\cN=\mathrm{id}} \xrightarrow {\partial\log} \Zp\lsem T\rsem  \xrightarrow{1-\varphi\circ\psi} \Zp\lsem T\rsem ^{\psi = 0}\xrightarrow{\partial^{-1}}\Zp\lsem T\rsem ^{\psi = 0} \xrightarrow{\sA^{-1}} \Lambda(\Zp^\times).
\]

\begin{proposition}
	The map $\mathrm{Col}$ restricts to a $\Zp$-equivariant map
	\[
	\mathrm{Col} : \sU_{\infty,1} \longrightarrow \Lambda(\Zp^\times).
	\]
\end{proposition}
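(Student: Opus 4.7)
The plan is to first observe that the unrestricted composition $\mathrm{Col}\colon \sU_\infty \to \Lambda(\Zp^\times)$ is a group homomorphism from the multiplicative group $\sU_\infty$ to the additive group $\Lambda(\Zp^\times)$, giving $\Z$-linearity automatically; then to upgrade this to $\Zp$-linearity on the subgroup $\sU_{\infty,1}$ by a density-plus-continuity argument.

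For the first step, I would check each constituent of the defining composition separately. Coleman's isomorphism $u \mapsto f_u$ is multiplicative by Theorem \ref{thm:coleman map 2}. The logarithmic derivative converts products into sums, $\partial\log(fg) = \partial\log(f) + \partial\log(g)$, so it intertwines the multiplicative structure on $(\Zp\lsem T\rsem^\times)^{\cN=\mathrm{id}}$ with the additive structure on $\Zp\lsem T\rsem$. The remaining three operators $1-\varphi\circ\psi$, $\partial^{-1}$, and $\sA^{-1}$ are all $\Zp$-linear by construction. Composing, $\mathrm{Col}(uv) = \mathrm{Col}(u) + \mathrm{Col}(v)$ for $u,v \in \sU_\infty$, and in particular $\mathrm{Col}(u^n) = n \cdot \mathrm{Col}(u)$ for every $n \in \Z$.

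To pass from $\Z$-linearity to $\Zp$-linearity, I would invoke continuity. Equipping $\sU_\infty$ with the profinite inverse-limit topology, $\Zp\lsem T\rsem$ with its $(p,T)$-adic topology, and $\Lambda(\Zp^\times)$ with its profinite topology, each of $\partial\log$, $1-\varphi\circ\psi$, $\partial^{-1}$ (on $\psi=0$ series, where it is well-defined) and $\sA^{-1}$ is manifestly continuous; the one delicate step is continuity of $u \mapsto f_u$. This however follows from the uniqueness in Lemma \ref{lem:unique coleman} together with the compactness of $\Zp\lsem T\rsem$: for any convergent sequence $u^{(k)} \to u$ in $\sU_\infty$, every cluster point $g$ of $(f_{u^{(k)}})$ satisfies $g(\pi_n) = u_n$ for all $n$, and is therefore equal to $f_u$. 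Hence the whole composition $\mathrm{Col}$ is continuous. Now given $u \in \sU_{\infty,1}$ and $a \in \Zp$, choose integers $a_k \in \Z$ with $a_k \to a$. The $\Zp$-action $a \mapsto u^a = \sum_{k\geq 0}\binom{a}{k}(u-1)^k$ on $\sU_{\infty,1}$ is continuous, so $u^{a_k} \to u^a$, and continuity of $\mathrm{Col}$ combined with the $\Z$-linearity established above yields
\[
\mathrm{Col}(u^a) = \lim_{k\to\infty} \mathrm{Col}(u^{a_k}) = \lim_{k\to\infty} a_k \cdot \mathrm{Col}(u) = a \cdot \mathrm{Col}(u).
\]
This is the required $\Zp$-linearity, and since $\sU_{\infty,1} \subseteq \sU_\infty$ the target is unchanged. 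The main obstacle is verifying continuity of Coleman's isomorphism cleanly; fortunately this is essentially built into the diagonal-subsequence construction of $f_u$ in \S\ref{sec:proof of Coleman}, so no new work is required beyond invoking it.
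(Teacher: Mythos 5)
Your proof is correct but takes a genuinely different route from the paper. The paper works directly at the power-series level: for $u \in \sU_{\infty,1}$ the constant term of $f_u$ is $\equiv 1 \newmod{p}$ (evaluate at $\pi_n$ and use that $\pi_n$ is a uniformiser), hence $f_u - 1 \in (p,T)$; since $\Zp\lsem T\rsem$ is $(p,T)$-adically complete, the binomial series $f_u^a = \sum_{j\geq 0}\binom{a}{j}(f_u - 1)^j$ converges for $a \in \Zp$, and one checks $f_u^a = f_{u^a}$ by evaluating at each $\pi_n$. The identity $\partial\log(f_u^a) = a\,\partial\log(f_u)$ and the $\Zp$-linearity of the remaining operators then finish the job. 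You instead extract $\Z$-linearity from multiplicativity of $u\mapsto f_u$ and $\partial\log$, and upgrade to $\Zp$-linearity by density of $\Z$ in $\Zp$ plus continuity of $\mathrm{Col}$, establishing continuity of Coleman's isomorphism via the same compactness device that appears in Proposition \ref{prop:R surjective}. Both arguments need $u\in\sU_{\infty,1}$, but for different visible reasons: in the paper it is what puts $f_u-1$ in $(p,T)$, in yours it is what defines (and makes continuous) the $\Zp$-action $a\mapsto u^a$ at all. What the paper's route buys is an explicit identity $f_{u^a}=f_u^a$ inside $(\Zp\lsem T\rsem^\times)^{\cN=\mathrm{id}}$ and a hands-on verification requiring no topological preliminaries; what yours buys is economy, at the cost that ``manifestly continuous'' for $\partial\log$ and for $\partial^{-1}$ on $\psi=0$ series in the $(p,T)$-adic topology, while true, deserves the short check you elide.
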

\begin{proof}
	It suffices to check $\Zp$-equivariance for each map in the composition in Definition \ref{def:coleman map}. 	The action of $a \in \Zp$ on $u \in \sU_{\infty,1}$ is given by $u \mapsto u^a \defeq \sum_{k \geq 0} {a \choose k} (u-1)^k$. Write $f_u = \sum_{k \geq 1}a_k(u) T^k$. We first claim that 
	\begin{equation}\label{eq:1 mod p}
	a_0(u) \equiv 1 \newmod{p}.
	\end{equation}
	Indeed, by definition $f_u(\pi_n) = u_n \equiv 1 \newmod{\pri_n}$ for each $n$, and as $\pi_n$ is a uniformiser for $K_n$, we see 
	\[
	f_u(\pi_n) = a_0(u) + \sum_{k\geq 1}a_k(u) \pi_n^k \ \ \ \in a_0(u) + \pri_n,
	\]
	from which we see that $a_0(u) \equiv 1 \newmod{\pri_n}$. But $a_0(u)$ lies in $\Zp$, giving \eqref{eq:1 mod p}.
	
	Thus $f_u(T) - 1 \in (p,T)$. As $\Zp\lsem T\rsem $ is complete in the $(p,T)$-adic topology,
	\[
	f_u(T)^a = \sum_{j \geq 0}\binomc{a}{j} (f_u(T)-1)^j
	\]
	converges to a power series $f_u^a(T) \in \Zp\lsem T\rsem $. Since by construction $f_u(\pi_n)^a = u_n^a$, by Lemma \ref{lem:unique coleman} we have, 
	\[
	f_{u}^a = f_{u^a} \in (\Zp\lsem T\rsem ^\times)^{\cN=\mathrm{id}}.
	\]
	As a result, we have equipped the image of $\sU_{\infty,1}$ inside $\Zp\lsem T\rsem$ under the map $u \mapsto f_u$ with a $\Zp$-action such that the restriction of the Coleman isomorphism is $\Zp$-equivariant. We compute that $\partial\log(f_u^a) = a\partial\log(f_u)$, so $\partial\log$ is equivariant for the natural $\Zp$-action on $\Zp\lsem T\rsem $. Finally the maps $(1- \varphi\circ\psi)$, $\partial^{-1}$ and $\sA^{-1}$ are $\Zp$-equivariant by definition.
\end{proof}

The next two lemmas show that we have not lost any information by restricting.
\begin{lemma}
	We have $\sU_\infty = \mu_{p-1} \times \sU_{\infty,1}.$
\end{lemma}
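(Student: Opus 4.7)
The plan is to build the decomposition at each finite level and then pass carefully to the inverse limit. First, I would observe that $K_n = \qp(\mu_{p^n})$ is totally ramified over $\qp$, so its residue field is $\fp$. Hensel's lemma applied to $X^{p-1}-1$ places $\mu_{p-1}$ inside $\qp^\times \subset K_n^\times$, and the Teichm\"uller lift provides a section of the reduction map $\sU_n \twoheadrightarrow \fp^\times$ whose kernel is exactly $\sU_{n,1}$. This yields a canonical finite-level decomposition $\sU_n = \mu_{p-1} \times \sU_{n,1}$.

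Second, I would check that this decomposition is compatible with the norm maps defining the inverse limits. On the 1-unit part, $N_{n+1,n}$ restricts to $\sU_{n+1,1} \to \sU_{n,1}$ automatically. On the $\mu_{p-1}$ part, since $\mu_{p-1} \subset \qp$ is pointwise fixed by $\Gal(K_{n+1}/K_n)$, the norm acts as $\zeta \mapsto \zeta^{[K_{n+1}:K_n]} = \zeta^p$. By uniqueness of the Teichm\"uller decomposition at each level, $N_{n+1,n}$ respects the direct product structure, and therefore
\[
\sU_\infty \;=\; \Bigl(\varprojlim_n \mu_{p-1}\Bigr) \times \sU_{\infty,1},
\]
where the transition maps on the first factor are $\zeta \mapsto \zeta^p$.

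Third, I would identify $\varprojlim_n \mu_{p-1}$ with $\mu_{p-1}$. Since $\gcd(p,p-1)=1$, the map $\zeta \mapsto \zeta^p$ is an automorphism of $\mu_{p-1}$, so the entire inverse system consists of isomorphisms and its limit is canonically $\mu_{p-1}$ (projection to level $n=1$ gives the isomorphism; the inverse sends $\zeta \in \mu_{p-1}$ to the norm-compatible sequence $(\zeta^{q^{n-1}})_n$, where $q \in \Z$ is any integer with $pq \equiv 1 \pmod{p-1}$). Combining the three steps produces the desired splitting.

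I do not anticipate a real obstacle: the whole argument is formal once the two inputs (total ramification of $K_n/\qp$ and coprimality of $p$ with $p-1$) are in hand. The one mild subtlety worth flagging is that the embedding $\mu_{p-1} \hookrightarrow \sU_\infty$ is \emph{not} the na\"ive diagonal $\zeta \mapsto (\zeta,\zeta,\dots)$, which fails to be norm-compatible; it must be twisted by inverse powers of $p$ modulo $p-1$ as above.
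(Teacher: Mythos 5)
Your proof is correct and follows the same route as the paper: split at each finite level using that $K_n/\qp$ is totally ramified (reduction modulo $\pri_n$ gives the split exact sequence $1\to\sU_{n,1}\to\sU_n\to\mu_{p-1}\to 1$), then pass to the inverse limit. Your extra care with the norm map on the $\mu_{p-1}$ factor -- it acts as $\zeta\mapsto\zeta^p$, which is an automorphism precisely because $\gcd(p,p-1)=1$ -- is a genuine and worthwhile clarification that the paper's terse ``the result follows in the inverse limit'' leaves implicit, and your observation that the naive diagonal embedding of $\mu_{p-1}$ is not norm-compatible is exactly the subtlety one should be aware of.
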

\begin{proof}
	We start at finite level $n$. As $p$ is totally ramified in $K_n$ for all $n$, there is a unique prime $\pri_n$ of $K_n$ above $p$, and reduction modulo $\pri_n$ gives a short exact sequence
	\[
	1 \to \sU_{n,1} \to \sU_{n} \to \mu_{p-1} \to 1,
	\]
	which is split, so $\sU_{n} = \mu_{p-1} \times \sU_{n,1}$. The result follows in the inverse limit.
\end{proof}

\begin{lemma}
	The subgroup $\mu_{p-1}$ of $\sU_{\infty}$ is killed by $\mathrm{Col}$. In particular, no information is lost when restricting to $\sU_{\infty,1}$.
\end{lemma}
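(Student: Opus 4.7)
The plan is to compute $\mathrm{Col}(\zeta)$ directly for $\zeta \in \mu_{p-1}$ by identifying its Coleman power series and observing that the logarithmic derivative step kills it.

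First I would explain how $\mu_{p-1}$ sits inside $\sU_\infty$: since $\mu_{p-1} \subset \Zp^\times \subset \sU_n$ for every $n$, any $\zeta \in \mu_{p-1}$ gives a sequence $(\zeta,\zeta,\zeta,\ldots)$, which is norm-compatible because $N_{n,n-1}(\zeta) = \zeta^{[K_n:K_{n-1}]} = \zeta^p = \zeta$, using $\zeta^{p-1} = 1$. This is precisely the splitting of $\sU_\infty = \mu_{p-1} \times \sU_{\infty,1}$ coming from the previous lemma, applied in the inverse limit.

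Next, I would compute the Coleman power series $f_\zeta$. The constant power series $f(T) \defeq \zeta \in \Zp\lsem T\rsem^\times$ satisfies $f(\pi_n) = \zeta$ for all $n$. Moreover, by the formula defining $\cN$ (Lemma \ref{lem:norm and trace}), we have
\[
\varphi(\cN(f))(T) = \prod_{\xi \in \mu_p} \zeta = \zeta^p = \zeta = \varphi(\zeta),
\]
so by injectivity of $\varphi$, $\cN(f) = f$. By the uniqueness clause of Theorem \ref{thm:coleman map 2}, $f_\zeta = \zeta$ is the constant power series.

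Now I would evaluate the remaining steps in Definition \ref{def:coleman map}. Since $\partial\log(f) = (1+T)f'(T)/f(T)$ and the derivative of a constant is zero, $\partial\log(f_\zeta) = 0$. All subsequent maps in the composition defining $\mathrm{Col}$ are linear, so $\mathrm{Col}(\zeta) = 0$, proving the first statement. For the ``in particular'' part, since $\sU_\infty = \mu_{p-1} \times \sU_{\infty,1}$ and the $\mu_{p-1}$ factor lies in $\ker(\mathrm{Col})$, the map $\mathrm{Col}$ and its restriction to $\sU_{\infty,1}$ have the same image, and their kernels differ only by the $\mu_{p-1}$ factor. There is no real obstacle here; the only thing to be slightly careful about is the verification that $\cN$ fixes the constant $\zeta$, which crucially uses $\zeta^{p-1} = 1$.
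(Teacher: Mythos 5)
Your proof is correct and takes essentially the same route as the paper: identify the Coleman power series of $\zeta \in \mu_{p-1}$ as the constant $\zeta$, then note that $\partial\log$ kills constants, and hence so do the subsequent (linear) maps in the composition defining $\mathrm{Col}$. Your version is somewhat more careful in verifying that the constant power series is actually norm-invariant (via $\zeta^p = \zeta$) and invoking the uniqueness clause of Theorem \ref{thm:coleman map 2}, but this is exactly the content the paper's proof takes for granted.
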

\begin{proof}
	Note $\mu_{p-1} \subset \Zp^\times$. The first map $u \mapsto f_u$ is an isomorphism that sends $v = (v)_{n \in \N} \in \mu_{p-1} \subset \UU_\infty$ to the constant power series $f_v(T) = v$. But constant power series are killed by the second map $\Zp\lsem T\rsem  \xrightarrow{\partial\log}\Zp\lsem T\rsem$, which involves differentiation. Thus $\mu_{p-1}$ is mapped to zero under the composition, and hence under $\mathrm{Col}$.
\end{proof}
\begin{remark}\label{rem:ker Delta}
	The kernel of $\partial \log$ is comprised of constant power series. Moreover, if $f \in \Zp\lsem T\rsem $ is constant and invariant under $\cN$, then this forces $f^p = f$. Thus the kernel of the composition of the first two maps is exactly $\mu_{p-1}$.
\end{remark}

\subsubsection{The Galois action}

	The Galois group $\GG = \Gal(F_\infty/\Q)$ is naturally isomorphic to $\mathrm{Gal}(K_\infty/\Qp)$, as $p$ is totally ramified in $F_\infty$. Thus $\Gamma$ acts on $\sU_\infty$.


\begin{notation}
	For $a \in \Zp^\times$, let $\sigma_a \in \GG$ be the corresponding element of $\GG$ with $\chi(\sigma_a) = a$, recalling that $\chi : \GG \isorightarrow \Zp^\times$ is the cyclotomic character from \eqref{eq:cyclo char}.
\end{notation}

\begin{proposition}
	The Coleman map $\mathrm{Col} : \sU_{\infty} \to \Lambda(\GG)$ is $\GG$-equivariant. 
\end{proposition}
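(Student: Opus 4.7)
The plan is to verify $\GG$-equivariance of each map in the composition defining $\mathrm{Col}$ in Definition \ref{def:coleman map}, using the identification $\GG = \Gal(F_\infty/\Q) \cong \Gal(K_\infty/\Qp)$. Fix $\sigma_a \in \GG$ with $\chi(\sigma_a) = a \in \Zp^\times$. Recall from \S\ref{SubSectionphipsi} that, under the Amice transform combined with the cyclotomic character, the action of $\sigma_a$ on $\Lambda(\GG)$ by left multiplication in the group algebra corresponds to the operator $F(T) \mapsto F((1+T)^a - 1)$ on $\Zp\lsem T \rsem$. I will denote this latter operator also by $\sigma_a$; the final step $\sA^{-1}$ is then $\GG$-equivariant by construction.

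For the first step of the Coleman map: since Galois acts on our fixed system of roots of unity by $\sigma_a(\xi_{p^n}) = \xi_{p^n}^a$, we have $\sigma_a(\pi_n) = (1+\pi_n)^a - 1$, while the $\Zp$-coefficients of $f_u$ are fixed by $\sigma_a$. Hence the power series $f_u((1+T)^a - 1)$ takes the value $\sigma_a(u_n)$ at $\pi_n$ for every $n \geq 1$, and uniqueness in Theorem \ref{thm:coleman map 2} then forces $f_{\sigma_a \cdot u} = \sigma_a f_u$.

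The key (and subtle) point for the remaining steps is that $\sigma_a$ does \emph{not} commute with the derivation $\partial = (1+T)\tfrac{d}{dT}$: a direct chain-rule computation gives $\partial \circ \sigma_a = a \cdot \sigma_a \circ \partial$, whence $\partial \log \circ\, \sigma_a = a \cdot \sigma_a \circ \partial \log$ and $\partial^{-1} \circ \sigma_a = a^{-1} \cdot \sigma_a \circ \partial^{-1}$. The operator $1 - \varphi\circ\psi$, however, does commute with $\sigma_a$: commutation with $\varphi$ is immediate from $((1+T)^a-1)^p - 1 = (1+T)^{ap}-1$, and commutation with $\psi$ follows from the averaging formula for $\varphi\circ\psi$ together with the fact that $\zeta \mapsto \zeta^a$ permutes $\mu_p$ (as $\gcd(a,p)=1$). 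Chaining these identities, the factor $a$ introduced by $\partial\log$ and the factor $a^{-1}$ introduced by $\partial^{-1}$ cancel, and the $\sigma_a$-action propagates unchanged through the composition, giving $\mathrm{Col}(\sigma_a \cdot u) = [\sigma_a] \cdot \mathrm{Col}(u)$.

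There is no serious obstacle here; the proof is essentially a careful bookkeeping exercise. The one mildly subtle point, and the main conceptual takeaway, is that $\partial$ twists the $\GG$-action on power series by the cyclotomic character while $\partial^{-1}$ untwists it. Thus it is precisely the multiplication-by-$x^{-1}$ step in Definition \ref{def:coleman map} that ensures $\mathrm{Col}$ is genuinely $\GG$-equivariant, rather than equivariant only up to a twist by $\chi$.
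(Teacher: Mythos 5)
Your proof is correct and follows essentially the same route as the paper: decompose $\mathrm{Col}$ into its constituent maps, verify $\GG$-equivariance of each, and observe that $\partial\log$ and $\partial^{-1}$ introduce compensating factors of $a$ and $a^{-1}$ which cancel across the composition. The concluding remark that the multiplication-by-$x^{-1}$ step is precisely what makes $\mathrm{Col}$ equivariant (rather than equivariant up to a cyclotomic twist) is exactly the point the paper highlights immediately afterwards in Remark \ref{rem:renormalise}.
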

\begin{proof}
	We must show that if $a \in \Zp^\times$, and $u \in \sU_{\infty}$, we have
	\[
	\mathrm{Col}(\sigma_a(u)) = \sigma_a(\mathrm{Col}(u)).
	\]
	This is easy to check if we understand how $\GG$ acts on each of the modules involved. If $u = (u_n)_{n \geq 1} \in \UU_\infty$, then 
	\[
	\sigma_a(u) = (\sigma_a(u_n))_{n \geq 1} \in \UU_\infty,
	\]
	and if $f(T) \in \zp\lsem T\rsem $, then 
	\[\sigma_a(f)(T) = f\big((1 + T)^a - 1\big).\]
	Then:
	\begin{itemize}
		\item We have
		\begin{align*}
			(\sigma_a f_u)(\pi_n) &= f_u((1 + \pi_n)^a - 1) \\
			&= f_u(\xi_{p^n}^a - 1) \\
			& = f_u(\sigma_a(\xi_{p^n} - 1)) \\
			&= \sigma_a(f_u(\xi_{p^n}-1)) = \sigma_a(u_n),
		\end{align*}
		so that $u \mapsto f_u(T)$ is $\GG$-equivariant.
		\item If $f(T) \in \zp\lsem T\rsem ^\times$, then an easy calculation on power series shows that
		\begin{equation}\label{eq:dlog equivariant}
		\partial\log(\sigma_a(f)) = a \sigma_a(\partial\log(f)).
		\end{equation}
		\item On measures, restriction to $\Zp^\times$ is $\GG$-equivariant since the action of $\sigma_a$ is by multiplying the variable by $a \in \Zp^\times$, which obviously stabilises both $\Zp^\times$ and $p\Zp$.
		\item 	As operations on $\Zp\lsem T\rsem ^{\psi = 0}$, we have 
		\begin{equation}\label{eq:delta equivariant}
		\partial^{-1} \circ \sigma_a = a^{-1} \sigma_a \circ \partial^{-1},
		\end{equation}
		as is easily checked on measures. Indeed, 
		\begin{align*}
			\int_{\Zp^\times} f(x) \cdot \partial^{-1}\sigma_a \mu &= \int_{\Zp^\times}\frac{f(x)}{x} \cdot \sigma_a \mu\\
			&= \int_{\Zp^\times}\frac{f(ax)}{ax} \cdot \mu \\
			&= a^{-1} \int_{\Zp^\times}f(ax) \cdot \partial^{-1}\mu\\
			&= a^{-1} \int_{\Zp^\times} f(x) \cdot \sigma_a\partial^{-1} \mu.
		\end{align*}
		\item By definition of the action, the inverse Mahler transform $\sA^{-1}$ is equivariant under $\sigma_a$.
	\end{itemize}
	Putting all that together, the result follows.
\end{proof}

Now, the $\GG$-action on $\UU_\infty$ fixes $1 \in \mu_{p-1}$, so it stabilises the subspace $\sU_{\infty,1}$. This action commutes with the $\Zp$-action on $\sU_{\infty,1}$. We deduce that $\sU_{\infty,1}$ is a $\Lambda(\GG)$-module. The results of \S\ref{sec:equivariance} can then be summarised as follows.

\begin{corollary}\label{cor:G-eq}
	The map $\mathrm{Col}$ restricts to a map $\sU_{\infty,1} \to \Lambda(\GG)$ of $\Lambda(\GG)$-modules.
\end{corollary}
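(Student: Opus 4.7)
The plan is to assemble the corollary from the two equivariance results just proved, with only minor structural verifications to perform. The key observation is that $\Lambda(\GG) = \varprojlim_n \Zp[\GG_n]$ is the completed group algebra, so giving a continuous $\Lambda(\GG)$-module structure on a compact $\Zp$-module is equivalent to giving compatible continuous $\Zp$- and $\GG$-module structures whose actions commute. Thus the work reduces to checking that (i) $\sU_{\infty,1}$ truly carries such a structure, (ii) so does $\Lambda(\GG)$, and (iii) $\mathrm{Col}$ respects both pieces.

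First I would verify that the Galois action on $\sU_\infty$ preserves the subgroup $\sU_{\infty,1}$. This is essentially immediate: since $K_n/\qp$ is totally ramified, there is a unique prime $\mathfrak{p}_n$ above $p$, so any $\sigma \in \GG$ fixes $\mathfrak{p}_n$ setwise and hence sends units congruent to $1 \newmod{\mathfrak{p}_n}$ to units of the same form. Equivalently, under the splitting $\sU_\infty = \mu_{p-1} \times \sU_{\infty,1}$, the Galois action fixes the $\mu_{p-1}$ factor (as $\mu_{p-1} \subset \Zp^\times$ is fixed pointwise by $\GG$ via the cyclotomic character landing in $\Zp^\times$), so it stabilizes the complementary factor.

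Next I would check that the $\Zp$- and $\GG$-actions on $\sU_{\infty,1}$ commute. For $a \in \Z$ the identity $\sigma(u^a) = \sigma(u)^a$ is clear; passing to $a \in \Zp$ using the convergent expansion $u^a = \sum_{k \geq 0} \binom{a}{k}(u-1)^k$ and the continuity of $\sigma$ gives the general case. The target $\Lambda(\GG)$ is tautologically a $\Lambda(\GG)$-module via left multiplication, and its underlying $\Zp$- and $\GG$-actions visibly commute.

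Finally the corollary follows by combining the two propositions of this subsection: the restriction of $\mathrm{Col}$ to $\sU_{\infty,1}$ is $\Zp$-linear, and on all of $\sU_\infty$ (hence on $\sU_{\infty,1}$) it is $\GG$-equivariant. A continuous $\Zp$-linear, $\GG$-equivariant map between compact $\Lambda(\GG)$-modules is automatically $\Lambda(\GG)$-linear, since $\Lambda(\GG)$ is topologically generated as a $\Zp$-algebra by the image of $\GG$. I do not anticipate a real obstacle here; the only subtlety worth flagging is the need to invoke continuity of $\mathrm{Col}$ (inherited from continuity of each constituent map in Definition \ref{def:coleman map}) when extending $\Z$-linearity of the group-element action to genuine $\Lambda(\GG)$-linearity.
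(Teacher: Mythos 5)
Your proposal is correct and follows essentially the same route the paper takes: establish that $\GG$ stabilises $\sU_{\infty,1}$ (via totally ramified implies $\GG$-stable $\mathfrak{p}_n$), note the $\Zp$- and $\GG$-actions commute, and then combine the two preceding equivariance propositions using the fact that $\Lambda(\GG)$ is topologically generated over $\Zp$ by $\GG$. One small imprecision: $\mu_{p-1} \subset \Zp^\times$ is fixed pointwise by $\GG = \mathrm{Gal}(K_\infty/\Qp)$ simply because it sits in the base field $\Qp$, not "via the cyclotomic character" (which describes the action on $p$-power roots of unity); this does not affect the argument.
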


\begin{remark}\label{rem:renormalise}
	In the construction of $\zeta_p$, we renormalised by `dividing by $x$' (in \S\ref{sec:dep on a}). This appears here via $\partial^{-1}$. We see from \eqref{eq:delta equivariant} that $\partial^{-1}$ really is essential for the Coleman map to be $\GG$-equivariant, motivating the appearance of $x^{-1}$ in \S\ref{sec:dep on a}.  Conceptually, $\zeta$ and $\zeta_p$ are the $L$-function and $p$-adic $L$-function of the trivial Galois representation $\Qp$, whilst the cyclotomic units in $\UU_\infty$ form an Euler system for its twist $\Qp(1)$; the $\partial^{-1}$ bridges between these two Galois representations.
\end{remark}

\subsection{The fundamental exact sequence}
Theorem \ref{thm:iwasawa} says that the Coleman map induces an isomorphism $\UU_{\infty,1}^+/\CC_{\infty,1}^+ \cong \Lambda(\GG^+)/I(\GG^+)\zeta_p$. To prove this, we must study the kernel and cokernel of the Coleman map. We do so here (in Theorem \ref{thm:fund exact seq}) via a careful study of each of its constituent maps.

\subsubsection{The logarithmic derivative}

\label{sec:logarithmic der}

We will now show that the logarithmic derivative translates norm-invariance into trace-invariance (recalling the trace operator $\psi$). The key result is Theorem \ref{thm:log der}. For convenience of notation, and consistency with \cite{CS06}, we make the following definition.

\begin{definition}
	For $f(T) \in \zp\lsem T\rsem ^\times$, define its logarithmic derivative as
	\begin{align*} 
		\Delta(f) &\defeq \partial\log f
		=\frac{\partial f(T)}{f(T)} = (1 + T) \frac{f'(T)}{f(T)}. 
	\end{align*}
\end{definition}

The main result of this section is the following.

\begin{theorem} \label{thm:log der}
The logarithmic derivative   induces a short exact sequence
\[ 0 \to \mu_{p-1} \to \big( \zp\lsem T\rsem ^\times \big)^{\cN=\mathrm{id}} \xrightarrow{\ \ \Delta\ \ } \zp\lsem T\rsem ^{\psi=\mathrm{id}} \to 0. \]
\end{theorem}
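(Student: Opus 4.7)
The proof splits into three tasks: checking that $\Delta$ actually lands in $\Zp\lsem T\rsem^{\psi=\mathrm{id}}$, computing the kernel, and establishing surjectivity.

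For well-definedness I would start from the defining identity $\varphi(\cN f)(T) = \prod_{\xi\in\mu_p} f((1+T)\xi - 1)$ and apply $\partial\log$ to both sides. A direct chain-rule computation shows that $\partial$ commutes with the substitution $T\mapsto (1+T)\xi-1$, so each summand on the right becomes $(\Delta f)((1+T)\xi-1)$, giving $\partial\log \varphi(\cN f) = p(\varphi\psi)(\Delta f)$. An analogous but simpler computation gives $\partial\log\varphi(f) = p\varphi(\Delta f)$. When $\cN f = f$, equating these and invoking the injectivity of $\varphi$ (used already in Lemma~\ref{lem:norm and trace}) forces $\psi(\Delta f) = \Delta f$, as needed.

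The kernel is almost immediate: $\Delta f = 0$ gives $f' = 0$, so $f = c \in \Zp^\times$ is a constant. For a constant, $\varphi(\cN c) = \prod_{\xi\in\mu_p} c = c^p$, and since $\cN c$ is itself constant, $\cN c = c^p$. Hence $\cN c = c$ is equivalent to $c^{p-1} = 1$, i.e.\ $c \in \mu_{p-1}$.

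Surjectivity is the serious part, and I expect it to be the principal obstacle. Given $g \in \Zp\lsem T\rsem^{\psi=\mathrm{id}}$, the naive approach is to set $\log f = \int g/(1+T)$ and exponentiate, which solves $\Delta f = g$ in $\qp\lsem T\rsem^\times$ but spoils integrality: the antiderivative divides coefficients by integers, and $\exp$ then lands outside $\Zp\lsem T\rsem$ unless something intervenes. The miracle is that the single hypothesis $\psi(g)=g$ is exactly what guarantees an integral preimage. My plan is a successive-approximation argument exploiting the compactness of $\Zp\lsem T\rsem$ in the $(p,T)$-adic topology (used already in Proposition~\ref{prop:R surjective}): produce inductively a sequence $f_n \in \Zp\lsem T\rsem^\times$ with $\Delta f_n \equiv g \pmod{p^n}$ and $\cN(f_n) \equiv f_n \pmod{p^n}$, with the inductive step using $\psi g = g$ to correct the defect, and then extract a limit. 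A cleaner variant would use the auxiliary series $L(f) := \log f - \tfrac{1}{p}\log\varphi(f)$, which already lies in $\Zp\lsem T\rsem$ (since $\varphi f \equiv f^p \pmod p$) and satisfies $\partial L(f) = (1-\varphi)\Delta f$; this reduces the problem to the structure of $(1-\varphi)$ on $B^{\psi=\mathrm{id}}$, where the $\psi$-invariance of $g$ can be used to split $g$ compatibly.

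The delicate point throughout is that $p$-integrality and the $\cN$-invariance have to be controlled simultaneously; neither condition alone is strong enough to force the other, and only the precise hypothesis $\psi(g)=g$ threads the needle. This is where I expect essentially all the work of the proof to reside.
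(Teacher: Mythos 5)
Your treatment of well-definedness and the kernel is correct and runs exactly as in the paper, via $\Delta\circ\varphi = p\,\varphi\circ\Delta$ applied to the defining equation of $\cN$, together with the observation that a constant $\cN$-invariant unit satisfies $c^p = c$.

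Surjectivity has a genuine gap, and it is exactly where you flag one. The sentence ``produce $f_n$ with $\Delta f_n\equiv g$ and $\cN f_n\equiv f_n$ modulo $p^n$, using $\psi(g)=g$ to correct the defect'' is a statement of intent, not a construction: you have not said how the inductive step works. The paper's approximation scheme (Lemma~\ref{lem:log der red mod p}) is in fact structured differently from what you sketch: each approximant $g_i$ is chosen to lie \emph{exactly} in $\WW=(\Zp\lsem T\rsem^\times)^{\cN=\mathrm{id}}$, so there is no side condition ``$\cN f_n\equiv f_n$'' to track; one writes $\Delta(g_i)-f_{i-1}=pf_i$, notes $f_i$ is automatically $\psi$-invariant because $\Delta(\WW)\subseteq\Zp\lsem T\rsem^{\psi=\mathrm{id}}$, and then builds a telescoping product $h_n$ in $\WW$ whose limit (extracted by compactness) solves $\Delta h = f_0$. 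But this reduction presupposes surjectivity \emph{modulo $p$}, and that is where the actual argument lives and where your proposal is silent. The paper first shows $\overline{\WW}=\fp\lsem T\rsem^\times$ (Lemma~\ref{lem:A mod p}) by iterating the norm operator on a lift, then establishes by induction the explicit decomposition
\[
\fp\lsem T\rsem \;=\; \Delta\big(\fp\lsem T\rsem^\times\big) \;+\; \frac{T+1}{T}\,C,
\qquad C=\Big\{\sum_{n\geq 1}a_nT^{pn}\Big\}
\]
(Lemma~\ref{lem:B mod p 2}), and finally uses the projection formula $\psi(g\cdot\varphi(f))=\psi(g)f$ together with $T^{pm}=\varphi(T^m)$ in characteristic $p$ to show that a $\psi$-invariant element can have no component in $\frac{T+1}{T}C$ (Lemma~\ref{lem:B mod p}). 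That computation is the precise place where the hypothesis $\psi(g)=g$ bites. Without a concrete substitute for this mod-$p$ work, neither your successive-approximation scheme nor the alternative via $L(f)=\log f - \tfrac1p\log\varphi(f)$ and the operator $1-\varphi$ has anything to converge to.
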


 We described the kernel of $\Delta$ in Remark \ref{rem:ker Delta} above, so it suffices to deal with its image. We first prove that this image is contained in $\zp\lsem T\rsem ^{\psi=\mathrm{id}}$  (Lemma \ref{lem:log der 1}). We then reduce the proof of surjectivity, via Lemma \ref{lem:log der red mod p}, to surjectivity modulo $p$. Finally, in Lemma \ref{lem:A mod p} and Lemma \ref{lem:B mod p} we calculate the reduction modulo $p$ of both spaces. 

For convenience, let $\WW \defeq \big( \zp\lsem T\rsem ^\times \big)^{\cN=\mathrm{id}}$.

\begin{lemma} \label{lem:log der 1}
	We have $\Delta(\WW) \subseteq \zp\lsem T\rsem ^{\psi=\mathrm{id}}$.
\end{lemma}

\begin{proof}
	If $f \in \WW$, then
	\[ 
	\varphi(f) = (\varphi \circ \cN)(f) = \prod_{\eta \in \mu_p} f((1 + T)\eta - 1). 
	\]
	Applying $\Delta$ to the above equality and using the fact  that $\Delta \circ \varphi = p \, \varphi \circ \Delta$ (which is easy to see on power series from the definitions), we obtain
	\[ 
 (\varphi \circ \Delta)(f) = p^{-1} \sum_{\eta \in \mu_p} \Delta(f)((1 + T)\eta - 1) = (\varphi \circ \psi)(\Delta(f)).
 \] 
 By injectivity of $\varphi$, we deduce $\psi(\Delta(f)) = \Delta(f)$.
\end{proof}

We move now to the proof of surjectivity. In the following, let
\[ A = \overline{\Delta(\WW)} \subseteq \fp\lsem T\rsem ; \;\;\; B = \overline{\zp\lsem T\rsem ^{\psi=\mathrm{id}}} \subseteq \fp\lsem T\rsem  \]
be the reduction modulo $p$ of the modules we need to compare.

\begin{lemma} \label{lem:log der red mod p}
	If $A = B$, then $\Delta(\WW) = \zp\lsem T\rsem ^{\psi=\mathrm{id}}$.
\end{lemma}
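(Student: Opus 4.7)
The plan is to lift the mod-$p$ equality $A = B$ to an exact equality via successive approximation in powers of $p$, exploiting the key identity $\Delta(f^p) = p\,\Delta(f)$. Given $g \in \Zp\lsem T\rsem^{\psi=\mathrm{id}}$, I shall inductively construct a sequence $(f_N)_{N \geq 0}$ in $\WW$ such that the partial products
\[ F_N \defeq \prod_{n=0}^{N} f_n^{p^n} \]
satisfy $\Delta(F_N) \equiv g \pmod{p^{N+1}}$, and then show that $F_N$ converges in $\WW$ to an element $F$ with $\Delta(F) = g$.

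For the inductive step, suppose $F_{N-1} \in \WW$ has been constructed, and write $g - \Delta(F_{N-1}) = p^N g_N$ with $g_N \in \Zp\lsem T\rsem$. Since $\psi$ is $\Zp$-linear, $\psi(g) = g$, and $\psi(\Delta(F_{N-1})) = \Delta(F_{N-1})$ by Lemma \ref{lem:log der 1}, the torsion-freeness of $\Zp\lsem T\rsem$ forces $g_N \in \Zp\lsem T\rsem^{\psi=\mathrm{id}}$. The hypothesis $A = B$ then furnishes some $f_N \in \WW$ with $\Delta(f_N) \equiv g_N \pmod{p}$, and
\[ \Delta(f_N^{p^N}) = p^N \Delta(f_N) \equiv p^N g_N \pmod{p^{N+1}}, \]
so $F_N \defeq F_{N-1} f_N^{p^N}$ does the job (and lies in $\WW$ since $\WW$ is a group).

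For convergence, I exploit that $\mu_{p-1} \subset \ker(\Delta) \cap \WW$: by multiplying $f_N$ with the Teichm\"uller lift of the inverse of its constant term mod $p$ (which sits in $\fp^\times = \mu_{p-1}$), one may assume without loss of generality that $f_N$ lies in the principal units $U_1 \defeq 1 + (p,T) \subset \Zp\lsem T\rsem^\times$, without altering $\Delta(f_N)$ or leaving $\WW$. The standard filtration estimate $(U_k)^p \subseteq U_{k+1}$, where $U_k \defeq 1 + (p,T)^k$ (which follows from $p \mid \binom{p}{j}$ for $1 \leq j \leq p-1$), then yields $f_N^{p^N} \in U_{N+1}$, so $f_N^{p^N} \to 1$ in the $(p,T)$-adic topology of $\Zp\lsem T\rsem$. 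Hence $(F_N)$ is Cauchy in $\Zp\lsem T\rsem^\times$, converging to some $F$; continuity of $\Delta$ gives $\Delta(F) = g$, while $F \in \WW$ follows from the closedness of $\WW$ as the fixed locus of the continuous operator $\cN$.

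The main technical hurdle is precisely this convergence of the infinite product, which requires both the normalisation into principal units (to leverage $\mu_{p-1} \subset \ker \Delta$) and the filtration estimate $(U_k)^p \subseteq U_{k+1}$. The iterative construction itself is a mechanical Hensel-style lifting once one observes that $\Delta$ turns $p$-th powers into multiplication by $p$, which is exactly the feature that bridges the mod-$p$ hypothesis to an exact statement.
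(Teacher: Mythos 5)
Your proof is correct and takes essentially the same approach as the paper: iterative mod-$p$ lifting via the identity $\Delta(f^p)=p\,\Delta(f)$, assembling approximants into partial products of $p^k$-th powers normalized into the principal units $1+(p,T)$, and passing to a limit in $\WW$. The only cosmetic difference is that you prove convergence directly by the filtration estimate $(U_k)^p\subseteq U_{k+1}$ and note $\WW$ is closed as the $\cN$-fixed locus, whereas the paper invokes compactness of $\zp\lsem T\rsem$ to extract a convergent subsequence and then uses $\Delta(h_n)\to f_0$.
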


\begin{proof}
	Let $f_0 \in \zp\lsem T\rsem ^{\psi=\mathrm{id}}$. By hypothesis, there exists a $g_1 \in \WW$ such that $\Delta(g_1) - f_0 = p f_1$ for some $f_1 \in \zp\lsem T\rsem $. Since $\Delta(\WW) \subseteq \zp\lsem T\rsem ^{\psi=\mathrm{id}}$ by Lemma \ref{lem:log der 1}, we see that $\psi$ fixes both $\Delta(g_1)$ and $f_0$ and hence, by additivity, $\psi$ fixes $f_1$; so again by hypothesis, there exists some $g_2 \in \WW$ such that $\Delta(g_2) - f_1 = p f_2$ for some $f_2 \in \zp\lsem T\rsem $. By induction, we induce the existence of $g_i \in \WW$ and $f_i \in \zp\lsem T\rsem ^{\psi=\mathrm{id}}$, $i \geq 1$, such that \[ \Delta(g_i) - f_{i - 1} = p f_i. \] 
	
    Now let 
	\[h_n = \prod_{k = 1}^{n}  g_k^{(-1)^{k-1}p^{k-1}} \in \WW,\]
	i.e. 
	\[
		h_1 = g_1, \ \ \ \ h_2 = \frac{g_1}{g_2^p}, \ \ \ \ h_3 = \frac{g_1\cdot g_3^{p^2}}{g_2^p}, \ \ \ \ h_4 = \frac{g_1 \cdot g_3^{p^2}}{g_2^p \cdot g_4^{p^3}}, 
	\]
	etc. As $\Delta$ transforms multiplication into addition, we have
	\begin{align*}
		\Delta(h_n) &= \Delta(g_1) - p\Delta(g_2) +  \cdots + (-1)^{n-1}p^{n-1}\Delta(g_n)\\
		&= (f_0 + pf_1) - (pf_1 + p^2f_2) + \cdots + (-1)^{n-1}(p^{n-1}f_{n-1} + p^nf_n)\\
		&= f_0 + (-1)^{n-1} p^{n} f_n.
	\end{align*}
	By compactness, the sequence $(h_n)_{n \geq 1}$ admits a convergent subsequence converging to an element $h \in \WW$ satisfying $\Delta(h) = f_0$, which shows the result.
\end{proof}


\begin{lemma} \label{lem:A mod p}
	We have $\overline{\WW}\defeq \WW\newmod{p} = \fp\lsem T\rsem ^\times$.
\end{lemma}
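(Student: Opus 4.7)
The plan is to show that every element of $\fp\lsem T\rsem^\times$ admits a $\cN$-invariant lift to $\Zp\lsem T\rsem^\times$; equivalently, the reduction map $\WW \to \fp\lsem T\rsem^\times$ is surjective. The containment $\overline{\WW} \subseteq \fp\lsem T\rsem^\times$ is immediate since every element of $\WW$ is a unit in $\Zp\lsem T\rsem$ and reduction modulo $p$ of a unit is a unit.

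For the reverse inclusion, the strategy is a standard iteration of the norm operator, exactly as in the proof of Proposition \ref{prop:R surjective}. Given $\bar f \in \fp\lsem T\rsem^\times$, first lift it arbitrarily to some $f \in \Zp\lsem T\rsem^\times$ (using that $f$ is a unit iff its constant term is a $p$-adic unit, which is preserved under lifting from $\mathbf{F}_p$ to $\Zp$). Then set
\[
g \defeq \lim_{n \to \infty} \cN^n(f) \in \Zp\lsem T\rsem^\times.
\]
The existence of this limit is precisely Lemma \ref{lem:norm continuity}(iv), which asserts that $(\cN^n(f))_{n\geq 0}$ is Cauchy in the $p$-adic topology on $\Zp\lsem T\rsem$: for $k_2 \geq k_1 \geq 0$, we have $\cN^{k_2}(f) \equiv \cN^{k_1}(f) \newmod{p^{k_1+1}}$. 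The space $\Zp\lsem T\rsem$ is complete, so $g$ is well-defined, and it remains a unit because the constant term of each $\cN^n(f)$ is a $p$-adic unit (the unit property is preserved under $\cN$).

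It remains to check that $g \in \WW$ and that $g \equiv \bar f \newmod{p}$. For the first, continuity of $\cN$ (visible from its explicit description via $\varphi^{-1} \circ N_{B/A}$ in Lemma \ref{lem:norm and trace}) gives
\[
\cN(g) = \cN\big(\lim_n \cN^n(f)\big) = \lim_n \cN^{n+1}(f) = g,
\]
so $g$ is indeed $\cN$-invariant. For the second, taking $k_1 = 0$ in Lemma \ref{lem:norm continuity}(iv) and passing to the limit in $k_2$ gives $g \equiv f \newmod{p}$, whence $\bar g = \bar f$. This shows $\bar f \in \overline{\WW}$, completing the proof.

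There is no substantial obstacle here: the result is essentially a direct consequence of the continuity properties of $\cN$ already established in Lemma \ref{lem:norm continuity}, and the argument is a mild reuse of the compactness/iteration trick used to prove surjectivity of $R$. The only thing to keep in mind is that one must work with units (not arbitrary power series), so one should verify at the outset that the arbitrary lift $f$ of $\bar f$ can be chosen to be a unit — which is automatic because a power series in $\Zp\lsem T\rsem$ is a unit precisely when its constant term lies in $\Zp^\times$, a condition that lifts from $\fp^\times$.
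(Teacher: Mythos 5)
Your argument is correct and is essentially the same as the paper's: lift $\bar f$ to a unit $\tilde f_0 \in \Zp\lsem T\rsem^\times$, use Lemma \ref{lem:norm continuity}(ii),(iv) to see that $\cN^k(\tilde f_0)$ is $p$-adically Cauchy hence convergent to some $\cN$-invariant unit reducing to $\bar f$. The additional details you spell out (existence of a unit lift, $\cN$ preserving units, continuity of $\cN$) are correct and implicit in the paper's one-line proof.
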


\begin{proof}
	The inclusion $\subset$ is obvious. Conversely, for any element $f \in \fp\lsem T\rsem ^\times$, lift it to an element $\tilde{f}_0 \in \zp\lsem T\rsem ^\times$. By points (ii) and (iv) of Lemma \ref{lem:norm continuity}, the sequence $\cN^k(\tilde{f}_0)$ converges to an element $\tilde{f}$ that is invariant under $\cN$ and whose reduction modulo $p$ is $f$. 
\end{proof}

The most delicate and technical part of the proof of Theorem \ref{thm:log der} is contained in the following two lemmas describing the reduction of $\zp\lsem T\rsem ^{\psi=\mathrm{id}}$ modulo $p$.

\begin{lemma} \label{lem:B mod p}
	We have $B = \Delta(\fp\lsem T\rsem ^\times)$.
\end{lemma}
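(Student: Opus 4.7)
The plan is to prove both inclusions of the claimed equality separately.

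The easy direction $\Delta(\fp\lsem T\rsem^\times) \subseteq B$ is formal: given $\bar f \in \fp\lsem T\rsem^\times$, Lemma~\ref{lem:A mod p} produces $f \in \WW$ reducing to $\bar f$, and by Lemma~\ref{lem:log der 1} we have $\Delta(f) \in \zp\lsem T\rsem^{\psi = \mathrm{id}}$, so its reduction $\Delta(\bar f)$ lies in $B$.

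For the reverse inclusion $B \subseteq \Delta(\fp\lsem T\rsem^\times)$, the strategy is to first obtain an explicit description of $\bar\psi$, the reduction of $\psi$ modulo $p$, and then to solve the corresponding logarithmic differential equation directly in characteristic $p$. Starting from the identity $p \cdot \varphi \circ \psi(f) = \sum_{\xi \in \mu_p} f((1+T)\xi - 1)$ in $\zp\lsem T\rsem$, evaluating on monomials via the binomial theorem and using $\sum_{\xi \in \mu_p} \xi^k = 0$ for $p \nmid k$, one divides by $p$, inverts $\varphi$, and reduces mod $p$ (applying Lucas's theorem to the surviving binomial coefficients) to obtain
\[
\bar\psi(T^n) = (-1)^{n + \lfloor n/p \rfloor}\, T^{\lfloor n/p \rfloor}.
\]
Equivalently, via the projection formula $\bar\psi(h^p g) = h\, \bar\psi(g)$, inherited from its characteristic-zero analogue $\psi(\varphi(h) g) = h\, \psi(g)$, the operator $\bar\psi$ is the $\fp\lsem T^p\rsem$-linear contraction sending $T^i \mapsto (-1)^i$ for $0 \le i < p$. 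Either description makes $B$ explicitly computable inside $\fp\lsem T\rsem$.

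Given $\bar g \in B$, I would then construct $\bar f \in 1 + T\fp\lsem T\rsem$ with $\Delta(\bar f) = \bar g$ by solving the first-order linear ODE $\bar f' = \bar h \bar f$, where $\bar h = \bar g/(1+T)$, coefficient by coefficient: the recursion $(n+1)f_{n+1} = \sum_{i+j=n} h_i f_j$ determines $f_{n+1}$ uniquely whenever $p \nmid n+1$, while for $p \mid n+1$ it imposes the obstruction $\sum_{i+j=n} h_i f_j = 0$ and leaves $f_{n+1}$ as a free parameter (reflecting the fact that $\ker\Delta$ on units contains $\fp\lsem T^p\rsem^\times$); this freedom does not interfere with subsequent constraints.

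The principal obstacle will be to verify that the $\bar\psi$-invariance of $\bar g$ forced by membership in $B$ is precisely equivalent to the vanishing of these obstructions at every level $n = kp - 1$. I expect to carry out this matching by induction on $k$, exploiting the projection-formula description of $\bar\psi$ to organize both sides according to the base-$p$ expansion of $n$ and reduce the $k$-th obstruction to a lower-level instance of the same problem.
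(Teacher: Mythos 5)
Your setup is correct in outline and the explicit formula you derive for the reduction $\bar\psi$ of $\psi$ modulo $p$ is right: writing $n = pq + r$ with $0 \le r < p$ one indeed gets $\bar\psi(T^n) = (-1)^r T^q$ (equivalently $(-1)^{n + \lfloor n/p\rfloor}T^{\lfloor n/p\rfloor}$), and this is the $\varphi$-semilinear contraction you describe. The easy inclusion $\Delta(\fp\lsem T\rsem^\times) \subseteq B$ is also handled correctly via Lemmas~\ref{lem:A mod p} and \ref{lem:log der 1}. But the heart of the reverse inclusion is left unproved. You set up the recursion $(n+1)f_{n+1} = \sum_{i+j=n}h_i f_j$, observe that for $p \mid n+1$ this becomes an obstruction and a free parameter, and then state that you ``expect to carry out this matching by induction on $k$, exploiting the projection-formula description of $\bar\psi$.'' That matching --- showing that $\bar\psi$-invariance of $\bar g$ forces every obstruction $\sum_{i+j=kp-1}h_i f_j$ to vanish, and showing this simultaneously for the recursively defined $f_j$ and for every choice of the free parameters $f_p, f_{2p},\dots$ --- is precisely the difficulty, and it is not carried out. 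The assertion that ``this freedom does not interfere with subsequent constraints'' is plausible (perturbing $\bar f$ by an element of $\fp\lsem T^p\rsem^\times$, which lies in $\ker\Delta$, cannot change solvability) but it still needs to be said why; as written the obstruction $\sum_{i+j=n}h_i f_j$ depends on choices made at degrees $\le n$.

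The paper sidesteps this entirely by proving a decomposition Lemma~\ref{lem:B mod p 2}: $\fp\lsem T\rsem = \Delta(\fp\lsem T\rsem^\times) + \tfrac{T+1}{T}C$ with $C = \{\sum_{n\ge 1}a_nT^{pn}\}$. This is itself a coefficient-by-coefficient induction (so the paper is doing work of a similar flavour to what you are attempting), but the point of isolating it is that the complementary summand $\tfrac{T+1}{T}C$ is chosen so that $\psi$ acts on it by $\tfrac{T+1}{T}T^{pm}\mapsto\tfrac{T+1}{T}T^m$, whence $\psi$-invariance trivially forces the coefficients $d_m$ to vanish one valuation level at a time. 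In other words, the paper replaces the obstruction-matching you propose with a single well-chosen change of complement under which the $\psi$-fixed part is visibly zero. Your route via the explicit $\bar\psi$ and the logarithmic ODE in characteristic $p$ (morally: the Cartier operator criterion for a $1$-form to be $d\log$ of a unit) is a legitimate alternative and would give a more structural explanation if completed, but as it stands the central step is a sketch, not a proof.
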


\begin{proof}
	We have $\Delta(\WW) \subseteq \zp\lsem T\rsem ^{\psi=\mathrm{id}}$ by Lemma \ref{lem:log der 1}, so the inclusion $\Delta(\fp\lsem T\rsem ^\times) \subset B$ is clear using Lemma \ref{lem:A mod p}. For the other inclusion, take any $f \in B$ and use Lemma \ref{lem:B mod p 2} below to write
	\[ f = \Delta(a) + b \] 
	for some $a \in \fp\lsem T\rsem ^\times$ and $b = \sum_{m = 1}^{+\infty} d_m \frac{T + 1}{T} T^{pm}$. Since $\psi(f) = f$ and $\psi(\Delta(a)) = \Delta(a)$ (by a slight abuse of notation, as $f$ and $\Delta(a)$ are actually the reduction modulo $p$ of elements fixed by $\psi$), we deduce that $\psi(b) = b$. But we can explicitly calculate the action of $\psi$ on $b$. Using the identity\footnote{Again, this can be easily checked on measures.} $\psi(g\cdot\varphi(f)) = \psi(g) f$, the identity $T^{pm} = \varphi(T^m)$ in $\fp\lsem T\rsem $ and the fact that $\psi$ fixes $\frac{T + 1}{T}$ (cf.\ the proof of Lemma \ref{LemmaPsiInvariant}), we deduce that 
	\[ \psi(b) = \sum_{m = 1}^{+\infty} d_m \frac{T + 1}{T} T^m, \]
	which immediately implies $b = 0$ and concludes the proof.
\end{proof}

\begin{lemma} \label{lem:B mod p 2}
	We have
	\[ \fp\lsem T\rsem  = \Delta\Big(\fp\lsem T\rsem ^\times\Big) + \frac{T + 1}{T} C, \] 
	where $C = \big\{ \sum_{n = 1}^{+ \infty} a_n T^{pn} \big\} \subseteq \fp\lsem T\rsem $.
\end{lemma}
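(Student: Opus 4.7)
The plan is to construct the decomposition by successive approximation in the $T$-adic filtration, matching the coefficients of $f$ one degree at a time by perturbing $a$ multiplicatively and $b$ additively.

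The main calculation I would carry out first is the following pair of leading-term formulas. For $c \in \fp$ and $k \geq 0$, expanding $1/(1 + cT^{k+1})$ as a geometric series gives
$$\Delta(1 + cT^{k+1}) \;=\; (1+T)\cdot\frac{(k+1)cT^k}{1+cT^{k+1}} \;=\; (k+1)c\,T^k + O(T^{k+1});$$
and for $d \in \fp$ and $n \geq 1$,
$$\tfrac{T+1}{T}\cdot dT^{pn} \;=\; d\,T^{pn-1} + d\,T^{pn}.$$
Thus multiplying $a$ by $(1+cT^{k+1})$ modifies $\Delta(a)$ starting at degree $k$ with controllable leading coefficient $(k+1)c$, which can be any element of $\fp$ precisely when $p\nmid k+1$. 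On the other hand, adding $dT^{pn}$ to $b$ modifies $\frac{T+1}{T}b$ starting at degree $pn-1$ with leading coefficient $d$. The degrees covered by the second mechanism are exactly those ($k = pn-1$) that are missed by the first.

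With these observations in hand, I would run the following induction. Set $a^{(0)} = 1$, $b^{(0)} = 0$, so that $r^{(0)} \defeq f$. Assuming inductively that $r^{(k)} \defeq f - \Delta(a^{(k)}) - \frac{T+1}{T}b^{(k)} \in T^k\fp\lsem T\rsem$, let $\alpha_k \in \fp$ be its coefficient of $T^k$. If $p \nmid k+1$, put $a^{(k+1)} \defeq a^{(k)}(1 + cT^{k+1})$ with $c \defeq \alpha_k/(k+1)$ and $b^{(k+1)} \defeq b^{(k)}$. If instead $p \mid k+1$, so that $k+1 = pn$, put $a^{(k+1)} \defeq a^{(k)}$ and $b^{(k+1)} \defeq b^{(k)} + \alpha_k T^{pn}$. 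By the leading-term formulas above, in either case the coefficient of $T^k$ in $r^{(k+1)}$ vanishes, i.e.\ $r^{(k+1)} \in T^{k+1}\fp\lsem T\rsem$.

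To conclude, observe that by construction $a^{(k+1)}/a^{(k)} \in 1 + T^{k+1}\fp\lsem T\rsem$ and $b^{(k+1)} - b^{(k)} \in T^{k+1}\fp\lsem T\rsem$, so both sequences converge $T$-adically in the complete ring $\fp\lsem T\rsem$; their limits $a \in \fp\lsem T\rsem^\times$ and $b \in C$ satisfy $f = \Delta(a) + \frac{T+1}{T}b$. The only point requiring care is that, in the second case, the perturbation also injects an $\alpha_k T^{k+1}$ term which seemingly spoils the next coefficient; this is the expected main obstacle, but in fact it is harmless, since that new error is absorbed into $\alpha_{k+1}$ at the very next iteration and killed there. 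This interaction is the only delicate aspect of the argument; everything else is formal.
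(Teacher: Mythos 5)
Your argument is correct, and it takes a genuinely different route from the paper. You run a single degree-by-degree successive approximation, building $a \in \fp\lsem T\rsem^\times$ and $b \in C$ simultaneously: at step $k$, the residual's $T^k$-coefficient is killed either by multiplying $a$ by $(1+cT^{k+1})$ (usable exactly when $p \nmid k+1$, since the leading coefficient of $\Delta(1+cT^{k+1})$ is $(k+1)c$) or by adding $dT^{k+1}$ to $b$ (usable exactly when $p \mid k+1$, since $\tfrac{T+1}{T}dT^{pn}$ starts in degree $pn-1$). The paper instead performs a global decomposition first: it defines $h=\sum_{(m,p)=1} a_m \sum_{k\geq 0} T^{mp^k}$, observes $\tfrac{T}{T+1}g - h \in C$, and then devotes the work to showing $\tfrac{T+1}{T}h \in \Delta(\fp\lsem T\rsem^\times)$ by an iterative construction of $\prod_n (1-\alpha_n T^n)$. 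That construction relies on verifying the coefficient relation $d_n = d_{np}$ at every step so as to know that any nonzero residual coefficient occurs in degree prime to $p$ and can be divided out. Your interleaved version bypasses that bookkeeping entirely, because the $C$-perturbation is used precisely at the degrees the $\Delta$-side cannot reach, so there is never a divisibility obstruction to worry about. Both proofs rest on the same leading-term computation for $\Delta$ of a one-term perturbation; yours is arguably shorter and avoids the one genuinely delicate verification in the paper, while the paper's version is more structural in that it exhibits the $p$-adic orbit structure of the $\Delta$-image explicitly.
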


\begin{proof}
	One inclusion is clear. Take $g \in \fp\lsem T\rsem $ and write $\frac{T}{T + 1} g = \sum_{n = 1}^{+ \infty} a_n T^n$. Define
	\[ h = \sum_{\substack{m = 1 \\ (m, p) = 1}}^{+\infty} a_m \sum_{k = 0}^{+\infty} T^{m p^k}. \] Clearly $\frac{T}{T+1} g - h \in C$, so it suffices to show that $\frac{T + 1}{T} h \in \Delta(\fp\lsem T\rsem ^\times)$.
	Indeed, we will show by induction that, for every $m \geq 1$, there exists $\alpha_i \in \fp$ for $1 \leq i < m$ such that
	\[ h_m \defeq \frac{T + 1}{T} h - \bigg( \sum_{i = 1}^{m - 1} \Delta(1 - \alpha_i T^i) \bigg) \in T^{m-1} \fp\lsem T\rsem . \] The case $m = 1$ is empty. Suppose that the claim is true for $m$ and that $\alpha_1, \hdots, \alpha_{m - 1}$ have been chosen. Observe first that
	\[ \Delta(1 - \alpha_i T^i) =  - \frac{T + 1}{T} \sum_{k = 1}^{+\infty} i \alpha_i^k T^{ik}, \] so we can write 
	\[ h_m = \frac{T + 1}{T} \sum_{k = m}^{+ \infty} d_k T^k. \] Observe that, by construction of $h$ and $h_m$, we have $d_n = d_{np}$ for all $n$. If $d_m = 0$ then we set $\alpha_m = 0$. If $d_m \neq 0$ then, by what we have just remarked, $m$ must be prime to $p$, hence invertible in $\fp$, and we set $\alpha_m = - \frac{d_m}{m}$. One can then check that
	\[ g = \prod_{n = 1}^{+\infty} (1 - \alpha_n T^n) \in \fp\lsem T\rsem  \] satisfies $\Delta(g) = \frac{T + 1}{T} h$, which concludes the proof.
\end{proof}

We can now complete the proof of  Theorem \ref{thm:log der}. 

\begin{proof} [{Proof of Theorem \ref{thm:log der}}]
By Lemma \ref{lem:log der 1}, the map $\Delta$ is well-defined, and its kernel is $\mu_p$ by Remark \ref{rem:ker Delta}. It remains to prove surjectivity. By Lemma \ref{lem:log der red mod p}, it suffices to prove that $A = B$, which follows directly from Lemma \ref{lem:A mod p} and Lemma \ref{lem:B mod p}.
\end{proof}

\subsubsection{The fundamental exact sequence}

Finally, we study the fundamental exact sequence describing the kernel and cokernel of the Coleman map. The only remaining map to study is $1-\varphi\circ\psi$. By Theorem \ref{thm:log der}, it suffices to study this on $\Zp\lsem T\rsem^{\psi=\mathrm{id}}$. 

\begin{lemma} \label{lem:rest zp*}
	There is an exact sequence
	\[ 0 \to \zp \to \zp\lsem T\rsem ^{\psi=\mathrm{id}} \xrightarrow{\ 1 - \varphi\ } \zp\lsem T\rsem ^{\psi = 0} \to \zp \to 0, \]
	where the first map is the natural inclusion and the last map is evaluation at $T = 0$.
\end{lemma}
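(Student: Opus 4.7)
The plan is to verify exactness at each of the four positions, leveraging the key local identity $\varphi(T^k) = ((1+T)^p - 1)^k = p^k T^k + O(T^{k+1})$ together with the relation $\psi\circ\varphi = \mathrm{id}$.

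First I would dispatch the formal pieces. Constants are $\psi$-invariant: for $c \in \Zp$ one has $\varphi\circ\psi(c) = p^{-1}\sum_{\xi\in\mu_p} c = c = \varphi(c)$, so $\psi(c) = c$ by injectivity of $\varphi$; constants are trivially killed by $1 - \varphi$, giving the inclusion at the left. Moreover $\mathrm{ev}_0\circ(1-\varphi) = 0$ since $\varphi(f)(0) = f((1+0)^p - 1) = f(0)$, and the image of $1-\varphi$ lies in $\ker\psi$ because $\psi((1-\varphi)f) = \psi(f) - f$ via $\psi\varphi = \mathrm{id}$, which vanishes when $\psi(f) = f$. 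For surjectivity of $\mathrm{ev}_0$, I would exhibit the Mahler transform $1 + T$ of the Dirac measure $\delta_1$: this lies in $\Zp\lsem T\rsem^{\psi=0}$ by Corollary \ref{CorollarySupportedZpet} (since $\delta_1$ is supported on $\Zp^\times$) and has constant term $1$.

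For exactness at $\Zp\lsem T\rsem^{\psi=\mathrm{id}}$, suppose $f = \varphi(f)$ is non-constant and let $n \geq 1$ be the smallest index with $a_n(f) \neq 0$. Only the term $a_n \varphi(T^n)$ contributes to the $T^n$-coefficient of $\varphi(f)$ (lower-order coefficients vanish by choice of $n$, and $\varphi(T^k)$ has $T$-order $\geq k$), yielding $a_n = p^n a_n$, hence $a_n = 0$ as $1-p^n \in \Zp^\times$, a contradiction.

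The substantive step is exactness at $\Zp\lsem T\rsem^{\psi=0}$. Given $g = \sum_{n\geq 1} g_n T^n$ with $\psi(g) = 0$ and $g(0) = 0$, I would construct $f = \sum_{n\geq 0} f_n T^n \in \Zp\lsem T\rsem$ with $(1-\varphi)f = g$ by recursion on $n$. Setting $f_0 = 0$ and matching the coefficient of $T^n$ on both sides yields a recursion of the form
\[
(1 - p^n)\, f_n = g_n + L_n(f_1, \dots, f_{n-1}),
\]
where $L_n$ is a $\Zp$-linear combination collecting the tails of $\varphi(T^k)$ for $1 \leq k < n$; since $1 - p^n \in \Zp^\times$ for every $n \geq 1$, this uniquely determines $f_n \in \Zp$. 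Applying $\psi$ to $(1-\varphi)f = g$ and using $\psi\varphi = \mathrm{id}$ together with $\psi(g) = 0$ forces $\psi(f) = f$, so $f$ automatically lies in $\Zp\lsem T\rsem^{\psi=\mathrm{id}}$. The crux of the whole argument is precisely that $1 - p^n$ is a $p$-adic unit for every $n \geq 1$: this single fact simultaneously pins down the kernel of $1-\varphi$ to the constants and makes $1-\varphi$ contractive on $T\Zp\lsem T\rsem$, enabling the inductive lift.
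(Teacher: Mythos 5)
Your proof is correct, and the substantive step --- exactness at $\Zp\lsem T\rsem^{\psi=0}$ --- is handled by a route that differs from the paper's in presentation if not in underlying mechanism. The paper observes that if $g \in \Zp\lsem T\rsem^{\psi=0}$ has $g(0)=0$, then $\varphi^n(g) \to 0$ in the $(p,T)$-adic (weak) topology, so the geometric series $f = \sum_{n\geq 0}\varphi^n(g)$ converges and manifestly satisfies $(1-\varphi)f = g$; the check that $\psi(f) = f$ is the same telescoping computation you give. You instead invert $1-\varphi$ coefficient-by-coefficient, using $\varphi(T^k) = p^k T^k + O(T^{k+1})$ to produce the triangular recursion $(1-p^n) f_n = g_n + L_n(f_1,\dots,f_{n-1})$ and then feeding $1-p^n \in \Zp^\times$. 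These are two descriptions of the same inverse: expanding $\sum_n \varphi^n$ row by row would reproduce your recursion, with $\sum_{m\geq 0} p^{nm} = (1-p^n)^{-1}$ appearing on the diagonal. The paper's geometric-series formulation is shorter and requires no coefficient bookkeeping, but it leans on the topological completeness of $\Zp\lsem T\rsem$ in the weak topology; your version is more elementary (purely linear-algebraic) and it isolates the precise arithmetic input --- that $1-p^n$ is a $p$-adic unit for each $n\geq 1$ --- which, as you note, is also exactly what drives the computation of $\ker(1-\varphi)$. The remaining parts (constants are $\psi$-fixed, $\psi(1+T)=0$ gives surjectivity of evaluation, the lowest-nonzero-coefficient argument for the kernel) coincide with the paper's.
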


\begin{proof}
	Injectivity of the first map is trivial. To see surjectivity of the last map, note that $\psi(1 + T) = 0$, since 
	\[(\varphi \circ \psi)(1 + T) = p^{-1} \sum_{\eta \in \mu_p} \eta (1 + T) = 0.\]
	Thus $1+T \in \Zp\lsem T\rsem^{\psi=0}$ and is mapped to 1 under the last map.
 
 Let $f(T) \in \zp\lsem T\rsem ^{\psi = 0}$ be in the kernel of the last map, that is be such that $f(0) = 0$. Then $\varphi^n(f)$ goes to zero (in the weak/$(p,T)$-adic topology) and hence $\sum_{n \geq 0} \varphi^n(f)$ converges to an element $g(T)$ whose image under $(1 - \varphi)$ is $f(T)$. Since $\psi\circ \varphi = \mathrm{id}$, we also have
	\[ \psi(g) = \sum_{n\geq 0}\psi \circ \varphi^n(f) = \psi(f) + \sum_{n\geq 1} \varphi^{n-1}(f) = g, \]
	as $\psi(f) = 0$, which shows that 
	\[f \in (1-\varphi)\left(\Zp\lsem T\rsem ^{\psi=\mathrm{id}}\right)\]
	and hence that the sequence is exact at $\Zp\lsem T\rsem ^{\psi=0}.$ Finally, if $f(T) \in \zp\lsem T\rsem $ is not constant, then $f(T) = a_0 + a_r T^{r} + \hdots $ for some $a_r \neq 0$ and $\varphi(f)(T) = a_0 + p a_r T^r + \hdots \neq f(T)$, which shows that $\ker(1 - \varphi) = \zp$ and finishes the proof.
\end{proof}

\begin{definition}\label{def:Zp(1)}
	Let $\Zp(1) \defeq \varprojlim \mu_{p^n}$, the module $\Zp$ with an action of $\GG$ by $\sigma \cdot x = \chi(\sigma)x,$ recalling $\chi$ is the cyclotomic character. This is an integral version of $\Qp(1)$.
\end{definition}

\begin{theorem} \label{thm:fund exact seq}
	The Coleman map induces an exact sequence of $\GG$-modules
	\[ 0 \to \mu_{p - 1} \times \zp(1) \longrightarrow \mathscr{U}_\infty \xrightarrow{\mathrm{Col}} \Lambda(\GG) \longrightarrow \zp(1) \to 0, \] where the last map sends $\mu \in \Lambda(\GG)$ to $\int_{\GG} \chi \cdot \mu$. In particular, it induces an exact sequence
	\[
	0 \to \Zp(1) \longrightarrow \sU_{\infty,1} \xrightarrow{\mathrm{Col}} \Lambda(\GG) \longrightarrow \Zp(1) \to 0
	\]
	of $\Lambda(\GG)$-modules.
\end{theorem}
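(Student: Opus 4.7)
The plan is to factor the Coleman map as the five-step composition given in Definition \ref{def:coleman map}, identify which constituent maps are isomorphisms, and chase the kernel and cokernel through the remaining pieces. Three of the five maps are already bijections: the first map $\sU_\infty \xrightarrow{\sim} (\Zp\lsem T\rsem^\times)^{\cN=\mathrm{id}}$ by Theorem \ref{thm:coleman map 2}; the operator $\partial^{-1}$, which is a bijection on $\Zp\lsem T\rsem^{\psi=0}$ because under the Amice correspondence it is multiplication by $x^{-1}$ on measures supported on $\Zp^\times$, where $x$ is a unit (cf.\ Lemma \ref{LemmaMultiplicationbyx} and Corollary \ref{CorollarySupportedZpet}); and the inverse Amice transform $\sA^{-1}:\Zp\lsem T\rsem^{\psi=0}\xrightarrow{\sim}\Lambda(\Zp^\times)\cong\Lambda(\GG)$ via the cyclotomic character.

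Thus the kernel and cokernel of $\mathrm{Col}$ are controlled entirely by the second and third maps. Theorem \ref{thm:log der} tells us that $\Delta = \partial\log$ surjects onto $\Zp\lsem T\rsem^{\psi=\mathrm{id}}$ with kernel $\mu_{p-1}$. On this image, the operator $1-\varphi\circ\psi$ simplifies to $1-\varphi$, and Lemma \ref{lem:rest zp*} identifies its kernel with $\Zp$ (the constants) and its cokernel with $\Zp$ (via evaluation at $T=0$). Combining these gives that, up to $\GG$-module structure, $\ker(\mathrm{Col}) \cong \mu_{p-1} \times \Zp$ and $\mathrm{coker}(\mathrm{Col}) \cong \Zp$.

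To upgrade these two copies of $\Zp$ into the Tate twist $\Zp(1)$ asserted in the theorem, I would identify the Galois action on each explicitly. For the kernel, solve $\Delta(f) = c$ for constant $c \in \Zp$: one finds $f = \zeta\cdot(1+T)^c$ with $\zeta \in \mu_{p-1}$, and $(1+T)^c$ is $\cN$-invariant (for $c\in\Z$ by direct calculation using $\prod_{\eta\in\mu_p}\eta = 1$, and for general $c\in\Zp$ by continuity of $\cN$ and density of $\Z$ in $\Zp$). Pulling back via Coleman's isomorphism, $(1+T)^c$ corresponds to $u = (\xi_{p^n}^c)_{n\geq 1} \in \sU_{\infty,1}$; these norm-compatible systems form precisely the Tate module $\varprojlim_n \mu_{p^n}$, on which $\sigma_a$ acts by $\xi_{p^n}^c \mapsto \xi_{p^n}^{ac}$, i.e.\ as $\Zp(1)$. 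For the cokernel, trace evaluation at $T=0$ forward through $\partial^{-1}$ and $\sA^{-1}$: if $g\in\Zp\lsem T\rsem^{\psi=0}$ corresponds to $\mu\in\Lambda(\GG)$ and $g = \partial^{-1}h$, then Corollary \ref{eval at x^k} gives $h(0) = (\partial g)(0) = \int_{\Zp^\times}x\cdot\mu = \int_\GG\chi\cdot\mu$, which is the asserted quotient map. The $\GG$-equivariance of $\mathrm{Col}$ (Corollary \ref{cor:G-eq}) combined with the identity $\int_\GG\chi\cdot(\sigma_a\mu) = a\int_\GG\chi\cdot\mu$ shows $\GG$ acts on the cokernel through $\chi$, completing the identification with $\Zp(1)$.

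The main technical obstacles are justifying that $(1+T)^c\in(\Zp\lsem T\rsem^\times)^{\cN=\mathrm{id}}$ for all $c\in\Zp$ (not only integer $c$), and carefully matching up the Galois action on both Tate twists; the rest is bookkeeping. The second stated sequence involving $\sU_{\infty,1}$ then follows immediately from the first by quotienting by $\mu_{p-1}$, using $\sU_\infty = \mu_{p-1}\times\sU_{\infty,1}$ and $\mu_{p-1}\subset\ker(\mathrm{Col})$.
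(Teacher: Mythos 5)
Your proposal is correct and follows the same route as the paper: factor $\mathrm{Col}$ into its five constituent maps, note that the Coleman isomorphism (Theorem \ref{thm:coleman map 2}), $\partial^{-1}$ on $\Zp\lsem T\rsem^{\psi=0}$, and $\sA^{-1}$ are bijections, so the kernel and cokernel come from $\Delta$ (Theorem \ref{thm:log der}) and $1-\varphi$ (Lemma \ref{lem:rest zp*}), then identify the Galois twist by pulling the constants back to $(\xi_{p^n}^a)_n$ and pushing evaluation at $T=0$ forward through $\partial^{-1}$ to the map $\mu\mapsto\int_\GG\chi\cdot\mu$. The paper's proof is identical in substance; you have merely spelled out a few steps the paper leaves implicit (the bijectivity of $\partial^{-1}$ on $\psi=0$ power series, and the explicit derivation of the cokernel map).
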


\begin{proof}
We first compute the kernel of the Coleman map, which we recall from Definition \ref{def:coleman map} is the composition 
\begin{align*}
\mathrm{Col} :	\sU_\infty \xrightarrow{\ u \mapsto f_u(T)\ } (\Zp\lsem T\rsem ^\times)^{\cN=\mathrm{id}} \xrightarrow {\ \ \Delta\ \ } &\Zp\lsem T\rsem ^{\psi=\mathrm{id}} \xrightarrow{\ 1-\varphi\circ\psi\ } \Zp\lsem T\rsem ^{\psi = 0}\\
&\xrightarrow{\ \ \partial^{-1}\ \ }\Zp\lsem T\rsem ^{\psi = 0} \xrightarrow{\ \ \sA^{-1}\ \ } \Lambda(\Zp^\times).
\end{align*}
In the third term, we have used Theorem \ref{thm:log der} to replace $\Zp\lsem T\rsem$ with $\Zp\lsem T \rsem^{\psi=\mathrm{id}}$.

The first map is an isomorphism by Theorem \ref{thm:coleman map 2}. By Theorem \ref{thm:log der}, the second map surjects with kernel $\mu_{p - 1}$. By Lemma \ref{lem:rest zp*} the third map has kernel $\zp$; this is the image of $\{(1 + T)^a : a \in \zp\}$ under $\Delta$. This is the power series interpolating the sequence $(\xi_{p^n}^a)_{n \geq 1}$. Accordingly, when we pull this back to $\UU_\infty$, we get the factor
	\[\Zp(1) = \{(\xi_{p^n}^a)_n : a \in \Zp\} \subset \UU_\infty.\]
Finally, the fourth and fifth maps are isomorphisms, so ultimately the kernel of $\mathrm{Col}$ is as claimed. 

We now compute the cokernel. The first two and last two maps in $\mathrm{Col}$ are surjective, and the third map has cokernel $\zp$ by Lemma \ref{lem:rest zp*}, showing the exactness of the sequence.
	
	Finally, we turn to the $\GG$-equivariance. The subspace $\mu_{p-1} \times \Zp(1) \subset \sU_\infty$ is preserved by $\GG$, so the first map is $\GG$-equivariant. That $\mathrm{Col}$ is $\GG$-equivariant was Corollary \ref{cor:G-eq}. The last map is $\GG$-equivariant since
	\begin{align*}
		\int_{\GG} \chi(x) \cdot \sigma \mu(x) &= \int_{\GG} \chi(\sigma x) \cdot \mu(x)
		= \chi(\sigma)\int_{\GG}\chi \cdot \mu,
	\end{align*}
	and $\GG$ acts on $\Zp(1)$ through the cyclotomic character $\chi$.
\end{proof}

\subsection{Generators for the global cyclotomic units}

 Recall the (global) module of cyclotomic units $\DD_n$ is the intersection of $\mathscr{O}_{F_n}^\times$ with the multiplicative subgroup of $F_n^\times$ generated by $\pm \xi_{p^n}$ and $\xi_{p^n}^a-1$ with $1 \leq a < p^n$, and $\DD_n^+ = \DD_n \cap F_n^+$. We now show this module is cyclic over the group ring $\Z[\Gamma_n^+]$. This is essential preparation for treating the local analogue $\CC_{n,1}^+$ in the next subsection.

  Recall that we defined
\[c_n(a) \defeq \frac{\xi_{p^n}^a - 1}{\xi_{p^n} - 1} \in \DD_n,\]
and note that
\[\gamma_{n,a} \defeq \xi_{p^n}^{(1-a)/2}c_n(a) = \frac{\xi_{p^n}^{a/2} - \xi_{p^n}^{-a/2}}{\xi_{p^n}^{1/2} - \xi_{p^n}^{-1/2}}\]
is fixed by conjugation $c \in \GG$, hence gives an element of $\DD_n^+$. In fact:

\begin{lemma} \label{lem:cyc units gen}
	Let $n \geq 1$. Then
	\begin{itemize}
		\item[(i)] The group $\mathscr{D}_n^+$ is generated by $-1$ and 
		\[ \bigg\{ \gamma_{n,a}  \; : \; 1 < a < \frac{p^n}{2}, (a,p) = 1 \bigg\}. \]
		\item[(ii)] The group $\mathscr{D}_n$ is generated by $\xi_{p^n}$ and $\mathscr{D}_n^+$.
	\end{itemize}
\end{lemma}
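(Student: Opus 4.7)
The plan is to prove both parts simultaneously by exhibiting a canonical generating set for $\mathscr{D}_n$. The first step is to show
\[
\mathscr{D}_n = \langle -1,\ \xi_{p^n},\ c_n(a) \,:\, a \in (\Z/p^n\Z)^\times,\ a \neq 1\rangle,
\]
where $c_n(a) = (\xi_{p^n}^a - 1)/(\xi_{p^n} - 1)$, which is a unit when $(a,p)=1$. The main tool is the factorisation $Z^p - 1 = \prod_{\zeta \in \mu_p}(\zeta Z - 1)$. Applied with $Z = \xi_{p^{j+1}}^a$ and $(a,p) = 1$, this gives the recursion
\[
\xi_{p^j}^a - 1 \;=\; \prod_{i=0}^{p-1}(\xi_{p^{j+1}}^{a + ip^j} - 1) \;=\; (\xi_{p^{j+1}} - 1)^p \prod_{i=0}^{p-1} c_{j+1}(a + ip^j),
\]
since each $a + ip^j$ is coprime to $p$. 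Iterating, every generator $\xi_{p^n}^a - 1$ of the ambient multiplicative group---which for $p \mid a$ equals $\xi_{p^{n-v_p(a)}}^{a/p^{v_p(a)}} - 1$ and hence also feeds into the recursion---can be rewritten as $(\xi_{p^n}-1)^{p^{v_p(a)}}$ multiplied by a product of level-$n$ cyclotomic units $c_n(b)$ with $(b,p)=1$. Consequently every element of $\mathscr{D}_n$ admits a presentation $(-1)^\epsilon \xi_{p^n}^k (\xi_{p^n} - 1)^f \prod_{a} c_n(a)^{e_a}$; taking the normalised valuation $v_{F_n}$ (which vanishes on units and sends $\xi_{p^n}-1$ to $1$) forces $f = 0$.

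Next I substitute $c_n(a) = \xi_{p^n}^{(a-1)/2}\gamma_{n,a}$, immediate from the definition of $\gamma_{n,a}$ with $2^{-1} \newmod{p^n}$ well-defined as $p$ is odd. A short calculation gives $c_n(-a) = -\xi_{p^n}^{-a} c_n(a)$, whence
\[
\gamma_{n,-a} \;=\; -\gamma_{n,a},\qquad \gamma_{n,-1} = -1.
\]
Therefore the index set for the $\gamma_{n,a}$ may be halved, and one restricts to $1 < a < p^n/2,\ (a,p)=1$. This already yields part (2), since all generators of $\mathscr{D}_n$ besides $\xi_{p^n}$ lie in $\mathscr{D}_n^+$.

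For part (1), given $u \in \mathscr{D}_n^+$ I use the presentation $u = (-1)^\epsilon \xi_{p^n}^k \prod_a \gamma_{n,a}^{e_a}$ just obtained. Since $u$, $-1$, and each $\gamma_{n,a}$ are fixed by complex conjugation (the last noted in the text preceding the lemma), so is $\xi_{p^n}^k$. But $\mu_{p^n}\cap\R = \{1\}$ because $p$ is odd, so $k \equiv 0 \newmod{p^n}$ and $u$ lies in the subgroup generated by $-1$ and the $\gamma_{n,a}$.

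The main obstacle is the first step---showing that every unit in the ambient multiplicative group genuinely admits a presentation with no $(\xi_{p^n} - 1)$ factor. This is precisely the point of the iterated recursion, which funnels all non-unit contributions into a single $(\xi_{p^n}-1)^f$ whose exponent equals $v_{F_n}(u)$ and hence vanishes by the unit hypothesis.
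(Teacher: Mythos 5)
Your proof is correct and takes essentially the same approach as the paper: reduce to exponents prime to $p$ via the norm identity $Z^{p^m}-1 = \prod_{\zeta\in\mu_{p^m}}(\zeta Z - 1)$, use the $p$-adic valuation to force the exponent of $\xi_{p^n}-1$ to vanish, pass from $c_n(a)$ to $\gamma_{n,a}$ and halve the index set via $\gamma_{n,-a}=-\gamma_{n,a}$, and conclude part (1) from the reality of the $\gamma_{n,a}$ and the fact that $\mu_{p^n}\cap\R=\{1\}$. The only cosmetic difference is that you iterate a one-level recursion where the paper applies a single multi-level identity.
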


\begin{proof}
	We first show that we need only consider those elements $\xi_{p^n}^a - 1$ with $a$ prime to $p$. Indeed, this follows from the identity
	\[ \xi_{p^n}^{b p^m} - 1 = \prod_{j = 0}^{p^m - 1} (\xi_{p^n}^{b+jp^{n - m}} - 1), \] where $(b, p) = 1$ and $1 \leq m < n$, noting that $b+jp^{n - m}$ is prime to $p$. Also, since $\xi_{p^n}^a - 1 = -\xi_{p^n}^{a} (\xi_{p^n}^{-a} - 1)$, we can restrict to considering $1 \leq a < \frac{1}{2} p^n$ (recall here that $p$ is odd).
	
	Suppose that 
	\[
	 \gamma = \pm \xi_{p^n}^d \prod_{\substack{1 \leq a < \frac{1}{2}p^n \\ (a, p) = 1}} (\xi_{p^n}^a - 1)^{e_a} \in \mathscr{D}_n, 
	 \] for some integers $d$ and $e_a$. Since $v_p(\xi_{p^n}^d) = 0$ and all the $p$-adic valuations of $\xi_{p^n}^a - 1$ coincide (namely, $v_p(\xi_{p^n}^a - 1) = \frac{1}{(p - 1)p^{n - 1}}$), we deduce that $\sum_a e_a = 0$. Therefore we can write
	\[
	 \gamma = \pm \xi_{p^n}^d \prod_a \bigg( \frac{ \xi_{p^n}^a - 1}{\xi_{p^n} - 1} \bigg)^{e_a} = \pm \xi_{p^n}^e \prod_a \gamma_{n,a}^{e_a}, 
	 \] 
	 where $e = d + \frac{1}{2} \sum_a e_a (a - 1)$. This shows the second point; the first point follows by observing that every term $\gamma_{n,a}^{e_a}$ of the product is real, so $\gamma \in \mathscr{D}_n^+$ if and only if $e = 0$.
\end{proof}

\begin{corollary} \label{cor:cyc units gen 2}
If $a$ generates $(\Z / p^n \Z)^\times$, then $\gamma_{n,a}$ generates $\DD^+_{n}$ as a $\Z[\GG_n^+]$-module. 
\end{corollary}

\begin{proof}
	If $1 \leq b < p^n$ is prime to $p$, then $b \equiv a^r$ (mod $p^n$) for some $r \geq 0$, and hence
	 \[ \gamma_{n,b} = \frac{\xi_{p^n}^{a^r}-1}{\xi_{p^n}-1} = \prod_{i=0}^{r-1}\frac{\xi_{p^n}^{a^{i+1}}-1}{\xi_{p^n}^{a^i}-1} =  \prod_{i = 0}^{r - 1} (\gamma_{n,a})^{\sigma_a^i}. \qedhere
	 \]
\end{proof}

\subsection{Generators for the local cyclotomic units}

We now move to analogous results for the local cyclotomic units, that is, the $p$-adic closure of the global cyclotomic units.

A natural guess might be that `as $\DD_n^+$ is generated by $\gamma_{n,a}$ as a $\Z[\Gamma_n^+]$-module, then $\CC_n^+$ is generated by $\gamma_{n,a}$ as a $\Zp[\Gamma_n^+]$-module'. However, this is nonsense, as $\CC_n^+$ is not a $\Zp$-module; to parse this, we must pass to the principal cyclotomic units $\CC_{n,1}^+$. The following simple lemma then describes precisely the $p$-adic closure.

\begin{lemma}\label{lem:closure}
Let $g_1,\dots,g_r \in \UU_{n,1}$, and let $X = \langle g_1,\dots,g_r\rangle \subset \UU_{n,1}$ be the $\Z$-module they generate (multiplicatively\footnote{By which we mean: the $\Z$-module structure is given by exponentiation, i.e.\ $(a,g) \mapsto g^a$ for $a \in \Z$ (or $\zp$). As is standard, but perhaps confusingly, we sometimes use additive notation $ag$ for this module structure.}). Then the $p$-adic closure $\overline{X}$ of $X$ in $\UU_{n,1}$ is the $\Zp$-submodule of $\UU_{n,1}$ generated by $g_1,\dots,g_r$.
\end{lemma}

\begin{proof}
Let $a \in \Zp$, and $a_j$ be any sequence of integers tending to $a \in \Zp$. Then 
\begin{equation}\label{eq:converge Zp}
    g_i^{a_j} = \sum_{k \geq 0}\binomc{a_j}{k} (g_i-1)^k \longrightarrow \sum_{k\geq 0}\binomc{a}{k}(g_i-1)^k = g_i^a,
\end{equation}
 since $g_i - 1 \equiv 0 \newmod{\pri_n}$ (as $g_i \in \UU_{n,1}$). Thus $g_i^a$ lies in the $p$-adic closure. An identical argument shows more generally that the $\Zp$-span of the $g_i$'s is a subset of the $p$-adic closure.

To see the converse, let $g \in \overline{X}$. Then by definition there exists a sequence $(g_1^{a_{1,j}}\cdots g_r^{a_{r,j}})_j \subset X$ tending to $g$, with each $a_{i,j} \in \Z$. This gives a sequence $(a_{1,j},\dots,a_{r,j})_j \subset \Z^r \subset \Zp^r$. By compactness of $\Zp^r$, there exists a convergent subsequence  
\[
    (b_{1,k},\dots,b_{r,k})_k \to (b_1, \dots, b_r) \in \Zp^r.
\]
Then $(g_1^{b_{1, k}} \cdots g_r^{b_{r, k}})_k$ converges to $g_1^{b_1} \cdots g_r^{b_r}$ by \eqref{eq:converge Zp}; but by construction it also converges to $g$. Thus $g = g_1^{b_1}\cdots g_r^{b_r}$, showing that $g$ lies in the $\Zp$-span of the $g_i$'s, as required.
\end{proof}

We now put Corollary \ref{cor:cyc units gen 2} into a form useful for Lemma \ref{lem:closure}.

\begin{lemma}\label{lem:global generators 2}
Let $a \in \Z$ be a topological generator of $\zpe$, and $w \in \mu_{p-1} \subset \UU_n$ be such that $aw \equiv 1 \newmod{\mathfrak{p}_n}$. Then:
\begin{itemize}
\item[(i)] $w\gamma_{n,a} \in \UU_{n,1}$, 
\item[(ii)] $(w\gamma_{n,a})^{p-1} = \gamma_{n,a}^{p-1}$ lies in $\UU_{n,1}^+$, and generates the cyclic $\Z[\Gamma_n^+]$-module
\[
    \Z[\Gamma_n^+]\cdot (w\gamma_{n,a})^{p-1} = (p-1)\mathscr{D}_{n}^+ = \{\gamma^{p-1} : \gamma \in \mathscr{D}_{n}^+\} \subset \UU_{n,1}^+.
\]
\end{itemize}
\end{lemma}

\begin{proof}
(i) We first claim that $\gamma_{n,a} \equiv a \newmod{\mathfrak{p}_n}$. Indeed, by definition $\gamma_{n,a} = \xi_{p^n}^{a/2} c_n(a)$. Recall $\pi_n = \xi_{p^n} - 1$ is a uniformiser in $\mathfrak{p}_n$, so  $\xi_{p^n}^{a/2} \equiv 1 \newmod{\mathfrak{p}_n}$. It thus suffices to show 
\[
    c_n(a) \equiv a \newmod{\mathfrak{p}_n}.
\]
For this, recall that by construction of the Coleman power series attached to any unit $u = (u_n) \in \sU_\infty$, we have 
\[
    u_n = f_u(\pi_n) \equiv f_u(0) \newmod{\mathfrak{p}_n}.
\]
For $u = c(a)$, recall we have $f_{c(a)} = ((1+T)^a - 1)/T$; so $c_n(a) \equiv f_{c(a)}(0) = a \newmod{\mathfrak{p}_n}$, proving the claim. Thus $w$ is the unique element of $\mu_{p-1}$ such that $w\gamma_{n,a} \equiv 1 \newmod{\mathfrak{p}_n}$, and hence $w\gamma_{n,a} \in \mathscr{U}_{n,1}$, as required.

(ii) By Corollary \ref{cor:cyc units gen 2}, we know $\gamma_{n,a}$ generates $\DD_n^+$, and deduce $\gamma_{n,a}^{p-1}$ generates $(p-1)\DD_{n}^+$. In particular $\gamma_{n,a}^{p-1}$ lies in $\UU_{n,1}^+$. As $w^{p-1} = 1$, we have $\gamma_{n,a}^{p-1}= (w\gamma_{n,a})^{p-1}$, giving (ii).
\end{proof}

\begin{lemma} \label{LemmaGeneratorCinfty1}
Let $a \in \Z$ be a topological generator of $\zpe$, and $w \in \mu_{p-1} \subset \UU_n$ be such that $aw \equiv 1 \newmod{\mathfrak{p}_n}$. Then:
\begin{itemize}

\item[(i)] The module $\CC_{n,1}^+$ is a cyclic $\zp[\GG^+_n]$-module generated by $w\gamma_{n,a}$.

\item[(ii)] 	The module $\CC^+_{\infty, 1}$ is a cyclic $\Lambda(\GG^+)$-module generated by $(w \gamma_{n,a})_{n \geq 1}$.
 \end{itemize}
\end{lemma}

\begin{proof}
(i) By Lemma \ref{lem:global generators 2}(ii), 
\[
    (p-1)\mathscr{D}_{n}^+ = (p-1)\mathscr{D}_{n,1}^+ \subset \UU_{n,1}^+
\]
is generated as a $\Z[\Gamma^+_n]$-module by $(w\gamma_{n,a})^{p-1}$.  By Lemma \ref{lem:closure} the $p$-adic closure $(p-1)\CC_{n,1}^+$ of $(p-1)\mathscr{D}_{n,1}^+$ is generated as a $\Zp[\Gamma^+_n]$-module by $(w\gamma_{n,a})^{p-1}$. As $p-1$ is invertible in $\Zp$, we conclude $(p-1)\CC_{n,1}^+ = \CC_{n,1}^+$ is generated by $w\gamma_{n,a}$. We use here that $w\gamma_{n,a} \equiv 1 \newmod{\mathfrak{p}_n}$, so that $w\gamma_{n,a}$ is the unique $(p-1)$-th root of $(w\gamma_{n,a})^{p-1}$ lying in $\CC_{n,1}^+$.

(ii) Observe that
\[
    \CC_{\infty,1}^+ \cong \varprojlim \CC_{n,1}^+ = \varprojlim \Big(\Zp[\Gamma_n^+]\cdot w\gamma_{n,a}\Big)\cong \Lambda(\Gamma^+)\cdot (w\gamma_{n,a})_n,
\]
as required, with all maps as $\Lambda(\Gamma^+)$-modules and where the middle equality is (i).
\end{proof}

\subsection{End of the proof}

Finally we can prove Iwasawa's Theorem \ref{thm:iwasawa}.

\begin{theorem} \label{thm:iwasawa 2}
	The Coleman map induces:
	\begin{itemize}
 \item[(i)] A short exact sequence of $\Lambda(\GG)$-modules
		\[ 0 \to \mathscr{U}_{\infty, 1} / \mathscr{C}_{\infty, 1} \to \Lambda(\GG) / I(\GG) \zeta_p \to \zp(1) \to 0. \]
		\item[(ii)] An isomorphism of $\Lambda(\GG^+)$-modules
		\[ \mathscr{U}^+_{\infty, 1} / \mathscr{C}^+_{\infty, 1} \xrightarrow{\sim} \Lambda(\GG^+) / I(\GG^+) \zeta_p. \]
	\end{itemize}
\end{theorem}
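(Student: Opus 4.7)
The strategy is to combine the fundamental exact sequence of Theorem \ref{thm:fund exact seq} with the evaluation $\mathrm{Col}(c(a)) = -\theta_a \zeta_p$ coming from Proposition \ref{prop:coleman zetap} and Theorem \ref{thm:coleman to kl}. I would first prove part (2), which is the cleaner statement, and then deduce part (1) by passing to the $-$ part and assembling via the snake lemma.

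For part (2), I work on the $+$ part under complex conjugation $c \in \GG$. Since $\chi(c) = -1$, the module $\Zp(1)$ lies entirely in the $-$ eigenspace, so $\Zp(1)^+ = 0$, and the $+$ part of the fundamental exact sequence collapses to an isomorphism $\mathrm{Col}^+ : \sU^+_{\infty,1} \xrightarrow{\sim} \Lambda(\GG^+)$. I would then use the cyclicity of $\CC^+_{\infty,1}$ as a $\Lambda(\GG^+)$-module with generator $v = (u\gamma_{n,a})_n$, stated just before the theorem (and following from Corollary \ref{lem:cyc units gen 2}). Decomposing $v = u \cdot (\xi_{p^n}^{(1-a)/2})_n \cdot c(a)$ as a product of norm-compatible systems in $\sU_\infty$, I note that $u \in \mu_{p-1}$ and $(\xi_{p^n}^{(1-a)/2})_n \in \Zp(1)$ both lie in $\ker(\mathrm{Col})$, while $\mathrm{Col}(c(a)) = -\theta_a \zeta_p$. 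Since $a$ is a topological generator of $\zpe$, it projects to a topological generator of $\GG^+ = \zpe/\{\pm 1\}$, so $\theta_a$ generates $I(\GG^+) \subset \Lambda(\GG^+)$. Thus $\mathrm{Col}^+(\CC^+_{\infty,1}) = \Lambda(\GG^+) \theta_a \zeta_p = I(\GG^+)\zeta_p$, and passage to quotients yields part (2).

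For part (1), set $K = \mathrm{im}(\mathrm{Col}) \subset \Lambda(\GG)$, so the four-term fundamental exact sequence splits into $0 \to \Zp(1) \to \sU_{\infty,1} \to K \to 0$ and $0 \to K \to \Lambda(\GG) \to \Zp(1) \to 0$. First, $\Zp(1) \subset \CC_{\infty,1}$ is immediate since $\mu_{p^n} \subset \DD_n \subset \CC_n$ and $\mu_{p^n} \subset \UU_{n,1}$, so $\mu_{p^n} \subset \CC_{n,1}$ at each level. Second, I claim $\mathrm{Col}(\CC_{\infty,1}) = I(\GG)\zeta_p$. On the $+$ part this was just established. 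On the $-$ part, a finite-level computation shows that the $-$ part of the $\Lambda(\GG)$-generator $c_n(a)/\omega(a)$ of $\CC_{n,1}/\mu_{p^n}$ equals $\xi_{p^n}^{(a-1)/2} \in \mu_{p^n}$, from which one deduces $(\CC_{\infty,1})^- = \Zp(1)$ in the inverse limit, hence $\mathrm{Col}((\CC_{\infty,1})^-) = 0$. Simultaneously, $(I(\GG)\zeta_p)^- = \Lambda(\GG)^-\zeta_p$ vanishes: for $\mu \in \Lambda(\GG)^-$, the product formula $\int_\GG \chi^k \cdot (\mu\zeta_p) = \big(\int_\GG \chi^k \mu\big)\big(\int_\GG \chi^k \zeta_p\big)$ vanishes for every $k \geq 1$ (the first factor vanishes for even $k$ by the $c$-antisymmetry of $\mu$, the second for odd $k$ by the interpolation of $\zeta_p$), and hence $\mu\zeta_p = 0$ by the pseudo-measure variant of Lemma \ref{zero divisor}. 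Both $-$ parts vanish, so the claim follows from the $+$ identification. With $\Zp(1) \subset \CC_{\infty,1}$ and $\mathrm{Col}(\CC_{\infty,1}) = I(\GG)\zeta_p$ in hand, the snake lemma applied to the first short exact sequence gives $\sU_{\infty,1}/\CC_{\infty,1} \cong K/I(\GG)\zeta_p$; combining with the second short exact sequence (after passing to the quotient by $I(\GG)\zeta_p \subset K$) produces the desired four-term sequence of part (1).

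The principal obstacle is the identification $(\CC_{\infty,1})^- = \Zp(1)$: whilst the $-$ parts of the finite-level generators of $\CC_{n,1}$ are a straightforward computation, taking inverse limits requires carefully confirming that no additional $c$-antisymmetric components arise from the $p$-adic closure of $\DD_n$ inside $\UU_n$, which in turn requires tracking how Galois acts on the generators produced by Lemma \ref{lem:cyc units gen} and Corollary \ref{lem:cyc units gen 2} through the closure operation.
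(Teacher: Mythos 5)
Your proof is correct, but it reverses the paper's order and introduces a side computation that isn't needed. The paper proves part (1) directly: by Lemma~\ref{lem:cyc units gen} (resp.\ Corollary~\ref{lem:cyc units gen 2}), $\CC_{\infty,1}$ is generated as a $\Lambda(\GG)$-module by $\zp(1)$ (killed by $\mathrm{Col}$) together with the systems $(\xi_{p^n}^b\gamma_{n,a})_n$, whose Coleman images are $\pm\theta_a\zeta_p$ by Theorem~\ref{thm:coleman to kl}; as $a$ ranges over $\zpe$ the $\theta_a$ topologically generate $I(\GG)$, so $\mathrm{Col}(\CC_{\infty,1})=I(\GG)\zeta_p$, and plugging this and $\zp(1)\subset\CC_{\infty,1}$ into the exact sequence of Theorem~\ref{thm:fund exact seq} yields (1); part (2) then follows by taking $\langle c\rangle$-invariants (exact for $p$ odd, with $\zp(1)^{\langle c\rangle}=0$). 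You instead establish (2) first from the $+$-part of the fundamental sequence and rebuild (1) eigenspace by eigenspace. That is a legitimate route, and your verification that $\Lambda(\GG)^-\zeta_p=0$ — hence $I(\GG)\zeta_p=I(\GG^+)\zeta_p$ — via the convolution product formula, the trivial zeros $\zeta(1-k)=0$ for odd $k\geq 3$, and the vanishing Euler factor at $k=1$, is a clean fact the paper leaves implicit. But the step you flag as the principal obstacle, $(\CC_{\infty,1})^-=\zp(1)$, is a detour you can skip entirely: each generator of $\CC_{\infty,1}$ has Coleman image $\pm\theta_a\zeta_p$, and once you know $\Lambda(\GG)^-\zeta_p=0$ you get $\Lambda(\GG)\cdot\theta_a\zeta_p=I(\GG^+)\zeta_p=I(\GG)\zeta_p$ with no analysis of the $p$-adic closure or its Galois module structure. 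Indeed $(\CC_{\infty,1})^-=\zp(1)$ then drops out as a consequence, from the injectivity of $\mathrm{Col}$ on $\sU_{\infty,1}/\zp(1)$, rather than being an input — so the careful tracking you worried about is not required.
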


\begin{proof}
	Theorem \ref{thm:fund exact seq} gave an exact sequence of $\Lambda(\GG)$-modules 
	\[ 
	0 \to \zp(1) \longrightarrow \mathscr{U}_{\infty, 1} \xrightarrow{\mathrm{Col}} \Lambda(\GG) \longrightarrow \zp(1) \to 0. 
	\]
	The theorem will follow by calculating the image of the modules $\CC_{\infty, 1}$ and $\CC^+_{\infty, 1}$ under the Coleman map. By Lemma \ref{LemmaGeneratorCinfty1}, it suffices to calculate the image under $\mathrm{Col}$ of an element $(\xi_{p^n}^b \gamma_{n,a})_{n \geq 1} \in \UU_{\infty, 1}$, for $a, b \in \zpe$. But this has already been done:  by Theorem \ref{thm:coleman to kl}, and the fact that $\xi_{p^n}^b$ lies in the kernel of the Coleman map, we know that 
	\[ 
	\mathrm{Col} \big( (\xi_{p^n}^b \gamma_{n,a})_{n \geq 1} \big) = \mathrm{Col} \left( \xi_{p^n}^{-(1-a)/2}(\gamma_{n,a})_{n \geq 1} \right) = \mathrm{Col}\big( c(a) \big) = ([\sigma_a] - [1]) \zeta_p, 
	\]
	where as usual $\sigma_a$ denotes an element of $\GG$ such that $\chi(\sigma_a) = a.$ Since $a \in \zpe$ was arbitrary, we conclude that the image of $\CC_{\infty, 1}$ (resp. $\CC^+_{\infty, 1}$) under $\mathrm{Col}$ is $I(\GG) \zeta_p$ (resp. $I(\GG^+) \zeta_p$). We deduce an exact sequence
	\[ 0 \to \UU_{\infty, 1} / \CC_{\infty, 1} \longrightarrow \Lambda(\GG) / I(\GG) \zeta_p \longrightarrow \zp(1) \to 0. \] 
	This shows (i). Since $p$ is odd, taking invariants under the group $\langle c \rangle \subset \GG$ of order two generated by complex conjugation is exact. As $c$ acts on $\Zp(1)$ by $-1$, we see that $\zp(1)^{\langle c \rangle} = 0$, which shows (ii) and concludes the proof of the theorem.
\end{proof}


\section{The Iwasawa Main Conjecture}\label{sec:IMC}

 We now move from arithmetic to algebra. To state the Iwasawa Main Conjecture, we use the structure theory of $\Lambda$-modules. We first summarise this theory. We then define modules from the Galois theory of abelian extensions that will be needed. These modules carry an action of the Galois group $\GG = \mathrm{Gal}(F_\infty/\Q) \cong \Zp^\times$, and hence obtain the structure of $\Lambda(\GG)$-modules. The Iwasawa Main Conjecture describes the characteristic ideal of one of these Galois modules in terms of the Kubota--Leopoldt $p$-adic $L$-function.
 
 In the interests of space, where necessary, we state without proof relevant auxiliary results.

\subsection{Structure theory for $\Lambda$-modules}\label{sec:structure theorems}

There is a rich structure theory of modules over Iwasawa algebras, which looks similar to that of modules over PIDs. Here we state (without proof) some basic yet fundamental results. 

Let $L$ be a finite extension of $\Qp$, $\mathscr{O}_L$ its ring of integers and let $\Lambda \defeq \Lambda(\zp) = \varprojlim \roi_L[\Zp/p^n\Zp] \cong \roi_L\lsem T\rsem$ be the Iwasawa algebra of $\Zp$ over $\roi_L$. Let $M, M'$ be two $\Lambda$-modules. We say that $M$ is pseudo-isomorphic to $M'$, and we write $M \sim M'$, if there exists a homomorphism $M \rightarrow M'$ with finite kernel and cokernel, i.e, if there is an exact sequence
 \[
  0 \to A \to M \to M' \to B \to 0, 
  \] 
 with $A$ and $B$ finite $\Lambda$-modules (just in case: $A$ and $B$ have finite cardinality!). We remark that $\sim$ is \emph{not} an equivalence relation (see \cite[Warning, \S13.2]{Washington2}) but it \emph{is} an equivalence relation between \emph{finitely generated, torsion} $\Lambda$-modules. The following is the main result concerning the structure theory of finitely generated $\Lambda$-modules.

\begin{theorem} \label{thm:structure theorem} \cite[Thm.\ 13.12]{Washington2}. Let $M$ be a finitely generated $\Lambda$-module. Then
	\[
	 M \sim \Lambda^r \oplus \bigg( \bigoplus_{i = 1}^s \Lambda / (p^{n_i}) \bigg) \oplus \bigg( \bigoplus_{j = 1}^t \Lambda / (f_j(T)^{m_j}) \bigg), 
	 \]
	for some $r, s, t \geq 0$, $n_i, m_j \geq 1$ and irreducible distinguished polynomials $f_j(T) \in \mathscr{O}[T]$.
\end{theorem}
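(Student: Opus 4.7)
The plan is to emulate the classical structure theorem for finitely generated modules over a PID, using the fact that $\Lambda$ is very close to being such a ring: it is a two-dimensional regular local Noetherian UFD, and localizing at a height-one prime produces a genuine DVR. The foundational tool is the Weierstrass preparation theorem, which I would invoke first: every nonzero $f \in \Lambda$ factors uniquely as $f = \pi^\mu \cdot u \cdot P$, with $\pi$ a uniformizer of $\mathscr{O}_L$, $u \in \Lambda^\times$, and $P \in \mathscr{O}_L[T]$ distinguished. From this one deduces that $\Lambda$ is a UFD with maximal ideal $(\pi,T)$, that its height-one primes are exactly $(\pi)$ together with $(P)$ for $P$ distinguished irreducible, and that finitely generated $\Lambda$-modules supported only on $(\pi,T)$ are precisely the finite ones — so the target equivalence $\sim$ is controlled by what happens at height-one primes.

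Next I would peel off the free part. Put $Q = \mathrm{Frac}(\Lambda)$ and let $r = \dim_Q (M \otimes_\Lambda Q)$. Choose $x_1,\dots,x_r \in M$ mapping to a $Q$-basis of $M \otimes Q$; this gives an injection $\Lambda^r \hookrightarrow M$ whose cokernel is the torsion part of $M$ up to a finite error. To split this short exact sequence up to pseudo-isomorphism, I would argue that the obstruction lies in $\mathrm{Ext}^1_\Lambda(T(M), \Lambda^r)$, which is finite because $T(M)$ is supported on height-one primes while $\Lambda^r$ is reflexive (and $\Lambda$ is two-dimensional regular, so depth considerations force the obstruction into the maximal ideal). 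Hence $M \sim \Lambda^r \oplus T(M)$, reducing the problem to the torsion case.

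For a torsion finitely generated module $N$, its support is a finite set of height-one primes $\{\mathfrak{p}_1,\dots,\mathfrak{p}_k\}$, each principal: $\mathfrak{p}_i = (g_i)$ with $g_i \in \{\pi\} \cup \{\text{distinguished irreducible polynomials}\}$. At each $\mathfrak{p}_i$, the localization $N_{\mathfrak{p}_i}$ is a finitely generated torsion module over the DVR $\Lambda_{\mathfrak{p}_i}$, and the classical PID structure theorem gives a unique decomposition $N_{\mathfrak{p}_i} \cong \bigoplus_{j} \Lambda_{\mathfrak{p}_i}/\mathfrak{p}_i^{n_{i,j}}\Lambda_{\mathfrak{p}_i}$. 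I would then set
\[
E \;=\; \bigoplus_{i,j} \Lambda/(g_i^{n_{i,j}}),
\]
which manifestly has the desired elementary form and matches $N$ at every height-one prime.

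The crux, and the step I expect to be the main obstacle, is constructing a $\Lambda$-linear map $E \to N$ (or $N \to E$) realizing these local isomorphisms globally and verifying it is a pseudo-isomorphism. My approach is to lift generators $\mathrm{mod}\,\mathfrak{p}_i^{n_{i,j}}$ of each $N_{\mathfrak{p}_i}$ to elements of $N$, yielding a candidate map $E \to N$; its localization at every height-one prime is an isomorphism by construction, so its kernel and cokernel are supported on $(\pi,T)$ and are therefore finite. The delicate point is showing that cross-terms between summands with distinct $\mathfrak{p}_i$ do not obstruct this lift, which amounts to a Chinese-remainder-type computation showing that $\mathrm{Ext}^1_\Lambda(\Lambda/\mathfrak{p}^n, \Lambda/\mathfrak{q}^m)$ is killed by an element outside every height-one prime when $\mathfrak{p}\neq\mathfrak{q}$. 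Once this is in hand, combining with the free-part argument yields the stated pseudo-isomorphism. Uniqueness of the invariants $r,s,t,n_i,m_j$ then follows since they are recovered from the $Q$-rank and the localizations at each height-one prime.
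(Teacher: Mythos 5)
The paper does not supply its own proof of this theorem---it is quoted from Washington's book cited in the statement, where the argument proceeds by explicit matrix reduction over $\Lambda$. Your route via localization at height-one primes is a legitimate alternative to that, and your treatment of the torsion part is essentially correct: for coprime height-one generators $f,g$, the resolution $0\to\Lambda\xrightarrow{\cdot f}\Lambda\to\Lambda/(f)\to 0$ gives $\mathrm{Ext}^1_\Lambda(\Lambda/(f),\Lambda/(g))\cong\Lambda/(f,g)$, which is finite because $(f,g)$ has height two, so the cross-term obstructions you flag really are pseudo-null, and a map $E\to N$ that is an isomorphism after localizing at every height-one prime has finite kernel and cokernel. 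The real gap is in how you peel off the free part.

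You inject $\Lambda^r\hookrightarrow M$ using a $Q$-basis and assert the cokernel is ``the torsion part of $M$ up to a finite error''; this is false, and the embedding runs in the wrong direction. Take $M=(p,T)\subset\Lambda$, the maximal ideal: here $T(M)=0$ and $r=1$, but for every $x\in M$ the quotient $(p,T)/\Lambda x$ is \emph{infinite}---for $x=p$ it is $\Lambda/(p)$---and in fact no choice of $x$ can give a finite quotient, since finiteness would force $(p,T)^N\subset(x)$ for some $N$, hence $x\mid p^N$ and $x\mid T^N$, making $x$ a unit. The claim that $\mathrm{Ext}^1_\Lambda(T(M),\Lambda^r)$ is finite is also false in general: the same resolution gives $\mathrm{Ext}^1_\Lambda(\Lambda/(f),\Lambda)\cong\Lambda/(f)$, which is infinite. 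The standard repair goes the opposite way: $M/T(M)$ is finitely generated and torsion-free, the canonical map $M/T(M)\to\bigl(M/T(M)\bigr)^{**}$ is injective with finite cokernel, and a finitely generated reflexive module over the two-dimensional regular local ring $\Lambda$ is free (Auslander--Buchsbaum forces projective dimension zero), so $M/T(M)$ embeds into $\Lambda^r$ with finite cokernel. Choosing a free $\Lambda^r$ of finite index inside $M/T(M)$ and taking its preimage in $M$ then produces a submodule pseudo-isomorphic to $M$ whose quotient by $T(M)$ is genuinely free, so the extension splits and one obtains $M\sim T(M)\oplus\Lambda^r$ without any finiteness claim about $\mathrm{Ext}^1$.
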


	Here we call a polynomial $P(T) \in \roi_L[T]$ \emph{distinguished} if $P(T) = a_0 + a_1 T + \hdots + a_{n-1} T^{n - 1} + T^n$ with $a_i \in \mathfrak{p}$ for every $0 \leq i \leq n - 1$.

\begin{remark}
	We do \emph{not} have a similar result for the finite level group algebras $\roi_L[\Zp/p^n\Zp]$, only for the projective limit. This is another major example of the fundamental concept of Iwasawa theory, where it is profitable to study a whole tower of objects all in one go, rather than individually at finite level. 
\end{remark}

\begin{definition}
	Suppose $M$ is a finitely generated torsion $\Lambda$-module. Then $r =0$ in the structure theorem. We define the \emph{characteristic ideal} of $M$ to be the ideal
	\[ 
	\mathrm{Ch}_{\Lambda}(M) = \left(p^{n}\right)\prod_{j=1}^t \left(f_j^{m_j}\right) \subset \Lambda, \qquad \text{where } n = \sum_{i = 1}^s n_i.
	\]
\end{definition}

We will apply this theory more generally. Suppose $\GG = H \times \Gamma'$, where $H$ is a finite commutative group of order prime to $p$ and $\Gamma' \cong \zp$.  A key example to have in mind is $\GG = \zpe$, $H = \mu_{p-1}$ and $\Gamma' = 1 + p \zp$. Then we have a decomposition
\[ 
\Lambda(\GG) \cong \roi_L[H] \otimes \Lambda. 
\] 
Let $M$ be a finitely generated torsion $\Lambda(\GG)$-module. Let $H^\wedge$ denote the group of characters of $H$ and define, for any $\omega \in H^\wedge$, the projector to the isotypic component
\[ e_\omega \defeq \frac{1}{|H|} \sum_{a \in H} \omega^{-1}(a) [a] \in \roi_L[H], \]
possibly after extending $L$ by adjoining the values of $\omega$. As the order of $H$ is prime to $p$, one can easily show the following result.

\begin{lemma}[{\cite[A.1]{CS06}}]
	The group $H$ acts on $M^{(\omega)} \defeq e_\omega M$ via multiplication by $\omega$ and we have a decomposition of $\Lambda(\GG)$-modules
	\[ M = \oplus_{\omega \in H^\wedge} M^{(\omega)}. \] Moreover, each $M^{(\omega)}$ is a finitely generated torsion $\Lambda$-module.
\end{lemma}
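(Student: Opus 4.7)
The plan is to exploit the standard idempotent decomposition of $\roi_L[H]$-modules, noting that $|H|$ is invertible in $\roi_L$ (after possibly enlarging $L$ to contain the values of all characters of $H$, which is harmless as stated in the lemma). First I would verify the orthogonal idempotent relations: namely, for $\omega, \omega' \in H^\wedge$, compute
\[
e_\omega e_{\omega'} = \frac{1}{|H|^2} \sum_{a,b \in H} \omega^{-1}(a) \omega'^{-1}(b) [ab] = \delta_{\omega,\omega'} e_\omega,
\]
using the orthogonality of characters on the finite group $H$, together with
\[
\sum_{\omega \in H^\wedge} e_\omega = 1 \in \roi_L[H],
\]
which follows from $\sum_{\omega} \omega(a) = |H| \delta_{a,1}$. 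These identities imply that $M = \bigoplus_{\omega \in H^\wedge} e_\omega M$ as $\roi_L[H]$-modules, and since each $e_\omega$ is central in $\Lambda(\GG) = \roi_L[H] \otimes_{\roi_L} \Lambda$, the decomposition is in fact one of $\Lambda(\GG)$-modules.

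Next, to see that $H$ acts on $M^{(\omega)}$ via $\omega$, I would compute, for $b \in H$,
\[
[b] \cdot e_\omega = \frac{1}{|H|} \sum_{a \in H} \omega^{-1}(a) [ba] = \frac{1}{|H|} \sum_{c \in H} \omega^{-1}(b^{-1}c) [c] = \omega(b) e_\omega,
\]
via the substitution $c = ba$. Therefore $[b] \cdot m = \omega(b) m$ for every $m \in M^{(\omega)}$.

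Finally, for the $\Lambda$-module structure: via the inclusion $\Gamma' \hookrightarrow \GG$, we obtain an embedding $\Lambda \hookrightarrow \Lambda(\GG)$, and under the identification $\Lambda(\GG) \cong \roi_L[H] \otimes_{\roi_L} \Lambda$ this exhibits $\Lambda(\GG)$ as a free $\Lambda$-module of rank $|H|$. Since $M$ is finitely generated over $\Lambda(\GG)$, it is finitely generated over $\Lambda$, and each direct summand $M^{(\omega)}$ inherits finite generation. For torsion, any nonzero $\lambda \in \Lambda(\GG)$ annihilating $M$ also annihilates $M^{(\omega)}$; applying $e_\omega$ to $\lambda$ yields an element $e_\omega \lambda \in \Lambda$ (using that $H$ acts on the $\omega$-part through the scalar character $\omega$, so $e_\omega \lambda$ lies in the $\omega$-component, which is isomorphic to $\Lambda$ as a $\Lambda$-module) that still annihilates $M^{(\omega)}$; choosing $\lambda$ wisely, or more simply noting that $\Lambda(\GG)$ is a finite free $\Lambda$-module so the $\Lambda(\GG)$-torsion submodule of any module coincides with its $\Lambda$-torsion submodule, gives that $M^{(\omega)}$ is $\Lambda$-torsion.

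The only mild subtlety, which I would flag but which is immediate, is ensuring the idempotents $e_\omega$ actually lie in $\roi_L[H]$: this is why one may need to enlarge $L$ to contain the $|H|$-th roots of unity, after which $|H| \in \roi_L^\times$ (as $(|H|,p)=1$) and all $\omega(a) \in \roi_L$. No step here should present a genuine obstacle; the argument is a routine application of the orthogonal idempotent formalism combined with the finiteness of $\Lambda(\GG)$ over $\Lambda$.
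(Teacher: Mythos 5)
Your proof is correct. The paper itself does not prove this lemma — it cites it to \cite[A.1]{CS06} without argument — and your orthogonal-idempotent decomposition is precisely the standard argument used there: the relations $e_\omega e_{\omega'} = \delta_{\omega,\omega'} e_\omega$, $\sum_\omega e_\omega = 1$ give the $\Lambda(\GG)$-module splitting, and finiteness of $\Lambda(\GG)$ over $\Lambda$ (so that, via the norm $N_{\Lambda(\GG)/\Lambda}(s) \in s\Lambda(\GG)$, any $\Lambda(\GG)$-torsion module is $\Lambda$-torsion) carries the finitely-generated-torsion property down to each $M^{(\omega)}$. Your first attempt at the torsion claim (picking a ``nonzero'' annihilator $\lambda$ and applying $e_\omega$) is slightly loose, since $\Lambda(\GG)$ is not a domain and $e_\omega\lambda$ could vanish for a zero-divisor $\lambda$; one should take $\lambda$ a non-zero-divisor, at which point the norm argument you supply second is the cleaner route and closes the issue.
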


\begin{definition}
	Let $\GG = H \times \Zp$ be as above and let $M$ be a finitely generated torsion $\Lambda(\GG)$-module. We define the \emph{characteristic ideal} of $M$ to be the ideal
	\[ \mathrm{Ch}_{\Lambda(\GG)}(M) \defeq \oplus_{\omega \in H^\wedge} \mathrm{Ch}_\Lambda(M^{(\omega)}) \subseteq \Lambda(\GG). \] 
\end{definition}

\begin{lemma}[{\cite[A.1 Prop.\ 1]{CS06}}]
	The characteristic ideal is multiplicative in exact sequences.
\end{lemma}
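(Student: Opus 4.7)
The strategy is to reduce to ordinary $\Lambda$-modules via the idempotent decomposition, then to reinterpret $\mathrm{Ch}_\Lambda$ in terms of lengths at height-one primes and invoke additivity of length.

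\textbf{Step 1: Reduction to $\Lambda$.} Given an exact sequence $0 \to M' \to M \to M'' \to 0$ of finitely generated torsion $\Lambda(\GG)$-modules, applying the idempotent $e_\omega \in \roi_L[H]$ preserves exactness (it extracts a direct summand), so $0 \to (M')^{(\omega)} \to M^{(\omega)} \to (M'')^{(\omega)} \to 0$ is exact for each character $\omega \in H^\wedge$. Since $\mathrm{Ch}_{\Lambda(\GG)}$ is defined as the sum over $\omega$ of the characteristic ideals of the components, it suffices to prove multiplicativity for finitely generated torsion $\Lambda$-modules.

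\textbf{Step 2: Reinterpretation via height-one localizations.} The ring $\Lambda \cong \roi_L\lsem T \rsem$ is a two-dimensional regular local UFD, whose height-one primes are $(p)$ and $(f)$ for $f \in \roi_L[T]$ irreducible distinguished (by the Weierstrass preparation theorem). For any such prime $\mathfrak{p}$, the localization $\Lambda_\mathfrak{p}$ is a discrete valuation ring. If $M$ is a finitely generated torsion $\Lambda$-module, then $M_\mathfrak{p}$ is a finitely generated torsion module over $\Lambda_\mathfrak{p}$ and hence has finite length $\ell_\mathfrak{p}(M)$. The key claim is that
\[
\mathrm{Ch}_\Lambda(M) \;=\; \prod_{\mathfrak{p}} \mathfrak{p}^{\ell_\mathfrak{p}(M)},
\]
where the product runs over the (finitely many) height-one primes of $\Lambda$. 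To verify this, one checks it directly on the elementary module $E(M) = \bigoplus_i \Lambda/(p^{n_i}) \oplus \bigoplus_j \Lambda/(f_j^{m_j})$: for $\mathfrak{p} = (p)$, one has $\ell_\mathfrak{p}(\Lambda/(p^{n_i})) = n_i$ and $\ell_\mathfrak{p}(\Lambda/(f_j^{m_j})) = 0$ (since $f_j$ is a unit in $\Lambda_{(p)}$ as a distinguished polynomial has $T$-adic leading term), and symmetrically for $\mathfrak{p} = (f_j)$. This matches the generators of $\mathrm{Ch}_\Lambda(E(M))$. Moreover, any finite $\Lambda$-module is annihilated by a power of the maximal ideal $(p,T)$, hence localizes to zero at every height-one prime; therefore a pseudo-isomorphism $M \sim E(M)$ becomes an isomorphism after localizing at $\mathfrak{p}$, yielding $\ell_\mathfrak{p}(M) = \ell_\mathfrak{p}(E(M))$.

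\textbf{Step 3: Additivity of length and conclusion.} Localization at $\mathfrak{p}$ is exact, so the short exact sequence yields
\[
0 \to M'_\mathfrak{p} \to M_\mathfrak{p} \to M''_\mathfrak{p} \to 0.
\]
As length is additive in short exact sequences of finite length modules over a DVR, $\ell_\mathfrak{p}(M) = \ell_\mathfrak{p}(M') + \ell_\mathfrak{p}(M'')$ for every height-one prime $\mathfrak{p}$. Taking the product over all $\mathfrak{p}$ and using Step~2 gives $\mathrm{Ch}_\Lambda(M) = \mathrm{Ch}_\Lambda(M') \cdot \mathrm{Ch}_\Lambda(M'')$, and reassembling over $\omega \in H^\wedge$ proves the lemma.

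\textbf{Main obstacle.} The substantive point is Step~2: identifying the characteristic ideal defined via the structure theorem with the ideal $\prod_\mathfrak{p} \mathfrak{p}^{\ell_\mathfrak{p}(M)}$ read off from height-one localizations, which ultimately rests on the UFD / Weierstrass structure of $\Lambda$ and the invariance of $\ell_\mathfrak{p}$ under pseudo-isomorphism. Once this bridge is in place, multiplicativity is a formal consequence of exactness of localization and additivity of length over a DVR.
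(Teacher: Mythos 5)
Your argument is correct and follows the standard route that the cited reference takes: reduce to $\Lambda$ via the idempotent decomposition, show that $\mathrm{Ch}_\Lambda(M)$ equals $\prod_{\mathfrak{p}} \mathfrak{p}^{\ell_{\mathfrak{p}}(M)}$ over height-one primes (using the fact that finite modules vanish upon localizing away from the maximal ideal, so pseudo-isomorphism invariance of the lengths is automatic), and then use exactness of localization plus additivity of length over the DVRs $\Lambda_{\mathfrak{p}}$. The paper cites this lemma without proof, and this localization-and-length argument is precisely what one finds in Coates--Sujatha.

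One small imprecision worth correcting: $\Lambda$ has infinitely many height-one primes (one for each irreducible distinguished polynomial), not finitely many; what is true is that $\ell_{\mathfrak{p}}(M) = 0$ for all but finitely many $\mathfrak{p}$, since $\mathrm{Supp}(M)$ is contained in $V(g)$ for any nonzero annihilator $g$ of the torsion module $M$, and only finitely many height-one primes contain $g$. With this phrasing fixed, the product in Step~2 is well defined and the rest goes through. It would also be worth a sentence noting that the general ``multiplicative in exact sequences'' statement for longer exact complexes (as actually used in Corollary~\ref{CFTunits2}) follows from your short-exact-sequence case by splicing into shorts in the usual way.
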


\subsection{The $\Lambda$-modules arising from Galois theory} \label{SecLambdamods}

The following $\Lambda$-modules will be the protagonists of the Galois side of the Main Conjecture; we urge the reader to refer back to the following definitions as these objects appear in the text. Recall $F_n = \Q(\mu_{p^n})$, that $\pri_n$ is the unique prime above $p$ in $F_n$, and other similar notation from \S\ref{sec:coleman map notation}. Then define
\[ 
\mathscr{M}_n \defeq \text{maximal abelian $p$-extension of $F_n$ unramified outside $\pri_n$}, \]
\[ \mathscr{M}^+_n \defeq \text{maximal abelian $p$-extension of $F^+_n$ unramified outside $\pri_n^+$}, \]
\[ \mathscr{L}_n \defeq \text{maximal unramified abelian $p$-extension of $F_n$}, \]
\[ \mathscr{L}^+_n \defeq \text{maximal unramified abelian $p$-extension of $F^+_n$},
\]
and set
\[ 
\mathscr{M}_\infty \defeq \cup_{n \geq 1} \mathscr{M}_n = \text{maximal abelian pro-$p$-extension of $F_\infty$ unramified outside $\mathfrak{p}$}, \]
\[ \mathscr{M}^+_\infty \defeq \cup_{n \geq 1} \mathscr{M}^+_n = \text{maximal abelian pro-$p$-extension of $F^+_\infty$ unramified outside $\mathfrak{p}^+$}, \]
\[ \mathscr{L}_\infty \defeq \cup_{n \geq 1} \mathscr{L}_n = \text{maximal unramified abelian  pro-$p$-extension of $F_\infty$}, \]
\[ \mathscr{L}^+_\infty \defeq \cup_{n \geq 1} \mathscr{L}^{+}_n = \text{maximal unramified abelian pro-$p$-extension of $F^+_\infty$}.
 \]
Finally, define
\[
 \mathscr{X}_\infty \defeq \mathrm{Gal}(\mathscr{M}_\infty / F_\infty), \;\;\; \mathscr{X}^+_\infty = \mathrm{Gal}(\mathscr{M}^+_\infty / F^+_\infty), \]
\[ \mathscr{Y}_\infty \defeq \mathrm{Gal}(\mathscr{L}_\infty / F_\infty), \;\;\; \mathscr{Y}^+_\infty = \mathrm{Gal}(\mathscr{L}^+_\infty / F^+_\infty). 
\]
These modules fit into the following diagram of field extensions:

\[
\begin{tikzcd}[every arrow/.append style={dash}]
&                                & \mathscr{M}_\infty \\
& \mathscr{L}_\infty\arrow[ur]   & \\
F_\infty \arrow[ur,"\! \mathscr{Y}_\infty"'] \arrow[uurr, bend left, "\mathscr{X}_\infty"] & & \mathscr{M}_n\arrow[uu]\\
&\mathscr{L}_n \arrow[uu]\arrow[ur]                  &\\
F_n \arrow[uu]\arrow[ur]       &                                 & \\
\Q \arrow[u, "\GG_n"']\arrow[uuu, bend left, "\GG"]       &                          &
\end{tikzcd}
\]

There is an identical diagram for the totally real objects, with superscripts ${}^+$ everywhere.

\begin{rem}\label{rem:inner automorphisms}
In the limit, the module $\XX_\infty$ is a $\Lambda(\GG)$-module, and thus can be studied via Theorem \ref{thm:structure theorem}. To describe this, let $x \in \mathscr{X}_\infty$ and $\sigma \in \GG$, and choose any lifting $\tilde{\sigma} \in \mathrm{Gal}(\mathscr{M}_\infty / \Q)$ of $\sigma$; then 
\[
	\sigma \cdot x \defeq \tilde{\sigma} x \tilde{\sigma}^{-1}
\]
gives a well defined action of $\GG$ on $\mathscr{X}_\infty$. As $\mathscr{O}_L[\GG]$ is dense in $\Lambda(\GG)$, and the latter is Hausdorff, this action extends by linearity and continuity to an action of $\Lambda(\GG)$ on $\mathscr{X}_\infty$. In exactly the same way we define actions of $\Lambda(\GG)$ on $\mathscr{Y}_\infty$ and of $\Lambda(\GG^+)$ on $\mathscr{X}^+_\infty$ and $\mathscr{Y}^+_\infty$. 
\end{rem}

\subsection{The Main Conjecture}

Recall the ideal $I(\GG^+)\zeta_p \subset \Lambda(\GG^+)$, and that this encodes the zeros of $\zeta_p$. We already gave an arithmetic description of this ideal in Theorem \ref{thm:iwasawa} in terms of cyclotomic units. The Iwasawa Main Conjecture upgrades this to the following:

\begin{theorem} [Iwasawa Main Conjecture] \label{IMC} $\mathscr{X}^+_\infty$ is a finitely generated torsion $\Lambda(\GG^+)$-module, and 
\[ 
\mathrm{ch}_{\Lambda(\GG^+)}(\mathscr{X}^+_\infty) = I(\GG^+) \zeta_p. 
\]
\end{theorem}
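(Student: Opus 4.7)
The plan is to combine Iwasawa's theorem (Theorem \ref{thm:iwasawa 2}) with a class field theory exact sequence and the Vandiver hypothesis $p \nmid h_1^+$, under which the conjecture will be established. The starting point is the fundamental four-term exact sequence of $\Lambda(\GG^+)$-modules
\[
0 \to \overline{\mathscr{V}^+_{\infty,1}} \to \mathscr{U}^+_{\infty,1} \to \mathscr{X}^+_\infty \to \mathscr{Y}^+_\infty \to 0,
\]
where $\overline{\mathscr{V}^+_{\infty,1}}$ denotes the closure inside $\mathscr{U}^+_{\infty,1}$ of the diagonal image of the inverse limit of global units. This sequence comes from applying class field theory at each finite level $n$ to identify $\mathrm{Gal}(\mathscr{M}^+_n / F^+_n)$ with an appropriate quotient of the idele class group and passing to the inverse limit: the surjection $\mathscr{X}^+_\infty \twoheadrightarrow \mathscr{Y}^+_\infty$ corresponds to killing inertia at the unique prime above $p$, and the kernel of the induced map from local units is precisely the closure of global units by the reciprocity law.

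Under the Vandiver hypothesis, a classical theorem of Iwasawa propagates $p \nmid h_1^+$ to $p \nmid h_n^+$ for every $n \geq 1$. Since class field theory identifies the finite-level quotients of $\mathscr{Y}^+_\infty$ with the $p$-primary parts of $\mathrm{Cl}(F_n^+)$, this forces $\mathscr{Y}^+_\infty = 0$. Simultaneously, Theorem \ref{thm:cyclo units class number} gives $[\mathscr{V}^+_n : \mathscr{D}^+_n] = h_n^+$, now prime to $p$, so taking $p$-adic closures inside $\mathscr{U}^+_{n,1}$ identifies $\overline{\mathscr{V}^+_{\infty,1}}$ with $\mathscr{C}^+_{\infty,1}$. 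The four-term sequence collapses to
\[
0 \to \mathscr{C}^+_{\infty,1} \to \mathscr{U}^+_{\infty,1} \to \mathscr{X}^+_\infty \to 0.
\]

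Combining this with Theorem \ref{thm:iwasawa 2} yields an isomorphism of $\Lambda(\GG^+)$-modules
\[
\mathscr{X}^+_\infty \;\cong\; \mathscr{U}^+_{\infty,1}/\mathscr{C}^+_{\infty,1} \;\cong\; \Lambda(\GG^+)/I(\GG^+)\zeta_p,
\]
which immediately shows $\mathscr{X}^+_\infty$ is finitely generated over $\Lambda(\GG^+)$. To obtain torsion and to compute the characteristic ideal, decompose $\GG^+ = \Delta^+ \times \Gamma$, with $\Delta^+$ of order prime to $p$ and $\Gamma \cong \Zp$, inducing $\Lambda(\GG^+) = \bigoplus_\omega \Lambda(\GG^+)^{(\omega)}$ with each component isomorphic to $\roi_L\lsem T \rsem$. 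For every nontrivial character $\omega$ of $\Delta^+$, the augmentation kills $e_\omega$, so $(I(\GG^+))^{(\omega)}$ is the full component and $(I(\GG^+)\zeta_p)^{(\omega)}$ is the principal ideal generated by the image of $\zeta_p$. On the trivial component $(I(\GG^+))^{(1)} = (T)$, and the product $T \cdot \zeta_p^{(1)}$ is a regular element of $\roi_L\lsem T \rsem$ since the factor $T$ precisely cancels the simple pole of the pseudo-measure $\zeta_p$. Thus on each character component the ideal $I(\GG^+)\zeta_p$ is principal with nonzero generator, so the quotient is torsion with characteristic ideal equal to that generator; summing over $\omega$ gives $\mathrm{ch}_{\Lambda(\GG^+)}(\mathscr{X}^+_\infty) = I(\GG^+)\zeta_p$.

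The main obstacles are setting up the class field theory exact sequence rigorously and verifying the two Vandiver-driven simplifications ($\mathscr{Y}^+_\infty = 0$ and $\overline{\mathscr{V}^+_{\infty,1}} = \mathscr{C}^+_{\infty,1}$). Both ultimately rest on the fact that the index $[\mathscr{V}^+_n : \mathscr{D}^+_n] = h_n^+$ remains prime to $p$ throughout the cyclotomic tower, which is the deep arithmetic content of Vandiver. Outside the Vandiver case, the direct isomorphism with $\mathscr{U}^+_{\infty,1}/\mathscr{C}^+_{\infty,1}$ breaks down, and one must prove two matching divisibilities between $\mathrm{ch}(\mathscr{X}^+_\infty)$ and $I(\GG^+)\zeta_p$ — this is the route of Mazur and Wiles using congruences between Eisenstein series and cuspforms, and of Rubin using Euler systems.
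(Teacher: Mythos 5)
Your proposal follows essentially the same route as the paper's proof of the Vandiver case: the class-field-theoretic four-term exact sequence (Corollary \ref{CFTunits2}), collapse of the outer terms $\mathscr{Y}^+_\infty$ and $\mathscr{E}^+_{\infty,1}/\mathscr{C}^+_{\infty,1}$ under the Vandiver hypothesis via Proposition \ref{prop:coinvariants} and Theorem \ref{thm:cyclo units class number} (Corollary \ref{Iw1}), and then Theorem \ref{thm:iwasawa 2} to conclude $\mathscr{X}^+_\infty \cong \Lambda(\GG^+)/I(\GG^+)\zeta_p$. You usefully spell out the character-component computation showing $\Lambda(\GG^+)/I(\GG^+)\zeta_p$ is torsion with characteristic ideal $I(\GG^+)\zeta_p$, a step the paper states without proof.
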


\begin{rem} \leavevmode \label{rem:even v odd}
It is usual in the literature to formulate the Iwasawa Main Conjecture in terms of an even Dirichlet character of $\mathrm{Gal}(\Q(\mu_p) / \Q)$. As one can already observe from the behaviour of the Bernoulli numbers, there exists a certain dichotomy involving the parity of this character which makes the formulation of the Main Conjecture different in the even and odd cases. The above formulation takes into account every such even Dirichlet character. For a formulation of the Main Conjecture for odd Dirichlet characters, see \cite{HK}.
\end{rem}

\subsection{The Iwasawa Main Conjecture for Vandiver primes}\label{sec:vandiver proof}

\begin{definition}
	Let $h_n^+ \defeq \#\mathrm{Cl}(F_n^+)$ be the class number of $F_n^+$.  We say $p$ is a \emph{Vandiver prime} if $p \nmid h_1^+$.
\end{definition}

The rest of these notes are dedicated to the following theorem of Iwasawa.

\begin{theorem}\label{thm:vandiver}
If $p$ is a Vandiver prime, we have an isomorphism of $\Lambda(\GG^+)$-modules
\[ \mathscr{X}^+_\infty \cong \Lambda(\GG^+) / I(\GG^+) \zeta_p. \] In particular, Iwasawa's Main Conjecture holds.
\end{theorem}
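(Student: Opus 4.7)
The strategy is to use global class field theory to identify $\mathscr{X}^+_\infty$ with $\mathscr{U}^+_{\infty,1}/\mathscr{C}^+_{\infty,1}$, after which Theorem \ref{thm:iwasawa 2}(2) immediately yields the Main Conjecture. The Vandiver hypothesis will be used to eliminate two obstructions in this identification: a class-group term, and the gap between global units and cyclotomic units.

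First, global class field theory applied to the totally real field $F_n^+$, using that $p$ is totally ramified (so there is a unique prime $\mathfrak{p}_n$ above $p$), yields an exact sequence
\[
0 \longrightarrow \overline{\mathscr{V}^+_{n,1}} \longrightarrow \mathscr{U}^+_{n,1} \longrightarrow \mathrm{Gal}(\mathscr{M}^+_n/F^+_n) \longrightarrow A_n^+ \longrightarrow 0,
\]
where $\overline{\mathscr{V}^+_{n,1}}$ denotes the $p$-adic closure in $\mathscr{U}^+_{n,1}$ of the diagonal image of the global units $\mathscr{V}_n^+$, and $A_n^+$ is the $p$-Sylow of $\mathrm{Cl}(F_n^+)$, identified via reciprocity with $\mathrm{Gal}(\mathscr{L}_n^+/F_n^+)$. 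All four terms are compact $\zp$-modules, so passing to the inverse limit over $n$ (with respect to norm maps) is exact and produces
\[
0 \longrightarrow \overline{\mathscr{V}^+_{\infty,1}} \longrightarrow \mathscr{U}^+_{\infty,1} \longrightarrow \mathscr{X}^+_\infty \longrightarrow \mathscr{Y}^+_\infty \longrightarrow 0.
\]

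Next I would invoke a theorem of Iwasawa asserting that if $p \nmid h_1^+$, then $p \nmid h_n^+$ for every $n \geq 1$; the standard proof exploits the fact that the unique prime of $F_n^+$ above $p$ becomes principal in $F_\infty^+$, combined with a control-type comparison between the Galois coinvariants $(\mathscr{X}^+_\infty)_{\mathrm{Gal}(F_\infty^+/F_n^+)}$ and $\mathscr{X}_n^+$. This forces $A_n^+ = 0$ for all $n$, so $\mathscr{Y}^+_\infty = 0$. Moreover, Theorem \ref{thm:cyclo units class number} gives $[\mathscr{V}_n^+ : \mathscr{D}_n^+] = h_n^+$, which is now prime to $p$, and passing to $p$-adic closures inside the pro-$p$ group $\mathscr{U}^+_{n,1}$ therefore yields $\overline{\mathscr{V}^+_{n,1}} = \overline{\mathscr{D}^+_{n,1}} = \mathscr{C}^+_{n,1}$ at every finite level. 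Taking the inverse limit (justified by compactness of each term) gives $\overline{\mathscr{V}^+_{\infty,1}} = \mathscr{C}^+_{\infty,1}$.

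Substituting these two identifications into the four-term sequence collapses it to
\[
0 \longrightarrow \mathscr{C}^+_{\infty,1} \longrightarrow \mathscr{U}^+_{\infty,1} \longrightarrow \mathscr{X}^+_\infty \longrightarrow 0,
\]
whence $\mathscr{X}^+_\infty \cong \mathscr{U}^+_{\infty,1}/\mathscr{C}^+_{\infty,1}$ as $\Lambda(\GG^+)$-modules. Combining with Theorem \ref{thm:iwasawa 2}(2) delivers the desired isomorphism $\mathscr{X}^+_\infty \cong \Lambda(\GG^+)/I(\GG^+)\zeta_p$, and taking characteristic ideals recovers Theorem \ref{IMC}. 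The hard part is the Vandiver propagation: although $p \nmid h_1^+$ is hypothesised only at the ground level, carrying it up the cyclotomic tower to kill $\mathscr{Y}^+_\infty$ is a genuine class field theoretic argument, and is where the hypothesis does its real work.
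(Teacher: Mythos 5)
Your strategy coincides with the paper's own proof: global class field theory applied at each finite level, followed by passage to the inverse limit, gives the four-term exact sequence $0 \to \EE^+_{\infty,1} \to \UU^+_{\infty,1} \to \XX^+_\infty \to \YY^+_\infty \to 0$; the Vandiver hypothesis combined with the control statement (Proposition \ref{prop:coinvariants}) and Nakayama's lemma kills $\YY^+_\infty$ and gives $p \nmid h_n^+$ for all $n$; the indices of cyclotomic in global units then force $\EE^+_{\infty,1} = \CC^+_{\infty,1}$; and Theorem \ref{thm:iwasawa 2}(2) finishes. Two small points to tighten: the control comparison should be stated for the \emph{unramified} Galois modules $\YY^+_\infty$ and $\YY_n^+$, not $\XX^+_\infty$ and $\XX_n^+$; and the jump from ``$[\VV_n^+ : \DD_n^+] = h_n^+$ is prime to $p$'' to ``$\overline{\VV^+_{n,1}} = \overline{\DD^+_{n,1}}$'' silently uses that $[\VV^+_{n,1} : \DD^+_{n,1}]$ is also prime to $p$, which is a different index and needs a further observation (the paper bounds it by $(p-1)h_n^+$ via the mod-$p$ reduction maps $\roi_{F_n^+}^\times \to \mathbf{F}_p^\times$ and $\roi_{K_n^+}^\times \to \mathbf{F}_p^\times$). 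Neither gap affects the overall route, which is identical to the paper's.
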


The arguments of this section form the origins of Iwasawa's formulation of the Main Conjecture, and give further motivation for it. As our main goal is the study of $p$-adic $L$-functions, we omit the proofs of some more classical auxiliary results. Our approach follows that of \cite[\S4.5]{CS06}, which we suggest the reader consults for a more detailed exposition.

We first use class field theory to reinterpret Theorem \ref{thm:iwasawa 2} in terms of some modules arising from Galois theory.

\begin{definition} For any $n \geq 1$, define $\EE_n$ as the $p$-adic closure of the global units $\VV_n = \mathscr{O}_{F_n}^\times$ inside the local units $\UU_n$, let $\EE^+_n \defeq \EE_n \cap \UU_n^+$, and let
\[ \EE_{n, 1} \defeq \EE_n \cap \UU_{n, 1}, \;\;\; \EE^+_{n, 1} \defeq \EE^+_n \cap \UU_{n, 1}; \]
\[ \EE_{\infty, 1} \defeq \varprojlim_{n \geq 1} \EE_{n, 1}, \;\;\; \EE^+_{\infty, 1} \defeq \varprojlim_{n \geq 1} \EE^+_{n, 1}. \]
\end{definition}

 Leopoldt's conjecture (which is known in this case by a theorem of Brumer) states that for any $n \geq 1$, the group $\mathscr{E}_n$ is a finite $\zp$-module of rank $r_1 + r_2 - 1 = p^{n-1} (p-1)/2$. Here as usual $r_1$  (resp.\ $r_2$) denotes the number of real  (resp.\ half the number of complex) embeddings of $F_n$ into $\C$. In light of Lemma \ref{lem:closure}, the conjecture says that if global units are (multiplicatively) independent over $\Z$, then they are independent over $\Zp$.


\begin{proposition} \label{prop:CFTunits1}
There is an exact sequence of $\Lambda(\GG^+)$-modules
\[ 
0 \to \EE^+_{\infty, 1} \to \UU^+_{\infty, 1} \to \mathrm{Gal}(\MM^+_\infty / \LL^+_\infty) \to 0. 
\]
\end{proposition}

\begin{proof}
Global class field theory (see \cite[Cor.\ 13.6]{Washington2}) gives a short exact sequence
\begin{equation}\label{eq:cft} 
0 \to \EE^+_{n, 1} \to \UU^+_{n, 1} \to \mathrm{Gal}(\MM^+_n / \LL^+_n) \to 0. 
\end{equation}
Taking the inverse limit over $n$ gives the result. This is exact as all modules in the sequence above are finitely generated $\zp$-modules (so satisfy the Mittag-Leffler condition).
\end{proof}

We now rewrite the terms in this sequence, moving it closer to Theorem \ref{thm:vandiver}.  Motivated by Theorem \ref{thm:iwasawa}, we also introduce $\CC_{\infty,1}^+$ into the picture. Then:

\begin{corollary} \label{cor:CFTunits2} We have an exact sequence of $\Lambda(G)$-modules
\[ 0\to \EE^+_{\infty, 1} / \CC^+_{\infty, 1} \to \UU^+_{\infty, 1} / \CC^+_{\infty, 1} \to \mathscr{X}^+_\infty \to \mathscr{Y}^+_\infty \to 0.\]
\end{corollary}

\begin{proof}
The fundamental theorem of Galois theory yields a short exact sequence
\[
    0 \to \mathrm{Gal}(\MM_\infty^+/\LL_\infty^+) \to \XX_\infty^+ \to \YY_\infty^+ \to 0.
\]
The result now follows from Proposition \ref{prop:CFTunits1}, as
\[
    \Gal(\MM_\infty^+/\LL_\infty^+) \cong \EE_{\infty,1}^+/\UU_{\infty,1}^+ \cong \big(\EE_{\infty,1}^+/\CC_{\infty,1}^+\big)/\big(\UU_{\infty,1}^+/\CC_{\infty,1}^+\big),
\]
the last identification being the third isomorphism theorem.
\end{proof}

Key to the proof of Theorem \ref{thm:vandiver} is the following result from classical Iwasawa theory. For the sake of completeness we will give in Appendix \ref{sec:mu invariant} an introduction to this topic, including in particular a proof of the following result. Let 
\begin{equation}\label{eq:Y_n^+}
\YY_n^+ \defeq \Gal(\LL_n^+/F_n^+) \cong \mathrm{Cl}(F_n^+)\otimes_{\Z}\Zp.
\end{equation}

\begin{proposition}[{\cite[Prop. 13.22]{Washington2}}] \label{prop:coinvariants}
For all $n\geq 0$, we have
\[ (\mathscr{Y}^+_\infty)_{\GG^+_n} = \YY_n^+, \]
where  $\GG^+_n = \mathrm{Gal}(F^+_\infty / F^+_n)$ and the left-hand side is the module of coinvariants.
\end{proposition}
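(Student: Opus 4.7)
My approach is to compute the coinvariants by studying the Galois group $H \defeq \mathrm{Gal}(\mathscr{L}_\infty^+/F_n^+)$ and its maximal abelian quotient. Concretely, the short exact sequence
\[
1 \longrightarrow \mathscr{Y}_\infty^+ \longrightarrow H \longrightarrow \GG_n^+ \longrightarrow 1
\]
identifies the module of coinvariants $(\mathscr{Y}_\infty^+)_{\GG_n^+} = \mathscr{Y}_\infty^+/\overline{(\gamma-1)\mathscr{Y}_\infty^+}$ (for $\gamma$ a topological generator of $\GG_n^+$) with the Galois group of the maximal sub-extension of $\mathscr{L}_\infty^+$ that is abelian over $F_n^+$ and contains $F_\infty^+$. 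Since $\mathscr{Y}_\infty^+$ is abelian, the closure of the commutator subgroup of $H$ is precisely the closed subgroup generated by commutators $[\tilde\gamma, y] = (\gamma - 1) y$, and the plan is to identify this maximal abelian extension $M$ with the compositum $\mathscr{L}_n^+ \cdot F_\infty^+$.

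The first step is to exploit the fact that $F_\infty^+/F_n^+$ is totally ramified at the unique prime $\mathfrak{p}_n$ above $p$ (and unramified elsewhere), while $\mathscr{L}_\infty^+/F_\infty^+$ is everywhere unramified. Choosing a prime $\mathfrak{P}$ of $\mathscr{L}_\infty^+$ above $\mathfrak{p}_n$, the inertia subgroup $I \subseteq H$ at $\mathfrak{P}$ satisfies $I \cap \mathscr{Y}_\infty^+ = 1$ (from the unramifiedness of $\mathscr{L}_\infty^+/F_\infty^+$) and projects surjectively onto $\GG_n^+$ (from total ramification of $F_\infty^+/F_n^+$). Hence $I \xrightarrow{\sim} \GG_n^+$ and we obtain a semidirect product decomposition $H = \mathscr{Y}_\infty^+ \rtimes I$, with $I$ providing a canonical lift of $\gamma$.

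Next, I claim $M = \mathscr{L}_n^+ \cdot F_\infty^+$. The inclusion $\mathscr{L}_n^+ \cdot F_\infty^+ \subseteq M$ is clear: $\mathscr{L}_n^+/F_n^+$ is abelian and unramified, so base changing to $F_\infty^+$ stays inside $\mathscr{L}_\infty^+$ and the compositum with $F_\infty^+$ remains abelian over $F_n^+$. For the reverse inclusion, given any intermediate field $F_n^+ \subseteq M' \subseteq \mathscr{L}_\infty^+$ with $M'/F_n^+$ abelian, the inertia subgroup $I_{M'} \subseteq \mathrm{Gal}(M'/F_n^+)$ at $\mathfrak{p}_n$ maps isomorphically onto $\mathrm{Gal}(F_\infty^+/F_n^+)$ by the same total-ramification/unramified argument as above, so $M' = (M')^{I_{M'}} \cdot F_\infty^+$. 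But $(M')^{I_{M'}}/F_n^+$ is an unramified abelian $p$-extension, hence contained in $\mathscr{L}_n^+$, proving $M' \subseteq \mathscr{L}_n^+ \cdot F_\infty^+$.

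Finally, linear disjointness of $\mathscr{L}_n^+$ and $F_\infty^+$ over $F_n^+$ (one is unramified, the other totally ramified at $\mathfrak{p}_n$) yields $\mathrm{Gal}(\mathscr{L}_n^+ F_\infty^+/F_\infty^+) \cong \mathrm{Gal}(\mathscr{L}_n^+/F_n^+) = \mathscr{Y}_n^+$, combining with the commutator computation to give $(\mathscr{Y}_\infty^+)_{\GG_n^+} = \mathrm{Gal}(M/F_\infty^+) = \mathscr{Y}_n^+$. The main obstacle is the descent argument in the second paragraph, namely showing that \emph{every} abelian-over-$F_n^+$ sub-extension of $\mathscr{L}_\infty^+$ descends through $F_\infty^+$ via an inertia splitting; this is where the specific ramification behaviour of cyclotomic $\Zp$-extensions at $p$ is genuinely used, and a similar argument applies uniformly for $n \geq 0$ once one notes that although $\GG_0^+$ is not procyclic, the relevant commutator subgroup is still topologically generated by $(\sigma - 1)\mathscr{Y}_\infty^+$ as $\sigma$ ranges over $\GG_0^+$.
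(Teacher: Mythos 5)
Your proposal takes essentially the same route as the paper's proof (Lemma~\ref{Iwmu2} in Appendix~\ref{sec:mu invariant}): both arguments use the semidirect decomposition $\Gal(\LL_\infty^+/F_n^+) \cong \mathscr{Y}_\infty^+ \rtimes I$ via the inertia $I$ at the unique ramified prime, identify the closure of the commutator subgroup with the augmentation submodule $I_{\GG_n^+}\mathscr{Y}_\infty^+$, and then read off the answer; the paper just computes $G/\langle G', I\rangle$ in one step, whereas you factor the computation through the maximal abelian subfield $M$ and appeal to linear disjointness of $\LL_n^+$ and $F_\infty^+$ at the end. The content is the same.

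One small imprecision in your reverse inclusion: you quantify over arbitrary abelian subextensions $M'/F_n^+$ inside $\LL_\infty^+$ and assert that $I_{M'}$ maps isomorphically onto $\GG_n^+$, hence $M' = (M')^{I_{M'}}\cdot F_\infty^+$. This is false for $M'$ not containing $F_\infty^+$ --- for example $M' = F_n^+$, or any finite subextension of $F_\infty^+$, where $I_{M'}$ is finite while $\GG_n^+$ is not, and the factorisation visibly fails. The fix is immediate: you only need the claim for the maximal $M$, which does contain $F_\infty^+$ (since $F_\infty^+/F_n^+$ is abelian), or alternatively one may replace an arbitrary $M'$ with $M'F_\infty^+$ before running the inertia argument. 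With that adjustment the proof is correct, including your remark about the case $n=0$, where $\GG_0^+$ has a nontrivial prime-to-$p$ torsion factor and is therefore not procyclic, but the commutator closure is still the full augmentation submodule.
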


\begin{proof}
See Proposition \ref{prop:Iwmu2}.
\end{proof}

\begin{corollary} \label{cor:Iw1}
If $p$ is a Vandiver prime, then:
\begin{itemize}
\item[(i)] $\YY_\infty^+ = 0$;
\item[(ii)] $p \nmid h_n^+$ for any $n \geq 1$;
\item[(iii)] $\EE^+_{\infty, 1} / \CC^+_{\infty, 1} = 0$.
\end{itemize}
\end{corollary}

\begin{proof}
By \eqref{eq:Y_n^+}, we deduce that $p \nmid h_n^+$ if and only if $\YY_n^+ = 0$. 

(i)  By Proposition \ref{prop:coinvariants}, if $p \nmid h_1^+$, then $0 = \YY_1^+ = (\mathscr{Y}_\infty^+)_{\GG_0} = 0$. By Nakayama's lemma, this implies that $\mathscr{Y}_\infty^+ = 0$. 

(ii) Combining (i) with Proposition \ref{prop:coinvariants} shows $\YY_n^+ = 0$, hence the result.

(iii) In Theorem \ref{thm:cyclo units class number} we saw that $[\VV_n^+:\DD_n^+] = h_n^+$, which is prime to $p$ by (ii). We claim further that  
\begin{equation}\label{eq:prime to p}
    [\VV_{n,1}^+ : \DD_{n,1}^+] \text{ is prime to }p.
\end{equation}
Indeed,  note $\DD_{n,1}^+ = \VV_{n,1}^+ \cap \DD_{n}^+$ by definition; so applying the isomorphism theorem $S/(S\cap N) \cong SN/N \leq G/N$ to the subgroups $S = \VV_{n,1}^+$ and $N = \DD_n^+$ of $G = \VV_n^+$, we see that $\VV_{n,1}^+/\DD_{n,1}^+$ is isomorphic to a subgroup of $\VV_n^+/\DD_n^+$, hence has order dividing $h_n^+$, which is prime to $p$.  Hence there is an exact sequence
\[ 0 \to \DD^+_{n, 1} \to \VV^+_{n, 1} \to W_n \to 0, \]
where $W_n$ is a finite group of order prime to $p$. Applying $-\otimes_{\Z}\Zp$ to every term, we get
\[ \DD^+_{n, 1} \otimes_\Z \zp \cong \VV^+_{n, 1} \otimes_\Z \zp. \]
Recall now that $\EE^+_{n, 1}$ (resp. $\CC^+_{n, 1}$) is by definition the $p$-adic closure of $\VV^+_{n, 1}$ (resp. $\DD^+_{n, 1}$) inside $\UU^+_{n, 1}$, and that $\CC^+_{n, 1} \subseteq \EE^+_{n, 1}$. By Lemma \ref{lem:closure}, we have natural surjections $\DD^+_{n, 1} \otimes_{\Z} \zp \to \CC^+_{n, 1}$ and $\VV^+_{n, 1} \otimes_{\Z} \zp \to \EE^+_{n, 1}$, making the diagram
\[
\xymatrix{
\DD_{n,1}^+ \otimes_{\Z} \Zp \ar[r]^{\sim}\ar@{->>}[d] & \VV_{n,1}^+\otimes_{\Z} \Zp\ar@{->>}[d]\\
\CC_{n,1}^+ \ar[r] & \EE_{n,1}^+
}
\]
commute. Thus the inclusion $\CC^+_{n, 1} \to \EE^+_{n, 1}$ is surjective, so an isomorphism. Taking inverse limits yields $\CC_{\infty,1}^+ \cong \EE^+_{\infty,1}$, which finishes the proof.
\end{proof}

We can now easily finish the proof of Iwasawa Main Conjecture for Vandiver primes.

\begin{proof}
By Corollaries \ref{cor:CFTunits2} and \ref{cor:Iw1}(i,iii) (for the first isomorphism) and Theorem \ref{thm:iwasawa} (for the second), we have 
\[ \mathscr{X}^+_\infty \cong  \UU^+_{\infty, 1} / \CC^+_{\infty, 1} \cong \Lambda(\GG^+) / I(\GG^+) \zeta_p. \] 
In particular, 
\[ \mathrm{ch}_{\Lambda(\GG^+)}(\mathscr{X}_\infty^+) = \mathrm{ch}_{\Lambda(\GG^+)} \big( \Lambda(\GG^+) / I(\GG^+) \zeta_p \big) = I(\GG^+) \zeta_p.\qedhere \]
\end{proof}

\begin{remark}
Conjecturally, every prime is a Vandiver prime, and under this conjecture we have proved the full Iwasawa Main Conjecture. 
The conditional proof above was due to Iwasawa himself. The first full proof of the Iwasawa Main Conjecture was given by Mazur--Wiles \cite{MW84}. For a description of another proof, using Euler systems and due to Kolyvagin, Rubin, and Thaine, see \cite{CS06} and \cite{Lan90}.
\end{remark}

\subsection{Generalisations: Selmer groups, $p$-adic $L$-functions and Iwasawa--Greenberg Main Conjectures} \label{sec:greenberg selmer}

Our focus throughout has been on Iwasawa's original Main Conjecture. We conclude with a sketch of a formulation due to Greenberg \cite{Gre89} of a Main Conjecture for more general Galois representations, which, for the trivial Galois representation, recovers Theorem \ref{IMC}. In order to do this, we first need to introduce Selmer groups. These are fundamental objects in arithmetic, lying at the core of two very important conjectures concerning $L$-functions: Iwasawa Main Conjectures and Bloch--Kato conjectures.  The reader interested in this beautiful theory can learn more from the  references \cite{Gre89}, \cite{Bel09},  \cite{Rub00}, \cite{Kat04}, \cite{PerrinRiou}, and \cite{Ski18}.

\subsubsection{Selmer groups over $F_\infty^+$}\label{sec:greenberg 1}

Selmer groups are objects that generalise one of the sides of the Iwasawa Main Conjecture. Let us start with a general definition. For compatibility with our earlier study, we will work over the field $F_\infty^+ = \Q(\mu_{p^{\infty}})^+$, which we denote $\cF$ (to ease notation). This differs from \cite{Gre89}, who works over the cyclotomic $\Zp$-extension $\Q_\infty/\Q$ (cf.\ Remark \ref{rem:even v odd}). We continue to take coefficients in a fixed finite extension $L/\Qp$.

\begin{definition}
 \begin{enumerate}
 \item Let $M$ be a topological $\mathcal{O}_L$-module equipped with a continuous $\mathcal{O}_L$-linear action of $\mathscr{G}_{\cF}$, which is unramified outside a finite set of places. 
 \item A \emph{Selmer structure} $\mathcal{L} = (\mathcal{L}_v)_v$ for $M$ over $\cF$ is a choice of subspace $\mathcal{L}_v \subseteq H^1(\cF_v, M)$ for every finite place $v$ of $\cF$ such that, for almost all $v$, we have $\mathcal{L}_v = H^1_{\mathrm{ur}}(\cF_v, M)$ \footnote{Here, the \emph{unramified cohomology groups} are defined as $H^1_{\mathrm{ur}}(\cF_v, M) \defeq \mathrm{ker}(H^1(\cF_v ,M) \to H^1(I_{\cF_v}, M))$, where $I_{\cF_v}$ denotes the inertia group of $\cF_v$ and the map is restriction.}. 
 \item Given a Selmer structure $\mathcal{L} = (\mathcal{L}_v)_v$ for $M$, we define the \emph{Selmer group} to be
\[ H^1_\mathcal{L}(\cF, M) = \ker \left( H^1(\cF,M) \to \bigoplus_v H^1(\cF_v, M) / \mathcal{L}_v \right), \]
where $v$ runs over every finite place of $\cF$.
\end{enumerate}
\end{definition}

In what follows, we will only be interested in Selmer structures with unramified local conditions for all places $v$ not dividing $p$. Let $T$ be a finite free $\zp$-module equipped with an action of $\mathscr{G}_{\Q}$. We let $V \defeq T \otimes_{\zp} \qp$ and $W \defeq V / T$, which is a finite free $\qp / \zp$-representation of $\mathscr{G}_\Q$. We will assume that $V$ is one of the representations that appeared in \S \ref{subsec:classicalL}, i.e.\ a Galois representation attached to some arithmetic object, and take $M = W$.

Given the general definition of a Selmer structure and Selmer group, the next step is to give reasonable subspaces $\mathcal{L}_v$ for $v \in \Sigma$. There are two main approaches to this.

\begin{definition}\label{def:selmer at p}
\begin{enumerate}
\item (Greenberg's approach)  To define interesting Selmer structures at $p$, Greenberg's approach assumes that the representation $V$ is $p$-ordinary, in the sense that there exists a saturated, finite, $\mathscr{G}_{\Q}$-invariant filtration $\mathrm{Fil}^i$ of $V$ such that the inertia group $I_{\qp}$ of $\mathscr{G}_{\qp}$ acts on the $i$th graded piece $\mathrm{Fil}^i/\mathrm{Fil}^{i+1}$ as the $i$th power of the cyclotomic character (from \eqref{eq:cyclo char}). Let $\cF = \Q(\mu_{p^\infty})^+$ as before and let $v_p$ be the unique place of $\cF$ above $p$. Then one defines
\[ \mathcal{L}^{\mathrm{Gr}}_{v_p} \defeq \ker \Big( H^1(\mathcal{F}_{v_p}, W) \to H^1(I_{v_p}, W) \to H^1(I_{v_p}, W / \mathrm{Fil}^1 W) \Big), \]
where $\mathrm{Fil}^1 W$ denotes the image of $\mathrm{Fil}^1 V$ under the natural map $V \to W$.
\item (Bloch--Kato's approach) This approach uses technology coming from $p$-adic Hodge theory (see e.g. \cite{Ber02}, \cite{Ben22}), and we only mention it here for the sake of completeness. Letting $V$ be as before, we define
\[ H^1_f(\cF_{v_p}, V) \defeq \ker \left( H^1(\cF_{v_p}, V) \to H^1(\cF_{v_p}, V \otimes \mathbf{B}_{\rm cris}) \right), \]
where $\mathbf{B}_{\rm cris}$ denotes Fontaine's ring of crystalline periods. Then one defines
\[ \mathcal{L}^{\mathrm{BK}}_{v_p} \defeq \mathrm{Im} \left( H^1_f(\cF_{v_p}, V) \to H^1(\cF_{v_p}, W) \right), \]
where the map is induced by the natural map $V \to W$.
\end{enumerate}
\end{definition}

\begin{remark}
Greenberg Selmer groups and Bloch--Kato Selmer groups coincide in some fundamental examples. For a general relation between them, we refer the reader to \cite{kato98} and \cite[\S 2.4.7]{PerrinRiou}.
\end{remark}

\subsubsection{The module $\XX_\infty^+$ via Selmer groups}
 
We now reinterpret the space $\XX_\infty^+$ appearing in Theorem \ref{IMC} in terms of a Greenberg Selmer group. We follow \cite[\S1]{Gre89}. Recall $F_\infty = \Q(\mu_{p^\infty})$, and the cyclotomic character $\chi : \mathrm{Gal}(F_\infty/\Q) \to \Zp^\times$. Note that $\mathrm{Gal}(F_\infty/\Q)$ is the quotient of $\mathscr{G}_{\Q}$ by $\mathscr{G}_{F_\infty}$, so that we may consider $\chi$ as a character of $\mathscr{G}_{\Q}$ that is trivial on $\mathscr{G}_{F_\infty}$. For $n \in \Z$, we consider the representation $T_n \defeq \zp(n)$, that is, the space $\Zp$ upon which $\mathscr{G}_{\Q}$ acts by $\chi^n$ (cf.\ Definition \ref{def:Zp(1)}). Let $V_n \defeq \qp(n)$ and $W_n \defeq V_n/T_n =  (\qp / \zp)(n)$. 

By the above remark, $\mathscr{G}_{F_\infty}$ acts trivially on $W_n$, and we thus have 
\[
    H^1(F_\infty, W_n) = \mathrm{Hom}_{\mathrm{cts}}(\mathscr{G}_{F_\infty}, W_n).
\]
As $\Gal(F_\infty/F_\infty^+) = \{1,c\} \cong \{\pm 1\}$ (with the non-trivial element given by complex conjugation) and $p$ is odd, the inflation-restriction sequence gives
\[
    H^1(\cF,W_n) = H^1(\mathscr{G}_{F_\infty},W_n)^{\{\pm1\}} = \mathrm{Hom}_{\mathrm{cts}, \{\pm 1\}}(\mathscr{G}_{F_\infty}, W_n),
\]
recalling $\cF = F_\infty^+$ and where the subscript $\{\pm1\}$ denotes homomorphisms equivariant for $\Gal(F_\infty/F_\infty^+)$, acting on $\mathscr{G}_{F_\infty}$ as in Remark \ref{rem:inner automorphisms}. We can thus describe the Greenberg Selmer group over $\cF$ in the simpler language of group homomorphisms, rather than Galois cohomology classes.

We can refine this via local/global conditions. To do so, let $\sigma \in H^1(\cF,W_n)$. Then:

\begin{enumerate}

\item As $W_n$ is abelian and $p$-power torsion, we find 
\[
    H^1(\cF,W_n) = \mathrm{Hom}_{\mathrm{cts}, \{\pm 1\}}(\Gal(F_\infty^{\mathrm{ab},\mathrm{pro}-p}/F_\infty), W_n),
\]
where $F_\infty^{\mathrm{ab},\mathrm{pro}-p}$ is the maximal abelian pro-$p$ extension of $F_\infty$.

\item (Finite primes away from $p$). Since the local condition at finite places $v\nmid p$ is the unramified condition, we deduce that $\sigma$ is unramified at all $v \nmid p$. Then $\sigma$ descends to a homomorphism on $\XX_\infty = \mathrm{Gal}(\MM_\infty/F_\infty)$, for $\MM_\infty$ the maximal abelian pro-$p$ extension of $F_\infty$ unramified outside $\mathfrak{p}$ as in \S \ref{SecLambdamods}.

\item (The prime above $p$). At $p$, we have $\cF_{v_p} = K_\infty^+ = \Qp(\mu_{p^\infty})^+$. The filtration required by Greenberg is given by
\[
    \mathrm{Fil}^i \Qp(n) = \left\{\begin{array}{cl} \Qp(n) & \text{ if } i \leq n, \\
    0 & \text{ if } i > n.\end{array}\right.
\]
From the definition, we see that
\[
    \mathcal{L}^{\mathrm{Gr}}_{v_p} = \left\{\begin{array}{cl} H^1(K_\infty^+, W_n) & \text{ if } n \geq 1\\
    H^1_{\mathrm{ur}}(K_{\infty}^+,W_n) & \text{ if } n \leq 0.
    \end{array}\right.
\]
Thus if $n \geq 1$, the local condition at $p$ is empty, while for $n \geq 0$ the local condition at $p$ means $\sigma$ is also unramified at $p$, so it descends further to $\YY_\infty = \mathrm{Gal}(\LL_\infty/F_\infty)$ (for notation as in \S \ref{SecLambdamods}). We conclude that
\[
H^1_{\mathcal{L}^{\mathrm{Gr}}}(F_\infty, W_n) = \left\{\begin{array}{cl} \mathrm{Hom}_{\mathrm{cts}, \{\pm1\}}(\XX_\infty, W_n) & \text{ if } n \geq 1\\
    \mathrm{Hom}_{\mathrm{cts}, \{\pm1 \}}(\YY_\infty, W_n) & \text{ if } n \leq 0.
    \end{array}\right.
\]

\item (Equivariance for $\{\pm 1\}$). As $p$ is odd, the action of $\Gal(F_\infty/F_\infty^+) = \{1,c\}$ yields a decomposition $\XX_\infty = (\XX_\infty)^{c = 1} \oplus (\XX_\infty)^{c = -1}$. There is a similar decomposition for $\YY_\infty$. Note also that $c$ acts on $W_n$ as $(-1)^n$. We see that if $n > 0$, we have
\[
H^1_{\mathcal{L}^{\mathrm{Gr}}}(F_\infty, W_n) = \mathrm{Hom}_{\mathrm{cts}}((\XX_\infty)^{c = (-1)^n}, W_n),
\]
that is, we see the dual of $(\XX_\infty)^{c = 1}$ for even $n > 0$, and of $(\XX_\infty)^{c = -1}$ for odd $n > 0$. Similarly for $n \leq 0$ we have
\[
H^1_{\mathcal{L}^{\mathrm{Gr}}}(F_\infty, W_n) = \mathrm{Hom}_{\mathrm{cts}}((\YY_\infty)^{c=(-1)^n}, W_n).
\]
\end{enumerate}

As in \cite[p.6]{CS06}, the natural surjection $\XX_\infty \to \XX_\infty^+ = \Gal(\MM_\infty^+/F_\infty^+)$ induces an isomorphism $(\XX_\infty)^{c = 1} \cong \XX_\infty^+$ (and similarly $(\YY_\infty)^{c = 1} \cong \YY_\infty^+$).  Combining all of the above, we conclude that for even positive $n$, we have
\[
    H^1_{\mathcal{L}^{\mathrm{Gr}}}(F_\infty, W_n) = \mathrm{Hom}_{\mathrm{cts}}(\XX_\infty^+, W_n),
\]
(a twist of) the Pontryagin dual of the module $\XX_\infty^+$ appearing in Theorem \ref{IMC}.

\subsubsection{The Iwasawa--Greenberg Main Conjecture}

Let $T, V$ and $W$ be representations as in \S\ref{sec:greenberg 1}.  We also let $V^\vee = \mathrm{Hom}_{\mathrm{cts}}(V,\Qp(1))$ be the Tate dual of $V$. In particular $V$ is ordinary at $p$. Inspired by the Iwasawa Main Conjecture and by an analogous conjecture of Mazur for ordinary elliptic curves, in \cite{Gre89}, Greenberg described a Main Conjecture for $V$, which we now state.

Attached to $V$ is an $L$-function $L(V,s)$, and an Euler factor at infinity $L_\infty(V,s)$ (called the Gamma factor in \cite{Gre89}).  This involves a product of certain translates of the usual Gamma function $\Gamma(s)$ and  has no zeros. We let $r_V$ denote the order of the pole of $L_\infty(V,s)$ at $s=1$. For example, if $V = \Qp(n)$, then $L(V, s) = \zeta(s-n)$ and $L_\infty(V, s) = \pi^{-(s-n)/2}\Gamma((s-n)/ 2)$. The poles of the last function are $s = n, n-2, n-4, \ldots$.

Coates--Perrin-Riou conjectured in \cite{coatesperrinriou89} that there should exist a $p$-adic $L$-function for $V$, which in this context lies in the field of fractions of $\Lambda(\Gamma^+)$. We comment more on this conjecture in Appendix \ref{sec:appendix further generalisations}.

\begin{conj} [Greenberg]
The following assertions hold.
\begin{itemize}
\item[(i)] $H^1_{\mathcal{L}^{\mathrm{Gr}}}(\cF, W)$ has $\Lambda(\Gamma^+)$-corank equal to $r_V$. 
\item[(ii)] If $r_V = r_{V^\vee} = 0$, then the characteristic ideal of the Pontryagin dual of $H^1_{\mathcal{L}^{\mathrm{Gr}}}(\cF, W)$ (as a $\Lambda(\Gamma^+)$-module) coincides with the ideal generated by the $p$-adic $L$-function of $V$.
\end{itemize}
\end{conj}

\begin{example}\label{ex:greenberg general 2}
Suppose $n>0$ is even; then above we showed $H^1_{\mathcal{L}}(\cF,W_n) = \mathrm{Hom}_{\mathrm{cts}}(\XX_\infty^+,W_n)$. This has Pontryagin dual $\XX_\infty^+(-n)$, the space $\XX_\infty^+$ with $\Lambda(\Gamma^+)$-action twisted by $\chi^{-n}$. In this case, the $p$-adic $L$-function in Greenberg's conjecture is $\partial^n\zeta_p$, the $n$th twist of Kubota--Leopoldt, so we see that Greenberg's conjecture is essentially a (twist of) Theorem \ref{IMC}.
\end{example}

\begin{remark}\label{rem:greenberg general 2}
We described Theorem \ref{IMC} as `the Main Conjecture for $V = \Qp$'. Although they are equivalent, strictly speaking, the case $n = 0$ doesn't fit into Greenberg's picture. Indeed, the underlying assumption does not hold: here $\Qp^\vee = \Qp(1)$, so $L_\infty(\Qp^\vee,s) = L_\infty(\Qp,1-s)$, hence $r_{\Qp^\vee} = 1$. In Greenberg's terminology, $L(\Qp(n),1) = \zeta(1-n)$ and $L(\Qp(n)^\vee,1) = \zeta(n)$ are both critical only when $n$ is even and (strictly) positive, or $n$ is odd and negative, so the case $n = 0$ is not covered. 
\end{remark}

\begin{remark}
There exist more general, even non-commutative, statements of the Main conjecture. We refer the interested reader to \cite{KatoIwasawa} and \cite{FukayaKato}.
\end{remark}

\vspace{15pt}
\newpage

\appendix
\begin{center}\scshape{{\LARGE Appendix}}\\
\end{center}
\addcontentsline{toc}{part}{Appendix}

\vspace{-10pt}

\section{Iwasawa's $\mu$-invariant}\label{sec:mu invariant}

We end these notes by giving a flavour of further topics in classical Iwasawa theory, introducing the $\mu$ and $\lambda$-invariants of a $\zp$-extension. In proving Iwasawa's theorem on the $\mu$ and $\lambda$-invariants, we develop techniques that can be used to show that the modules appearing in the exact sequence of Corollary \ref{cor:CFTunits2} are finitely generated torsion modules over the Iwasawa algebra, a part of the general Iwasawa Main Conjecture (beyond the Vandiver case that we have already proved). \\

The following results will hold for an arbitrary $\zp$-extension of number fields, although we will only prove them under some hypotheses that slightly simplify the proofs. 

\begin{definition}
Let $F$ be a number field. A $\zp$-extension $F_\infty$ of $F$ is a Galois extension such that $\mathrm{Gal}(F_\infty / F) \cong \zp$. If $F_\infty / F$ is a $\zp$-extension, we denote by $F_n$ the subextension such that $\Gal(F_n/F) \cong \Z/p^n\Z$.
\end{definition}

Recall first that any number field has at least one $\zp$-extension, the \emph{cyclotomic $\zp$-extension}. Indeed, by Galois theory $\mathrm{Gal}(F(\mu_{p^\infty}) / F)$ is an open subgroup of $\mathrm{Gal}(\Q(\mu_{p^\infty}) / \Q) \cong \Zp^\times$, and hence contains a maximal quotient isomorphic to $\zp$ (specifically, the quotient by the finite torsion subgroup $\mu_{p-1}$). The corresponding field (under the fundamental theorem of Galois theory) is the cyclotomic $\Zp$-extension.
\begin{example}
Let $F = \Q(\mu_p)$. Then $F_\infty = \Q(\mu_{p^\infty})$ is the cyclotomic $\Zp$-extension of $F$, and 
\[F_n = \Q(\mu_{p^{n+1}}),\]
noting that earlier we denoted this field $F_{n+1}$. The cyclotomic $\Zp$-extension of $\Q$ is the field $(F_\infty)^{\mu_{p-1}}$ in $F_\infty$ fixed by the torsion subgroup $\mu_{p-1} \subset \Gal(F_\infty/\Q)$.
\end{example}

\emph{Leopoldt's conjecture} states that the number of independent $\zp$-extensions of a number field $F$ is exactly $r_2 + 1$ , where $r_2$ is the number of complex embeddings of $F$. In particular, the conjecture predicts that any totally real number field possesses a unique $\zp$-extension (the cyclotomic one). Whilst the conjecture remains open for general number fields, it is known in the case that $F$ is an abelian extension of $\Q$ or an abelian extension of an imaginary quadratic field (see \cite[Theorem 10.3.16]{NSW99}).

\subsection{Iwasawa's theorem}

Let $F$ be a number field, $F_\infty / F$ a $\zp$-extension, $\Gamma = \Gamma_F = \mathrm{Gal}(F_\infty / F) \cong \zp$ and $\gamma_0$ a topological generator of $\Gamma_F$. Using this choice of $\gamma_0$, we identify $\Lambda(\Gamma)$ with $\Lambda \defeq \zp\lsem T\rsem$ by sending $\gamma_0$ to $T + 1$ (when $\gamma_0$ is sent to $1$ by the isomorphism $\Gamma \cong \Zp$, this is simply the Mahler transform, but this identification holds for any $\gamma_0$). Let $\mathscr{L}_n$ (resp. $\mathscr{L}_\infty$) be the maximal unramified abelian $p$-extension of $F_n$ (resp. pro-$p$ extension of $F_\infty$), and write 
\[
\mathscr{Y}_{F, n} = \YY_n \defeq \Gal(\mathscr{L}_n / F_n) = \mathrm{Cl}(F_n) \otimes \zp,
\] 
which is the $p$-Sylow subgroup of the ideal class group of $F_n$. Set 
\[
\YY_\infty = \YY_{F, \infty} \defeq \varprojlim_n \YY_{F, n}. 
\] 
Write $e_n = v_p(\# \YY_n  )$ for the exponent of $p$ in the class number of $F_n$. The following theorem is the main result we intend to show in this section.

\begin{theorem} [Iwasawa] \label{Iwmu}
There exist integers $\lambda \geq 0$, $\mu \geq 0$, $\nu \geq 0$, and an integer $n_0$, such that, for all $n \geq n_0$, we have
\[ e_n = \mu p^{n} + \lambda n + \nu. \]
\end{theorem}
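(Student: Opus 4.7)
The plan is to realise $\YY_\infty$ as a finitely generated torsion $\Lambda$-module, apply the structure theorem from \S\ref{sec:structure theorems}, and then compute $\#\YY_n$ by identifying $\YY_n$ with a quotient of $\YY_\infty$ by a suitable ideal. For simplicity I will sketch the argument under the hypothesis that there is exactly one prime of $F$ above $p$ and that it is totally ramified in $F_\infty/F$ (the general case being a bookkeeping modification).

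First I would equip $\YY_\infty = \varprojlim \YY_n$ with its $\Lambda = \Zp\lsem T\rsem$-module structure via the identification $\gamma_0 \leftrightarrow 1+T$, exactly as in the case of $\mathscr{Y}_\infty^+$ treated earlier. The key identification is that, under the total ramification hypothesis,
\[
\YY_n \;\cong\; \YY_\infty/\omega_n \YY_\infty, \qquad \omega_n \defeq (1+T)^{p^n}-1.
\]
This is obtained via class field theory together with a careful analysis of how $\LL_\infty/F_\infty$ descends to $\LL_n/F_n$: the inertia at the unique prime above $p$ forces the quotient to be by $\omega_n$ rather than by $\gamma_0^{p^n}-1 = \omega_n$ (under total ramification these coincide with a single generator of inertia). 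In particular, taking $n=0$ gives $\YY_\infty/T\YY_\infty \cong \YY_0$, which is \emph{finite}. By a compact version of Nakayama's lemma applied to the compact $\Lambda$-module $\YY_\infty$, this forces $\YY_\infty$ to be a finitely generated $\Lambda$-module; and finiteness of $\YY_0$ then upgrades this to the assertion that $\YY_\infty$ is $\Lambda$-torsion.

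Next I would invoke the structure theorem (Theorem \ref{thm:structure theorem}) to obtain a pseudo-isomorphism
\[
\YY_\infty \;\sim\; E \defeq \bigoplus_{i=1}^s \Lambda/(p^{n_i}) \;\oplus\; \bigoplus_{j=1}^t \Lambda/(f_j(T)^{m_j}),
\]
and set $\mu \defeq \sum n_i$ and $\lambda \defeq \sum m_j \deg(f_j)$. The heart of the argument is then the computation, for each elementary summand $M$, of $v_p\bigl(\#(M/\omega_n M)\bigr)$ for $n$ sufficiently large. Two calculations are needed: (a) $\#\bigl(\Lambda/(p^k,\omega_n)\bigr) = p^{kp^n}$, obtained by writing $\omega_n$ as a distinguished polynomial of degree $p^n$ modulo $p$ and quoting the Weierstrass preparation / division theorem; and (b) $\#\bigl(\Lambda/(f^m,\omega_n)\bigr) = p^{m\deg(f)\cdot n + c_f}$ for $n \gg 0$ and some constant $c_f$, the point being that $f^m$ and $\omega_n$ are coprime in $\Lambda \otimes \Qp$ for $n$ large (since distinguished irreducible $f$ has only finitely many roots in the open unit disc while $\omega_n$'s roots $\zeta-1$ with $\zeta\in\mu_{p^n}$ eventually avoid them), and then counting via the resultant. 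Summing these contributions gives $v_p(\#(E/\omega_n E)) = \mu p^n + \lambda n + \nu'$ for $n \geq n_0'$.

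Finally I would bridge back from $E$ to $\YY_\infty$ using the pseudo-isomorphism. The kernel and cokernel $A,B$ of $\YY_\infty \to E$ are finite, so both are killed by $\omega_n$ for $n$ large (since $\Lambda$ acts on a finite module through a finite quotient), and a snake-lemma argument comparing $\YY_\infty/\omega_n\YY_\infty$ with $E/\omega_n E$ shows that the two orders agree up to a bounded factor that \emph{stabilises} for $n \geq n_0$. Setting $\nu$ to absorb this stable discrepancy yields $e_n = \mu p^n + \lambda n + \nu$ for all $n \geq n_0$, as required. The main obstacle, and the place where care is genuinely needed, is the snake-lemma bookkeeping in this last step: one must show that although $\#A/\omega_n A$ and $\#\ker(\omega_n\mid B)$ fluctuate for small $n$, they become \emph{constant} in $n$ once $\omega_n$ annihilates both $A$ and $B$, which is exactly what produces the clean linear formula rather than just an asymptotic one.
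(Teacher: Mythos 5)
Your proposal is correct and follows essentially the same two-step strategy as the paper: identify $\YY_n \cong \YY_\infty/\omega_n\YY_\infty$ (the paper writes $\varphi^n(T)$ for your $\omega_n$) via the commutator computation under total ramification, apply Nakayama and the structure theorem, compute $\#(E/\omega_n E)$ summand by summand for an elementary module $E$, and bridge back through the finite kernel and cokernel of the pseudo-isomorphism. The one local variant is your treatment of the $\Lambda/(f^m)$ summand via the resultant of $f^m$ and $\omega_n$, where the paper instead shows $\omega_{k+1}/\omega_k$ acts as $p$ times a unit on $\Lambda/(g)$ once $p^k \geq \deg g$; both routes give $m\,\deg(f)\cdot n + O(1)$ for $n$ large. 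One sentence is garbled: ``forces the quotient to be by $\omega_n$ rather than by $\gamma_0^{p^n}-1 = \omega_n$'' contrasts an element with itself (these are identical under $\gamma_0 \leftrightarrow 1+T$); the subtlety you presumably want to flag is that \emph{without} total ramification the correct divisor is a norm element (of the shape $\nu_{n,e}=\omega_n/\omega_e$ after a base shift), not $\omega_n$ alone.
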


\begin{rem} 
\begin{enumerate}
\item This is yet another typical example of the power of Iwasawa theory, in which we derive information at finite levels by considering all levels simultaneously. 

There are two basic steps in the proof of Theorem \ref{Iwmu}. We first show that the module $\mathscr{Y}_{F, \infty}$ is a finitely generated torsion $\Lambda(\Gamma)$-module. Using the structure theorem of $\Lambda(\Gamma)$-modules (Theorem \ref{thm:structure theorem}), we study the situation at infinite level, and then we transfer the result back to finite level to get the result.
\item We will only describe the proof for the case where the extension $F_\infty / F$ satisfies the following hypothesis: there is only one prime $\mathfrak{p}$ of $F$ above $p$, and it ramifies completely in $F_\infty$. The reduction of the general case to this case is not difficult, and is contained in \cite[\S13]{Washington2}. This assumption covers our cases of interest; in particular, it applies if $F = \Q(\mu_{p^m})$ or $F = \Q(\mu_{p^m})^+$ for some $m \geq 0$ and $F_\infty / F$ is the cyclotomic $\zp$-extension.
\end{enumerate}
\end{rem}

\subsubsection{First step}

The first step of the proof of Theorem \ref{Iwmu} consists in showing (Proposition \ref{prop:Iwmu3}) that the module $\YY_{\infty}$ is a finitely generated $\Lambda(\Gamma)$-module. Then Proposition \ref{prop:Iwmu2} will allow us to recover each $\YY_{n}$ from the whole tower $\YY_{\infty}$. We then use a variation of Nakayama's lemma to conclude.\\

Since $\mathfrak{p}$ is totally ramified in $F_\infty$, and $\Ln$ is unramified over $F_n$, we deduce that $F_{n + 1} \cap \Ln = F_n$ and hence  
\begin{align*}
\YY_n = \mathrm{Gal}(\Ln / F_n) &= \mathrm{Gal}(\Ln F_{n+1} / F_{n + 1})\\
&= \YY_{n + 1} / \mathrm{Gal}(\mathscr{L}_{n + 1} / \Ln F_{n+1}),
\end{align*}
showing that $\YY_{n+1}$ surjects onto $\YY_n$. The module $\YY_\infty$ carries the natural Galois action of $\Lambda = \Lambda(\Gamma)$, and under the identification $\Lambda \cong \Zp\lsem T\rsem$, the polynomial $1 + T \in \Lambda$ acts as $\gamma_0 \in \Gamma$.

Let $\tilde{\mathfrak{p}}$ be a prime of $\Linf$ above $\mathfrak{p}$, and write
 \[I \subseteq G \defeq \mathrm{Gal}(\Linf / F)\]
for its inertia group. Since $\Linf / F_\infty$ is unramified, all of the inertia occurs in the subextension $F_\infty/F$. Accordingly $I \cap \Yinf = 1$ and since $F_\infty / F$ is totally ramified at $\mathfrak{p}$, the inclusion $I \hookrightarrow G / \Yinf \cong \Gamma$ is surjective, and hence bijective. We deduce that
\[ G = I \Yinf = \Gamma \Yinf. \]
We've shown the following picture of extensions:
\[
\begin{tikzcd}[every arrow/.append style={dash}]
                     & \mathscr{L}_\infty   & \\
F_\infty \arrow[ur,"\mathscr{Y}_\infty"] & &\\
                              &\mathscr{L}_n \arrow[uu]                  \\
F_n \arrow[uu]\arrow[ur,"\mathscr{Y}_n"']      &                                  \\
F \arrow[u, "\Z/p^n\Z"']\arrow[uuu, bend left, "I \hspace{2pt}\cong \hspace{2pt}\Gamma \hspace{2pt}\cong \hspace{2pt}\Zp"] \arrow[uuuur, bend right = 70, "G = I\mathscr{Y}_\infty"']       &                          
\end{tikzcd}
\]

Let $\sigma \in I$ map to the topological generator $\gamma_0 \in \Gamma$ under the natural isomorphism $I \cong \Gamma$.

\begin{proposition}
Let $G'$ be the closure of the commutator of $G$. Then 
\[ G' = (\gamma_0 - 1) \cdot \Yinf = T \Yinf.\]
\end{proposition}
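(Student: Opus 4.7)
The plan is to exploit the semidirect product structure of $G$ and translate group-theoretic commutators into the additive $\Lambda$-module language on $\Yinf$.

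First, I would record the structural fact that $G$ is a semidirect product $I \ltimes \Yinf$. Indeed, $\Yinf$ is abelian (as $\Linf/F_\infty$ is abelian) and normal in $G$ (as $G/\Yinf \cong \Gamma$ is abelian, so $\Yinf \supseteq G'$ already). Combined with $G = I\Yinf$ and $I \cap \Yinf = 1$ (both of which were established in the preceding paragraph), this gives the semidirect product decomposition, with $I$ acting on $\Yinf$ by conjugation. Under the isomorphism $I \cong \Gamma$, this is precisely the $\Lambda$-module action used to define $\Yinf$ as a $\Lambda(\Gamma)$-module; in particular, $\sigma y \sigma^{-1}$ corresponds to $(1+T)\cdot y$ when $\Yinf$ is written additively.

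Next, because both $I$ and $\Yinf$ are abelian, the only nontrivial commutators in $G$ are the mixed ones. Concretely, a general element of $G$ has the form $\sigma^a y$ with $a \in \Zp$ and $y \in \Yinf$, and a direct computation of $[\sigma^a y_1, \sigma^b y_2]$ (using that $y\sigma^b = \sigma^b\cdot((1+T)^{-b}y)$ in $G$) shows that every commutator lies in $\Yinf$ and, in additive notation, is a $\Zp$-linear combination of elements of the form $[\sigma^a, y] = \sigma^a y\sigma^{-a}y^{-1}$, which translates to $((1+T)^a - 1)\cdot y$. Taking the closed subgroup topologically generated by these, we conclude
\[ G' = \overline{\bigl\langle ((1+T)^a - 1)\cdot y : a \in \Zp,\ y \in \Yinf \bigr\rangle}. \]

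Finally, two inclusions finish the proof. Taking $a=1$ gives $T\Yinf \subseteq G'$. Conversely, for any $a \in \Zp$ we have the identity $(1+T)^a - 1 = T\sum_{k\geq 1}\binom{a}{k}T^{k-1} \in T\Lambda$ in $\zp\lsem T\rsem $, so $((1+T)^a-1)\cdot\Yinf \subseteq T\Yinf$; since $T\Yinf$ is already closed ($\Yinf$ being a compact $\Lambda$-module), the reverse inclusion $G' \subseteq T\Yinf$ follows. The main (very mild) obstacle is just bookkeeping: making sure the conjugation convention matches the $\Lambda$-action $\gamma_0 \leftrightarrow 1+T$, and that the commutators of arbitrary elements $\sigma^a y_1$ and $\sigma^b y_2$ really do reduce to the generators $((1+T)^a-1)y$ modulo the already-known triviality of $[\Yinf,\Yinf]$ and $[I,I]$.
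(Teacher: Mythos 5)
Your proof is correct and takes essentially the same route as the paper: both exploit the decomposition $G = I\Yinf$ (with $I \cap \Yinf = 1$ and $I \cong \Gamma$), compute the commutator of two arbitrary elements in terms of the $\Lambda$-action to land inside $T\Yinf$, and obtain the reverse inclusion by setting one component equal to $\gamma_0$ and the other trivial. Your explicit remark that $T\Yinf$ is closed (because $\Yinf$ is a compact $\Lambda$-module) is a small but worthwhile point of care that the paper leaves implicit.
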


\begin{proof}
Recall that we have a decomposition $G = \Gamma \Yinf$. Let $a = \alpha x, b = \beta y \in G$, where $\alpha, \beta \in \Gamma$ and $x, y \in \Yinf$. Using the definition of the $\Lambda(\Gamma)$ structure of $\Yinf$, and the fact that $\Gamma$ and $\mathscr{Y}_\infty$ are abelian, we get that
\begin{align*}
ab a^{-1} b^{-1} &= \alpha x \beta y x^{-1} \alpha^{-1} y^{-1} \beta^{-1} \\
&= (\alpha x \alpha^{-1})(\alpha \beta y \beta^{-1} \alpha^{-1}) (\alpha \beta x^{-1} \beta^{-1} \alpha^{-1}) (\beta y^{-1} \beta^{-1}) \\
&= (x^\alpha)^{1 - \beta} (y^\beta)^{\alpha - 1}.
\end{align*}
Setting $\beta = 1$ and $\alpha = \gamma_0$, we deduce that $(\gamma_0 - 1) \Yinf \subseteq G'$. To see the other inclusion, write $\beta = \gamma_0^c$, where $c \in \zp$, so that $1 - \beta = - \sum_{n = 1}^{+ \infty} {c \choose n} (\gamma_0 - 1)^n = - \sum_{n = 1}^{+ \infty} {c \choose n} T^n \in T \Lambda$ and similarly for $\alpha - 1$, which allows us to conclude.
\end{proof}

Recall that the $n$th power of the Frobenius operator on $\Zp\lsem T\rsem$ is given by $\varphi^n(T) = (1 + T)^{p^n} - 1$. Let $\varphi^0(T) = T$.

\begin{proposition} \label{prop:Iwmu2}
We have \[ \Yn = \Yinf / \varphi^n(T). \]
\end{proposition}

\begin{proof}
We treat first the case $n = 0$. Since $\mathscr{L}_0$ is the maximal unramified abelian $p$-extension of $F$ and $\mathscr{L}_\infty / F$ is a $p$-extension, $\mathscr{L}_0 / F$ is the maximal unramified abelian subextension of $\Linf$. In particular, $\mathscr{Y}_0 = \mathrm{Gal}(\mathscr{L}_0 / F)$ is the quotient of $G$ by the subgroup generated by the commutator $G'$ and by the inertia group $I$ of $\mathfrak{p}$. By the above lemma and the decomposition $G = I \Yinf$, we conclude that 
\begin{align*} \mathscr{Y}_0 &= G / \langle G', I \rangle \\
&= \Yinf I / \langle (\gamma_0 - 1) \Yinf, I \rangle \\
&= \Yinf / (\gamma_0 - 1) \Yinf \\
&= \Yinf / T \Yinf. 
\end{align*}

For $n \geq 1$, we apply the arguments of the last paragraph, replacing $F$ by $F_n$ and $\gamma_0$ by $\gamma_0^{p^n}$, so that $\sigma_0$ becomes $\sigma_0^{p^n}$ and $(\gamma_0 - 1) \Yinf$ becomes 
\[(\gamma_0^{p^n} - 1) \Yinf = ((1 + T)^{p^n} - 1) \Yinf = \varphi^n(T) \Yinf,\]
which gives the result.
\end{proof}

We state next a variation of Nakayama's lemma for testing when a $\Lambda$-module is finitely generated, whose standard proof is left as an exercise.

\begin{lemma}[Nakayama's lemma; {\cite[Lemma 13.16]{Washington2}}] 
Let $\mathscr{Y}$ be a compact $\Lambda$-module. Then $\mathscr{Y}$ is finitely generated over $\Lambda$ if and only if $\mathscr{Y} / (p, T) \mathscr{Y}$ is finite. Moreover, if the image of $x_1, \hdots, x_m$ generates $\mathscr{Y} / (p, T) \mathscr{Y}$ over $\Z$, then $x_1, \hdots, x_n$ generate $\mathscr{Y}$ as a $\Lambda$-module. In particular, if $\mathscr{Y} / (p, T) \mathscr{Y} = 0$, then $\mathscr{Y} = 0$.
\label{lem:Nakayama}
\end{lemma}

Applying this in our particular situation we obtain the following result.
 
\begin{proposition} \label{prop:Iwmu3}
$\Yinf$ is a finitely generated $\Lambda$-module.
\end{proposition}

\begin{proof}
Since 
\[\varphi(T) = (1 + T)^p - 1 = \sum_{k = 1}^p {p \choose k} T^k \in (p, T),\]
the module $\Yinf / (p, T) \Yinf$ is a quotient of $\Yinf / \varphi(T) \Yinf = \mathscr{Y}_1 = \mathrm{Cl}(F_1) \otimes \zp$, the $p$-Sylow subgrop of $\mathrm{Cl}(F_1),$ which is finite. Therefore, applying Lemma \ref{lem:Nakayama}, we conclude that $\Yinf$ is a finitely generated $\Lambda$-module, as desired.
\end{proof}

\subsubsection{Second step}

Once we know that the module $\Yinf$ is a finitely generated $\Lambda$-module, we can invoke the structure theorem for these modules (Theorem \ref{thm:structure theorem}) to get an exact sequence
\[ 0 \to Q \to \Yinf \to \mathscr{A} \to R \to 0, \] where $Q$ and $R$ are finite modules and where
\[ \mathscr{A} = \Lambda^r \oplus \bigg( \bigoplus_{i = 1}^s \Lambda / (p^{m_i}) \bigg) \oplus \bigg( \bigoplus_{j = 1}^t \Lambda / (f_j(T)^{k_j}) \bigg). \]
for some integers $s, r, t \geq 0$, $m_i, k_j \geq 1$ and some distinguished polynomials $f_j(T) \in \Lambda$.\\

Recall that we want to calculate the size of $\Yn = \Yinf / \varphi^n(T)$. The following lemma reduces the problem  to calculating the size of $\mathscr{A} / \varphi^n(T)$.

\begin{lemme} \label{IwLemma0}
There exists a constant $c$ and an integer $n_0$ such that, for all $n \geq n_0$,
\[ | \Yinf / \varphi^n(T) | = p^c | \mathscr{A} / \varphi^n(T) |.\]
\end{lemme}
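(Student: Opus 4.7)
The plan is to exploit the pseudo-isomorphism provided by the structure theorem as a bridge between $\Yinf$ and $\mathscr{A}$, writing $\omega_n \defeq \varphi^n(T)$ throughout. I would split the given four-term sequence $0 \to Q \to \Yinf \to \mathscr{A} \to R \to 0$ through $B \defeq \Yinf/Q$ into two short exact sequences
\[ 0 \to Q \to \Yinf \to B \to 0 \qquad \text{and} \qquad 0 \to B \to \mathscr{A} \to R \to 0, \]
and apply the snake lemma to each with multiplication by $\omega_n$, producing six-term exact sequences relating $M[\omega_n]$ and $M/\omega_n M$ for $M \in \{Q, \Yinf, B, \mathscr{A}, R\}$. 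The strategy is to show that, for $n$ large, these six-term sequences collapse into short sequences from which the desired identity falls out.

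Two inputs drive the collapse. The first is that for $n$ sufficiently large, $\omega_n$ annihilates both $Q$ and $R$: any finite $\Lambda$-module has a finite automorphism group, so the action of $\Gamma \cong \Zp$ factors through a finite quotient, and hence $\gamma_0^{p^n} - 1 = \omega_n$ acts trivially once $n$ is large. Thus $Q[\omega_n] = Q/\omega_n Q = Q$ and $R[\omega_n] = R/\omega_n R = R$.

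The second, and main, input is that $\mathscr{A}[\omega_n] = 0$ for $n$ large. By Lemma \ref{Iwmu2}, $\Yinf/\omega_n = \Yn$ is finite. Propagating this through the first snake sequence (using $Q$ finite) shows $B/\omega_n B$ is finite as a quotient of $\Yn$, and propagating through the second snake sequence (using $R$ finite) shows $\mathscr{A}/\omega_n \mathscr{A}$ is finite as well. Now writing $\mathscr{A} = \bigoplus_i \Lambda/(p^{m_i}) \oplus \bigoplus_j \Lambda/(f_j^{k_j})$ and analysing each factor: $\Lambda/(p^{m_i}, \omega_n)$ always has order $p^{m_i p^n}$, while $\Lambda/(f_j^{k_j}, \omega_n)$ is finite if and only if $f_j \nmid \omega_n$. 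Finiteness of $\mathscr{A}/\omega_n$ therefore forces $f_j \nmid \omega_n$ for every $j$. Since $\omega_n$ also has reduction $T^{p^n}$ modulo $p$, hence is a non-zero-divisor on each $\Lambda/(p^{m_i})$, multiplication by $\omega_n$ is injective on every summand of $\mathscr{A}$; thus $\mathscr{A}[\omega_n] = 0$.

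With these two inputs the sequences collapse. Since $B[\omega_n] \hookrightarrow \mathscr{A}[\omega_n] = 0$, the second snake sequence reduces to $0 \to R \to B/\omega_n \to \mathscr{A}/\omega_n \to R \to 0$, yielding $|B/\omega_n| = |\mathscr{A}/\omega_n|$; the first reduces to $0 \to Q \to \Yinf/\omega_n \to B/\omega_n \to 0$, yielding $|\Yinf/\omega_n| = |Q| \cdot |B/\omega_n|$. Combining, $|\Yinf/\omega_n| = |Q| \cdot |\mathscr{A}/\omega_n|$, so the lemma holds with $p^c = |Q|$ (a power of $p$ since $Q$ is a finite $\Zp$-module). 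The principal obstacle is the second input: the vanishing $\mathscr{A}[\omega_n] = 0$ is not automatic from the structure theorem alone but relies essentially on the arithmetic input that each $\Yn$ is finite, transported across the pseudo-isomorphism via both snake sequences and then translated into a statement about the characteristic polynomials $f_j$.
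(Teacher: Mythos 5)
Your argument is correct, and it fills in the details that the paper explicitly leaves as an exercise. The paper's sketch applies the snake lemma to the commutative square whose vertical maps are the pseudo-isomorphism $\theta : \Yinf \to \mathscr{A}$ and whose rows are $0 \to \varphi^n(T) M \to M \to M/\varphi^n(T) M \to 0$, and then claims the kernel and cokernel of the induced map $\Yinf/\varphi^n(T) \to \mathscr{A}/\varphi^n(T)$ stabilize. You instead split the four-term sequence $0 \to Q \to \Yinf \to \mathscr{A} \to R \to 0$ through $B = \Yinf/Q$ and apply the snake lemma to multiplication by $\omega_n = \varphi^n(T)$ on each short exact sequence; this is a genuinely different decomposition of the same diagram chase, and it is arguably cleaner because one only ever needs to understand $\omega_n$-torsion and $\omega_n$-cotorsion of finite modules and of the standard summands of $\mathscr{A}$, rather than the cokernel $\omega_n\mathscr{A}/\theta(\omega_n\Yinf)$, which is harder to describe explicitly. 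Your identification of the key non-formal ingredient --- that $\mathscr{A}[\omega_n]=0$, which is \emph{not} automatic from the structure theorem but is forced by the finiteness of each $\Yn$ pushed across both snake sequences --- is exactly right, and is the point the paper glosses over.

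One small point you should state explicitly rather than leave implicit: when you write $\mathscr{A} = \bigoplus_i \Lambda/(p^{m_i}) \oplus \bigoplus_j \Lambda/(f_j^{k_j})$ you have silently dropped the free part $\Lambda^r$ from the structure theorem. Finiteness of $\mathscr{A}/\omega_n\mathscr{A}$ (which you have just established) forces $r=0$ since $\Lambda/\omega_n$ is an infinite $\Zp$-module, so you should say so in passing; the paper defers this observation to the next lemma, but in your account it is needed before you can analyse $\mathscr{A}[\omega_n]$ summand by summand. With that sentence added the proof is complete.
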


\begin{proof}
	The full proof of this lemma is given in \cite[Lem.\ 13.21]{Washington}; we give a sketch.
	
Consider the diagram

\[
\begin{tikzcd}
               0 \arrow[r] & \varphi^n(T) \Yinf \arrow[r] \arrow[d] & \Yinf \arrow[r] \arrow[d] & \Yinf / \varphi^n(T) \Yinf \arrow[r] \arrow[d] & 0 \\
               0 \arrow[r] & \varphi^n(T) \mathscr{A} \arrow[r] & \mathscr{A} \arrow[r] & \mathscr{A} / \varphi^n(T) \mathscr{A} \arrow[r] & 0
\end{tikzcd}
\]


By hypothesis, the kernel and cokernel of the middle vertical map are bounded. By elementary calculations and diagram chasing, one ends up showing that the kernel and the cokernel of the third vertical arrow stabilise for $n$ large enough, which is what is needed to conclude the proof.
\end{proof}

We now proceed to calculate the size of the module $\mathscr{A}$.

\begin{proposition} \label{prop:size of A}
Let
\[ \mathscr{A} = \Lambda^r \oplus \bigg( \bigoplus_{i = 1}^s \Lambda / (p^{m_i}) \bigg) \oplus \bigg( \bigoplus_{j = 1}^t \Lambda / (f_j(T)^{k_j}) \bigg), \]
for some integers $s, r, t \geq 0$ and $m_i, k_j \geq 1$ and some distinguished polynomials $f_j(T) \in \Lambda$, and write $m = \sum m_i$, $\ell = \sum k_j \, \mathrm{deg}(f_j)$. Suppose $\mathscr{A} / \varphi^n(T) \mathscr{A}$ is finite for all $n \geq 0$. Then $r = 0$ and there exist constants $n_0$ and $c$ such that, for all $n \geq n_0$, 
\[ | \mathscr{A} / \varphi^n(T)| = p^{m p^n + \ell n + c}.\]
\end{proposition}

\begin{proof}

\textit{Step 1.} First we show $r = 0$. Note that
\[
    \varphi^n(T) = (1+T)^{p^n}-1 = T^{p^n} + \sum_{k = 1}^{p^n - 1} {p^n \choose k} T^k
\]
is distinguished. We may thus apply the division algorithm from Weierstrass preparation (a $p$-adic analytic analogue of Euclid's algorithm, see \cite[\S5.2.1]{BGR}). This implies that any $f \in \Zp\lsem T\rsem$ can be written uniquely as $f(T) = q(T)\cdot ((1+T)^{p^n}-1) + r(T)$, where $r(T)$ is a polynomial of degree $\leq p^n-1$. We see 
\begin{equation}\label{eq:r=0}
    \Lambda/\varphi^n(T) = \Zp[\![T]\!]/((1+T)^{p^n}-1) \cong \{r(T) \in \Zp[T] : \mathrm{deg}(r) \leq p^n-1\}
\end{equation}
is infinite. Since $\mathscr{A} / \varphi^n(T)$ is assumed to be finite, we deduce that $r = 0$.

\medskip

\textit{Step 2.} We now deal with the second summand. By Equation \eqref{eq:r=0}, for any $k \geq 0$ we see $\Lambda/(p^k,\varphi^n(T))$ is the space of polynomials over $\Z/p^k\Z$ of degree at most $p^n-1$, whence
\[ | \Lambda / (p^k,\varphi^n(T)) |= p^{k p^n}. \]  
We deduce from this that
\[ \bigg|  \bigg(\bigoplus_{i = 1}^s \Lambda / (p^{m_i})\bigg)\Big/\varphi^n(T) \bigg| = p^{m p^n}, \] where $m = \sum_{i} m_i$.

\medskip

\textit{Step 3.} Finally, we deal with the last (and most involved) summand. Let $g(T) \in \Z_p[T]$ be a distinguished polynomial of degree $d$ (that we don't assume is irreducible, as we want to apply to $g=  f_j^{k_j}$). Letting $V = \Lambda / (g(T))$, we want to compute the order of 
\[
    V/\varphi^n(T) = \Lambda/(g,\varphi^n(T)).
\]
We will show inductively that for sufficiently large $n$, $|V/\varphi^{n+1}(T)| = p^{d}|V/\varphi^{n}(T)|$.

As $g$ is distinguished, $T^d \equiv p \cdot (\mathrm{poly}) \newmod{g}$,  where $(\mathrm{poly})$ denotes some polynomial in $\Zp[T]$. Further, $T^k \equiv p \cdot (\mathrm{poly}) \newmod{g}$ for all $k \geq d$. Let $n_0$ be such that $p^{n_0} \geq d$. We will use the following lemma.

\begin{lemma} \label{lemmatempIw} For any $n > n_0$, we have
\[
\varphi^{n+1}(T)\cdot V = p \varphi^n(T) \cdot V.
\]
\end{lemma}

\begin{proof}
First take $k \geq n_0$, allowing $k = n_0$. As $\varphi^k(T)$ is distinguished, we see that
\[
    \varphi^{k}(T) = T^{p^k} + p \, (\mathrm{poly})  \equiv p \cdot (\mathrm{poly}) \newmod{g},
\]
as $p^k \geq d$. Write $\varphi^k(T) = pQ_k(T) \newmod{g}$, for $Q_k(T) \in \Zp[T]$.

We use the identity
\[
    (X^{p^{k+1}} - 1) = (X^{p^k} - 1) \cdot (X^{p^k(p-1)} + X^{p^k(p-2)} + \cdots + X^{p^k} + 1).
\]
Applying with $X = 1+T$ yields
\begin{align}
    \varphi^{k+1}(T) &= (1+T)^{p^{k+1}} - 1\notag\\
    &= ((1+T)^{p^k} - 1) \cdot \Big[(1+T)^{p^k(p-1)} + \cdots + (1+T)^{p^k} + 1\Big]\notag\\
    &= \varphi^{k}(T) \cdot \Big[(\varphi^{p^k}(T) +1)^{p-1} + \cdots + (\varphi^{k}(T) +1) + 1\Big]\notag\\
    &\equiv \varphi^{p^k}(T) \cdot \Big[ \big(pQ_k(T) + 1\big)^{p-1}  + \cdots + \big(pQ_k(T)+1\big) + 1\Big] \newmod{g}. \label{eq:Q induction}
\end{align}
Expanding the binomials, every term is divisible by $p$ except the constant term 1 in each expression; but these sum to $p$. In particular, we deduce
\[
\varphi^{k+1}(T) \equiv p\varphi^k(T) \newmod{g}.
\]
Applying this with $k = n_0$, we see that 
\[
    pQ_{n_0+1}(T) \equiv \varphi^{n_0+1}(T) \equiv p\varphi^{n_0}(T) \equiv p^2Q_n(T) \newmod{g},
\]
so $Q_{n_0+1}(T) \equiv 0 \newmod{p}$. Inductively we see $Q_n(T) \equiv 0 \newmod{p}$ for all $n > n_0$.

Returning to the last line of \eqref{eq:Q induction}, now take $k = n > n_0$. As $Q_n(T) = 0 \newmod{p}$, every term in each binomial expansion is now divisible by $p^2$, again except the constant terms, which sum to $p$. In particular, we deduce
\begin{align*}
    \varphi^{n+1}(T) &\equiv \varphi^n(T) \cdot \big[p^2\cdot (\mathrm{poly}) + p\big] \\
    &\equiv p\varphi^n(T) \cdot \big[p \cdot (\mathrm{poly}) + 1\big] \newmod{g}.
\end{align*}
The term $p\cdot(\mathrm{poly}) + 1$ is a unit in $\Lambda$, so its reduction is a unit in $V = \Lambda/(g)$. We find 
\[
    \varphi^{n+1}(T) \newmod{g}\in p\varphi^n(T) \cdot V^\times,
\]
from which $\varphi^{n+1}(T) \cdot V = p\varphi^n(T) \cdot V$, completing the proof of the lemma.
\end{proof}

We now go back to the proof of the proposition. For any $n> n_0$, Lemma \ref{lemmatempIw} implies that $\varphi^{n+1}(T)V \subset pV$, and
\begin{align*} | V / \varphi^{n+1}(T) V | &= |V/pV| \cdot |pV/\varphi^{n+1}(T)V|\\ 
&= |V / p V | \cdot | p V / p \varphi^{n}(T) V |.
\end{align*}
Since $g(T)$ is distinguished of degree $d$, we have
\[ |V / p V | = | \Lambda / (p, g(T)) | = | \Lambda / (p, T^d) | = p^{d}. \]
Finally, we compute $| p V / p\varphi^{n + 1}(T) V |$. Since $(g(T), p) = 1$, multiplication by $p$ is injective on $V$ and hence 
\[
    | p V / p \varphi^{n + 1}(T) V | = | V / \varphi^{n + 1}(T) V |.
\]
Recall that $n_0$ is fixed with $p^{n_0} \geq d$. Inducting on Lemma \ref{lemmatempIw}, we find that, for any $n > n_0$, we have 
\[
    \varphi^{n + 1}(T)V = p^{n-n_0-1}\varphi^{n_0+1}(T)V.
\]
Again by Weierstrass division, we know $V$ is isomorphic to polynomials in $\Zp[T]$ of degree $\leq d-1$. This means
\[ | V / \varphi^{n + 1}(T) V | = p^{d (n - n_0 - 1)} | V / \varphi^{n_0 + 1}(T) V|. \]
Putting everything together, we deduce that
\[ | V / \varphi^{n}(T) V | = p^{nd + c},\] for some constant $c$ and all $n > n_0$. Applying this to the third summand of $\mathscr{A}$, we get
\[ 
\bigg| \bigg(\bigoplus_{j = 1}^t \Lambda / (f_j(T)^{k_j})\bigg)\Big / \varphi^n(T) \bigg| = p^{\ell n + c}, 
\] for $n \geq n_0$, where $n_0$ is such that $p^{n_0} \geq k_j\deg(f_j)$ for all $j$, $\ell = \sum_j k_j \deg(f_j)$, and $c$ is a constant. This finishes the proof of the proposition.
\end{proof}

Along the way, we have proven the following fact.

\begin{corollary} \label{Iwtor}
Let $\mathscr{Y}$ be a finitely generated $\Lambda$-module. If $\mathscr{Y} / \varphi^n(T) \mathscr{Y}$ is finite for all $n$, then $\mathscr{Y}$ is torsion.
\end{corollary}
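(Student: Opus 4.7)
The plan is to exploit the structure theorem in essentially the same way as in the proof of Lemma~\ref{Iwlemma1}, but now running the argument in reverse: rather than computing sizes assuming torsion, we use the finiteness hypothesis to force the free rank $r$ to vanish.

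First I would apply Theorem~\ref{thm:structure theorem} to the finitely generated $\Lambda$-module $\mathscr{Y}$ to obtain a pseudo-isomorphism
\[
 0 \to Q \to \mathscr{Y} \to \mathscr{A} \to R \to 0,
\]
with $Q,R$ finite and $\mathscr{A} = \Lambda^r \oplus (\text{torsion part})$. The next step is to transfer the finiteness hypothesis from $\mathscr{Y}/\varphi^n(T)\mathscr{Y}$ to $\mathscr{A}/\varphi^n(T)\mathscr{A}$. This is done exactly as in Lemma~\ref{IwLemma0}: by the snake lemma applied to the diagram obtained from multiplication by $\varphi^n(T)$ on the above exact sequence, the kernels and cokernels of the induced maps between $\mathscr{Y}/\varphi^n(T)\mathscr{Y}$ and $\mathscr{A}/\varphi^n(T)\mathscr{A}$ are subquotients of $Q$ and $R$, hence bounded independently of $n$. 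In particular, if $\mathscr{Y}/\varphi^n(T)\mathscr{Y}$ is finite for all $n$, then so is $\mathscr{A}/\varphi^n(T)\mathscr{A}$ for all $n$.

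Since $\mathscr{A}/\varphi^n(T)\mathscr{A}$ splits as a direct sum, finiteness forces each summand $\Lambda/\varphi^n(T)\Lambda$ coming from the free part to be finite. But $\varphi^n(T) = (1+T)^{p^n} - 1$ is a distinguished polynomial of degree $p^n$, so by the Weierstrass division algorithm $\Lambda/\varphi^n(T)\Lambda$ is free of rank $p^n$ over $\Zp$, and in particular is infinite. This contradiction forces $r = 0$, so $\mathscr{A}$ is torsion and therefore $\mathscr{Y}$ is torsion as well (the kernel of $\mathscr{Y} \to \mathscr{A}$ being the finite module $Q$, which is clearly torsion).

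The main obstacle is really only the bookkeeping in the first step, i.e.\ the verification that the finiteness of $\mathscr{Y}/\varphi^n(T)\mathscr{Y}$ propagates to $\mathscr{A}/\varphi^n(T)\mathscr{A}$; but as noted, this is identical to the diagram chase already carried out in Lemma~\ref{IwLemma0}, so no new ideas are required. Once that is in place, the conclusion is immediate from the fact that $\Lambda/\varphi^n(T)\Lambda$ is an infinite $\Zp$-module.
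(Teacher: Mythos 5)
Your proof is correct and follows essentially the same route as the paper: invoke the structure theorem, reduce to the elementary module $\mathscr{A}$, observe that a free summand $\Lambda$ would make $\Lambda/\varphi^n(T)\Lambda$ (infinite, being $\Zp$-free of rank $p^n$) a direct summand of $\mathscr{A}/\varphi^n(T)\mathscr{A}$, and conclude $r=0$. The only difference is that you make explicit the transfer of finiteness from $\mathscr{Y}/\varphi^n(T)\mathscr{Y}$ to $\mathscr{A}/\varphi^n(T)\mathscr{A}$ via the snake-lemma diagram chase, whereas the paper's one-line proof leaves this implicit (appealing silently to the computation already done in Lemma~\ref{IwLemma0}); your version is the more careful one.
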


\begin{proof}
If $\mathscr{A}$ is as in the statement of Proposition \ref{prop:size of A}, then we showed that $r = 0$ in the structure theorem for $\mathscr{Y}$. This implies that $\mathscr{A}$ is torsion; each element is annihilated by the characteristic ideal of $\mathscr{A}$. If $\mathscr{Y}$ is any finitely generated $\Lambda$-module, then $\mathscr{Y}$ is quasi-isomorphic to a module $\mathscr{A}$ as before, and as $\mathscr{A}$ is torsion, so is $\mathscr{Y}$.
\end{proof}

We can now complete the proof of Theorem \ref{Iwmu}.

\begin{proof} [Proof of Theorem \ref{Iwmu}]
Applying Lemma \ref{IwLemma0} and Proposition \ref{prop:size of A}, for $n \geq n_0$ we get
\[ | \Yn | = | \Yinf / \varphi^n(T)  \Yinf| = p^{c} | \mathscr{A} / (\varphi^n(T)) | = p^{\mu p^n + \lambda n + \nu}. \] This finishes the proof of the theorem.
\end{proof}

\subsection{Some consequences of Iwasawa's theorem}

We have already seen one application of Iwasawa's theorem (Corollary \ref{cor:Iw1}) during the statement of the main conjecture. This stated that if one class number in a $\zp$-extension is coprime to $p$, then so are all the others. We list here some further interesting applications.\\

Recall that if $A$ is a finite abelian group, then 
\[A[p] \defeq \{ x \in A :  p x = 0 \}\]
 denotes the subgroup of $p$-torsion elements and its $p$-rank $\mathrm{rk}_p(A)$ is defined to be 
\[\mathrm{rk}_p(A) = \dim_{\fp}(A / pA) = \mathrm{dim}_{\fp}(A[p]).\]
Equivalently, we can decompose $A$ uniquely as a direct sum of cyclic groups of prime power order; then the rank at $p$ is the number of direct summands of $p$-power order. 

\begin{corollary} \label{mu0prankbounded}
Let $F_\infty/F$ be a $\Zp$-extension. Then $\mu = 0$ if and only if $\mathrm{rk}_p(\mathrm{Cl}(F_n))$ is bounded independently of $n$.
\end{corollary}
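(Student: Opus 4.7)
The plan is to read off $\mathrm{rk}_p(\mathrm{Cl}(F_n)) = \mathrm{rk}_p(\YY_n)$ from the structure theorem applied to $\YY_\infty$, using Lemma \ref{Iwmu2} to identify $\YY_n = \YY_\infty/\varphi^n(T)\YY_\infty$. By Proposition \ref{Iwmu3} and Corollary \ref{Iwtor}, $\YY_\infty$ is a finitely generated torsion $\Lambda$-module, and the structure theorem gives a pseudo-isomorphism
\[
\YY_\infty \sim \mathscr{A} = \bigoplus_{i=1}^s \Lambda/(p^{m_i}) \oplus \bigoplus_{j=1}^t \Lambda/(f_j(T)^{k_j}),
\]
with $\mu = \sum_i m_i$ and $\lambda = \sum_j k_j \deg(f_j)$ by Lemma \ref{Iwlemma1}. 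The idea is to compute $\mathrm{rk}_p(\mathscr{A}/\varphi^n(T)\mathscr{A})$ summand by summand, and then transfer the result back to $\YY_n$.

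First I would handle the summands $V_j = \Lambda/(f_j^{k_j})$ coming from the $\lambda$-part. Since $f_j$ is distinguished and coprime to $p$, Weierstrass preparation identifies $V_j$ with a free $\Zp$-module of rank $d_j = k_j\deg(f_j)$. Hence $V_j/\varphi^n(T)V_j$ is a finitely generated $\Zp$-module of $\Zp$-rank at most $d_j$, so its $p$-rank is bounded by $d_j$. Summing over $j$, the $\lambda$-part contributes at most $\lambda$ to $\mathrm{rk}_p(\mathscr{A}/\varphi^n(T))$, uniformly in $n$. Next, for each summand $W_i = \Lambda/(p^{m_i})$ from the $\mu$-part, I would use the congruence $\varphi^n(T) \equiv T^{p^n} \pmod{p}$, which gives
\[
W_i/(\varphi^n(T), p) \;\cong\; \fp[[T]]/(T^{p^n}),
\]
an $\fp$-vector space of dimension $p^n$. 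Thus $\mathrm{rk}_p(W_i/\varphi^n(T)W_i) = p^n$, and combining the two computations yields $\mathrm{rk}_p(\mathscr{A}/\varphi^n(T)\mathscr{A}) = s\cdot p^n + O(1)$, where $s$ is the number of $p$-power summands. In particular this sequence is bounded if and only if $s = 0$, i.e.\ if and only if $\mu = 0$.

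Finally I would transfer from $\mathscr{A}$ back to $\YY_\infty$. The pseudo-isomorphism fits into an exact sequence $0 \to Q \to \YY_\infty \to \mathscr{A} \to R \to 0$ with $Q$ and $R$ finite, so $\mathrm{rk}_p(Q)$ and $\mathrm{rk}_p(R)$ are fixed constants. A diagram chase analogous to the one used in Lemma \ref{IwLemma0} (applying the snake lemma to multiplication by $\varphi^n(T)$ on the four-term sequence, and then tensoring with $\fp$) shows that for $n$ sufficiently large the $\fp$-dimensions of $\YY_n/p\YY_n$ and $\mathscr{A}/(\varphi^n(T),p)\mathscr{A}$ differ by a bounded amount. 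Combined with the previous paragraph this gives $\mathrm{rk}_p(\YY_n) = s\cdot p^n + O(1)$, and since $\mathrm{Cl}(F_n)\otimes\fp \cong \YY_n/p\YY_n$, the claim follows.

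The main obstacle is the last diagram chase: one must check that although the images and preimages of $\varphi^n(T)$-multiplication through $Q$ and $R$ wobble with $n$, their $p$-ranks stabilise, so that the difference $|\mathrm{rk}_p(\YY_n) - \mathrm{rk}_p(\mathscr{A}/\varphi^n(T))|$ is bounded uniformly in $n$. This is the $p$-rank analogue of the cardinality estimate in Lemma \ref{IwLemma0}, and goes through by the same reasoning because $\varphi^n(T)$-multiplication on the finite modules $Q$ and $R$ is eventually constant (both modules being killed by some $\varphi^{n_0}(T)$-power for $n_0$ large, since $Q\subset\YY_\infty$ is finite and $R$ is a quotient of $\mathscr{A}$).
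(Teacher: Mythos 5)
Your proof is correct and takes essentially the same route as the paper: decompose $\YY_\infty$ via the structure theorem, compute the $p$-rank of each summand modulo $\varphi^n(T)$ (each $\mu$-summand $\Lambda/(p^{m_i})$ contributes exactly $p^n$, each $\lambda$-summand contributes $O(1)$), and transfer the count back to $\YY_n$ through the pseudo-isomorphism as in Lemma~\ref{IwLemma0}. The only cosmetic difference is that you bound the $\lambda$-contribution by noting $\Lambda/(f_j^{k_j})$ is $\Zp$-free of rank $d_j$, whereas the paper computes $\Lambda/(p,g_j,\varphi^n(T)) \cong \fp[[T]]/(T^{\deg g_j})$ explicitly for large $n$; both amount to the same Weierstrass-division observation.
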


\begin{proof}
 Recall that \[ \mathrm{Cl}(F_n) \otimes \zp = \Yn \defeq \Yinf / (\varphi^n(T)), \] that $\Yinf = \varprojlim \Yn$ is quasi-isomorphic to a $\Lambda$-module $\mathscr{A} = \big( \bigoplus_{i = 1}^s \Lambda / (p^{m_i}) \big) \oplus \big( \bigoplus_{j = 1}^t \Lambda / (g_j(T)) \big)$ for some integers $s, t \geq 0$, $m_i \geq 1$, and $g_i(T) \in \mathscr{O}_L[T]$ distinguished polynomials, and that we have (cf.\ the proof of Proposition \ref{prop:size of A}) an exact sequence
 \[ 0 \to C_n \to \Yn \to \mathscr{A}_n \to B_n \to 0, \] where $\mathscr{A}_n \defeq \mathscr{A} / \varphi^n(T)$, with $|B_n|$ and $|C_n|$ bounded independently of $n$. It then suffices to show that $\mu = 0$ if and only if $\dim_{\fp}(\mathscr{A}_n / p \mathscr{A}_n)$ is bounded independently of $n$.
 
 We have
 \[ \mathscr{A} / p \mathscr{A}_n = \mathscr{A} / (p, \varphi^n(T)) = \bigg( \bigoplus_{i = 1}^s \Lambda / (p, \varphi^n(T)) \bigg) \oplus \bigg( \bigoplus_{j = 1}^t \Lambda / (p, g_j(T), \varphi^n(T)) \bigg). \]
 Take $n$ large enough such that $p^n \geq \deg(g_j)$ for all $j$ and recall that $g_j$ and $\varphi^n(T)$ are distinguished polynomials (in the sense that all but their leading coefficients are divisible by $p$). The above formula then equals
 \[ \bigg( \bigoplus_{i = 1}^s \Lambda / (p, T^{p^n}) \bigg) \oplus \bigg( \bigoplus_{j = 1}^t \Lambda / (p, T^{\deg(g_j)})\bigg) = (\Z / p \Z)^{s p^n + t g}, \] 
where $g = \sum \deg(g_j)$. This shows that $\mathrm{rk}_p(\mathrm{Cl}(F_n))$ is bounded independently of $n$ if and only if $s = 0$, i.e.\ if and only if $\mu = 0$. This finishes the proof.
\end{proof}

Concerning Iwasawa's invariants, we have the following results:

\begin{theorem} [Ferrero-Washington; {\cite[\S 7.5]{Washington2}}]
If $F$ is an abelian number field and $F_\infty / F$ is the cyclotomic $\zp$-extension of $F$, then $\mu = 0$.
\end{theorem}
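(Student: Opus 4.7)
The plan is to reduce, via Corollary \ref{mu0prankbounded} and the structure theory of $\Lambda$-modules, to an analytic non-vanishing statement about the Kubota--Leopoldt $p$-adic $L$-function, and then attack that via the combinatorial heart of the Ferrero--Washington argument.

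First I would reduce to the case where $F$ is itself a cyclotomic field. By Kronecker--Weber, $F \subset \Q(\mu_m)$ for some $m$; the cyclotomic $\zp$-extension of $F$ sits inside that of $\Q(\mu_m)$, and a routine argument comparing class groups under norm maps shows $\mu(F_\infty/F) = 0$ whenever $\mu(\Q(\mu_m)_\infty/\Q(\mu_m)) = 0$. Assume then $F = \Q(\mu_m)$. After tensoring with $\overline{\Q}_p$, the $\Lambda(\Gamma)$-module $\Yinf$ decomposes into $\chi$-isotypic components as $\chi$ runs over characters of $\Gal(F/\Q)$, and it suffices to show $\mu(\Yinf^{(\chi)}) = 0$ for each such $\chi$. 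For $\chi$ odd, Stickelberger's theorem supplies annihilators of $\Yinf^{(\chi)}$ coming from the Kubota--Leopoldt measure $\zeta_{\chi\omega^{-1}}$ of Theorem \ref{nontame}, giving an upper bound $\mu(\Yinf^{(\chi)}) \leq \mu(\zeta_{\chi\omega^{-1}})$; a sharper equality follows from the Iwasawa Main Conjecture, proved for abelian fields by Mazur--Wiles. For $\chi$ even, one passes to the odd case either via a reflection (Spiegelungssatz) inequality relating $\mu(\Yinf^{(\chi)})$ to $\mu(\Yinf^{(\omega\chi^{-1})})$, or again via the Main Conjecture and an analysis of the plus-part $\mathscr{X}_\infty^+$.

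The problem is thereby reduced to the analytic statement $\mu(\zeta_\eta) = 0$ for every Dirichlet character $\eta$ of conductor $D$ prime to $p$; equivalently, the power series $F_\eta(T) \in \mathscr{O}_L\lsem T\rsem$ defined in Section \ref{interpolation} is not divisible by $p$. Expanding the Mahler coefficients explicitly from the formula
\[ F_\eta(T) = \frac{-1}{G(\eta^{-1})} \sum_{c \in (\Z/D\Z)^\times} \frac{\eta^{-1}(c)}{(1+T)\epsilon_D^c - 1}, \]
and reducing modulo $p$, one finds after standard manipulations that the reduction of the $k$-th coefficient is an explicit $\mathbf{F}_p$-linear character sum whose inputs are determined by the base-$p$ digits of rationals of the form $c/D$.

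The main obstacle is the Ferrero--Washington combinatorial lemma itself: showing that this digit-sum expression is non-zero for infinitely many $k$. At heart this is a Diophantine rigidity statement, asserting that no non-trivial $\mathbf{F}_p$-linear relation can hold among the sequences of base-$p$ digits of rationals with denominators prime to $p$. The proof proceeds by exhibiting a suitable rational $\alpha$ and exploiting the fact that the fractional parts $\{p^n \alpha\}$, though eventually periodic, distribute themselves in a sufficiently generic way modulo $p$ that no such relation can hold. This is the one genuinely deep and new input; everything upstream is essentially formal manipulation of the constructions developed earlier in the text.
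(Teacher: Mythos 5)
Since the paper itself offers only a one-sentence pointer to \cite[\S 7.5]{Washington2}, there is no in-text proof to compare against; the relevant question is whether your outline is a valid sketch of the actual argument, and on the whole it is, though with two imprecisions worth flagging.

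First, on the reduction to the analytic statement. Your odd-character step via Stickelberger is the right idea, but the clean statement is that the Stickelberger element annihilates $\mathscr{Y}_\infty^{(\chi)}$, which yields $\mathrm{char}_\Lambda(\mathscr{Y}_\infty^{(\chi)}) \mid (L_p(\chi\omega^{-1}))^s$ for some $s$ depending on the number of generators -- so the inequality $\mu(\mathscr{Y}_\infty^{(\chi)}) \le \mu(L_p(\chi\omega^{-1}))$ as you wrote it is not literal, but the implication $\mu(L_p) = 0 \Rightarrow \mu(\mathscr{Y}_\infty^{(\chi)}) = 0$, which is all you need, does hold. More importantly, your invocation of the Mazur--Wiles Main Conjecture for the "sharper equality" and as an alternate route for the even part is anachronistic and logically much heavier than necessary: Ferrero--Washington (1979) predates Mazur--Wiles (1984), and the paper's own phrase "duality coming from Kummer theory" signals exactly the Leopoldt Spiegelungssatz route, which together with the Stickelberger bound on the odd part closes the argument elementarily without any input from the geometry of modular curves. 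This is also the only route consistent with this paper, which proves the Main Conjecture only for Vandiver primes. You should strike the Main Conjecture option entirely.

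Second, and this is the more substantive gap, your description of the combinatorial heart is too loose to constitute a proof. The $\mu$-invariant in question is the $p$-adic valuation on each branch $L_p(\chi\omega^i)$ viewed as a power series in $\roi_L\lsem T\rsem$, i.e.\ on the $\omega^i$-isotypic component, not on $\zeta_\eta$ as a measure on $\Zp^\times$ wholesale. After that reduction, the Ferrero--Washington lemma is not an equidistribution statement about a single rational: for $\alpha \in \Q$ with denominator prime to $p$, the sequence $\{p^n\alpha\}$ is eventually \emph{periodic}, so "distribute generically modulo $p$" cannot be taken at face value. What the argument actually uses is a simultaneous-digit rigidity lemma: one considers the full system of rationals $c/D$ for $c \in (\Z/D\Z)^\times$ and shows, via an independence argument (essentially that one digit position of one $c_0/D$ can be varied while the others are pinned, after a suitable choice of $c_0$), that a non-trivial $\mathbf{F}_p$-linear relation among the per-digit terms would force all coefficients to vanish. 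That rigidity lemma is the irreducible new input and your sketch gestures at it without stating it, so as written this step is a gap rather than a proof.

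In summary, your overall architecture -- reduce up to $\Q(\mu_m)$, split into $\chi$-parts, bound the odd parts by Stickelberger, handle the even parts by reflection, and reduce to an explicit digit computation for $L_p(\theta)$ -- matches the route the paper references and the classical proof. To make it a genuine proof you should drop the Main Conjecture crutch, be precise about which object's $\mu$-invariant you compute, and state and prove (or at least precisely state) the Ferrero--Washington rigidity lemma on base-$p$ digits.
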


Finally, the following is an open conjecture of Greenberg (see \cite{Gre76}).
\begin{conj} [Greenberg]
For any totally real field $F$, and any $\Zp$-extension $F_\infty/F$, we have $\mu = \lambda = 0$. In other words, the values $\# \mathrm{Cl}(F_n)$ are bounded as $n \to +\infty$.
\end{conj}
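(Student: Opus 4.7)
The plan is to proceed via the Iwasawa Main Conjecture for totally real fields, established by Mazur--Wiles for abelian extensions of $\Q$ and extended by Wiles in general. The Main Conjecture identifies the characteristic ideal of (a suitable version of) the module $\mathscr{Y}_{F,\infty}$ from \S\ref{sec:mu invariant} with the ideal generated by a $p$-adic $L$-function $\zeta_p^F$ attached to $F$, constructed via Deligne--Ribet. Under this identification, the invariants $\mu(\mathscr{Y}_{F,\infty})$ and $\lambda(\mathscr{Y}_{F,\infty})$ coincide with the $\mu$- and $\lambda$-invariants of $\zeta_p^F$ as an element of $\Lambda(\Gamma) \cong \zp\lsem T\rsem$ via Weierstrass preparation. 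The conjecture $\mu = \lambda = 0$ thus reduces to showing $\zeta_p^F$ is a unit in $\Lambda(\Gamma)$.

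First I would handle $\mu = 0$. When $F$ is abelian over $\Q$, this is exactly the Ferrero--Washington theorem cited just above the conjecture in the text; its proof exploits a delicate Diophantine analysis of the $p$-adic digits of the rational numbers appearing in the Mahler expansion of $\zeta_p^F$. For non-abelian totally real $F$, one would attempt to propagate this mod-$p$ non-vanishing across base change from an abelian subfield, using the functorial properties of the Deligne--Ribet construction. As an alternative, Corollary \ref{mu0prankbounded} shows it suffices to bound $\mathrm{rk}_p(\mathrm{Cl}(F_n))$ uniformly in $n$, which one could attempt via genus theory and control of ramification in the tower.

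The substantive task is then to show $\lambda = 0$. Via the Main Conjecture, this amounts to proving $\zeta_p^F$ has no zeros on the closed unit disc indexed by finite-order characters of $\Gamma$; equivalently, the $p$-adic $L$-functions $L_p(\chi, s)$ attached to the totally even characters of $F$ should not vanish at $s = 1$. One concrete angle combines Soul\'e-type cohomological vanishing with the Gross--Kuz'min conjecture to show that the capitulation kernel $\ker(\YY_n \to \YY_\infty)$ stabilises, forcing $\lambda = 0$ once $\mu = 0$ is known. A complementary deformation-theoretic route is to find enough congruences between $\zeta_p^F$ and $p$-adic $L$-functions already known to be units (for instance, twists by auxiliary characters), and transfer the unit property across the family.

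The hard part, and the reason the conjecture has remained open for nearly fifty years, is that $\lambda = 0$ is not a mod-$p$ statement: it encodes full $p$-adic non-vanishing of an $L$-function, with no $p$-adic analogue of the analytic tools (functional equations, positivity, density estimates) that give non-vanishing in the complex setting. At present the evidence is essentially computational, handled prime-by-prime via Iwasawa's theorem \ref{Iwmu} combined with explicit class-number calculations, together with sporadic theoretical cases under strong Leopoldt-type hypotheses. A complete proof would almost certainly require a genuinely new structural input — perhaps from the theory of Euler systems for Deligne--Ribet $L$-functions, or from the emerging $p$-adic Langlands picture for $\mathrm{GL}_2$ over totally real fields — that systematically prevents $\zeta_p^F$ from acquiring zeros.
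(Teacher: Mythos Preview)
This statement is labelled a \emph{conjecture} in the paper, and the paper gives no proof of it; it is explicitly presented as open. So there is no ``paper's own proof'' to compare against, and your proposal is not a proof either --- as you yourself concede in the final paragraph, where you acknowledge the problem has been open for fifty years and would require ``a genuinely new structural input''. What you have written is a survey of strategies, not an argument.

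Beyond that framing issue, there is a concrete technical gap in your sketch. The Main Conjecture for totally real $F$, as proved by Wiles, identifies the characteristic ideal of the module $\mathscr{X}^+_\infty = \mathrm{Gal}(\mathscr{M}^+_\infty/F^+_\infty)$ (unramified outside $p$) with the ideal generated by $\zeta_p^F$. Greenberg's conjecture concerns the invariants of $\mathscr{Y}_\infty = \mathrm{Gal}(\mathscr{L}_\infty/F_\infty)$ (unramified everywhere). These are different modules, related by the exact sequence of Corollary~\ref{CFTunits2}, and passing from one to the other is precisely where the difficulty lies: it involves the module $\mathscr{E}^+_{\infty,1}/\mathscr{C}^+_{\infty,1}$, whose structure is controlled by Leopoldt-type phenomena that are themselves not understood in general. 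So your claimed reduction ``$\mu = \lambda = 0$ for $\mathscr{Y}_\infty$ iff $\zeta_p^F$ is a unit'' is not what the Main Conjecture gives you. And even if it did, ``$\zeta_p^F$ is a unit in $\Lambda$'' is a restatement of the conjecture, not a simplification of it.
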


\section{Iwasawa theory for modular forms}\label{sec:modular forms}

An interesting source of $L$-functions are those arising from automorphic forms, analytic functions on adelic groups that are symmetric for certain group actions. Dirichlet characters are algebraic automorphic forms for $\GL(1)$, so Parts I and II describe `Iwasawa theory for $\GL(1)$'.  The next natural case, of $\GL(2)$, is that of \emph{modular forms} (as explained in \cite{Wei71}). It is natural to ask how much of the theory above has an analogue for modular forms. The short answer is \textit{all of it}, though our understanding of this case is not fully complete.

\subsection{Recapping $\mathrm{GL}(1)$}

In these notes we have described three different constructions of the Kubota--Leopoldt $p$-adic $L$-function $\zeta_p$. Recall  $\Gamma^+  \defeq \mathrm{Gal}(\Q(\mu_{p^\infty})^+ / \Q) \cong \zpe / \{\pm 1\}$, and that $\Lambda(\Gamma^+)$ is its Iwasawa algebra, with ring of fractions $Q(\Gamma^+)$.

\begin{enumerate}\setlength{\itemsep}{5pt}
	\item In Part I, we gave an \emph{analytic} construction, a $p$-adic pseudo-measure $\zeta_p^{\mathrm{an}} \in Q(\Gamma^+)$ interpolating special values of the Riemann zeta function.
 
	\item In \S\ref{sec:coleman map}, we gave an \emph{arithmetic} construction, defining $\zeta_p^{\mathrm{arith}}$ via the image under $\mathrm{Col}$ of the family of cyclotomic units.
 
	\item In \S\ref{sec:IMC} we gave an \emph{algebraic} construction. We described a torsion $\Lambda(\Gamma^+)$-module $\sX_\infty^+$, with characteristic ideal $\zeta_p^{\mathrm{alg}} \defeq \mathrm{Char}_{\Lambda(\Gamma^+)}(\XX_\infty^+) \subset \Lambda(\Gamma^+)$.
\end{enumerate}

Theorem \ref{thm:coleman to kl} says $\zeta_p^{\mathrm{an}} = \zeta_p^{\mathrm{arith}}$. The Iwasawa Main Conjecture says $\zeta_p^{\mathrm{alg}} = I(\Gamma^+) \zeta_p^{\mathrm{an}}$.

\subsection{Analogues for $\mathrm{GL}(2)$}
Ultimately, versions of all of the above theory are known for sufficiently nice modular forms. Let $f$ be a cuspidal Hecke eigenform of weight $k+2$ and level $\Gamma_0(N)$, with $p|N$, and let $L(f,s)$ be its attached $L$-function. There are three ways of associating a $p$-adic $L$-function to $f$.

\subsubsection{Analytic} In the $\GL(1)$ story, the Kubota--Leopoldt $p$-adic $L$-function interpolated zeta values $L(\chi,-k)$ for $k \geq 0$ with $\chi(-1)(-1)^k = -1$. Such values are called `critical'. For a more general $L$-function $L(s)$, Deligne \cite[Def.\ 1.3]{Del79} gave an arithmetic characterisation\footnote{Precisely, Deligne asks that given a `motivic' $L$-function $L(M,s)$, then if $L_\infty(M,s)$ denotes the `Euler factor at infinity', then $s= j$ is critical for $L(M,s)$ if neither $L_\infty(M,s)$ nor $L_\infty(M,1-s)$ has a pole at $j$. For $\zeta(s)$, the factor at infinity is $\pi^{-s/2}\Gamma(s/2)$, and the $\Gamma$-function has poles at negative even integers. We deduce the critical values of $\zeta(s)$ are exactly the negative odd integers (as seen by Kubota--Leopoldt) and positive even integers, which relate to the negative odd integers through the functional equation for $\zeta(s)$.} of which values $s$ should be critical for $L(s)$. For the modular form $f$, his criterion says the critical values of $L(f,s)$ are $L(f,\chi,j+1)$ for $\chi$ any Dirichlet character and $0 \leq j \leq k$. 

The analytic $p$-adic $L$-function is an element $L_p^\mathrm{an}(f)$ in space of $p$-adic distributions $\ccD(\Zp^\times)$ which interpolates these critical values. In particular, we have the following.

\begin{theorem}\label{thm:intro 1}
	Let $\alpha_p$ denote the $U_p$ eigenvalue of $f$. If $v_p(\alpha_p) < k+1$, then there exists a unique locally analytic distribution $L_p^{\mathrm{an}}(f) \in \mathscr{D}^{\rm la}(\Zp^\times)$ on $\Zp^\times$ such that
	
	\begin{itemize}\setlength{\itemsep}{5pt}
		\item $L_p^{\mathrm{an}}(f)$ has growth of order $v_p(\alpha_p)$.
		\item For all Dirichlet characters $\chi$ of conductor $p^n$, and for all $0 \leq j \leq k$, we have
		\begin{align*}
			L_p^{\mathrm{an}}(f,\overline{\chi},j+1) &= \int_{\Zp^\times} \chi(x)x^j \cdot L_p^{\mathrm{an}}(f)\\
			&= -\alpha_p^{-n}\cdot \left(1-\chi(p)\frac{p^j}{\alpha_p}\right)\cdot\frac{G(\chi)\cdot j!\cdot p^{nj}}{(2\pi i)^{j+1}}\cdot \frac{ L(f,\overline{\chi},j+1)}{\Omega_f^\pm}.
		\end{align*}
	\end{itemize}
\end{theorem}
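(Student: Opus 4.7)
The plan is to mimic the $\GL(1)$ construction, replacing the role of the Bernoulli-like generating functions with \emph{modular symbols} attached to $f$. Recall that to the cusp form $f$ one associates a classical modular symbol $\phi_f : \mathrm{Div}^0(\mathbf{P}^1(\Q)) \to V_k(L)$, where $V_k(L) = \mathrm{Sym}^k L^2$ is the space of polynomials in one variable of degree $\leq k$ over a suitable finite extension $L/\Qp$. The value $\phi_f(\{0\}-\{\infty\})$ packages the critical $L$-values of $f$: concretely, the $j$th moment of $\phi_f(\{a/p^n\} - \{\infty\})$, after twisting by a character $\chi$ of conductor $p^n$, recovers (up to the explicit period, Gauss sum, and $(2\pi i)^{-(j+1)}$ factors that appear in the statement) the classical value $L(f,\overline{\chi},j+1)/\Omega_f^\pm$. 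This is the modular-symbol analogue of the Mellin-transform computation of Lemma \ref{dirichlet integral} in the $\mathrm{GL}(1)$ case.

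To $p$-adically interpolate these moments I would lift $\phi_f$ to an \emph{overconvergent} modular symbol: one with values in the space $\ccD^{\mathrm{la}}(\Zp,L)$ of locally analytic distributions on $\Zp$ (equivalently, on $\Zp^\times \sqcup p\Zp$ after decomposing). There is a natural specialisation map $\rho_k : \ccD^{\mathrm{la}}(\Zp,L) \to V_k(L)$ given by integrating against polynomials of degree $\leq k$, and this induces a map $\rho_k^*$ on the corresponding spaces of modular symbols that is $U_p$-equivariant. The key input is Stevens' control theorem: on the slope-$<k+1$ part (with respect to $U_p$), the map $\rho_k^*$ is an isomorphism. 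Since $f$ is a $U_p$-eigenform with eigenvalue $\alpha_p$ of slope strictly less than $k+1$ by hypothesis, there exists a unique overconvergent eigenlift $\Phi_f$ of $\phi_f$ with the same $U_p$-eigenvalue. Define
\[
L_p^{\mathrm{an}}(f) \defeq \mathrm{Res}_{\Zp^\times}\bigl(\Phi_f(\{0\}-\{\infty\})\bigr) \in \ccD^{\mathrm{la}}(\Zp^\times,L).
\]
The growth-of-order-$v_p(\alpha_p)$ property follows from the slope estimate: the $U_p$-action scales distributions in a way that controls the growth rate of their moments, and the existence of an eigenvector of slope $h = v_p(\alpha_p)$ forces $L_p^{\mathrm{an}}(f)$ to be an $h$-admissible distribution in the sense of Amice--Vélu/Vishik.

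The interpolation property would then be verified by a direct computation: one evaluates $L_p^{\mathrm{an}}(f)$ against $\chi(x)x^j$ using that $\Phi_f$ lifts $\phi_f$ and is a $U_p$-eigenvector. The $U_p$-relation $U_p \Phi_f = \alpha_p \Phi_f$, unfolded using the explicit formula $U_p \Phi(D) = \sum_{a=0}^{p-1} \smallmatrd{1}{a}{0}{p} \cdot \Phi(\smallmatrd{1}{a}{0}{p}^{-1}D)$, produces the factor $\alpha_p^{-n}$ and the Euler-like correction $(1-\chi(p)p^j/\alpha_p)$ by an inductive telescoping argument on $n$. The remaining classical factor $G(\chi)j!p^{nj}/(2\pi i)^{j+1} \cdot L(f,\overline{\chi},j+1)/\Omega_f^\pm$ is exactly the formula for evaluating $\phi_f$ on divisors supported at $p$-power cusps, via the integral representation of $L(f,\chi,s)$ as a Mellin transform. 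Uniqueness is automatic from Amice--Vélu--Vishik: an admissible distribution of growth order $h < k+1$ on $\Zp^\times$ is determined by its values on $\chi(x)x^j$ for $0 \leq j \leq k$.

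The main obstacle is Stevens' control theorem, which is the genuine hard input and has no analogue in the $\GL(1)$ story: it requires setting up the formalism of overconvergent modular symbols, endowing them with a compact $U_p$-operator, and invoking Coleman-style slope decompositions together with a cohomological comparison between classical and overconvergent symbols. Everything else, including the interpolation formula and the admissibility bound, reduces to bookkeeping once this control theorem is granted. A self-contained treatment of the theorem (following Pollack--Stevens \cite{PS11}) will be the technical heart of the sequel \cite{intro-part-II}.
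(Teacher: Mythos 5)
The paper does not contain a proof of this theorem: it is stated in a survey appendix and the proof is explicitly deferred to the sequel \cite{WilTCC}, where (as the surrounding text notes) the construction used is that of Pollack--Stevens \cite{PS11}. Your sketch is precisely that Pollack--Stevens argument---classical modular symbols encoding critical values, the overconvergent eigenlift existing uniquely by Stevens' control theorem in slope $< k+1$, the growth bound coming from the slope, and interpolation plus Amice--V\'elu--Vishik uniqueness---so it matches the approach the paper intends.
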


We see that $L_p^{\mathrm{an}}(f)$ is, in this generality, only a locally analytic distribution (in the sense of \S\ref{sec:locally analytic}). We note, however, that if $f$ is $p$-ordinary (i.e.\ if $v_p(\alpha_p) = 0$) then the growth condition implies that $L_p^{\mathrm{an}}(f)$ lies in the subspace of $p$-adic measures on $\Zp^\times$. This theorem was first proved in \cite{MSD74, AV75, Vis76}.

If $\alpha_p \neq 0$, then we know that $v_p(\alpha_p) \leq k+1$, but the above theorem does not handle the case $v_p(\alpha) = k+1$. Subsequent work of Pollack--Stevens \cite{PS12} and Bella\"iche \cite{Bel12} means we have $p$-adic $L$-functions in this case too, though the statement is slightly different.

The case of $\alpha_p = 0$ is known as the \emph{infinite slope} case. For certain particular cases, a construction of a $p$-adic $L$-function attached to elliptic curves of bad additive reduction (where $a_p = 0$) can be found in \cite{Delbourgo}. For arbitrary modular forms, partial $p$-adic $L$-functions with good interpolation properties have been constructed in \cite{RodriguesPhiGamma}, using Kato's Euler system and extending Perrin-Riou's big logarithm maps (as discussed in \S \ref{sec:pr log}).

\subsubsection{Arithmetic}
As we sketched in \S\ref{sec:pr log}, the appropriate generalisation of the arithmetic construction goes through Galois representations and Euler systems. Attached to a modular form $f$, we have a Galois representation $V_f$, constructed by Deligne inside the \'etale cohomology of the modular curve, in which we can pick a Galois-stable integral lattice $T_f$. The arithmetic $p$-adic $L$-function is then given by the following deep theorem of Kato, proved in his magisterial paper \cite{Kat04}.

\begin{theorem}[Kato]
	There exists an Euler system $\mathbf{z}_{\mathrm{Kato}}(f)$ attached to $T_f$.
\end{theorem}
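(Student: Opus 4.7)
The plan is to follow Kato's original strategy: construct the Euler system out of \emph{Siegel units} (also called modular units) on modular curves, paired via cup products in motivic $K$-theory, and then push forward to Galois cohomology via the \'etale regulator and the Hecke-equivariant projection to the $f$-isotypic component. Concretely, for each integer $N \geq 3$ and each $\alpha \in (\Z/N\Z)^2 \setminus \{0\}$, one has a Siegel unit ${}_cg_\alpha \in \roi(Y(N))^\times$ (a modular unit built from Klein forms / theta functions, where an auxiliary integer $c$ prime to $6N$ is used to kill $2$-torsion in its construction). Its key properties are an explicit divisor supported on the cusps, and a \emph{distribution relation} under the natural maps in the tower $\{Y(M)\}_M$.

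First, I would form the cup product of two Siegel units to obtain a class
\[
\{{}_cg_\alpha,{}_dg_\beta\} \in K_2(Y(N)),
\]
or equivalently a class in motivic cohomology $H^2_{\mathcal{M}}(Y(N),\Z(2))$. Applying Beilinson's Eisenstein symbol (or rather its integral refinement due to Kato), one lifts such classes to motivic cohomology with coefficients in $\mathrm{Sym}^k$ of the relative Tate module of the universal elliptic curve over $Y(N)$, landing in $H^2_{\mathcal{M}}(Y(N),\mathrm{Sym}^k\mathscr{H}(2))$. Applying the \'etale Chern character (the $p$-adic realisation) gives classes in $H^1(\Z[1/Np\ell_c\ell_d],\mathrm{Sym}^k H^1_{\mathrm{et}}(\overline{Y(N)},\Zp)(2))$. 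Projecting to the $f$-isotypic quotient via the Hecke algebra---using Eichler--Shimura to identify this quotient with $V_f$---produces the candidate cohomology class $\mathbf{z}_N(f) \in H^1(\Z[1/Np\ell_c\ell_d],T_f)$ at each level $N$. After a suitable normalisation using the auxiliary parameters $c,d$, one obtains genuine classes over $\Q(\mu_m)$ for $m$ coprime to $p$.

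The heart of the proof---and the main obstacle---is verifying the Euler system norm relations
\[
\mathrm{cores}_{\Q(\mu_{m\ell})/\Q(\mu_m)}(\mathbf{z}_{m\ell}(f)) \;=\; P_\ell(V_f^*(1),\sigma_\ell^{-1})\,\mathbf{z}_m(f) \qquad (\ell \nmid m)
\]
and the compatibility at primes $\ell\mid m$. I would deduce these from two inputs: (i) the distribution relations of Siegel units, which assert that the trace along the degeneracy map $Y(N\ell)\to Y(N)$ of ${}_cg_\alpha$ equals an explicit combination of Siegel units at level $N$; and (ii) the description of Hecke operators at $\ell$ as correspondences on the modular curve, so that pushing forward through degeneracy maps produces precisely the Euler factor $1 - T_\ell\ell^{-s} + \langle\ell\rangle\ell^{k+1-2s}$ acting on the Galois representation after projection to the $f$-component. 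Matching these two computations requires careful book-keeping: tracking the action of $\mathrm{Gal}(\Q(\mu_{m\ell})/\Q(\mu_m))$ via the identification $Y(N)(\C) \cong \mathrm{SL}_2(\Z)\backslash(\mathcal{H}\times \mathrm{GL}_2(\hat\Z)/K(N))$ and the corresponding Galois action on connected components, and verifying that the Hecke-equivariant projection converts the geometric distribution relation into the Euler-system relation. The auxiliary $c,d$ can finally be removed by an integrality argument to obtain the Euler system for $T_f$ itself. Once in hand, composing with $\mathrm{loc}_p$ and Perrin-Riou's big logarithm $\mathrm{Log}_{V_f}$ produces Kato's arithmetic $p$-adic $L$-function; the hardest remaining step, the explicit reciprocity law equating it with $L_p^{\mathrm{an}}(f)$, is then a separate (and famously difficult) computation.
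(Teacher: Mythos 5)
The paper does not prove this theorem; it simply cites Kato's \emph{magisterial paper} \cite{Kat04} (and points to \cite{LZ18} for an exposition) and treats the existence of the Euler system as a black box, so there is no internal proof here to compare yours against. That said, your sketch is a faithful high-level summary of Kato's actual strategy: zeta elements in $K_2$ of modular curves built from pairs of Siegel units, lifted to motivic cohomology with $\mathrm{Sym}^k$ coefficients, realised in $p$-adic \'etale cohomology via the Chern character, projected Hecke-equivariantly onto the $f$-isotypic quotient, with norm relations extracted from the distribution relations of Siegel units against the Hecke-correspondence structure of the degeneracy maps.

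A few places are worth flagging so you do not underestimate what remains. First, Kato's construction does not work on $Y(N)$ alone but on the two-variable tower $Y(M,N)$, and the bookkeeping of level structures is what makes the norm relations come out with the correct Euler factor rather than merely ``an Euler factor.'' Second, the passage to $\mathrm{Sym}^k$ coefficients for weight $\geq 3$ is itself a nontrivial step, going through pushforward from Kuga--Sato varieties; ``applying the Eisenstein symbol'' is suggestive of Beilinson's conceptual picture but is not literally Kato's route, and the integrality needed to land in $T_f$ (rather than $V_f$) is where the auxiliary parameters $c,d$ earn their keep and must subsequently be removed by an explicit denominator analysis. Third, the verification of the norm relations and the later explicit reciprocity law together occupy the bulk of a several-hundred-page paper, and in particular ``careful book-keeping'' substantially understates the trace and commutation computations involved; you have correctly deferred the reciprocity law, but the norm relations themselves deserve the same respect. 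As a route map your proposal is sound and accurately reflects the geometry of the argument; it is not, and does not claim to be, a proof.
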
 

In the yoga described in \S\ref{sec:pr log}, we then consider the localisation of Kato's Euler system at $p$, which we still denote by the same name, and obtain a class
\[ \mathbf{z}_{\rm Kato}(f) \in H^1_{\rm Iw}(\qp, V_f). \]
The arithmetic $p$-adic $L$-function is then the image of $\mathbf{z}_{\mathrm{Kato}}(f)$ under the Perrin-Riou big logarithm map :
\begin{align*}
	\mathrm{Log}_{V_f} : \mathrm{H}^1_{\mathrm{Iw}}(\Qp, V_f) &\longrightarrow \mathscr{D}^{\rm la}(\Zp^\times)\\
	\mathbf{z}_{\mathrm{Kato}}(f) &\longmapsto L_p^{\mathrm{arith}}(f).
\end{align*}
The second deep theorem of \cite{Kat04} is the following \emph{explicit reciprocity law}.

\begin{theorem}
	There is an equality 
	\[
	L_p^{\mathrm{an}}(f) = L_p^{\mathrm{arith}}(f) \in \mathscr{D}^{\rm la}(\Zp^\times).
	\]
\end{theorem}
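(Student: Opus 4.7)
The plan is to verify the equality $L_p^{\mathrm{an}}(f) = L_p^{\mathrm{arith}}(f)$ in $\ccD(\Zp^\times)$ by testing both sides against a dense family of locally analytic characters. Since both distributions are of the same order of growth $h := v_p(\alpha_p) < k+1$, an analogue of Lemma \ref{zero divisor} for admissible distributions of bounded order (Visik/Amice-Vélu) shows that such a distribution is uniquely determined by its values at the characters $x\mapsto \chi(x)x^j$ for Dirichlet characters $\chi$ of conductor $p^n$, $n\geq 0$, and integers $0\leq j\leq k$. It therefore suffices to match the two sides at these \emph{critical} characters; the values of $L_p^{\mathrm{an}}(f)$ are given explicitly by Theorem \ref{thm:intro 1}.

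Next, I would unwind the definition of $L_p^{\mathrm{arith}}(f)=\mathrm{Log}_{V_f}(\mathbf{z}_{\mathrm{Kato}}(f))$ at these characters. The defining interpolation property of Perrin-Riou's big logarithm map (the generalisation of the fundamental exact sequence of Theorem \ref{thm:fund exact seq} to the crystalline representation $V_f$) expresses the value
\[
\int_{\Zp^\times}\chi(x)x^j\cdot L_p^{\mathrm{arith}}(f)
\]
in terms of the Bloch--Kato dual exponential map $\exp^*_{V_f^*(1-j)}$ applied to the twisted localisation $\mathrm{loc}_p\bigl(\mathbf{z}_{\mathrm{Kato}}(f)\otimes\chi^{-1}\bigr)$, up to the expected Euler factor at $p$ involving $\alpha_p$ and $\chi(p)$ and a Gauss sum. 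This mirrors exactly the local Euler factor and Gauss-sum terms appearing in the analytic interpolation formula of Theorem \ref{thm:intro 1}.

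The heart of the argument is then Kato's \emph{explicit reciprocity law} for Beilinson--Kato elements, which computes $\exp^*$ of these localisations in terms of period integrals of $f$. In weight two this identifies $\exp^*$ of Beilinson--Kato classes, built from Siegel units, with Rankin--Selberg integrals of $f$ against Eisenstein series; in higher weight one replaces Siegel units by motivic Eisenstein classes in the $K$-theory of Kuga--Sato varieties, and the period integrals with the Eichler--Shimura realisation of $f$ as a modular symbol. Applying Beilinson's formula (a modular-forms avatar of Theorem \ref{s=1 theorem} relating regulators to special $L$-values) yields precisely the factors $G(\chi)\cdot j!\cdot p^{nj}\cdot(2\pi i)^{-(j+1)}\cdot L(f,\overline{\chi},j+1)/\Omega_f^{\pm}$ appearing in Theorem \ref{thm:intro 1}. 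Matching these two expressions character by character closes the argument.

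The principal obstacle is the explicit reciprocity law itself: passing from the motivic/algebraic construction of $\mathbf{z}_{\mathrm{Kato}}(f)$ to a formula in $p$-adic Hodge theory for $\exp^*$ requires comparing the syntomic realisation of Beilinson--Kato elements with their de Rham realisation, and then integrating against the differential associated to $f$. This is the technical core of \cite{Kat04} and depends on a delicate compatibility between Kato's dual exponential computation, the Kuga--Sato comparison isomorphism, and the normalisation of periods $\Omega_f^{\pm}$; all subtleties concerning Euler factors at $p$, the shift by $1$ in the cyclotomic twist (cf.\ Remark \ref{rem:galois action}), and the sign $\pm$ governing the parity of $\chi(-1)(-1)^j$ must be tracked with care.
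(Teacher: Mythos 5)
The paper does not prove this theorem: it states it as a ``deep theorem of \cite{Kat04}'' and points the reader to \cite{Kat04}, \cite{Rub00}, and \cite{LZ18} for the details. Your proposal is therefore not competing with a proof in the text but with a citation, and what you have written is a faithful expository roadmap of Kato's strategy rather than a proof in its own right.

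As a roadmap it is essentially accurate. The uniqueness step is right: since both $L_p^{\mathrm{an}}(f)$ and $L_p^{\mathrm{arith}}(f)$ are admissible of order $h = v_p(\alpha_p) < k+1$, the Amice--V\'elu/Vi\v{s}ik criterion (the correct analogue of Lemma \ref{zero divisor} in the distribution setting) reduces the claim to matching values at the finite critical characters $\chi(x)x^j$, $0\le j\le k$, $\mathrm{cond}(\chi) = p^n$. You correctly identify that the interpolation property of $\mathrm{Log}_{V_f}$ expresses $\int \chi x^j \cdot L_p^{\mathrm{arith}}(f)$ through the Bloch--Kato dual exponential applied to twists of $\mathrm{loc}_p(\mathbf{z}_{\mathrm{Kato}}(f))$, and that Kato's explicit reciprocity law is what converts this into the Rankin--Selberg/Eichler--Shimura period integrals appearing in Theorem \ref{thm:intro 1}. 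The parenthetical comparison with Theorem \ref{s=1 theorem} is a helpful heuristic but substantially understates the gap: Beilinson's regulator formula and the syntomic-versus-de Rham comparison on Kuga--Sato varieties have no $\mathrm{GL}(1)$ analogue of comparable difficulty.

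The one thing worth flagging is that the entire mathematical content is concentrated in the sentence you label ``the heart of the argument.'' Everything before it is formal reduction, and everything after it is invocation of \cite{Kat04}. That is exactly the level at which the paper operates in this appendix, so it is not a defect of your write-up for the stated purpose; but you should be explicit that the explicit reciprocity law is being \emph{cited}, not derived, and that its proof --- Kato's Theorems 6.6 and 16.6, several hundred pages of $p$-adic Hodge theory and higher direct images on Kuga--Sato varieties --- is the actual theorem. As written, a reader could get the impression that what remains is bookkeeping of Euler factors and Gauss sums; in fact those normalisations, though genuinely delicate (and your list of subtleties is the right one), are a small fraction of the work.
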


\subsubsection{Algebraic}

Recall that, in \S\ref{sec:greenberg selmer}, we described how the Iwasawa Main Conjecture for $\GL(1)$ (Theorem \ref{IMC}) can be generalised via Selmer groups. If $f$ is a $p$-\emph{ordinary} modular form, that is, when the eigenvalue $\alpha_p$ has $v_p(\alpha_p) = 0$, then its associated Galois representation $V_f$ is $p$-ordinary in the sense of Definition \ref{def:selmer at p}; analogously to that section, we obtain a Selmer group $H^1(\Q(\mu_{p^\infty}),V_f)$ attached to $f$ over the Iwasawa algebra $\Lambda(\Gamma)$ of $\Gamma \defeq \Gal(\Q(\mu_{p^\infty})/\Q) \cong \Zp^\times$. In \cite{Kat04}, Kato proved that this is a torsion $\Lambda$-module, and thus has a characteristic ideal 
\[
L_p^{\mathrm{alg}}(f) \defeq \mathrm{ch}_{\Lambda(\Zp^\times)}(\sX_{p^\infty}(V_f)),
\]
the \emph{algebraic $p$-adic $L$-function of $f$.} When $f$ is $p$-ordinary, the analytic/arithmetic $p$-adic $L$-function is actually a measure on $\Zp^\times$, and hence lives in the subspace $\Lambda(\Zp^\times) \subset \cD(\Zp^\times)$. 

\begin{theorem}[Iwasawa Main Conjecture for $f$]
	Under some mild additional technical hypotheses, we have
	\[
	L_p^{\mathrm{alg}}(f) = (L_p^{\mathrm{an}}(f)) \subset \Lambda(\Zp^\times).
	\]
\end{theorem}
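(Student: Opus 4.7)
The plan is to imitate the strategy we used for $\GL(1)$, but replacing cyclotomic units with Kato's Euler system and the Coleman map with Perrin-Riou's big logarithm. Recall the template from Part II: we translated the analytic $p$-adic $L$-function into an ideal in the Iwasawa algebra built from an Euler system (via Coleman), and then, using class field theory, identified this ideal with the characteristic ideal of a Galois-theoretic module. Here, $\mathbf{z}_{\mathrm{Kato}}(f) \in H^1_{\mathrm{Iw}}(\Q_p, V_f)$ plays the role of the cyclotomic units, and the explicit reciprocity law $L_p^{\mathrm{an}}(f) = \mathrm{Log}_{V_f}(\mathbf{z}_{\mathrm{Kato}}(f))$ plays the role of Theorem \ref{thm:coleman to kl}.

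For the first divisibility, namely $L_p^{\mathrm{alg}}(f) \mid L_p^{\mathrm{an}}(f)$, I would invoke the general Euler system machine of Kolyvagin--Rubin (see \cite{Rub00}). Given the global Euler system $(\mathbf{z}_m)_m$ over the tower $\Q(\mu_m)$, Kolyvagin's derivative classes produce annihilators of the Selmer group $\sX_{p^\infty}(V_f)$ at every finite level, and these assemble in the limit into a statement about characteristic ideals in $\Lambda(\Z_p^\times)$. Combining this with the explicit reciprocity law, which rewrites the localization $\mathrm{loc}_p(\mathbf{z}_{\mathrm{Kato}}(f))$ in terms of $L_p^{\mathrm{an}}(f)$, one obtains that $\mathrm{ch}_{\Lambda}(\sX_{p^\infty}(V_f))$ divides the ideal generated by $L_p^{\mathrm{an}}(f)$. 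This is precisely the content of Kato's main theorem in \cite{Kat04}; the mild hypotheses alluded to (big image, ordinary at $p$, irreducibility of the residual representation, etc.) enter exactly to make the Euler system argument go through.

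For the reverse divisibility, I would appeal to the work of Skinner--Urban \cite{SU14}. Their approach is orthogonal to Kato's: instead of bounding the Selmer group from above by constructing cohomology classes, one bounds it from below by producing enough congruences between cusp forms and Eisenstein series on a higher rank group (a unitary group of signature $(3,1)$ or $(2,2)$), using a lattice construction in the style of Ribet and Mazur--Wiles to manufacture nontrivial elements of the Selmer group whose characteristic ideal is divisible by $L_p^{\mathrm{an}}(f)$. The two divisibilities together force equality, which is the Main Conjecture.

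The main obstacle, by far, is the Skinner--Urban half: Kato's divisibility, while itself a landmark theorem, at least fits cleanly into the Euler system paradigm developed in these notes and is, morally, the direct analogue of what we did for $\zeta_p$. The Eisenstein congruence argument, by contrast, requires genuinely new tools (the automorphic theory of unitary groups, $p$-adic families of automorphic forms, Galois representations attached to them, and a delicate lattice argument) that have no counterpart in the $\GL(1)$ story above. The "mild technical hypotheses" in the statement are the combined hypotheses from both halves: at a minimum, one needs $f$ to be $p$-ordinary (so that $L_p^{\mathrm{an}}(f)$ actually lies in $\Lambda(\Z_p^\times)$ rather than in $\mathscr{D}(\Z_p^\times)$ and the ordinary Selmer group is well-defined), the residual representation $\overline{\rho}_f$ to be absolutely irreducible and suitably ramified, and an auxiliary prime condition to ensure one can run the lattice construction.
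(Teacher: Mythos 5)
Your proposal matches the paper's treatment: the paper does not give a self-contained proof here but states exactly this two-divisibility decomposition, attributing $L_p^{\mathrm{alg}}(f) \mid (L_p^{\mathrm{an}}(f))$ to Kato's Euler system argument in \cite{Kat04} and the reverse divisibility to the Eisenstein congruence and lattice method of Skinner--Urban \cite{SU14}, with the technical hypotheses living on the Skinner--Urban side. Your more detailed sketch of each half (Kolyvagin--Rubin machinery plus the explicit reciprocity law for Kato's divisibility; congruences on unitary groups for Skinner--Urban) is consistent with and elaborates on what the paper asserts.
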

This is a theorem of Kato \cite{Kat04} and Skinner--Urban \cite{SU14}.
There has since been much further work weakening the required hypotheses, including analogues for non-ordinary modular forms. For a description of up-to-date developments in this direction, see \cite{FouquetWan}.

\begin{remark}
The Iwasawa theory of modular forms has important applications to elliptic curves, and in particular to the BSD conjecture. An introduction to this is contained in Skinner's 2018 Arizona Winter School lectures \cite{Ski18}. 
\end{remark}

\begin{remark}
The topics described above comprise the \emph{cyclotomic} Iwasawa theory of modular forms.  There is also a rich \emph{anticyclotomic} Iwasawa theory, working over an auxiliary imaginary quadratic field, with similarly spectacular applications to BSD. This is described in \cite{BD05}, and for a more recent overview, see \cite{CGLS22}.
\end{remark}

\subsection{Further results}\label{sec:appendix further generalisations}

The three constructions above, and the equalities between them, are expected to go through in very wide generality, but there are very few cases in which the whole picture has been completed. We sketch this here. Suppose $V$ is a Galois representation, arising from a motive $M$, and corresponding (at least conjecturally) under the Langlands correspondence to an automorphic representation $\pi$. 

\begin{enumerate}\setlength{\itemsep}{5pt}
	\item \textbf{(Analytic)}. There should be a locally analytic $p$-adic distribution $L_p^\mathrm{an}(V)$ which interpolates critical values of $L(V,s)$. 
 This analytic $p$-adic $L$-function is subject to precise conjectures of Coates--Perrin-Riou and Panchishkin \cite{coatesperrinriou89, coates89, Pan94}.
	
All current techniques for proving such conjectures are automorphic; but this is already difficult, even assuming $p$-ordinarity. We illustrate this in the case of (regular algebraic, cuspidal, $p$-ordinary) automorphic representations of $\GL_n(\A_{\Q})$. 
 \begin{itemize}
\item The cases of $\GL(1)$ and $\GL(2)$ were described above.
\item The case of $\GL(3)$ was only recently handled in \cite{LW-GL3}. Constructions in the special case where $\pi$ is a symmetric square lift were given (decades earlier) in \cite{Sch88, Hid90}.
\item No general construction is known for any $n\geq4$. The best known results are in further `degenerate' cases where $\pi$ really comes from a different group (e.g.\ \cite{AG94}). 
 \end{itemize}

 For other groups, there are also many results; e.g.\ \cite{EHLS16} for unitary groups, \cite{LPSZ19, Liu16} for Siegel modular forms, and \cite{KMS00, Jan-Non-Abelian} for $\GL_{n+1}\times\GL_n$. The general picture remains, however, very fragmented.

We do not claim to give anything approaching a comprehensive list here. Indeed, we have only scratched the surface; there are also constructions for many other groups, with more general base fields, and without assuming ordinarity. We highlight mainly that there remain a vast number of open questions in the construction of analytic $p$-adic $L$-functions.

If one drops the cuspidality assumption, we know even less, with good results only for $\GL(2)$. Without the regular algebraic assumption, we know essentially nothing at all.

\item \textbf{(Arithmetic)}.  We also expect Euler systems, in the sense of \cite{Rub00}, to exist in great generality, but known examples are scarcer still. Until relatively recently, Kato's Euler system and the cyclotomic units were two of only three examples of Euler systems, the other being the system of \emph{elliptic units} (though the system of \emph{Heegner points} is closely related). The last decade, though, has seen an explosion of activity in the area. Recent important examples of Euler systems include Euler systems for products of two modular forms \cite{LLZ14}, the \emph{diagonal cycles} attached to triple products of modular forms \cite{DR14}, and Euler systems for $\mathrm{GSp}_4$ \cite{LSZ17}.
	
    Where an Euler system exists, one can apply a Perrin-Riou logarithm map and extract an arithmetic $p$-adic $L$-function; but proving an explicit reciprocity law is harder still. Such reciprocity laws were studied in the Rankin--Selberg setting in \cite{KLZ17}, for diagonal cycles in \cite{DiagonalAsterisque}, and for $\mathrm{GSp}_4$ in \cite{LZ20}. For a precise summary of the double- and triple-product settings, see \cite[\S B]{LoefflerRiveroEisenstein}.

	\item \textbf{(Algebraic)}. There are Iwasawa Main Conjectures in wide generality, at least in ordinary settings, and there are many partial results towards these too. Whenever one has an Euler system with the equality $L_p^{\mathrm{an}} = L_p^{\mathrm{arith}}$, for example, one has that the corresponding Selmer group is torsion and the divisibility $L_p^{\mathrm{alg}} | (L_p^{\mathrm{an}})$. 
	
\end{enumerate}

\small
\addtocontents{toc}{\protect\vspace{10pt}}
\bibliography{master_references}{}
\bibliographystyle{alpha}

\end{document}